\newtheorem{thm}{Theorem}
\newtheorem{lem}[thm]{Lemma}
\newtheorem{prop}[thm]{Proposition}
\newtheorem{cor}[thm]{Corollary}
\newtheorem*{cor*}{Corollary}
\newtheorem*{lem*}{Lemma}
\newtheorem{defn}{Definition}[section]
\theoremstyle{remark}
\newtheorem{remark}{Remark} 
\theoremstyle{plain}
\def\CC{{\mathbb C}}
\def\EE{{\mathbb E}}
\def\HH{{\mathbb H}}
\def\NN{{\mathbb N}}
\def\RR{{\mathbb R}}
\def\ZZ{{\mathbb Z}}
\def\veca{{\text{\boldmath$a$}}}
\def\vecb{{\text{\boldmath$b$}}}
\def\vece{{\text{\boldmath$e$}}}
\def\vecu{{\text{\boldmath$u$}}}
\def\vecv{{\text{\boldmath$v$}}}
\def\scrA{{\mathcal A}}
\def\scrC{{\mathcal C}}
\def\scrD{{\mathcal D}}
\def\scrE{{\mathcal E}}
\def\scrF{{\mathcal F}}
\def\scrH{{\mathcal H}}
\def\scrI{{\mathcal I}}
\def\scrJ{{\mathcal J}}
\def\scrK{{\mathcal K}}
\def\scrP{{\mathcal P}}
\def\scrR{{\mathcal R}}
\def\scrS{{\mathcal S}}
\def\scrY{{\mathcal Y}}
\def\Re{\operatorname{Re}}
\def\Im{\operatorname{Im}}
\def\GL{\operatorname{GL}}
\def\Res{\operatorname{Res}}
\def\SL{\operatorname{SL}}
\def\SO{\operatorname{SO}}
\def\PSL{\operatorname{PSL}}
\def\sgn{\operatorname{sgn}}
\def\Proj{\operatorname{Proj}}
\def\Onder#1#2#3#4#5{#1 \setbox0=\hbox{$#1$}\setbox1=\hbox{$#2$}
       \dimen0=.5\wd0 \dimen1=\dimen0 \dimen2=\dp0 \dimen3=\dimen2
       \advance\dimen0 by .5\wd1 \advance\dimen0 by -#4
       \advance\dimen1 by -.5\wd1 \advance\dimen1 by -#4
       \advance\dimen2 by -#3 \advance\dimen2 by \ht1
       \advance\dimen2 by 0.3ex \advance\dimen3 by #5
        \kern-\dimen0\raisebox{-\dimen2}[0ex][\dimen3]{\box1}
       \kern\dimen1}
\newcommand{\GaG}{\Gamma\backslash G}
\newcommand{\GaH}{\Gamma\backslash \mathbb{H}}
\newcommand{\sfrac}[2]{{\textstyle \frac {#1}{#2}}}
\newcommand{\smatr}[4]{\left( \begin{smallmatrix} #1 & #2 \\ #3 & #4 \end{smallmatrix} \right) }
\newcommand{\wty}{\widetilde{y}}
\newcommand{\wtk}{\widetilde{k}}
\newcommand{\kthe}{k_{\theta}}
\newcommand{\sYGa}{\mathcal{Y}_{\Gamma}}
\begin{document}
\title[Eisenstein series and analytic continuation of representations]{Renormalization of integrals of products of Eisenstein series and analytic continuation of representations}

\author{Samuel C. Edwards}

\address{Department of Mathematics, Yale University, New Haven, CT, USA}
\address{Department of Mathematics, Uppsala University, Uppsala, Sweden}
\email{samuel.edwards@yale.edu}


\date{\today}

\subjclass[2000]{}

\keywords{}

\begin{abstract}
We combine Zagier's theory of \emph{renormalizable} automorphic functions on the hyperbolic plane with the analytic continuation of representations of $\mathrm{SL}(2,\mathbb{R})$ due to Bernstein and Reznikov to study triple products of Eisenstein series of arbitrary (in particular, non-arithmetic) non-compact finite-volume hyperbolic surfaces. 
\end{abstract}

\maketitle
\tableofcontents
\section{Introduction}\label{intro}
Let $\Gamma < \PSL(2,\RR)$ be a non-cocompact cofinite Fuchsian group, which acts by isometries on the hyperbolic upper half plane $\HH=\lbrace z\in\CC\,:\, \Im(z)>0\rbrace$ by way of M\"obius transformations:
\begin{equation*}
g\cdot z:= \sfrac{az+b}{cx+d}\qquad\forall g=\pm \smatr a b c d \in \PSL(2,\RR),\;z\in \HH.
\end{equation*}
Recall that this action also preserves the  hyperbolic area element $d\mu(x+iy)=\frac{dx\,dy}{y^2}$. Since the hyperbolic surface $\GaH$ is non-compact and of finite volume, it has a finite number $\kappa$ ($1\leq \kappa <\infty$) of cusps. We choose a set of representatives $\lbrace\eta_1,\eta_2,\ldots \eta_{\kappa}\rbrace\subset\partial_{\infty}\HH= \RR\cup\lbrace \infty\rbrace$ for the cusps; $\lbrace\eta_1,\eta_2,\ldots \eta_{\kappa}\rbrace$ is thus a maximal set of $\Gamma$-inequivalent parabolic fixed points of $\Gamma$. 

To each $\eta_j$ and $z\in\HH$, $s\in\CC$ we associate an \emph{Eisenstein series} $E_j(z,s)$ (see Section \ref{Eispolessec}, \eqref{Eisdef} for the definition). Recall that for fixed $z\in\HH$, $E_j(z,s)$ is a meromorphic function in $s$, and for fixed $s\in\CC$ that is not a pole of the Eisenstein series, $E_j(z,s)$ is an automorphic form. Furthermore, for $s$ which is not a pole, each Eisenstein series is an eigenfunction of the hyperbolic Laplacian $\Delta= y^{-2}(\frac{\partial^2}{\partial x^2}+\frac{\partial^2}{\partial y^2})$; $-\Delta E_j(x+i y, s)= s(1-s)E_j(x+i y, s)$.   

For each representative $\eta_k$ of a cusp of $\GaH$, there exists a re-parametrization of $\HH$ given by $z=x+iy \leftrightarrow x_k+iy_k$ (see Section \ref{prelims} for a rigorous definition) such that each Eisenstein series $E_j(z,s)$ has the following Fourier decomposition with respect to $x_k+i y_k$:
\begin{equation}\label{EjkFourier}
E_j(z,s)= \delta_{j, k} y_k^s + \varphi_{j,k}(s) y_k^{1-s} + \sum_{\underset{m\neq 0}{m\in\ZZ}} \psi_{m,k}^{(j)}(s)\sqrt{y_k} K_{s-\frac{1}{2}}(2 \pi |m| y_k) e( m x_k),
\end{equation}
where (as is standard) $e(m x_k)=e^{2\pi i m x_k}$, $K_u(r)$ is the $K$-Bessel function (cf.\ \cite[Chapter 8.4]{GR}), and the coefficients  $\varphi_{j,k}(s)$, $\psi_{m,k}^{(j)}(s)$ are meromorphic functions in $s$. The purpose of this article is to study the asymptotic behaviour of the coefficients $ \psi_{m,k}^{(j)}(s)$ as $|m|\rightarrow \infty$.

We fix $k_0\in \lbrace 1,\ldots,\kappa\rbrace$ and $s_0=\sigma_0+i t_0\in \CC$ with $\sigma_0\geq \frac{1}{2}$, and write $E(z)=E_{k_0}(z,s_0)$. For each $k\in\lbrace 1,\ldots,\kappa\rbrace$ we can define a \emph{Rankin-Selberg $L$-function} $L_k(|E|^2,s)$ by
\begin{equation}\label{Lkdef}
L_k(|E|^2,s):=\sum_{\underset{m\neq 0}{m\in\ZZ}} \frac{|\psi_{m,k}^{(k_0)}(s_0)|^2}{|m|^s}, \qquad \Re(s) > 2\sigma_0.
\end{equation}
The sum is absolutely convergent for $s$ as above due to the following bound on sums of $|\psi_{m,l}^{(j)}(s_0)|^2$ (see \cite[Lemma 2.7]{AndJensKron}):
\begin{equation}\label{RanSelbd}
\sum_{\underset{0<|m|\leq M}{m\in\ZZ}} |\psi_{m,l}^{(j)}(s_0)|^2 \ll_{\Gamma,s_0} \begin{cases} M \log 2 M\qquad&\mathrm{if}\,\sigma_0=\sfrac{1}{2}\\ M^{2\sigma_0}\qquad &\mathrm{if}\,\sigma_0>\sfrac{1}{2}\end{cases}\qquad \forall j,l\in\lbrace1,\ldots,\kappa\rbrace.
\end{equation}
In the case $s_0\in\frac{1}{2}+i\RR$, we prove the following:
\begin{thm}\label{thm1} For $T>1$, $s_0\in \frac{1}{2}+i\RR$, and $\epsilon>0$
\begin{equation*}
\int_0^{T} |L_k(|E|^2,\sfrac{1}{2}+it)|^2\,dt\ll_{\Gamma,s_0,\epsilon} T^{6+\epsilon}.
\end{equation*}
\end{thm}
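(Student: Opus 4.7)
The plan is to apply the Rankin-Selberg method, making the unfolded integral rigorous via the Zagier-type renormalization developed in the body of the paper, and then to exploit the spectral theory of $L^2(\GaH)$ together with Stirling asymptotics for the arising Gamma factors.

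\textbf{Unfolding.} Formally unfolding $\int_{\GaH}^{*}|E(z)|^2 E_k(z,s)\,d\mu(z)$ against the Fourier expansion \eqref{EjkFourier} of $E_k(z,s)$ at $\eta_k$, the non-zero Fourier modes of $|E|^2$ produce, after rescaling $y_k\mapsto y_k/(2\pi|m|)$, the Dirichlet series $L_k(|E|^2,s)$ multiplied by the Mellin transform $G(s;s_0):=\int_0^\infty|K_{s_0-1/2}(2\pi y)|^2 y^{s-1}\,dy$, which by the classical Weber--Schafheitlin formula is a product of four Gamma functions divided by $\Gamma(s)$. The zeroth Fourier coefficient of $|E|^2$ at $\eta_k$ contributes, after renormalization, a rational-in-$s$ term $C_k(s;s_0)$. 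This yields
\begin{equation*}
\int_{\GaH}^{*}|E(z)|^2 E_k(z,s)\,d\mu(z) = G(s;s_0)\,L_k(|E|^2,s) + C_k(s;s_0),
\end{equation*}
providing in particular the meromorphic continuation of $L_k(|E|^2,s)$ to $s\in\CC$.

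\textbf{Spectral estimate on the critical line.} Subtract from $|E|^2$ a correction $\xi(z)$ built out of the Eisenstein constant terms $y_l^{s_0}$, $y_l^{1-s_0}$ at each cusp (Maass--Selberg-type truncation) so that $h:=|E|^2-\xi\in L^2(\GaH)$ with $\|h\|_{L^2}$ bounded in terms of $\Gamma$ and $s_0$. Plancherel for the continuous spectrum of $L^2(\GaH)$ gives $\int_{-\infty}^{\infty}|\langle h,E_k(\cdot,\tfrac12+it)\rangle|^2\,dt\le 4\pi\|h\|^2_{L^2}=O(1)$, while $\int^*\xi(z)E_k(z,s)\,d\mu(z)$ reduces to elementary Mellin integrals of $y^a$'s paired with the constant term of $E_k(\cdot,s)$, which are meromorphic in $s$ and estimable polynomially in $|\Im s|$.

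\textbf{Combining.} Solving for $L_k(|E|^2,s)=(\text{renorm.\ integral}-C_k(s;s_0))/G(s;s_0)$ on $\Re(s)=1/2$, using Stirling's formula for $|G(1/2+it;s_0)|^{-1}$, and combining the $L^2$-Plancherel bound for $h$ with the pointwise polynomial bound for the $\xi$- and $C_k$-corrections via Cauchy--Schwarz over $t\in[0,T]$, yields the claimed $\int_0^T|L_k(|E|^2,1/2+it)|^2\,dt\ll T^{6+\epsilon}$. The exponent $6$ accumulates from the Stirling asymptotic for the ratio of Gamma factors in $G^{-1}$, the polynomial growth of the correction terms, and the extra factor from Cauchy--Schwarz.

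\textbf{Main obstacle.} Since $G(s;s_0)$ is exponentially small on the critical line, a polynomial bound on $L_k(|E|^2,1/2+it)$ requires an exponentially accurate cancellation between the $L^2$-spectral inner product $\langle h,E_k(\cdot,\tfrac12+it)\rangle$, the explicit $\xi$-contribution, and the renormalization term $C_k(s;s_0)$. Quantifying this cancellation uniformly in $t$ — which is supplied precisely by the renormalization formalism and the Bernstein--Reznikov analytic continuation of representations developed in the preceding sections — is the heart of the argument.
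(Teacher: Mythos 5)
Your \textbf{Unfolding} and \textbf{Combining} steps reproduce the paper's reduction correctly: Proposition~\ref{GL=RS} gives $G(s)L_k(|E|^2,s)=R_k(|E|^2,s)$, and Stirling yields $|G(\tfrac12+it)|\asymp e^{-\pi|t|/2}(1+|t|)^{-1}$, so $\int_0^T|L_k(\tfrac12+it)|^2\,dt \ll T^2\int_0^T|R_k(|E|^2,\tfrac12-it)|^2\,e^{\pi t}\,dt$. The issue is entirely in how you propose to bound this last integral.

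Your \textbf{Spectral estimate} step does not work, and the ``Main obstacle'' paragraph concedes the gap without closing it. Plancherel for $h=|E|^2-\xi\in L^2(\GaH)$ gives $\int_{-\infty}^{\infty}|\langle h,E_k(\cdot,\tfrac12+it)\rangle|^2\,dt = O(1)$ --- an \emph{unweighted} $L^2$ bound in $t$. But after dividing by $G(\tfrac12+it)$ you need to control $\int_0^T|\langle h,E_k(\cdot,\tfrac12+it)\rangle|^2\,e^{\pi t}(1+t)^2\,dt$, which carries the exponential weight $e^{\pi t}$. An $O(1)$ unweighted bound says nothing about this: Cauchy--Schwarz over $t$, or any rearrangement of it, cannot convert bounded total mass into exponential pointwise decay, and a priori the Plancherel-allowed mass of $|\langle h,E_k\rangle|^2$ could all sit near $t=T$, making the weighted integral of size $e^{\pi T}$. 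What the paper actually uses here is Theorem~\ref{thm2}, the bound $\int_0^T\big|R.N.\!\big(\int_{\GaH}E_j(\cdot,r)E_k(\cdot,s)\overline{E_l(\cdot,\tfrac12+it)}\,d\mu\big)\big|^2 e^{\pi t}\,dt\ll_{r,s,\epsilon}T^{4+\epsilon}$, which encodes precisely the required exponential decay of the triple-product coefficients. Proving Theorem~\ref{thm2} is where all the work is: one needs Proposition~\ref{Phiprops} to rewrite the renormalized integral as a genuine $L^2$ inner product with $\Phi_{j,k}^{r,s}$ (a subtraction by \emph{Eisenstein series}, not merely by constant terms, so that the Maass--Selberg pairings vanish), and then the Bernstein--Reznikov analytic continuation of representations to $g\in U\supsetneq G$ (Sections~4--5, Proposition~\ref{Psancon}, Lemma~\ref{Phigb}), which extracts the exponential factor from $\|\pi^{1/2+it}(g_\epsilon)\vece_0\|^2$. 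None of this is supplied by Plancherel, so the proposal as written does not constitute a proof of Theorem~\ref{thm1}.
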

By a standard argument due to Good \cite{Good} (see also \cite{Petridis95}), Theorem \ref{thm1} may be used to obtain more precise asymptotics for the sums of the $|\psi_{m,l}^{(j)}(s_0)|^2$. For each $j\in\lbrace 1,\ldots,\kappa\rbrace$, let $\scrE_j\subset \CC$ denote the set of poles for $E_j(z,s)$. One then has $\#(\scrE_j\cap\lbrace s\in\CC\,:\,\Re(s)\geq \frac{1}{2}\rbrace)=\#(\scrE_j\cap (\frac{1}{2},1])<\infty$ (for all $j$); see Section \ref{Eispolessec} for a summary of (and references to) of the known properties of $\scrE_j$ which we will need. We can now state the following
\begin{cor}\label{Fccor} Given $t_0\in\RR\setminus\lbrace 0\rbrace$, there exist $c_1$, $c_{1+2it_0}$, $c_{\zeta}$ ($\zeta\in \scrE_k \cap(\frac{1}{2},1)$) in $\CC$ such that for all $\epsilon>0$,
\begin{align*}
\sum_{\underset{0<|m|\leq M}{m\in\ZZ}}& |\psi_{m,k}^{(k_0)}(\sfrac 1 2 +i t_0)|^2= \frac{16\cosh(\pi t_0)}{\mu(\GaH)\pi}M\log M\\[-13pt]&\qquad\qquad\qquad\qquad+\frac{8\cosh(\pi t_0)}{\mu(\GaH)\pi}\left(c_1\mu(\GaH)+2\log (4\pi)-2-2\Re\left(\sfrac{\Gamma'(\frac{1}{2}+i t_0)}{\Gamma(\frac{1}{2}+i t_0)} \right) \right)M\\&\qquad\qquad\qquad\qquad+\Re\big(M^{1+2 i t_0} c_{1+2 i t_0}\big)+\!\!\sum_{\zeta\in (\frac{1}{2},1)\cap\scrE_k}\!\! c_{\zeta}M^{\zeta}+O_{\Gamma, t_0,\epsilon}\big(M^{\frac{6}{7}+\epsilon}\big)
\end{align*}
as $M\rightarrow\infty$. The numbers $c_1$, $c_{1+2 i t_0}$ and $c_{\zeta}$ may be explicitly computed in terms of $t_0$ and $\Gamma$, and are given by the formulas \eqref{c1def}, \eqref{c12it0}, and \eqref{czeta}, respectively.
\end{cor}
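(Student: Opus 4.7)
The plan is to apply Perron's formula to the Dirichlet series $L_k(|E|^2, s)$, shift the contour to the line $\Re(s) = \frac{1}{2}$, extract the main terms as residues at the poles of $L_k(|E|^2, s)$ in the strip $\frac{1}{2} \leq \Re(s) \leq 1$, and bound the shifted contour integral via Cauchy--Schwarz combined with the second moment estimate of Theorem \ref{thm1}. This is the standard Good--Petridis strategy.

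First I would invoke the meromorphic continuation of $L_k(|E|^2, s)$ to $\CC$ and its pole structure in the half-plane $\Re(s) \geq \frac{1}{2}$, both of which are established earlier in the paper via the renormalized Rankin--Selberg unfolding. Explicitly, $L_k(|E|^2, s)$ has a double pole at $s = 1$ (from the pole of $E_k(z, s)$ at $s = 1$ interacting with the $y$-term of the constant Fourier coefficient of $|E(z)|^2$ at cusp $k$), simple poles at $s = 1 \pm 2 i t_0$ (arising from the $y^{1\pm 2it_0}$ pieces of that constant term), and simple poles at each $\zeta \in \scrE_k \cap (\frac{1}{2}, 1)$ (from the additional poles of $E_k(z, s)$ in $(\frac{1}{2}, 1)$). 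I would then apply Perron's formula at $c = 1 + 1/\log M$ truncated to $|\Im(s)| \leq T$; the truncation error is $O(M^{1+\epsilon}/T)$ by \eqref{RanSelbd}. Shifting to $\Re(s) = \frac{1}{2}$, the residues of $L_k(|E|^2, s) M^s/s$ produce the main terms: the double pole at $s=1$ contributes $A(M \log M - M) + BM$, where $A, B$ are the two leading Laurent coefficients of $L_k(|E|^2, s)$ at $s=1$; the conjugate pair at $s = 1 \pm 2it_0$ contributes $\Re(c_{1+2it_0} M^{1+2it_0})$ (the real part appearing because $L_k(|E|^2, \bar s) = \overline{L_k(|E|^2, s)}$, so residues at conjugate points are complex conjugates); and each $\zeta$ contributes $c_\zeta M^\zeta$.

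The remaining vertical integral on $\Re(s) = \frac{1}{2}$ is handled by dyadic decomposition: on $|t| \sim T_0$, Cauchy--Schwarz combined with Theorem \ref{thm1} gives $\int_{|t| \sim T_0} |L_k(|E|^2, \sfrac{1}{2}+it)|/|\sfrac{1}{2}+it|\, dt \ll T_0^{-1} \cdot T_0^{1/2} \cdot T_0^{3+\epsilon} = T_0^{5/2+\epsilon}$, so the full integral contributes $O(M^{1/2} T^{5/2+\epsilon})$. The horizontal segments at $|\Im(s)| = T$ are controlled by standard polynomial-growth bounds on $L_k(|E|^2, s)$ in vertical strips, obtained by choosing $T$ in a dyadic window via a mean-value argument based on Theorem \ref{thm1}. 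Balancing $M^{1+\epsilon}/T$ against $M^{1/2} T^{5/2+\epsilon}$ gives the optimal choice $T = M^{1/7}$ and the asserted error $O(M^{6/7+\epsilon})$. The main obstacle will be the explicit bookkeeping for the Laurent expansions at $s = 1$: one must combine the double pole of $L_k(|E|^2, s)$ with the Taylor expansion of $M^s/s$ at $s = 1$ to produce the $M \log M$ and $M$ contributions, and verify that the leading Laurent coefficient reproduces $\frac{16\cosh(\pi t_0)}{\mu(\GaH)\pi}$ (which involves the Mellin transform of $|K_{it_0}(2\pi y)|^2$ at $s=1$ together with the residue $1/\mu(\GaH)$ of $E_k(z,s)$ at $s=1$), while the subleading coefficient must reproduce the intricate $M$-coefficient containing $\log(4\pi)$ and $\Gamma'(\frac{1}{2}+it_0)/\Gamma(\frac{1}{2}+it_0)$ as prescribed by \eqref{c1def}.
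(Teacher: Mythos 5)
Your overall strategy matches the paper's: shift the contour to $\Re(s)=\frac12$, extract main terms from residues at $s=1$, $s=1\pm 2it_0$, and $s=\zeta\in\scrE_k\cap(\frac12,1)$, bound the remaining vertical integral via Cauchy--Schwarz together with Theorem \ref{thm1}, handle the horizontal segments by a mean-value argument, and balance to get $T=M^{1/7}$. Your residue computation at $s=1$ (expanding $M^s/s$ against the double-pole Laurent series of $L_k$) and your observation that $\overline{L_k(s)}=L_k(\bar s)$ forces the conjugate pair at $1\pm 2it_0$ to contribute a real part are both correct.

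However, there is a genuine gap in the \emph{sharp-cutoff} Perron step. You claim the truncation error is $O(M^{1+\epsilon}/T)$ ``by \eqref{RanSelbd},'' but the truncated Perron formula has an error term of the shape $\sum_{n}|a_n|\min\bigl(1,\tfrac{M}{T|M-n|}\bigr)$, whose dominant contribution comes from the short window $|n-M|\ll M/T$. Controlling $\sum_{|n-M|\ll M/T}|\psi_{n,k}^{(k_0)}(\tfrac12+it_0)|^2$ requires a short-interval estimate that \eqref{RanSelbd} does not supply: that bound only gives $\sum_{0<|m|\le M}|\psi_m|^2\ll M\log M$, and differencing gives nothing stronger than $O(M\log M)$ for the window, which is far too large. (The individual-coefficient bound available from \eqref{RanSelbd} is merely $|\psi_m|^2\ll m\log m$, so the window contributes $\gg M^2/T$ a priori.) Any attempt to refine this runs into circularity, since the short-interval control is exactly what you are trying to establish.

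The paper avoids this entirely by replacing the sharp cutoff with a smooth one $\psi_U$ (with Mellin transform $\Psi_U$), proving the asymptotic for the smoothed sum $\sum_m |\psi_m|^2\psi_U(|m|/M)$ — where the rapid decay of $\Psi_U$ removes any truncation step — and then recovering the sharp count by sandwiching between $\sum_{|m|\le M(1-U^{-1})}$ and $\sum_{|m|\le M(1+U^{-1})}$. The price of the smoothing enters only through the perturbation $M\mapsto M(1\pm U^{-1})$ in the \emph{main terms}, which gives the clean error $O(M\log M/U)$ without requiring any short-interval input. Your balancing $M^{1+\epsilon}/T$ against $M^{1/2}T^{5/2+\epsilon}$ happens to reproduce $T=M^{1/7}$ and the exponent $6/7$ only because you wrote down the same numerology the smoothing legitimately produces; the sharp-cutoff route does not actually justify the $M^{1+\epsilon}/T$ term. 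To fix your argument, replace Perron with a smooth cutoff from the outset (as the paper does), or alternatively make explicit a bootstrap argument to control the short-interval sum — but the smooth cutoff is clearly the cleaner path.

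One further point worth noting: when you evaluate the Laurent coefficient $c_1$ at $s=1$, you will need the value of the renormalized integral $R.N.(\int_{\GaH}|E_{k_0}(\cdot,\tfrac12+it_0)|^2\,d\mu)$, which the paper computes via the Maass--Selberg relations as $-\sum_j\varphi_{k_0,j}'(\tfrac12+it_0)\varphi_{k_0,j}(\tfrac12-it_0)$. Your proposal alludes to the bookkeeping but does not flag that this identity is needed; it is not a pure $\Gamma$-function computation.
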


\begin{remark}
As noted above, Corollary \ref{Fccor} follows from Theorem \ref{thm1} by more-or-less standard arguments from analytic number theory (involving contour integrals and Cauchy's residue theorem). For the sake of completeness, we give a detailed proof in Appendix \ref{AppC}. 
\end{remark}

\begin{remark}
The restriction $t_0\neq 0$ in Corollary \ref{Fccor} is to give a relatively simple statement; for $t_0\neq 0$, $L_k(|E|^2,s)$ has simple poles at $s=1\pm 2 i t_0$ and a pole of order two at $s=1$. The residues at these points give rise to the main terms in the expression in Corollary \ref{Fccor}. As such, when $t_0=0$, a different expression is required. It appears that our method may also be generalized without any major complications in order to prove analogues of Theorem \ref{thm1} for $s_0$ with $\Re(s_0)>\frac{1}{2}$.
\end{remark}

\begin{remark}
Note that Corollary \ref{Fccor} gives rise to the following bound on the individual Fourier coefficients of $E_{k_0}(z,\frac{1}{2}+i t_0)$:
\end{remark}
\begin{cor}\label{Fccor1} For $t_0,\,m\neq 0$, $\epsilon>0$
\begin{equation*}
|\psi_{m,k}^{(k_0)}(\sfrac 1 2 +i t_0)|\ll_{\Gamma,t_0,\epsilon} |m|^{\frac{3}{7}+\epsilon}.
\end{equation*}
\end{cor}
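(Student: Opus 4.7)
The plan is to deduce Corollary \ref{Fccor1} directly from Corollary \ref{Fccor} by a standard differencing argument, isolating an individual $|\psi_{m,k}^{(k_0)}(\tfrac12+it_0)|^2$ from the partial sums.

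First, I would write, for $m \geq 1$,
\begin{equation*}
|\psi_{m,k}^{(k_0)}(\sfrac12+it_0)|^2 + |\psi_{-m,k}^{(k_0)}(\sfrac12+it_0)|^2 = \sum_{0<|m'|\leq m}|\psi_{m',k}^{(k_0)}(\sfrac12+it_0)|^2 - \sum_{0<|m'|\leq m-1}|\psi_{m',k}^{(k_0)}(\sfrac12+it_0)|^2.
\end{equation*}
Since both terms on the left are non-negative, any upper bound on the right-hand side gives an upper bound on $|\psi_{m,k}^{(k_0)}(\tfrac12+it_0)|^2$.

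Next, I would apply the asymptotic formula of Corollary \ref{Fccor} to each of the two partial sums, with $M=m$ and $M=m-1$ respectively, and subtract. The contributions of the main terms are controlled by elementary estimates: $m\log m - (m-1)\log(m-1) = O(\log m)$, $m - (m-1) = 1$, $m^{1+2it_0} - (m-1)^{1+2it_0} = O(1+|t_0|)$, and $m^{\zeta}-(m-1)^{\zeta} = O(m^{\zeta-1}) = O(1)$ for $\zeta \in (\frac{1}{2},1)$. Thus the cancellation of main terms gives at most $O_{\Gamma,t_0}(\log m)$, which is absorbed by the error. The error terms combine to $O_{\Gamma,t_0,\epsilon}(m^{6/7+\epsilon})$.

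Combining these estimates, I obtain
\begin{equation*}
|\psi_{m,k}^{(k_0)}(\sfrac12+it_0)|^2 \ll_{\Gamma,t_0,\epsilon} |m|^{\frac{6}{7}+\epsilon},
\end{equation*}
and taking square roots (and using $|\psi_{-m,k}^{(k_0)}|$ symmetrically) yields the claimed bound $|\psi_{m,k}^{(k_0)}(\tfrac12+it_0)| \ll_{\Gamma,t_0,\epsilon} |m|^{\frac{3}{7}+\epsilon}$. There is no real obstacle here: all of the work was already done in Corollary \ref{Fccor} (and ultimately in Theorem \ref{thm1}); the deduction of the pointwise bound from the partial sum asymptotic is completely routine, relying only on the fact that the derivatives of the main terms are of much smaller order than the error term $O(M^{6/7+\epsilon})$.
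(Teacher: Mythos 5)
Your differencing argument is correct and is exactly what the paper intends: the paper simply remarks that Corollary \ref{Fccor1} "follows" from Corollary \ref{Fccor} without spelling out the details, and your proof supplies the standard computation (subtracting the asymptotics at $M=m$ and $M=m-1$, observing that the main-term differences are $O(\log m)$, which is dominated by the error term $O(m^{6/7+\epsilon})$, then taking square roots). There is nothing to add.
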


\begin{remark}\label{Ramanujan}
Recall that a \emph{Maass cusp form} $\phi$ of $\Gamma$ also has a Fourier expansion similar to \eqref{EjkFourier} at the cusp $\eta_k$ given by
\begin{equation*}
\phi(x+iy)= \sum_{\underset{m\neq 0}{m\in\ZZ}} a_m\sqrt{y_k} K_{s-\frac{1}{2}}(2 \pi |m| y_k) e( m x_k),
\end{equation*}
where $s\in \CC$ is such that $-\Delta\phi=s(1-s)\phi$. For arithmetic $\Gamma$, the Ramanujan-Petersson conjecture for Mass cusp forms is the (conjectured) bound $|a_m| \ll_{\epsilon} |m|^{\epsilon}$ for all $\epsilon>0$. To date, the best result in this direction is $|a_m|\ll |m|^{5/28+\epsilon}$, due to Bump, Duke, Hoffstein, and Iwaniec \cite{BDHI}. In \cite{Sarnak}, Sarnak gave the first improvement over the Hecke bound $|a_m|\ll |m|^{1/2}$ for \emph{all} lattices $\Gamma$, in particular, also the non-arithmetic ones (related work by Petridis \cite{Petridis95, Petridis99} further improved these bounds). Key to these results is the use of analytic continuation to bound integrals over $\GaH$ of triple products of Maass forms. Bernstein and Reznikov \cite{Bern} subsequently interpreted the analytic continuation in terms of representation theory and combined this with a theory of $\SL(2,\RR)$-invariant Sobolev norms to obtain (essentially) the optimal bound for the triple product method: an error term $O\big(M^{\frac{2}{3}+\epsilon}\big)$ in the sum corresponding to Corollary \ref{Fccor} for cusp forms-this gives the bound $|a_m|\ll_{\epsilon} |m|^{1/3+\epsilon}$ on the individual Fourier coefficients. These results (together with certain numerical investigations and hueristic arguments, cf., e.g., \cite{Hej}) suggest that the Ramanujan-Petersson conjecture might also hold for the Maass cusp forms of non-arithmetic $\Gamma$.
\end{remark}

\begin{remark}
Returning our attention to Eisenstein series, we note that for $\Gamma$ that are congruence subgroups, we have a much greater understanding of the Fourier coefficients of Eisenstein series than those of Maass cusp forms. Indeed, for $\Gamma=\SL(2,\ZZ)$, a ``classical" computation gives $\psi_{m,1}^{(1)}(s)= \frac{2\pi^s|m|^{s-1/2} }{\Gamma(s)}\frac{\sigma_{1-2 s}(|m|)}{\zeta(2s)}$, where as usual $\sigma_{z}(n)=\sum_{d|n} d^z$ ($n\in\NN$, $z\in \CC$). Standard bounds on $\sigma_z$ then give $|\psi_{m,1}^1(s)|\ll_{\epsilon} |m|^{\epsilon+\Re(s)-\frac{1}{2}}$ for $\Re(s)\geq \frac{1}{2}$, i.e. the Ramanujan-Petersson conjecture holds for the Eisenstein series of $\SL(2,\ZZ)$ (similar computations may be done for other congruence lattices). 

Similarly to the case of Maass cusp forms, less is known for non-arithmetic lattices. Using \eqref{RanSelbd}, one obtains the convexity bound $|\psi_{j,m}^{(k)}(s)|\ll_{\epsilon} |m|^{\Re(s)+\epsilon}$ ($\Re(s)\geq \frac{1}{2}$) of Eisenstein series of any lattice $\Gamma$. Corollary \ref{Fccor1} thus provides the first (as far as we are aware) subconvexity bound for the Fourier coefficients of Eisenstein series of non-arithmetic $\Gamma$. As for cusp forms, one might conjecture that the Ramanujan-Petersson bound $|\psi_{j,m}^{(k)}(s)|\ll_{\epsilon} |m|^{\Re(s)-\frac{1}{2}+\epsilon}$ ($\Re(s)\geq \frac{1}{2}$) also holds for the Eisenstein series of non-arithmetic lattices, i.e.\ it is a general property of lattices that does originate in number theory. 
\end{remark}

\begin{remark}
It is conjectured (the Phillips-Sarnak conjecture, cf.\ e.g. \cite{PS85a,PS85b,PS92}) that for a ``generic" lattice $\Gamma$, $\GaH$ has only finitely many Maass cusp forms. Thus, the Eisenstein series are of great importance for the spectral theory of such lattices $\Gamma$; they are expected to contribute ``almost all" of the spectrum of $\Delta$ in $L^2(\GaH)$. 
\end{remark}

\begin{remark}\label{Eisentripprod}
Eisenstein series are not in $L^2(\GaH)$. The integrals of the products that one needs to consider when using the triple product method are thus not finite. In order to get round this, we follow Zagier \cite{Zagier}, who developed the Rankin-Selberg method for a certain class of functions that are not of rapid decay in the cusps of $\GaH$. The idea behind this is to give meaning to, or \emph{renormalize}, certain divergent integrals in a way that captures the information required for the Rankin-Selberg method. Theorem \ref{thm1} follows (cf.\ Section \ref{implication}) from a more general (and our main) result, Theorem \ref{thm2}, which is a bound on integrals of renormalized integrals of triple products of Eisenstein series.
\end{remark}

\begin{remark}
The main work of this article consists of combining the ideas of Bernstein and Reznikov with those of Zagier described, respectively, in Remarks \ref{Ramanujan} and \ref{Eisentripprod} above. We start by obtaining expressions for renormalized integrals of triple products of Eisenstein series in terms of (standard) integrals of linear combinations of these triple products with other Eisenstein series. This enables us to construct a function in $L^2(\GaG)$ (here $G=\SL(2,\RR)$ and $\Gamma$ is identified with its inverse image under the map $g\mapsto \pm g$ from $G$ to $\PSL(2,\RR)$) whose ``coefficients" in the direct integral decomposition of $L^2(\GaG)$ into irreducible representations are given by precisely the renormalized integrals of triple products that we are interested in. Interpreting Eisenstein series as intertwining operators from principal series representations of $G$ to $C^{\infty}(\GaG)$ then allows us to use the analytic continuation of representations as in \cite{Bern} to obtain subconvexity bounds for the Fourier coefficients of Eisenstein series.

Our initial hope was that at this stage we would be able to use the results of \cite{Bern} regarding $G$-invariant Sobolev norms in order to improve the error term in Corollary \ref{Fccor} to  $O\big(M^{\frac{2}{3}+\epsilon} \big)$. However, due to the slightly more complicated nature of renormalized integrals (compared with standard integrals), we have so far not been able to see how to do so.
\end{remark}

\begin{remark}
Unpublished work of Str\"ombergsson \cite{Anotes} suggests that by more closely following the method of \cite{Sarnak}, one should be able to obtain Corollary \ref{Fccor} with an error term $O\big(M^{\frac{3}{4}+\epsilon}\big)$. In \cite{Avelin}, Avelin carried out numerical investigations of the statistical properties of the Fourier coefficients of Eisenstein series, including Corollary \ref{Fccor}, for certain cases of non-arithmetic $\Gamma$ with $\kappa=1$.
\end{remark}

\subsection{Acknowledgements}
This research was partially funded by Swedish Research Council Grant 2016-03360, as well as by a postdoctoral scholarship from the Knut and Alice Wallenberg Foundation. I would like to thank my supervisor Andreas Str\"ombergsson for suggesting this problem, sharing his notes from his previous work \cite{Anotes}, many useful and enlightening discussions, and carefully reading this manuscript.

\section{Renormalization}\label{renorm}
We now follow Zagier \cite{Zagier} and develop a theory of \emph{renormalizable integrals} over $\GaH$. In \cite{Zagier}, Zagier deals only with the case $\Gamma=\PSL(2,\ZZ)$. Further generalisations to congruence subgroups and the adelic setting have been carried out by a number of authors, cf.\ \cite{Ben,DuttaGupta,DuttaGupta2,Michel}. Here, however, we deal with arbitrary (in particular, non-arithmetic) $\Gamma<\PSL(2,\RR)$.

\subsection{Preliminaries}\label{prelims}
 We start by choosing a \emph{canonical fundamental domain} $\scrF\subset \HH$ for $\Gamma$, see \cite[p. 268]{Hejhal}. We denote the vertices of $\scrF$ along $\partial_{\infty}\HH=\RR\cup\lbrace \infty\rbrace$ by $\eta_1,\ldots,\eta_{\kappa}$ (recall that we have assumed that $\kappa \geq 1$). Note that since $\scrF$ is canonical, the set $\lbrace \eta_1,\ldots,\eta_{\kappa}\rbrace$ is a maximal set of $\Gamma$-inequivalent parabolic fixed points. For each $k\in \lbrace 1,\ldots \kappa\rbrace$ we choose an element $h_k\in\PSL(2,\RR)$ such that $h_k\cdot \eta_k=\infty$ and $h_k\Gamma_{\eta_k}h_k^{-1}= \pm \smatr{1}{\ZZ}{0}{1}$ ($\Gamma_{\eta_k}$ being the stabilizer of $\eta_k$ in $\Gamma$). Since $\scrF$ is canonical, we may assume without loss of generality that each $h_k$ has been chosen so that
\begin{equation*}
h_k\cdot \scrF \cap \lbrace z\in \HH\,:\, \Im(z)\geq B\rbrace =\lbrace x+i y\in \HH\,:\, 0\leq x\leq 1,\, y\geq B\rbrace 
\end{equation*}
for any $B$ greater than a constant $B_0=B_0(\Gamma)>1$, which we fix once and for all. Given $B\geq B_0$, we define the \emph{cuspidal region at} $\eta_k$, $\scrC_{k,B}$, by
\begin{equation*}
\scrC_{k,B}=h_k^{-1}\cdot\lbrace x+i y\in \HH\,:\, 0\leq x\leq 1,\, y\geq B\rbrace \subset \scrF,
\end{equation*}
and also a bounded region $\scrF_B$ by
\begin{equation*}
\scrF_B=\scrF\setminus \bigcup_{k=1}^{\kappa} \scrC_{k,B}.
\end{equation*}
For each $k\in\lbrace 1,\ldots,\kappa\rbrace$ and $z \in \HH$, we write
\begin{equation}\label{cusppara}
z_k=x_k+i y_k=h_k\cdot z.
\end{equation}
This parametrization may be used to express integrals over cusp regions $\scrC_{k,B}$ as follows:
\begin{equation}\label{cuspint}
\int_{\scrC_{k,B}} F(z)\,d\mu(z)=\int_{h_k^{-1}\cdot[0,1]\times i[B,\infty)} F(z)\,d\mu(z)=\int_{B}^{\infty}\int_0^1 F(z)\,\frac{dx_k\,dy_k}{y_k^2},
\end{equation}
were in the last expression we interpret $z$ through $z=h_k^{-1}\cdot z_k=h_k^{-1}\cdot(x_k+i y_k)$. In connection with our choice of cusps and elements $h_k$, we define the \emph{invariant height function} $\scrY_{\Gamma}$ by
\begin{equation}\label{YGadef}
\sYGa(z):=\max_{k\in\lbrace 1,\ldots,\kappa\rbrace} \max_{\gamma\in \Gamma}\, \Im(h_k \gamma\cdot z)
\end{equation}
(cf.\ \cite[(3.8)]{Iwaniec}). We note that $\sYGa$ is a continuous, $\Gamma$-invariant function on $\HH$, and for each $k\in\lbrace 1,\ldots,\kappa\rbrace$, we have the bound
\begin{equation}\label{sYkjbd}
\sYGa(z)\leq \max\lbrace y_k,\,y_k^{-1}\rbrace.
\end{equation}
This is seen by writing $h_j \gamma h_k^{-1}=\smatr a b c d$, where $j,k\in \lbrace 1,\ldots,\kappa\rbrace$, $\gamma\in \Gamma$. Then $\Im(h_j\gamma \cdot z)= \Im(\smatr a b c d \cdot z_k)=\Im( \frac{a z_k + b}{c z_k +d})=\frac{y_k}{|c z_k +d|^2}$. Now, either $|c|\geq 1$ or $\smatr a b c d = \smatr 1 * 0 1$ (cf.\ \cite[Lemma 2.3]{Strom1}), hence $\frac{y_k}{|c z_k +d|^2}\leq \max\lbrace y_k,y_k^{-1}\rbrace$. Let $\delta_B>0$ be such that for all $k\in\lbrace 1,\ldots,\kappa\rbrace$, $y_k\geq \delta_B$ for all $z\in\scrF_B\cup \scrC_{k,B}$. We will write $y\gg1$ for $y\geq \delta_B$. Unless stated otherwise, all implied constants are dependent on $\Gamma$.

\subsection{Poles of Eisenstein series}\label{Eispolessec}
We now recall some standard facts regarding the poles of the Eisenstein series that will be needed. Our references for this section are principally \cite[Chapter 11]{Borel} and \cite[Chapters 6.9, 6.11, and 6 ]{Hejhal} (in particular, see the following: \cite[Claim 9.6 (p.\ 78), Theorem 11.8 (p.\ 130), Theorem 11.11 (p.\ 140), and p.\ 284 (item 12), p.\ 297 (E), p.\ 298 (line 6)]{Hejhal}.

Recall that the Eisenstein series $E_k(z,s)$ is defined through the meromorphic continuation (in $s$) of
\begin{equation}\label{Eisdef}
E_k(z,s)=\sum_{\gamma \in \Gamma_{\eta_k}\backslash \Gamma} \Im( h_k\gamma\cdot z)^s \qquad z\in \HH,\,\Re(s)>1,
\end{equation} 
and that we have defined $\scrE_k\subset \CC$ to be the set of poles of $E_k(z,s)$. We now let $\scrE=\bigcup_{k=1}^{\kappa}\scrE_k$. As stated in Section \ref{intro}, $\scrE\cap\lbrace s\in\CC\,:\,\Re(s)\geq \frac{1}{2}\rbrace=\scrE\cap(\frac{1}{2},1]$. Moreover, all the poles in $\scrE\cap(\frac{1}{2},1]$ are simple. The poles of an Eisenstein series $E_j(z,s)$ are closely linked to the poles of the functions $\varphi_{j,k}(s)$. Each $\varphi_{j,k}(s)$ is a meromorphic function (in $s$) and $E_j(z,s)$ has a pole of order less than or equal to $n$ at $s=\zeta$ if and only if all $\varphi_{j,k}$, $k=1,\ldots,\kappa$ also have poles of order less than or equal to $n$ at $s=\zeta$. In summary: all $E_j(z,s)$ and $\varphi_{j,k}(s)$ are holomorphic along the line $\frac{1}{2}+i\RR$, and for all $s$ with $\Re(s)>\frac{1}{2}$ apart from at a finite number of simple poles in $(\frac{1}{2},1]$.

We have the following result regarding the behaviour of the Eisenstein series at $s=1$:
\begin{lem}\label{Eis1pole} All Eisenstein series $E_j(z,s)$ and $\varphi_{j,k}(s)$ ($j,k\in\lbrace 1,\ldots,\kappa\rbrace$) have simple poles at $s=1$. Moreover,
\begin{equation*}
\Res_{s=1} E_j(z,s)=\Res_{s=1}\varphi_{j,k}(s)= \frac{1}{\mu(\GaH)}\qquad \qquad\forall j,k\in\lbrace 1,\ldots,\kappa\rbrace.
\end{equation*}
Consequently, for all $j,k\in\lbrace 1,\ldots,\kappa\rbrace$ there exist  functions $\widetilde{E}_j:\HH\times\CC\rightarrow\CC$ and $\widetilde{\varphi}_{j,k}:\CC\rightarrow\CC$ such that
\begin{equation*}
E_j(z,s)=\frac{\mu(\GaH)^{-1}}{s-1}+\widetilde{E}_j(z,s),
\end{equation*}
\begin{equation*}
\varphi_{j,k}(s)=\frac{\mu(\GaH)^{-1}}{s-1}+\widetilde{\varphi}_{j,k}(s),
\end{equation*}
where $s\mapsto\widetilde{E}_j(z,s)$, $s\mapsto\widetilde{\varphi}_{j,k}(s)$ are holomorphic in a neighbourhood of $1$, $\widetilde{E}_j(z,s)$ is jointly continuous in $(z,s)$ and an automorphic function in $z$ for those $s$ that are not poles of $\widetilde{E}_j$.
\end{lem}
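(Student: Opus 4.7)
The plan is to show that the residue $R_j(z) := \Res_{s=1}E_j(z,s)$ is a constant function on $\GaH$, and then to evaluate that constant via Green's identity on a truncated fundamental domain; the latter computation will simultaneously establish that the pole at $s=1$ is genuine and pin down its residue.

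From the preceding paragraph, every pole of $E_j$ in $(\tfrac12,1]$ is simple, and $E_j$ has a pole at $s=1$ precisely when some $\varphi_{j,k}$ does, so I may write near $s=1$
\begin{equation*}
E_j(z,s) = \frac{R_j(z)}{s-1}+\widetilde{E}_j(z,s), \qquad \varphi_{j,k}(s) = \frac{r_{j,k}}{s-1}+\widetilde{\varphi}_{j,k}(s),
\end{equation*}
with the tilded quantities holomorphic at $s=1$ and $\widetilde{E}_j$ jointly continuous in $(z,s)$. Extracting the $(s-1)^{-1}$ coefficient of $-\Delta E_j = s(1-s)E_j$ gives $\Delta R_j = 0$. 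Taking the residue at $s=1$ term-by-term in the Fourier expansion \eqref{EjkFourier}, the constant Fourier part of $R_j$ in the cusp $\eta_k$ equals $r_{j,k}$ (the term $\delta_{jk}y_k^s$ being holomorphic at $s=1$), while each non-constant Fourier term contributes a multiple of $\sqrt{y_k}\,K_{1/2}(2\pi|m|y_k)\,e(mx_k)$, which decays exponentially in $y_k$ because $K_{1/2}(w)=\sqrt{\pi/(2w)}\,e^{-w}$. Hence $R_j$ is bounded on $\GaH$, and as a bounded harmonic automorphic function on a finite-volume hyperbolic surface it is constant: $R_j \equiv c_j$. Comparing cusp constant terms then forces $r_{j,k} = c_j$ for every $k$.

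To evaluate $c_j$, I apply Green's identity on the compact region $\scrF_B$. Boundary contributions from the $\Gamma$-paired sides of $\scrF$ cancel by automorphy of $E_j$, while on each horizontal arc $\{y_k = B\}$ (with outward hyperbolic unit normal $y_k\partial_{y_k}$ and arc-length element $dx_k/y_k$) only the constant Fourier term survives the $x_k$-integration, yielding for $s\ne 0,1$
\begin{equation*}
\int_{\scrF_B} E_j(z,s)\,d\mu(z) = -\sum_{k=1}^{\kappa}\Bigl[\frac{\delta_{jk}B^{s-1}}{1-s} + \frac{\varphi_{j,k}(s)B^{-s}}{s}\Bigr].
\end{equation*}
Taking $\Res_{s=1}$ of both sides, the left-hand side gives $c_j\,\mu(\scrF_B)$, and the right-hand side is readily computed, using $\Res_{s=1}\tfrac{B^{s-1}}{1-s}=-1$ and $\Res_{s=1}\varphi_{j,k}=c_j$, to be $1 - \kappa c_j B^{-1}$. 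Combining with the identity $\mu(\scrF_B) = \mu(\GaH) - \kappa B^{-1}$ (each cusp contributing $\mu(\scrC_{k,B}) = B^{-1}$) yields $c_j\,\mu(\GaH) = 1$; in particular $c_j \ne 0$, confirming that $E_j$ genuinely has a pole at $s=1$, and the stated Laurent forms of the lemma follow directly. The subtlest point to justify rigorously is the boundedness of $R_j$, which rests on the cusp Fourier expansion above; the remaining manipulations, including the interchange of residue and integration over the compact set $\scrF_B$, are routine.
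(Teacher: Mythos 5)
The paper gives no proof of this lemma; it is recorded as a standard fact with the discussion around it citing Borel and Hejhal. Your proposal therefore supplies a complete argument where the paper simply cites, and the argument you give is the classical one (it is essentially the route taken in the standard references). Your reasoning is sound. A few points worth making explicit if you were to write this up in full:

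\begin{enumerate}
\item \emph{Harmonicity and boundedness of the residue.} The step $\Delta R_j = 0$ correctly follows by extracting the $(s-1)^{-1}$ coefficient of $-\Delta E_j = s(1-s)E_j$ (the right side contributes nothing to that coefficient because $s(1-s)/(s-1) = -s$ is regular at $s=1$). For boundedness, note that $\Res_{s=1}\big(\varphi_{j,k}(s)\,y_k^{1-s}\big) = r_{j,k}$ (the $\log y_k$ term drops out since it multiplies the regular part of $\varphi_{j,k}$), and the residue of the full non-constant Fourier tail can be written as a contour integral of $(s-1)E_j(z,s)$ on a small circle about $s=1$, on which the uniform-in-$s$ exponential decay bound \eqref{Eisexp} applies; this justifies the term-by-term residue and the exponential decay of $R_j - r_{j,k}$ in each cusp.
\item \emph{Bounded harmonic implies constant.} This is legitimate here: $R_j$ is bounded on a finite-volume surface, hence $L^2$; Green's identity on $\scrF_B$ then gives $\int_{\scrF_B}|\nabla R_j|^2\,d\mu$ equal to a boundary term along $\{y_k=B\}$ that is $O(e^{-cB})$ by the decay just established, so letting $B\to\infty$ yields $\nabla R_j \equiv 0$.
\item \emph{The Green's identity formula.} Your identity
\begin{equation*}
\int_{\scrF_B} E_j(z,s)\,d\mu(z) = -\sum_{k=1}^{\kappa}\Bigl(\frac{\delta_{jk}B^{s-1}}{1-s} + \frac{\varphi_{j,k}(s)B^{-s}}{s}\Bigr)
\end{equation*}
is correct: both sides are meromorphic in $s$ and agree for $\Re(s)>1$, $s\ne 1$, since the paired-side contributions cancel by automorphy and only the constant term survives the $x_k$-integration along $\{y_k=B\}$. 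Interchanging $\Res_{s=1}$ with $\int_{\scrF_B}$ is justified because $\scrF_B$ is compact and $\widetilde E_j$ is jointly continuous. The residue computation, together with $\mu(\scrC_{k,B}) = B^{-1}$, then cleanly produces $c_j\,\mu(\GaH)=1$.
\end{enumerate}

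In short, this is a correct and self-contained derivation of a fact the paper only quotes; the cleanest textbook presentations (e.g.\ Iwaniec, Hejhal) proceed along the same lines.
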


\subsection{Renormalizable functions and renomalized integrals}
\begin{defn}\label{Renormdef1}
A continuous, $\Gamma$-invariant function $F(z)$ on $\HH$ is said to be \underline{renormalizable} if, for each $k\in \lbrace 1,\ldots,\kappa\rbrace$, 
\begin{equation}\label{renormform1}
F(z)= \Xi_k(y_k)+O(y_k^{-A}) \qquad (\forall A\in \RR)\;\mathrm{as}\; y_k\rightarrow\infty,
\end{equation}
where each $\Xi_k(y)$ is a function of the form
\begin{equation}\label{Xikdef}
\Xi_k(y)= \sum_{i=1}^{I_k} \frac{c_{ki}}{n_{ki}!}  y^{\alpha_{ki}}\log^{n_{ki}}y
\end{equation}
(here $c_{ki}\in\CC\setminus\lbrace 0\rbrace,\,\alpha_{ki}\in \CC,\,I_k,\,n_{ki}\in\NN$, and $(\alpha_{ki},n_{ki})\neq (\alpha_{k'i'},n_{k'i'})$ for $(k,i)\neq(k',i')$).
\end{defn}
We note (though will not need) that this definition is in fact independent of the choices we've made (viz.\ $\scrF$, the elements $h_k$, and the cusps $\eta_k$). Observe also that the set of renormalizable functions is an algebra over $\CC$; if $F_1(z)$ and $F_2(z)$ are renormalizable, then so is $F_1(z)F_2(z)$.
\begin{defn}\label{Renormdef2}
We define the \underline{renormalized integral}, denoted $R.N.\left(\int_{\GaH} F(z)\,d\mu(z)\right)$, of a renormalizable function $F(z)$ by
\begin{align}\label{RNintdef}
R.N.\left(\int_{\GaH} F(z)\,d\mu(z)\right):=\int_{\scrF_B} F(z)\,d\mu(z)+\sum_{k=1}^{\kappa}\left( \int_{\scrC_{k,B}} \left(F(z)-\Xi_k(y_k)\right)\,d\mu(z)-\widehat{\Xi}_k(B)\right),
\end{align}
where $B\geq B_0$ is arbitrary, and
\begin{equation}\label{Xihatdef}
\widehat{\Xi}_k(y)=\sum_{\substack{1 \leq i \leq I_k \\ 
\alpha_{ki} \neq 1}} c_{ki} \sum_{m=0}^{n_{ki}}
\frac{(-1)^{n_{ki}-m}}{m!} \cdot 
\frac{y^{\alpha_{ki}-1} \log^m y}{(\alpha_{ki}-1)^{n_{ki}-m+1}}+\sum_{\substack{1 \leq i \leq I_k \\ 
\alpha_{ki} = 1}} c_{ki}
\frac{\log^{n_{ki}+1} y}{(n_{ki}+1)!}
\end{equation}
($c_{ki},\,\alpha_{ki},\,I_k,\,n_{ki}$ being as in \eqref{Xikdef}).
\end{defn}
That $B\geq B_0$ is permitted to be arbitrary in Definition \ref{Renormdef2} is due to the fact that
\begin{equation*}
\frac{d}{dy} \widehat{\Xi}_k(y)=y^{-2}\Xi_k(y),
\end{equation*}
which (when combined with \eqref{cuspint}) shows that this definition does in fact make sense. From the definition, we see that $F\mapsto R.N.\left( \int_{\GaH} F(z)\,d\mu(z)\right)$ is a linear map from the space of renormalizable functions to $\CC$. 
\begin{lem}\label{intlem}
Let $F\in L^1(\GaH)$ be renormalizable. Then
\begin{equation*}
R.N.\left(  \int_{\GaH} F(z)\,d\mu(z)\right)= \int_{\GaH} F(z)\,d\mu(z).
\end{equation*}
\end{lem}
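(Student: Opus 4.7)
The plan is to show, cusp by cusp, that
\begin{equation*}
\int_{\scrC_{k,B}} \Xi_k(y_k)\, d\mu(z) = -\widehat{\Xi}_k(B) \qquad (k=1,\ldots,\kappa).
\end{equation*}
Granted this, the hypothesis $F \in L^1(\GaH)$ permits the splitting $\int_{\scrC_{k,B}}(F-\Xi_k(y_k))\,d\mu = \int_{\scrC_{k,B}} F\,d\mu - \int_{\scrC_{k,B}} \Xi_k(y_k)\,d\mu$ inside (\ref{RNintdef}), and the two resulting $\widehat{\Xi}_k(B)$ contributions cancel, collapsing the right-hand side of (\ref{RNintdef}) to $\int_{\scrF_B} F\,d\mu + \sum_k \int_{\scrC_{k,B}} F\,d\mu = \int_{\GaH} F\,d\mu$. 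Using (\ref{cuspint}) together with the antiderivative identity $\frac{d}{dy}\widehat{\Xi}_k(y) = y^{-2}\Xi_k(y)$ noted after Definition \ref{Renormdef2}, the displayed reduction is in turn equivalent to the pair of assertions: $\int_B^\infty \Xi_k(y)\, y^{-2}\,dy$ converges, and $\widehat{\Xi}_k(Y) \to 0$ as $Y \to \infty$.

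Absolute convergence is immediate from $F \in L^1(\GaH)$: the restriction of $F$ to $\scrC_{k,B}$ is in $L^1(\scrC_{k,B})$, and the bound $F(z) - \Xi_k(y_k) = O(y_k^{-A})$ with, say, $A=3$, is trivially in $L^1(\scrC_{k,B})$; hence so is $\Xi_k(y_k)$, i.e.\ $\int_B^\infty |\Xi_k(y)|\,y^{-2}\,dy < \infty$.

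The main obstacle is the second assertion, which requires extracting from this absolute integrability the pointwise constraint $\Re \alpha_{ki} < 1$ for every $i$. My preferred route is a Mellin-transform argument: set
\begin{equation*}
M(s) := \int_B^\infty \Xi_k(y)\, y^{-2-s}\,dy.
\end{equation*}
Absolute integrability of $\Xi_k(y)y^{-2}$ together with $|y^{-s}| = y^{-\Re s}$ shows that $M$ is holomorphic on $\{\Re s > 0\}$ and continuous on $\{\Re s \ge 0\}$. On the other hand, for $\Re s$ large enough, termwise integration writes $M(s)$ as a finite sum of explicit rational functions in $s$, analytically continuing $M$ to a meromorphic function on $\CC$ with a pole of order $n_{ki}+1$ at $s = \alpha_{ki} - 1$ for each $i$. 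Distinctness of the pairs $(\alpha_{ki}, n_{ki})$ together with $c_{ki}\neq 0$ ensures that every such pole is genuine: grouping indices sharing a common value of $\alpha_{ki}$, the largest associated $n_{ki}$ isolates a nonzero leading Laurent coefficient at $s = \alpha_{ki} - 1$. Holomorphy of $M$ on $\{\Re s \ge 0\}$ therefore forces $\Re(\alpha_{ki} - 1) < 0$ for every $i$.

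Substituting this conclusion into formula (\ref{Xihatdef}) shows that the $\alpha_{ki} = 1$ branch is vacuous, and that every monomial $y^{\alpha_{ki}-1}\log^m y$ appearing in $\widehat{\Xi}_k(y)$ has $\Re(\alpha_{ki}-1) < 0$ and therefore tends to $0$ as $y \to \infty$. Hence $\widehat{\Xi}_k(Y) \to 0$, which closes the reduction and completes the proof.
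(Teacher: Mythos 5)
Your argument follows the same skeleton as the paper's proof: split off $\int_{\scrC_{k,B}}\Xi_k(y_k)\,d\mu$, reduce the identity to the vanishing of $\widehat{\Xi}_k(Y)$ as $Y\to\infty$, and deduce that vanishing from $\Re\alpha_{ki}<1$. The genuine difference is that the paper disposes of the implication ``$F\in L^1(\GaH)\Rightarrow\Re\alpha_{ki}<1$'' in a single clause, citing only \eqref{cuspint}, while you actually prove it via the Mellin transform $M(s)=\int_B^\infty\Xi_k(y)\,y^{-2-s}\,dy$: bounded and holomorphic on $\{\Re s>0\}$ by absolute integrability, meromorphically continued by termwise integration, with a genuine pole of order $n_{ki}+1$ at each $s=\alpha_{ki}-1$ (distinctness of the pairs $(\alpha_{ki},n_{ki})$ and $c_{ki}\neq0$ isolate a nonzero leading Laurent coefficient), hence $\Re(\alpha_{ki}-1)<0$. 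This fills a gap the paper glosses over, and it is not an entirely trivial gap: since the $\alpha_{ki}$ are complex, one might a priori worry that oscillation among the $y^{i\Im\alpha_{ki}}$ factors could keep $\int_B^\infty|\Xi_k(y)|\,y^{-2}\,dy$ finite while some $\Re\alpha_{ki}\geq 1$; the Mellin argument rules that out cleanly, and more cleanly than a direct almost-periodicity estimate would. One small imprecision: the termwise integrals $\int_B^\infty y^{\alpha_{ki}-2-s}\log^{n_{ki}}y\,dy$ are not pure rational functions of $s$ but rational functions multiplied by the entire, nonvanishing factor $B^{\alpha_{ki}-1-s}$; this leaves the pole locations, orders, and leading Laurent coefficients unchanged, so the conclusion stands.
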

\begin{proof}
Since $F$ is integrable, \eqref{cuspint} implies that $\alpha_{ki}< 1$ for all choices of $k$ and $i$ in \eqref{Xikdef}. Using this, we have
\begin{align*}
R.N.&\left(\int_{\GaH} F(z)\,d\mu(z)\right)=\int_{\scrF_B} F(z)\,d\mu(z)+\sum_{k=1}^{\kappa}\left( \int_{\scrC_{k,B}} F(z)-\Xi_k(y_k)\,d\mu(z)-\widehat{\Xi}_k(B)\right)\\&=\int_{\scrF_B} F(z)\,d\mu(z)+\sum_{k=1}^{\kappa} \int_{\scrC_{k,B}} F(z)\,d\mu(z)-\sum_{k=1}^{\kappa} \int_{\scrC_{k,B}} \Xi_k(y_k)\,d\mu(z)-\sum_{k=1}^{\kappa} \widehat{\Xi}_k(B)
\\&=\int_{\GaH} F(z)\,d\mu(z)+\sum_{k=1}^{\kappa} \widehat{\Xi}_k(B)-\sum_{k=1}^{\kappa} \widehat{\Xi}_k(B).
\end{align*}
\end{proof}

\subsection{Renormalization of integrals of products of Eisenstein series}
Our principal source of renormalizable functions are Eisenstein series; that Eisenstein series are indeed renormalizable follows from entering the bound (see \cite[pp. 295-297]{Hejhal}, \cite[6.20]{Iwaniec})
\begin{equation}\label{Eisexp}
 \left|\sum_{\underset{m\neq 0}{m\in\ZZ}} \psi_{m,k}^{(j)}(s)\sqrt{y_k} K_{s-\frac{1}{2}}(2 \pi |m| y_k) e( m x_k) \right|= O_s(e^{-2 \pi y_k})
\end{equation}
into \eqref{EjkFourier}. The implied constant in this bound is uniform for $y_k\gg 1$ and all $s$ in any compact subset of $\CC\setminus \scrE_j$; this will be of importance later. As previously observed, pointwise multiplication of renormalizable function gives a new renormalizable function. We therefore obtain further examples of renormalizable functions by multiplying Eisenstein series with each other. We now prove a series of results regarding renormalized integrals of products of Eisenstein series, and state our main theorem. 
\begin{lem}\label{maassel}
Let $k_1,k_2\in \lbrace 1,\ldots ,\kappa\rbrace$ and $s_1,s_2\in \CC\setminus\scrE$ and $\overline{s_2}\neq s_1,1-s_1$. Then
\begin{equation}\label{masselrel}
R.N.\left( \int_{\GaH} E_{k_1}(z,s_1)\overline{E_{k_2}(z,s_2)}\,d\mu(z)\right)=0.
\end{equation}
\end{lem}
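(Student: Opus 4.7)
The plan is to relate $R.N.\!\bigl(\int_{\GaH} E_{k_1}(z,s_1)\overline{E_{k_2}(z,s_2)}\,d\mu(z)\bigr)$ to a genuinely convergent $L^2$-inner product of truncated Eisenstein series, and then invoke the classical Maass--Selberg relation to evaluate that inner product.

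First, for $j\in\{1,2\}$ and $B\geq B_0$, I would introduce the truncated Eisenstein series
\[
E_{k_j}^{(B)}(z,s_j):=E_{k_j}(z,s_j)-\sum_{k=1}^{\kappa}\mathbf{1}_{\scrC_{k,B}}(z)\bigl(\delta_{k_j,k}\,y_k^{s_j}+\varphi_{k_j,k}(s_j)\,y_k^{1-s_j}\bigr).
\]
By the exponential decay estimate \eqref{Eisexp}, both $E_{k_j}^{(B)}$ lie in $L^2(\GaH)$. Splitting the inner product over $\scrF_B\cup\bigcup_{k}\scrC_{k,B}$ and performing the $x_k$-integration at the integrand level \emph{before} expanding $(E_{k_1}-\phi_1^{(k)})\overline{(E_{k_2}-\phi_2^{(k)})}$ (with $\phi_j^{(k)}(y)=\delta_{k_j,k}y^{s_j}+\varphi_{k_j,k}(s_j)y^{1-s_j}$) in each cuspidal strip, the two cross-terms and the product of constant terms neatly combine—since $\int_0^1 E_{k_j}(z,s_j)\,dx_k=\phi_j^{(k)}(y_k)$ in $\scrC_{k,B}$—to yield a clean subtraction of $\Xi_k(y):=\phi_1^{(k)}(y)\overline{\phi_2^{(k)}(y)}$. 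Comparing the resulting expression with Definition \ref{Renormdef2} would then give the identity
\[
\langle E_{k_1}^{(B)},E_{k_2}^{(B)}\rangle_{L^2(\GaH)}=R.N.\!\left(\int_{\GaH}E_{k_1}(z,s_1)\overline{E_{k_2}(z,s_2)}\,d\mu(z)\right)+\sum_{k=1}^{\kappa}\widehat{\Xi}_k(B).
\]
The hypothesis $\overline{s_2}\neq s_1,1-s_1$ guarantees that the four exponents $s_1+\overline{s_2}$, $s_1+1-\overline{s_2}$, $1-s_1+\overline{s_2}$, $2-s_1-\overline{s_2}$ that appear in $\Xi_k$ all differ from $1$, so no logarithmic terms are produced in $\widehat{\Xi}_k$.

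Next, I would invoke the classical Maass--Selberg relation (cf.\ \cite[Chapter 6]{Hejhal}): applying Green's theorem to the $\Delta$-eigenfunctions $E_{k_1}(\cdot,s_1)$, $E_{k_2}(\cdot,s_2)$ and collecting the boundary contributions along the horocycles $\{y_k=B\}$ gives
\begin{align*}
\langle E_{k_1}^{(B)},E_{k_2}^{(B)}\rangle=\sum_{k=1}^{\kappa}\biggl[&\frac{\delta_{k_1,k}\delta_{k_2,k}B^{s_1+\overline{s_2}-1}}{s_1+\overline{s_2}-1}+\frac{\delta_{k_1,k}\overline{\varphi_{k_2,k}(s_2)}B^{s_1-\overline{s_2}}}{s_1-\overline{s_2}}\\
&+\frac{\delta_{k_2,k}\varphi_{k_1,k}(s_1)B^{\overline{s_2}-s_1}}{\overline{s_2}-s_1}+\frac{\varphi_{k_1,k}(s_1)\overline{\varphi_{k_2,k}(s_2)}B^{1-s_1-\overline{s_2}}}{1-s_1-\overline{s_2}}\biggr].
\end{align*}
The hypothesis $\overline{s_2}\neq s_1,1-s_1$ ensures no denominator vanishes. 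A direct inspection of the formula for $\widehat{\Xi}_k$ in Definition \ref{Renormdef2}, evaluated for the four exponents of $\Xi_k$ identified above, reveals that this sum is precisely $\sum_{k=1}^{\kappa}\widehat{\Xi}_k(B)$. Substituting back into the identity of the previous paragraph then yields \eqref{masselrel}.

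The main obstacle will be making the first step fully rigorous: naively expanding $(E_{k_1}-\phi_1^{(k)})\overline{(E_{k_2}-\phi_2^{(k)})}$ into four pieces produces integrals such as $\int_{\scrC_{k,B}}\phi_1^{(k)}(y_k)\overline{E_{k_2}(z,s_2)}\,d\mu(z)$ that need not converge absolutely for arbitrary $s_1,s_2$. This forces one to carry out the $x_k$-integration at the integrand level and confirm, via \eqref{Eisexp}, that the net quantity $E_{k_1}\overline{E_{k_2}}-\Xi_k(y_k)$ is $O(y_k^{-A})$ and hence integrable over $\scrC_{k,B}$. Once this book-keeping is handled, what remains is a purely mechanical matching of exponents and coefficients of $B$ between the Maass--Selberg formula and $\sum_{k}\widehat{\Xi}_k(B)$—something that is essentially built into the normalization chosen in Definition \ref{Renormdef2}.
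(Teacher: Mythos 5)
Your strategy---truncated Eisenstein series plus the Maass--Selberg relation, compared against Definition \ref{Renormdef2}---is the same as the paper's. The one genuine difference is in the bookkeeping: you integrate out $x_k$ first, using $\int_0^1 E_{k_j}\,dx_k=\phi_j^{(k)}$ to make the cross-terms vanish \emph{identically}, which gives the exact identity
\[
\langle E_{k_1}^{B},E_{k_2}^{B}\rangle_{L^2(\GaH)}=R.N.\!\left(\int_{\GaH}E_{k_1}(z,s_1)\overline{E_{k_2}(z,s_2)}\,d\mu(z)\right)+\sum_{k=1}^{\kappa}\widehat{\Xi}_k(B)
\]
for each fixed $B\geq B_0$, so that matching the Maass--Selberg formula with $\sum_k\widehat{\Xi}_k(B)$ finishes immediately. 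The paper instead expands the pointwise product $(E_{k_1}-\phi_1)\overline{(E_{k_2}-\phi_2)}$ directly, keeps the two cross-terms, bounds each by $O_{s_1,s_2,\epsilon}(e^{-(2\pi-\epsilon)B})$ via \eqref{Eisexp}, and deduces $R.N.(\cdots)=O(e^{-(2\pi-\epsilon)B})$; it then invokes the $B$-independence of the left-hand side and sends $B\to\infty$. Your version is slightly sharper and avoids the limiting argument, at the cost of having to justify interchanging the $x_k$-integration with the expansion (which you correctly flag, and which is unproblematic since $[0,1]$ is compact and the integrand is continuous). Both arguments are correct; your observation that the cross-terms cancel exactly after fiberwise integration is a nice simplification.
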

\begin{proof}
This is essentially a reformulation of the \emph{Maass-Selberg relations}, cf.\ \cite{Casselman}, \cite[pp.\ 427-429]{Zagier}. We recall the set-up: for $B\geq B_0$, let $E_k^B(z,s)$ be the $\Gamma$-invariant function that is defined through
\begin{equation*}
E_k^B(z,s)= E_k(z,s)-\sum_{i=1}^{\kappa} \mathbbm{1}_{\scrC_{i,B}}(z)\left(\delta_{k,i} y_i^s +\varphi_{k,i}(s) y_i^{1-s} \right)\qquad z\in\scrF
\end{equation*}
($E_k^B(z,s)$ is usually called a \emph{truncated} Eisenstein series). Using \eqref{Eisexp}, we see that $E_k^B(z,s)\in L^p(\GaH)$ for all $p\geq 1$. The Maass-Selberg relations then give an expression for the inner product (in $L^2(\GaH)$) of two truncated Eisenstein series:
\begin{align}\label{MaasSelexp}
\int_{\GaH} E_{k_1}^B(z,s_1)\overline{E_{k_2}^B(z,s_2)}\,d\mu(z)&=\sum_{i=1}^{\kappa} \bigg(\frac{\delta_{k_1, i}\delta_{k_2, i}B^{s_1+\overline{s_2}-1}-\varphi_{k_1,i}(s_1)\overline{\varphi_{k_2,i}(s_2)}B^{1-s_1-\overline{s_2}}}{s_1+\overline{s_2}-1}\\\notag&\qquad\qquad+ \frac{\delta_{k_1, i}\overline{\varphi_{k_2,i}(s_2)}B^{s_1-\overline{s_2}}-\varphi_{k_1,i}(s_1)\delta_{k_2 ,i}B^{\overline{s_2}-s_1}}{s_1-\overline{s_2}}\bigg)\\\notag&= \frac{\delta_{k_1 ,k_2} B^{s_1+\overline{s_2}-1}}{s_1+\overline{s_2}-1}+\frac{\overline{\varphi_{k_2,k_1}(s_2)}B^{s_1-\overline{s_2}}}{s_1-\overline{s_2}}+\frac{\varphi_{k_1,k_2}(s_1)B^{\overline{s_2}-s_1}}{\overline{s_2}-s_1}\\\notag&\qquad\qquad-\left(\sum_{i=1}^{\kappa} \varphi_{k_1,i}(s_1)\overline{\varphi_{k_2,i}(s_2)} \right)\frac{B^{1-s_1-\overline{s_2}}}{s_1+\overline{s_2}-1},
\end{align}
 cf. \cite[Chapter 2.3]{Kubota}. On the other hand, viewing $F(z)=E_{k_1}(z,s_1)\overline{E_{k_2}(z,s_2)}$ as a renormalizable function, from \eqref{EjkFourier} and \eqref{Eisexp}, we have
\begin{align*}
F(z)=&\left(  \delta_{k_1,i} y_i^{s_1} + \varphi_{k_1,i}(s_1) y_{i}^{1-s_1} + O(e^{-2\pi y_i}) \right)\\&\qquad\cdot\left(\overline{ \delta_{k_2,i} y_i^{s_2} + \varphi_{k_2,i}(s_2) y_{i}^{1-s_2} } + O(e^{-2\pi y_i}) \right)\qquad \mathrm{as}\, y_i\rightarrow\infty\\=& \delta_{k_1,i}\delta_{k_2,i}y_i^{s_1+\overline{s_2}}+\delta_{k_2,i}\varphi_{k_1,i}(s_1)y_i^{1+\overline{s_2}-s_1}+\delta_{k_1,i}\overline{\varphi_{k_2,i}(s_2)}y_i^{1+s_1-\overline{s_2}}\\&\qquad\qquad\qquad\qquad\qquad+\varphi_{k_1,i}(s_1)\overline{\varphi_{k_2,i}(s_2)}y_i^{2-s_1-\overline{s_2}}+ O(y_i^{-A}) \qquad (\forall A)\,\mathrm{as}\, y_i\rightarrow\infty,
\end{align*}
 giving
 \begin{align*}
 \Xi_i(y) =&\delta_{k_1,i}\delta_{k_2,i}y^{s_1+\overline{s_2}}+\delta_{k_2,i}\varphi_{k_1,i}(s_1)y^{1+\overline{s_2}-s_1}\\&\qquad\qquad+\delta_{k_1,i}\overline{\varphi_{k_2,i}(s_2)}y^{1+s_1-\overline{s_2}}+\varphi_{k_1,i}(s_1)\overline{\varphi_{k_2,i}(s_2)}y^{2-s_1-\overline{s_2}}.
 \end{align*}
Using \eqref{Xihatdef} and \eqref{MaasSelexp}, we observe that
\begin{equation*}
\sum_{i=1}^{\kappa} \widehat{\Xi}_i(B)=\int_{\GaH} E_{k_1}^B(z,s_1)\overline{E_{k_2}^B(z,s_2)}\,d\mu(z),
\end{equation*}
hence
\begin{align*}
0&=\int_{\GaH} E_{k_1}^B(z,s_1)\overline{E_{k_2}^B(z,s_2)}\,d\mu(z)-\sum_{i=1}^{\kappa} \widehat{\Xi}_i(B)\\&=\int_{\scrF} \left( E_{k_1}(z,s_1)-\sum_{i=1}^{\kappa} \mathds{1}_{\scrC_{i,B}}(z)\left(\delta_{k_1,i} y_i^{s_1} +\varphi_{k_1,i}(s_1) y_i^{1-s_1} \right)\right)\\&\qquad\qquad\times \left( \overline{E_{k_2}(z,s_2)}-\sum_{i=1}^{\kappa} \mathds{1}_{\scrC_{i,B}}(z)\left(\delta_{k_2,i} y_i^{\overline{s_2}} +\overline{\varphi_{k,i}(s_2)} y_i^{1-\overline{s_2}} \right)\right)\,d\mu(z)-\sum_{i=1}^{\kappa} \widehat{\Xi}_i(B)\\&=\int_{\scrF_B}F(z)\,d\mu(z) - \sum_{i=1}^{\kappa} \widehat{\Xi}_i(B)+\sum_{i=1}^{\kappa} \int_{\scrC_{i,B}} \left(E_{k_1}(z,s_1)-\delta_{k_1,i} y_i^{s_1}-\varphi_{k_1,i}(s_1) y_i^{1-s_1} \right)\\&\qquad\qquad\qquad\qquad\qquad\qquad\qquad\qquad\qquad\times \left(\overline{E_{k_2}(z,s_2)}-\delta_{k_2,i} y_i^{\overline{s_2}} -\overline{\varphi_{k_2,i}}(s_2) y_i^{1-\overline{s_2}} \right)\,d\mu(z)\\&=\int_{\scrF_B}F(z)\,d\mu(z) - \sum_{i=1}^{\kappa} \widehat{\Xi}_i(B)+\sum_{i=1}^{\kappa} \int_{\scrC_{i,B}} \bigg(F(z)-\Xi_i(y_i)\\&\qquad\qquad\qquad-\left( \delta_{k_1,i} y_i^{s_1}-\varphi_{k_1,i}(s_1) y_i^{1-s_1}\right)\left(\overline{E_{k_2}(z,s_2)}-\delta_{k_2,i} y_i^{\overline{s_2}} -\overline{\varphi_{k_2,i}}(s_2) y_i^{1-\overline{s_2}} \right)\\&\qquad\qquad\qquad-\left( \delta_{k_2,i} y_i^{\overline{s_2}} -\overline{\varphi_{k_2,i}}(s_2) y_i^{1-\overline{s_2}} \right)\left(E_{k_1}(z,s_1)-\delta_{k_1,i} y_i^{s_1}-\varphi_{k_1,i}(s_1) y_i^{1-s_1} \right)\bigg)\,d\mu(z).
\end{align*}
Once again using \eqref{Eisexp}, we have as $ y_i\rightarrow \infty$,
\begin{align}\label{prodbdd}
&\left( \delta_{k_1,i} y_i^{s_1}-\varphi_{k_1,i}(s_1) y_i^{1-s_1}\right)\left(\overline{E_{k_2}(z,s_2)}-\delta_{k_2,i} y_i^{\overline{s_2}} -\overline{\varphi_{k_2,i}}(s_2) y_i^{1-\overline{s_2}} \right)=O_{s_1,s_2,\epsilon} ( e^{-2(\pi-\epsilon) y_i})\\\notag&\left( \delta_{k_2,i} y_i^{\overline{s_2}} -\overline{\varphi_{k_2,i}}(s_2) y_i^{1-\overline{s_2}} \right)\left(E_{k_1}(z,s_1)-\delta_{k_1,i} y_i^{s_1}-\varphi_{k_1,i}(s_1) y_i^{1-s_1} \right)=O_{s_1,s_2,\epsilon} ( e^{-2(\pi-\epsilon) y_i}),
\end{align}
hence
\begin{align*}
0=&\int_{\scrF_B}F(z)\,d\mu(z)+\sum_{i=1}^{\kappa} \int_{\scrC_{i,B}} \big(F(z)-\Xi_i(y_i)\big)\,d\mu(z) - \sum_{i=1}^{\kappa} \widehat{\Xi}_i(B)\\&-\sum_{i=1}^{\kappa} \int_{\scrC_{i,B}}\left( \delta_{k_1,i} y_i^{s_1}-\varphi_{k_1,i}(s_1) y_i^{1-s_1}\right)\left(\overline{E_{k_2}(z,s_2)}-\delta_{k_2,i} y_i^{\overline{s_2}} -\overline{\varphi_{k_2,i}}(s_2) y_i^{1-\overline{s_2}} \right)\,d\mu(z) \\&-\sum_{i=1}^{\kappa} \int_{\scrC_{i,B}}\left( \delta_{k_2,i} y_i^{\overline{s_2}} -\overline{\varphi_{k_2,i}}(s_2) y_i^{1-\overline{s_2}} \right)\left(E_{k_1}(z,s_1)-\delta_{k_1,i} y_i^{s_1}-\varphi_{k_1,i}(s_1) y_i^{1-s_1} \right)\,d\mu(z).
\end{align*}
Using \eqref{prodbdd} in lines two and three of the previous expression yields
\begin{equation*}
R.N.\left(\int_{\GaH} F(z)\,d\mu(z) \right)=O_{s_1,s_2,\epsilon}(e^{-(2\pi-\epsilon) B});
\end{equation*}
since the left-hand side of this expression does not depend on $B$, letting $B\rightarrow\infty$ concludes the proof.
\end{proof}
We will now use Lemma \ref{maassel} to show how one can compute certain renormalized integrals over $\GaH$ by way of ordinary integrals.
We first make an auxiliary definition:
\begin{defn}\label{Psirsdef}
Given $j,\,k\in\lbrace 1,\ldots,\kappa\rbrace$ and $r,\,s\in \CC$, let $\Phi_{j,k}^{r,s}(z)$ be defined by
\begin{align}\label{Psirsdefeq}
\Phi_{j,k}^{r,s}(z):&= E_{j}(z,r)E_{k}(z,s)-\delta_{j,k}E_{k}(z,r+s)\\\notag&\qquad-\varphi_{k,j}(s)E_{j}(z,r+1-s)-\varphi_{j,k}(r)E_{k}(z,1-r+s)\\\notag&\qquad\qquad-\sum_{i=1}^{\kappa}\varphi_{j,i}(r)\varphi_{k,i}(s)E_i(z,2-r-s).
\end{align}
\end{defn}
Note that for fixed $r$ and $s$ such that none of the Eisenstein series in \eqref{Psirsdefeq} has a pole, $\Phi_{j,k}^{r,s}(z)$ is an automorphic function in $z$. Furthermore, fixing $z$ and one of $r,s$ yields a meromorphic function in the remaining parameter ($\Phi_{j,k}^{r,s}(z)$ being the product and linear combination of meromorphic functions). As we shall see, $\Phi_{j,k}^{r,s}(z)$ is holomorphic for more values of $r$ and $s$ than \eqref{Psirsdefeq} would initially suggest; this is due to a cancellation of poles of Eisenstein series occurring in \eqref{Psirsdefeq}. 

\begin{prop}\label{Phiprops}
The map $(r,s)\mapsto\Phi_{j,k}^{r,s}(z)$ is holomorphic for all $j,\,k\in\lbrace 1,\ldots,\kappa\rbrace$ and $(r,s)\in (\frac{1}{2}+i \RR)\times(\frac{1}{2}+i \RR)$. Furthermore, $\Phi_{j,k}^{r,s}\in L^p(\GaH)$ for all $p<\infty$ for all such $r,s$, and
\begin{equation}\label{tripleprodint}
R.N.\left(\int_{\GaH} E_j(z,r)E_k(z,s)\overline{E_l(z, \sfrac{1}{2}+it)}\,d\mu(z) \right)=\int_{\GaH} \Phi_{j,k}^{r,s}(z)\overline{E_l( z,\sfrac{1}{2}+it)}\,d\mu(z)
\end{equation}
for all $l\in\lbrace 1,\ldots,\kappa\rbrace$ and $t\in \RR$.
\end{prop}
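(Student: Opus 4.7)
The plan is to establish three claims: (a) the apparent singularities of $(r,s)\mapsto\Phi_{j,k}^{r,s}(z)$ along $\{r+s=1\}$ and $\{r=s\}$ cancel, so that the map is holomorphic in a complex neighbourhood of $(\tfrac12+i\RR)^2$; (b) each $\Phi_{j,k}^{r,s}$ is in fact bounded on $\GaH$, and therefore lies in $L^p$ for every $p<\infty$; (c) the identity \eqref{tripleprodint} then follows by combining (b) with Lemmas \ref{maassel} and \ref{intlem}.

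For (a), observe that on $\tfrac12+i\RR$ every scattering coefficient $\varphi_{j,k}(\cdot)$ is holomorphic, so the only possible singularities in \eqref{Psirsdefeq} come from the shifted Eisenstein series $E_k(z,r+s)$, $E_j(z,r+1-s)$, $E_k(z,1-r+s)$, and $E_{i'}(z,2-r-s)$, which can develop a pole only when the argument equals $1$, i.e., on $\{r+s=1\}$ or $\{r=s\}$. Using Lemma \ref{Eis1pole} to write $E_i(z,1+w)=\mu(\GaH)^{-1}/w+\widetilde{E}_i(z,1+w)$, and grouping the two summands that share each bad variety, I would compute the combined principal parts to equal
\begin{align*}
\frac{\mu(\GaH)^{-1}}{w}\Bigl[\delta_{j,k}-\sum_{i'}\varphi_{j,i'}(r)\varphi_{k,i'}(s)\Bigr] &\qquad\text{near }\{r+s=1\},\ w=r+s-1,\\
\frac{\mu(\GaH)^{-1}}{w}\bigl[\varphi_{k,j}(s)-\varphi_{j,k}(r)\bigr] &\qquad\text{near }\{r=s\},\ w=r-s.
\end{align*}
On $\{r+s=1\}$ the first bracket vanishes by the scattering-matrix functional equation $\Phi(s)\Phi(1-s)=I$ combined with the symmetry $\varphi_{j,k}=\varphi_{k,j}$ (both standard for any cofinite Fuchsian group), and on $\{r=s\}$ the second bracket vanishes by the symmetry alone. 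Riemann's removable-singularity theorem then extends $\Phi_{j,k}^{r,s}(z)$ holomorphically across both varieties, yielding holomorphicity on a neighbourhood of $(\tfrac12+i\RR)^2$.

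For (b), I would substitute the Fourier expansion \eqref{EjkFourier} into each summand of \eqref{Psirsdefeq} and collect the zeroth Fourier modes at each cusp $\eta_i$; these contribute terms of the form $y_i^w$ with $w\in\{r+s,\,r+1-s,\,1-r+s,\,2-r-s,\,1-r-s,\,r+s-1,\,r-s,\,s-r\}$. For $r,s\in\tfrac12+i\RR$ the first four exponents have $\Re w=1$ (the potentially growing terms) and the last four have $\Re w=0$. The definition of $\Phi_{j,k}^{r,s}$ is engineered precisely so that the four growing contributions cancel by an elementary bookkeeping relying only on identities such as $\delta_{j,i}\delta_{k,i}=\delta_{j,k}\delta_{k,i}$; what remains is a sum of bounded zeroth modes together with the exponentially decaying tails supplied uniformly by \eqref{Eisexp}. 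Hence $\Phi_{j,k}^{r,s}$ is bounded near every cusp, and continuity on the compact set $\scrF_B$ yields $\Phi_{j,k}^{r,s}\in L^\infty(\GaH)\subset L^p(\GaH)$ for all $p<\infty$.

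For (c), at a generic $(r,s)$ off the bad varieties, each of the four subtracted Eisenstein terms in \eqref{Psirsdefeq} has argument of real part $1$, hence distinct from both $\tfrac12+it$ and $\tfrac12-it$, so Lemma \ref{maassel} makes the renormalized integral of that term against $\overline{E_l(z,\tfrac12+it)}$ vanish; consequently the left-hand side of \eqref{tripleprodint} reduces to $R.N.\!\left(\int_{\GaH}\Phi_{j,k}^{r,s}(z)\overline{E_l(z,\tfrac12+it)}\,d\mu(z)\right)$. Boundedness of $\Phi_{j,k}^{r,s}$ combined with the $O(y_l^{1/2})$ growth of $E_l(z,\tfrac12+it)$ makes the product integrable over $\GaH$, and Lemma \ref{intlem} then turns the renormalized integral into the ordinary integral on the right of \eqref{tripleprodint}; by holomorphic continuation in $(r,s)$, the identity persists on all of $(\tfrac12+i\RR)^2$. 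The hard part of the whole argument is step (a), where one must see that the combinatorial structure of \eqref{Psirsdefeq} is exactly adapted both to the scattering-matrix functional equation and to its symmetry in order to produce the necessary cancellations—particularly delicate at the point $r=s=\tfrac12$ where both singular varieties intersect.
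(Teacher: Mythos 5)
Your overall route coincides with the paper's: use the scattering functional equation and symmetry to cancel the poles in \eqref{Psirsdefeq}, deduce good growth from the Fourier expansions, and then combine Lemmas \ref{maassel} and \ref{intlem} to pass from the renormalized to the ordinary integral. Step (a) is correct in substance (the computation of the two principal parts and their vanishing by $\Phi(r)\Phi(1-r)=I$ and $\varphi_{j,k}=\varphi_{k,j}$ matches the paper, up to sign conventions).

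There is, however, a genuine error in step (b). You claim that after the cancellation of the $\Re=1$ exponents, $\Phi_{j,k}^{r,s}$ is \emph{bounded} for all $(r,s)\in(\tfrac12+i\RR)^2$. That is true off the exceptional locus $\{s=r\}\cup\{s=1-r\}$, but it fails on it. On $\{s=1-r\}$ (say), the two surviving constant-mode contributions carry coefficients $\varphi_{k,l}(r+s)$ and $\sum_i\varphi_{j,i}(r)\varphi_{k,i}(s)\varphi_{i,l}(2-r-s)$, both of which have simple poles as $r+s\to 1$. When you pass to the analytic continuation, those two poles cancel against each other, but the difference of the two $y_l$-powers $y_l^{1-r-s}$ and $y_l^{r+s-1}$ produces, in the limit $w=r+s-1\to 0$, a term proportional to $w^{-1}(y_l^w-y_l^{-w})\to 2\log y_l$. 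Concretely, for $j=k$ the constant mode of $\Phi_{j,j}^{r,1-r}$ at the cusp $\eta_l$ contains the term $2\mu(\GaH)^{-1}\log y_l$, so $\Phi_{j,j}^{r,1-r}\notin L^\infty(\GaH)$. This is exactly why the paper's proof only asserts $\Phi_{j,k}^{r,s}\in L^p$ for $p<\infty$ and establishes it by first getting the $L^\infty$ bound away from the exceptional locus (formula \eqref{PhiKbdd}) and then invoking the maximum modulus principle on a small disc $D_{r,\xi}$ to transfer the bound inward, yielding the weaker estimate $|\Phi_{j,k}^{r,s}(z)|\ll_{r,\xi}y_l^{\xi}$ for arbitrary $\xi>0$. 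Your claim of $L^\infty$ is thus overstated; you should replace it by this $y_l^{\xi}$-type bound or by the $\log y_l$ bound, either of which still gives $L^p$ for $p<\infty$.

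Step (c) also elides a non-trivial point. Your ``by holomorphic continuation in $(r,s)$, the identity persists'' requires knowing that both sides of \eqref{tripleprodint} are holomorphic (or at least continuous) across $\{s=r\}\cup\{s=1-r\}$. For the right-hand side this follows from dominated convergence using the $y_l^{\xi}$-bound above. For the left-hand side it is more delicate: the renormalized integral involves a term $\widehat\Xi_i(B)$ whose denominators are $\alpha_{ki}-1$, where the $\alpha_{ki}$ are the exponents in the constant mode of $E_jE_k\overline{E_l}$. One must observe that every such exponent has real part $\tfrac32$ (because the three factors contribute $\tfrac12$ each), so $\alpha_{ki}\neq 1$ and $\widehat\Xi_i(B)$ has no pole at the exceptional locus; only then does dominated convergence give continuity of the renormalized integral. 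The paper carries this out explicitly; your argument should not skip it, since without it the left-hand side is not a priori known to extend across the exceptional locus.
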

\begin{proof}
We choose an arbitrary $T>0$, and prove the statement for all $r,s\in \frac{1}{2}+i(-T,T)$. Since the poles of all the Eisenstein series and all $\varphi_{j,k}(s)$ with real part greater or equal to $\frac{1}{2}$ are finitely many and lie in the half-open interval $(\frac{1}{2},1]$, there exists $\delta=\delta_T>0$ such that all the terms in \eqref{Psirsdefeq} are well-defined (i.e.\ finite) for all $(r,s)\in\big([\frac{1}{2}-2\delta,\frac{1}{2}+2\delta]+i[-T-\delta,T+\delta]\big) \times \big([\frac{1}{2}-2\delta,\frac{1}{2}+2\delta]+i[-T-\delta,T+\delta]\big)$ apart from at $(r,s)=(r,r)$ or $(r,s)=(r,1-r)$. Being jointly meromorphic in $(r,s)$, the function $(r,s)\mapsto \Phi_{j,k}^{r,s}(z)$ is therefore holomorphic at all $(r,s)\in \big( (\frac{1}{2}-\delta,\frac{1}{2}+\delta)+i(-T,T)\big)\times  \big( (\frac{1}{2}-\delta,\frac{1}{2}+\delta)+i(-T,T)\big)$ except for possibly the previously mentioned exceptions. We thus fix $r\in (\frac{1}{2}-\delta,\frac{1}{2}+\delta)+i(-T,T)$ and consider $s$ in a neighbourhood of $r$ or $1-r$. Since $s\mapsto \Phi_{j,k}^{r,s}(z)$ is meromorphic in $s$, in order to show that $s\mapsto \Phi_{j,k}^{r,s}(z)$ is in fact holomorphic at $s=r$ or $s=1-r$, it suffices to show that $s\mapsto \Phi_{j,k}^{r,s}(z)$ remains \emph{bounded} as $s\rightarrow r$ or $1-r$. By symmetry, the same will hold for $r\mapsto \Phi_{j,k}^{r,s}(z)$, allowing us to use Hartogs' theorem to ascertain that $(r,s)\mapsto \Phi_{j,k}^{r,s}(z)$ is (jointly) holomorphic on $ \big( \frac{1}{2}+i(-T,T)\big)\times  \big(\frac{1}{2}+i(-T,T)\big)$.

There are three cases to consider: \textit{i)} $s=1-r$, $r\neq \frac{1}{2}$ \textit{ii)} $s=r$, $r\neq \frac{1}{2}$, \textit{iii)} $r=\frac{1}{2}$. We will consider only case \textit{i)}, which is of most interest to us (the other cases are dealt with in a similar fashion). Letting $s=1-r+w$, with $w\neq 0$ in a small neighbourhood of $0$, we have
\begin{align*}
\Phi_{j,k}^{r,1-r+w}(z)&= E_{j}(z,r)E_{k}(z,1-r+w)-\delta_{j,k}E_{k}(z,1+w)\\\notag&\qquad-\varphi_{k,j}(1-r+w)E_{j}(z,2r-w)-\varphi_{j,k}(r)E_{k}(z,2-2r+w)\\\notag&\qquad\qquad-\sum_{i=1}^{\kappa}\varphi_{j,i}(r)\varphi_{k,i}(1-r+w)E_i(z,1-w).
\end{align*}
We now recall (see \cite{Borel,Hejhal,Iwaniec}) that the \emph{scattering matrix} $\big( \varphi_{i,l}(r)\big)_{1\leq i,l,\leq\kappa}$ satisfies 
\begin{enumerate}
\item$\big( \varphi_{i,l}(r)\big)_{i,l}=\big( \varphi_{l,i}(r)\big)_{i,l}$ (i.e.\ it is symmetric).
\item We have $\big( \varphi_{i,l}(r)\big)_{i,l}\big( \varphi_{i,l}(1-r)\big)_{i,l}=I$ for all $r,1-r\in\CC\setminus\scrE$ ($I$ being the $\kappa\times\kappa$ identity matrix).
\item $r\mapsto\varphi_{i,l}(r)$ is holomorphic on $[\frac{1}{2}-2\delta,\frac{1}{2}+2\delta]+i[-T-\delta,T+\delta]$. 
\end{enumerate}
These facts give
\begin{equation*}
\sum_{i=1}^{\kappa} \varphi_{j,i}(r)\varphi_{k,i}(1-r+w)=\sum_{i=1}^{\kappa} \varphi_{j,i}(r)\varphi_{i,k}(1-r+w)=\delta_{j,k}+O(|w|).
\end{equation*}
Using this together with Lemma \ref{Eis1pole}, we obtain
\begin{align*}
\Phi_{j,k}^{r,1-r+w}(z)&= E_{j}(z,r)E_{k}(z,1-r+w)-\delta_{j,k}\left( \frac{\mu(\GaH)^{-1}}{w}+\widetilde{E}_{k}(z,1+w)\right)\\\notag&\qquad-\varphi_{k,j}(1-r+w)E_{j}(z,2r-w)-\varphi_{j,k}(r)E_{k}(z,2-2r+w)\\\notag&\qquad\qquad-\sum_{i=1}^{\kappa}\varphi_{j,i}(r)\varphi_{k,i}(1-r+w)\left( \frac{\mu(\GaH)^{-1}}{-w}+\widetilde{E}_{i}(z,1-w)\right)\\&= E_{j}(z,r)E_{k}(z,1-r+w)-\delta_{j,k}\widetilde{E}_{k}(z,1+w)\\\notag&\qquad-\varphi_{k,j}(1-r+w)E_{j}(z,2r-w)-\varphi_{j,k}(r)E_{k}(z,2-2r+w)\\\notag&\qquad\qquad-\sum_{i=1}^{\kappa}\varphi_{j,i}(r)\varphi_{k,i}(1-r+w)\widetilde{E}_{i}(z,1-w)+ O(|w|)\frac{\mu(\GaG)^{-1}}{w}.
\end{align*}
Since $r\in \big(\frac{1}{2}-\delta,\frac{1}{2}+\delta)+i(-T,T)\big)\setminus\lbrace\frac{1}{2}\rbrace$, all terms in this expression remain bounded as $w\rightarrow 0$, as desired.

In order to prove that $\Phi_{j,k}^{r,s}\in L^p(\GaH)$ for all $r,s \in \frac{1}{2}+i(T,T)$ and $p<\infty$, we first assume that $r\in \frac{1}{2}+i(-T,T)\big)$, $s\in (\frac{1}{2}-\delta,\frac{1}{2}+\delta)+i(-T-\delta,T+\delta)$, $s\neq r,1-r$, and use the Fourier expansion \eqref{EjkFourier} in a cusp $\eta_l$ together with the bound \eqref{Eisexp} in the definition \eqref{Psirsdefeq} of $\Phi_{j,k}^{r,s}$, giving
\begin{align*}
\Phi_{j,k}^{r,s}(z)= -\delta_{j,k} \varphi_{k,l}(r+s)y_l^{1-r-s}&-\varphi_{k,j}(s)\varphi_{j,l}y_l^{s-r}-\varphi_{j,k}(r)\varphi_{k,l}y_l^{r-s}\\&-\sum_{i=1}^{\kappa}\varphi_{j,i}(r)\varphi_{k,i}(s)\varphi_{i,l}(2-r-s) y_l^{r+s-1}+O_{r,s,\epsilon}(e^{-2(\pi-\epsilon)y_l}),
\end{align*}
valid for all $y_l\gg 1$. From this, we see that if $s,r\in \frac{1}{2}+i\RR$, $s\neq r,1-r$, then $\Phi_{j,k}^{r,s}(z)=O_{r,s}(1)$, hence $\Phi_{j,k}^{r,s}\in L^{\infty}(\GaH)$. Moreover, since the implied constant in \eqref{Eisexp} is uniform over compact subsets of $\CC\setminus\scrE$, fixing $r\in \frac{1}{2}+i(-T,T)$ and letting $K\subset [\frac{1}{2}-2\delta,\frac{1}{2}+2\delta]+i[-T-\delta,T+\delta]$ be a compact subset such that $r,1-r\not\in K$, we have
\begin{align}\label{PhiKbdd}
\Phi_{j,k}^{r,s}(z)= -\delta_{j,k} \varphi_{k,l}(r+s)y_l^{1-r-s}&-\varphi_{k,j}(s)\varphi_{j,l}y_l^{s-r}-\varphi_{j,k}(r)\varphi_{k,l}y_l^{r-s}\\\notag&-\sum_{i=1}^{\kappa}\varphi_{j,i}(r)\varphi_{k,i}(s)\varphi_{i,l}(2-r-s) y_l^{r+s-1}+O_{r,K,\epsilon}(e^{-2(\pi-\epsilon)y_l})
\end{align}
for all $s\in K$ and $y_l\gg 1$. For $\delta>\xi>0$, let $D_{r,\xi}\subset \CC$ denote the following set: $\lbrace \zeta\in \CC\,:\, |r-\zeta|<\xi\mathrm{\;or\;}|1-r-\zeta|<\xi\rbrace$. Having proved that $ (r,s)\mapsto\Phi_{j,k}^{r,s}(z)$ is holomorphic on $(\frac{1}{2}+i\RR)\times(\frac{1}{2}+i\RR)$, $s\mapsto  \Phi_{j,k}^{r,s}(z)$ is holomorphic on $D_{r,\xi}$. By the maximum modulus principle, $|\Phi_{j,k}^{r,s}(z)|\leq\max_{\zeta\in\partial D_{r,\xi}} |\Phi_{j,k}^{r,\zeta}(z)|$, which, after applying \eqref{PhiKbdd} with $K=\partial D_{r,\xi}$, gives 
\begin{equation}\label{PhiDbd}
|\Phi_{j,k}^{r,s}(z)|\ll_{r,\xi} y_l^{\xi}\qquad \forall s\in D_{r,\xi},\,y_l\gg 1, \xi>0.
\end{equation}
In particular, this shows that $\Phi_{j,k}^{r,r}$, $\Phi_{j,k}^{r,1-r}\in L^p(\GaH)$ for all $r\in\frac{1}{2}+i\RR$ and $p<\infty$. 

It remains to prove \eqref{tripleprodint}. We fix $r\in\frac{1}{2}+i \RR$, and view both sides of \eqref{tripleprodint} as functions of $s$. Considering first the case $s\neq r,\,1-r$, we let $\alpha_1=r+s$, $\alpha_2=r+1-s$, $\alpha_3=1-r+s$, and $\alpha_4=2-r-s$. Since $\Re(\alpha_i)=1$, $\overline{\frac{1}{2}+it}\neq \alpha_i$ or $1-\alpha_i$ for all $i$. Furthermore, $\alpha_i\neq 1$, hence none of the $\alpha_i$s are poles of the Eisenstein series. This allows us to use Lemma \ref{maassel}:
\begin{equation*}
R.N\left( \int_{\GaH} E_m(z,\alpha_i)\overline{E_{l}(z,\sfrac{1}{2}+it)}\,d\mu(z)\right)=0\qquad \forall i\in\lbrace 1,\ldots,4\rbrace, \,l,m\in\lbrace 1,\ldots,\kappa\rbrace.
\end{equation*} 
The linearity of renormalized integrals then gives
\begin{align*}
&R.N.\left( \int_{\GaH} E_{j}(z,r)E_{k}(z,s)\overline{E_{l}(z,\sfrac{1}{2}+it)}\,d\mu(z)\right)\\&\qquad\qquad\qquad\qquad\qquad=R.N.\left(\int_{\GaH} \Phi_{j,k}^{r,s}(z)\overline{E_{l}(z,\sfrac{1}{2}+it)}\,d\mu(z)\right),
\end{align*} 
and so by Lemma \ref{intlem}, all that remains is to show that $  \Phi_{j,k}^{r,s}(z)\overline{E_{j}(z,\sfrac{1}{2}+it)}$ is integrable. This is indeed the case: $\overline{E_{j}(z,\sfrac{1}{2}+it)}\in L^{2-\epsilon}(\GaH)$ for all $\epsilon>0$, and we have previously shown that $\Phi_{j,k}^{r,s}\in L^{\infty}(\GaG)$, so their product is in $L^1(\GaH)$. We will now show that both sides of \eqref{tripleprodint} are continuous with respect to $s$ at $s=r$ and $s=1-r$, and hence also agree at these points.

Starting with the right-hand side of \eqref{tripleprodint}, and letting $s=r$ or $1-r$, we have 
\begin{equation*}
\Phi_{j,k}^{r,s}(z)\overline{E_{j}(z,\sfrac{1}{2}+it)}=\lim_{w\rightarrow s}\Phi_{j,k}^{r,w}(z)\overline{E_{j}(z,\sfrac{1}{2}+it)}.
\end{equation*}
Recalling the bound \eqref{PhiDbd}, we let $w\in D_{r,\xi}$, where (e.g.) $\xi < \frac{1}{3}$, giving
\begin{equation*}
|\Phi_{j,k}^{r,w}(z)\overline{E_{j}(z,\sfrac{1}{2}+it)}|\ll_{r,t} y_l^{\frac{1}{3}} y_l^{\frac{1}{2}}\qquad \forall z\in \scrF_B\cup \scrC_{l,B};
\end{equation*}
this allows is to use Lebesgue's dominated convergence theorem to conclude that the left-hand side of \eqref{tripleprodint} is indeed continuous with respect to $s$ at $s=r$ and $s=1-r$.

Turning our attention to the left-hand side of \eqref{tripleprodint}, again we let $w\rightarrow s$, where $s=r$ or $1-r$. From the definition of renormalized integrals \eqref{RNintdef}, we have
\begin{align}\label{rntrip}
\lim_{w\rightarrow s} R.N.&\left( \int_{\GaH} E_{j}(z,r)E_{k}(z,w)\overline{E_{l}(z,\sfrac{1}{2}+it)}\,d\mu(z)\right)\\\notag&\qquad\qquad=\lim_{w\rightarrow s} \int_{\scrF_B}  E_{j}(z,r)E_{k}(z,w)\overline{E_{l}(z,\sfrac{1}{2}+it)}\,d\mu(z)\\\notag&\qquad\qquad\quad+\sum_{i=1}^{\kappa} \lim_{w\rightarrow s} \int_{\scrC_{i,B}} \left(E_{j}(z,r)E_{k}(z,w)\overline{E_{l}(z,\sfrac{1}{2}+it)}-\Xi_i(y_i)\right)\,d\mu(z)\\\notag&\qquad\qquad\qquad-\sum_{i=1}^{\kappa}\lim_{w\rightarrow s}  \widehat{\Xi}_i(B),
\end{align}
where
\begin{equation*}
\Xi_i(y)=(\delta_{j,i}y^{r}+\varphi_{j,i}(r) y^{1-r})(\delta_{k,i}y^{w}+\varphi_{k,i}(w) y^{1-w})(\delta_{l,i}y^{\frac{1}{2}-i t}+\varphi_{l,i}(\sfrac{1}{2}-it) y^{\frac{1}{2}+i t}).
\end{equation*}
The uniformity of the bound \eqref{Eisexp} (in $s$) over compact subsets of $\CC$ again allows us to use the dominated convergence theorem to exchange the limits with the integrals in rows two and three of \eqref{rntrip}. One sees from the definition \eqref{Xihatdef} that $w \mapsto \widehat{\Xi}_i(B)$ is continuous (in particular, note that all the powers of $y$ in $\Xi_i(y)$ have real part $\frac{3}{2}$), completing the proof.
\end{proof}
We now state our main result:
\begin{thm}\label{thm2} For $r,s\in \frac{1}{2}+i\RR$ and $j,k,l\in\lbrace 1,\ldots,\kappa\rbrace$, 
\begin{equation*}
\int_0^T \left| R.N.\left(\int_{\GaH} E_j(z,r)E_k(z,s)\overline{E_l(z, \sfrac{1}{2}+it)}\,d\mu(z) \right)\right|^2e^{\pi t}\,dt \ll_{r,s,\epsilon}T^{4+\epsilon}\qquad \forall T>1.
\end{equation*}
\end{thm}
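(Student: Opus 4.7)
By Proposition \ref{Phiprops}, the renormalized integral inside the statement equals the honest $L^2(\GaH)$ pairing
\[
I_l(r,s,t):=\langle\Phi_{j,k}^{r,s},E_l(\cdot,\sfrac{1}{2}+it)\rangle,
\]
with $\Phi_{j,k}^{r,s}\in L^p(\GaH)$ for every $p<\infty$. Thus the task reduces to the mean-square estimate
\[
\int_0^T|I_l(r,s,t)|^2\,e^{\pi t}\,dt\ll_{r,s,\epsilon}T^{4+\epsilon}.
\]
A direct application of Plancherel to the Roelcke-Selberg expansion of $\Phi_{j,k}^{r,s}$ already gives the unweighted estimate $\int_0^T|I_l|^2\,dt\ll_{r,s}1$; the entire difficulty is therefore to absorb the exponential weight $e^{\pi t}$ and replace it by the polynomial factor $T^{4+\epsilon}$, i.e.\ to exhibit an effective decay $|I_l(r,s,t)|^2\lesssim t^{3+\epsilon}e^{-\pi t}$ on average.

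My approach, following the Bernstein-Reznikov-style outline of the introduction, is to lift to $G=\SL(2,\RR)$ (with $\Gamma$ identified with its preimage under $G\to\PSL(2,\RR)$) and to realize each Eisenstein series $E_l(\cdot,u)$ as $\mathrm{Eis}_l(\xi_0^{(u)})$, where $\mathrm{Eis}_l\colon V_u\to C^\infty(\GaG)$ is the intertwining operator from the principal series $V_u$ and $\xi_0^{(u)}$ is the normalized spherical vector. Under this dictionary, $I_l(r,s,t)$ is the value of a $G$-continuous linear functional on $V_{1/2+it}$ given by pairing $\mathrm{Eis}_l(v)$ against $\Phi_{j,k}^{r,s}\in L^2(\GaH)$. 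I would then construct, for each $T\geq 1$, an auxiliary function $F_T\in L^2(\GaG)$ whose direct-integral decomposition into irreducible $G$-representations has continuous-spectrum component, at parameter $1/2+it$ and cusp $\eta_l$, equal to $I_l(r,s,t)$ multiplied by an explicit factor of size $\sim e^{\pi t/2}$ on $t\in[0,T]$. Such an $F_T$ is obtained by replacing the spherical vector $\xi_0^{(1/2+it)}$ by a non-$K$-fixed, $T$-dependent test vector whose inner product with $\xi_0^{(1/2+it)}$ has the desired Stirling/Gamma-function asymptotics; Plancherel on $L^2(\GaG)$ then yields
\[
\int_0^T|I_l(r,s,t)|^2\,e^{\pi t}\,dt\ll\|F_T\|_{L^2(\GaG)}^2.
\]
It remains to bound $\|F_T\|_{L^2(\GaG)}^2\ll T^{4+\epsilon}$, which is where the analytic continuation of $u\mapsto\mathrm{Eis}_l$ off the critical line (in the style of \cite{Bern}, paralleling the holomorphy argument already used in the proof of Proposition \ref{Phiprops}) is applied together with a Phragm\'en-Lindel\"of-type interpolation in the auxiliary parameter.

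\textbf{Main obstacle.} The hard part is the simultaneous construction of $F_T$ and the norm bound $\|F_T\|_2^2\ll T^{4+\epsilon}$. Unlike the cuspidal setting of \cite{Bern}, the function $\Phi_{j,k}^{r,s}$ is not a tensor-product vector but a linear combination (see \eqref{Psirsdefeq}) carefully engineered so that its leading cuspidal Fourier terms cancel, and this cancellation has to be tracked through the analytic continuation in the spectral parameter $u$ of the intertwiners $\mathrm{Eis}_l$. Moreover, the weight $e^{\pi t}$ is not intrinsic to the Plancherel measure on $L^2(\GaG)$ and must be injected through the test-vector construction, so that a naive choice already produces a $\|F_T\|_2$ of order much larger than $T^{2+\epsilon/2}$. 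The exponent $4+\epsilon$ (as opposed to the Bernstein-Reznikov optimum $8/3+\epsilon$) reflects that only the analytic continuation, and not the finer $G$-invariant Sobolev norm machinery, is used; this is precisely the limitation flagged in the introduction.
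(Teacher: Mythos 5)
Your plan is essentially the paper's plan: reduce to the honest $L^2(\GaH)$ pairing via Proposition~\ref{Phiprops}, lift to $\GaG$, realize $E_l(\cdot,\tfrac12+it)$ as the value of an Eisenstein intertwiner $\EE_l^{1/2+it}$ on the spherical vector, build an $L^2(\GaG)$-valued ``tensor product'' object from $\Phi_{j,k}^{r,s}$ ($=\Psi_{j,k}^{r,s}(\vece_0\otimes\vece_0)$ in the paper's notation), and apply $G$-equivariance to deform the spherical vector into a large-norm test vector that injects the $e^{\pi t}$ weight. Your $F_T$ is the paper's $\Psi_{g_{T^{-1}}}=\Psi_{j,k}^{r,s}\bigl(\pi^r(g_{T^{-1}})\vece_0\otimes\pi^s(g_{T^{-1}})\vece_0\bigr)$, and the identity $\vecv_{\Psi_g,m}(t)\propto\pi_m^{1/2+it}(g)\vece_0$ is carried to $g\in U\subset\SL(2,\CC)$ by analytic continuation in $g$ (Lemma~\ref{Phigb}), after which Plancherel and the lower bound $\|\pi^{1/2+it}(g_\epsilon)\vece_0\|^2\gg e^{(\pi-12\epsilon)|t|}$ give exactly your displayed inequality.

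Two small miscalibrations in your final step, neither of which alters the strategy: (1) the analytic continuation used is in the group element $g$ (à la Bernstein--Reznikov), not in the spectral parameter $u$ of $\EE_l$ off the critical line --- the spectral parameter stays at $\tfrac12+it$ throughout; (2) the norm bound $\|\Psi_{g_{T^{-1}}}\|_{L^2(\GaG)}^2\ll T^{4+\epsilon}$ is not obtained by Phragm\'en--Lindel\"of in an auxiliary parameter but by the Sobolev-norm inequality of Corollary~\ref{Phiuvbd}, $\|\Psi_{j,k}^{r,s}(\vecu\otimes\vecv)\|_{L^2}\ll\scrS_\beta(\vecu)\scrS_\beta(\vecv)$ for $\beta>1$, combined with Lemma~\ref{gTSobbdd}, $\scrS_\beta(\pi^{1/2+it}(g_\epsilon)\vece_0)\ll\epsilon^{-\beta}$; taking $\epsilon=T^{-1}$ and $\beta=1+\eta$ gives $T^{4+4\eta}$. (The maximum modulus principle does appear, but only inside Proposition~\ref{PsiupssigFour} to control $\Psi_{j,k}^{r,s}$ uniformly near $s=r,1-r$.) You correctly flag that the missing piece is the uniform control of the cancellations in $\Phi_{j,k}^{r,s}$ as the test vector is deformed; that control is precisely what Corollary~\ref{Phiuvbd} packages, and is what the paper spends Propositions~\ref{Phiupssig}--\ref{PsiupssigFour} establishing.
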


\section{The Rankin-Selberg transform}
In this section we will define the \textit{Rankin-Selberg transform} of a renormalizable function and connect this with the renormalized integrals of the previous section. Here we essentially just follow Str\"ombergsson \cite{Anotes} (unpublished), who generalized the corresponding results of Zagier \cite{Zagier} for $\Gamma=\PSL(2,\ZZ)$ to general lattices $\Gamma$. 

\subsection{The Rankin-Selberg transform} Let $F(z)$ be a renormalizable function with $\alpha_{j,i}$ and $\Xi_k$ as in \eqref{Xikdef}. For notational purposes, let $M_0=\max_{k,i} |\Re(\alpha_{k,i})|$ (using the convention that $M_0=0$ if $I_k=0$ for all $k\in\lbrace 1,\ldots,\kappa\rbrace$). 
\begin{defn}\label{RSdef}
For a renormalizable function $F(z)$ and $k\in\lbrace 1,\ldots \kappa\rbrace$, let
\begin{equation*}
a_0^{(k)}(y)=\int_0^1 F\big(h_k^{-1}\cdot(x+i y)\big)\,dx,
\end{equation*}
and for $s\in \CC$ with $ \Re(s)> 1+ M_0$, define the \underline{Rankin-Selberg transform} $R_k(F,s)$ of $F$ by 
\begin{equation}\label{RSint}
R_k(F,s):= \int_0^{\infty} \left( a_0^{(k)}(y)-\Xi_k(y)\right)y^{s-2}\,dy.
\end{equation}
\end{defn}

\begin{lem}\label{RSprops} The following hold for $s\in \CC$ with $\Re(s)> M_0+1$:
\begin{enumerate} 
\item The integral in \eqref{RSint} defining $R_k(F,s)$ is absolutely convergent.
\item $R_k(F,s)=R.N.\left( \int_{\GaH} F(z)E_k(z,s)\,d\mu(z)\right)$. 
\end{enumerate}
\end{lem}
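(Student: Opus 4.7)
The argument adapts Zagier's Rankin-Selberg unfolding \cite{Zagier} to the multi-cusp setting. For part (1), I estimate the integrand of \eqref{RSint} at the two endpoints. As $y\to\infty$, the renormalizability \eqref{renormform1} at cusp $\eta_k$ applied uniformly in $x\in[0,1]$ gives $a_0^{(k)}(y)-\Xi_k(y)=O_A(y^{-A})$ for every $A>0$, so absolute integrability at $\infty$ is immediate. As $y\to 0^+$, the $\Gamma$-invariance of $F$ is essential: by \eqref{sYkjbd}, $\sYGa(h_k^{-1}(x+iy))\le y^{-1}$ for $y<1$, so the point $h_k^{-1}(x+iy)$ is $\Gamma$-equivalent to a point with imaginary part as large as $y^{-1}$ in some cusp $\eta_j$. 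Applying the renormalizability in $\eta_j$ together with the elementary bound $|\Xi_j(Y)|\ll Y^{M_0}(1+\log Y)^{N_0}$ (with $N_0:=\max_{k,i}n_{ki}$) yields $|F(h_k^{-1}(x+iy))|\ll y^{-M_0}(1+|\log y|)^{N_0}$; the same bound for $|\Xi_k(y)|$ is immediate from \eqref{Xikdef}. The integrand is therefore $O(y^{\Re(s)-2-M_0}(1+|\log y|)^{N_0})$ near $0$, which is absolutely integrable precisely when $\Re(s)>M_0+1$.

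For part (2), first observe that $F(z)E_k(z,s)$ is itself renormalizable with cusp-$\eta_i$ asymptotic $\tilde{\Xi}_i(y):=\Xi_i(y)(\delta_{k,i}y^{s}+\varphi_{k,i}(s)y^{1-s})$; this follows from \eqref{renormform1}, \eqref{EjkFourier} and \eqref{Eisexp}. By the definition \eqref{RNintdef},
\[
R.N.\!\int_{\GaH}FE_k\,d\mu=\int_{\scrF_B}FE_k\,d\mu+\sum_{i=1}^\kappa\Bigl[\int_{\scrC_{i,B}}(FE_k-\tilde{\Xi}_i(y_i))\,d\mu-\widehat{\tilde{\Xi}}_i(B)\Bigr].
\]
On each $\scrC_{i,B}$, substitute the Fourier expansion \eqref{EjkFourier} of $E_k$ in cusp $\eta_i$ and integrate over $x_i\in[0,1]$: this produces $\int_B^\infty(a_0^{(i)}(y)-\Xi_i(y))(\delta_{k,i}y^s+\varphi_{k,i}(s)y^{1-s})y^{-2}\,dy$ plus an absolutely convergent Bessel-type contribution $\mathcal{B}_i$ coming from the nonzero Fourier modes.

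For the bounded-region piece $\int_{\scrF_B}FE_k\,d\mu$, the hypothesis $\Re(s)>1$ permits use of the absolutely convergent defining series \eqref{Eisdef} for $E_k$. Swapping sum and integral and exploiting $\Gamma$-invariance of $F$ identifies this with an integral over $\Gamma_{\eta_k}\backslash(\HH\setminus\Gamma\cdot\bigcup_i\scrC_{i,B})$, which, in strip coordinates $w=h_kz$, is the principal $y$-integral $\int_0^\infty a_0^{(k)}(y)y^{s-2}\,dy$ minus the $\Gamma$-orbit contributions of the excluded cuspidal horoballs. Fourier analysis at each cusp $\eta_i$ shows that these excluded pieces precisely cancel the $\varphi_{k,i}(s)y^{1-s}$-terms and the Bessel contributions $\mathcal{B}_i$ for $i\ne k$, while the $i=k$ excluded horoball produces $-\int_B^\infty a_0^{(k)}(y)y^{s-2}\,dy$. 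Finally, the $\widehat{\tilde{\Xi}}_k(B)$ boundary term, computed from \eqref{Xihatdef} using $\tfrac{d}{dy}\widehat{\Xi}(y)=y^{-2}\Xi(y)$, restores the missing piece $\int_0^B(a_0^{(k)}(y)-\Xi_k(y))y^{s-2}\,dy$, and all remaining $B$-dependence cancels (as it must, by the $B$-independence of \eqref{RNintdef}). The result is
\[
R.N.\!\int_{\GaH}F(z)E_k(z,s)\,d\mu(z)=\int_0^\infty (a_0^{(k)}(y)-\Xi_k(y))y^{s-2}\,dy=R_k(F,s).
\]

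The main obstacle is the combinatorial bookkeeping needed to verify the cancellation of all cross-cusp contributions (the $\varphi_{k,i}(s)$-terms and the Bessel terms $\mathcal{B}_i$ for $i\ne k$), and to check that the $B$-dependent boundary corrections combine consistently. This hinges on the precise way $\Gamma$-translates of the cuspidal regions $\scrC_{i,B}$ tile the strip $\Gamma_{\eta_k}\backslash\HH$, together with the compatibility between the antiderivative prescription \eqref{Xihatdef} and the Fourier expansion of $E_k$ at each cusp.
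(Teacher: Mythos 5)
Part (1) of your proof is correct and essentially the same as the paper's: both estimate the integrand of \eqref{RSint} at $y\to\infty$ using \eqref{renormform1}, and at $y\to 0^+$ using $\Gamma$-invariance of $F$ together with the invariant-height bound \eqref{sYkjbd}. The paper trades your explicit logarithmic factor for an $\epsilon$-slack in the exponent, but that is a cosmetic difference.

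Part (2) identifies the right strategy (Zagier unfolding, as in the paper), but as written it has a genuine gap, which in fact you yourself flag at the end. The problematic step is where you claim that, after unfolding $\int_{\scrF_B} FE_k\,d\mu$ over $\Gamma_{\eta_k}\backslash\Gamma$, the result is ``the principal $y$-integral $\int_0^\infty a_0^{(k)}(y)y^{s-2}\,dy$ minus the $\Gamma$-orbit contributions of the excluded cuspidal horoballs.'' That integral does not converge: since $a_0^{(k)}(y)\sim\Xi_k(y)$ as $y\to\infty$ and $\Re(\alpha_{ki}+s)>1$, it diverges at the top end, so ``strip integral minus holes'' is not a well-defined decomposition. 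The paper avoids this by unfolding inside a truncated strip of finite height: it sets $G(z)=F(z)\Im(h_kz)^s$ and compares $\int_{\scrR_B}G\,d\mu$ (over $\scrR_B=[0,1]\times(0,B)$, which is finite) with $\int_{\scrD}G\,d\mu$, where $\scrD=(\scrF\setminus\scrC_{k,B})\cup\bigcup_{\gamma\in\Gamma_{\eta_k}\backslash\Gamma,\,\gamma\notin\Gamma_{\eta_k}}\gamma\scrF$ is a second fundamental domain for $\Gamma_{\eta_k}$ acting on $\RR+i(0,B)$. This gives the clean identity
\begin{equation*}
\int_0^B a_0^{(k)}(y)\,y^{s-2}\,dy=\int_{\scrF_B}F(z)E_k(z,s)\,d\mu(z)+\sum_{i=1}^{\kappa}\int_{\scrC_{i,B}}F(z)\big(E_k(z,s)-\delta_{i,k}\,y_i^s\big)\,d\mu(z),
\end{equation*}
with every piece absolutely convergent, and no divergent intermediate object.

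A second, smaller imprecision: you say the excluded pieces ``precisely cancel the $\varphi_{k,i}(s)y^{1-s}$-terms and the Bessel contributions $\mathcal{B}_i$ for $i\ne k$.'' That is not how the bookkeeping closes. The Bessel (nonzero Fourier mode) contributions are part of the full integrals $\int_{\scrC_{i,B}}FE_k\,d\mu$ appearing in the identity above and are carried through unchanged into the renormalized integral; they are not cancelled by anything. What actually vanishes, after subtracting and adding the renormalization profiles $\tilde\Xi_i$, are the $i\ne k$ boundary contributions $-\int_{\scrC_{i,B}}\tilde\Xi_i(y_i)\,d\mu(z)-\widehat{\tilde\Xi}_i(B)$; these vanish by the fundamental theorem of calculus because $\widehat{\tilde\Xi}_i(y)\to 0$ as $y\to\infty$ (the exponents $\alpha_{ij}+1-s$ have real part $<0$). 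For $i=k$ one has to split $\tilde\Xi_k(y)=\Xi_k(y)(y^s+\varphi_{k,k}(s)y^{1-s})$ and check separately that the $\varphi_{k,k}$-part cancels its antiderivative at $\infty$, while the $y^s$-part of $\widehat{\tilde\Xi}_k(B)$ equals $\int_0^B\Xi_k(y)y^{s-2}\,dy$; this is the ``explicit calculation'' the paper alludes to at the end, and it is what glues $\int_0^B a_0^{(k)}y^{s-2}\,dy$ and $\int_B^\infty(a_0^{(k)}-\Xi_k)y^{s-2}\,dy$ into $R_k(F,s)$. To turn your plan into a proof you should unfold at finite height $B$ as above, and replace the vague cancellation claim with the two antiderivative computations.
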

\begin{proof}
Starting with \textit{(1)}, we choose and fix $\epsilon>0$ so that $\Re(s)> M_0+1+\epsilon$, and note that by \eqref{renormform1} and \eqref{Xikdef}, for $z\in \scrF$ we have $|F(z)|\ll \sYGa(z)^{M_0+\epsilon}$ (cf.\ \eqref{YGadef}). Since both $F$ and $\sYGa$ are $\Gamma$-invariant, this inequality then holds for all $z\in\HH$. Fixing $B\geq B_0$, for all $y\in [B,\infty)$, we have
\begin{equation*}
a_0^{(k)}(y)-\Xi_k(y)=\int_0^1 F\big(h_k^{-1}\cdot(x+iy)\big)\,dx-\Xi_k(y)=\int_0^1\left( \Xi_k(y)+O(y^{-A})\right)\,dx-\Xi_k(y)=O(y^{-A})
\end{equation*}
(here $A\in\RR$ is arbitrary). We thus have
\begin{equation*}
\int_B^{\infty} |a_0^{(k)}(y)-\Xi_k(y)| y^{\Re(s)-2}\,dy\ll \int_B^{\infty} y^{\Re(s)-2-A} <\infty;
\end{equation*}
valid for all $A>\Re(s)+1$.
Since $F$ and $\Xi_k$ are continuous, $\int_1^B |a_0^{(k)}(y)-\Xi_k(y)| y^{\Re(s)-2}\,dy<\infty$, for all $s\in \CC$. For $0<y\leq 1$, we use the bound $F\big(h_k^{-1}\cdot(x+iy)\big)\ll \sYGa\big(h_k^{-1}\cdot(x+iy)\big)^{M_0+\epsilon}\leq \max \lbrace y,y^{-1}\rbrace^{M_0+\epsilon}=y^{-M_0-\epsilon}$ (here we used \eqref{sYkjbd} for the second inequality). This bound thus also holds for $a_0^{(k)}(y)$, as well as for $\Xi_k$, giving
\begin{equation*}
\int_0^1 |a_0^{(k)}(y)-\Xi_k(y)|y^{\Re(s)-2}\,dy\ll\int_0^1y^{\Re(s)-2-M_0-\epsilon}\,dy<\infty,
\end{equation*}
since $\Re(s)>M_0+1+\epsilon$. From this proof, we see that in fact the integral in \eqref{RNintdef} converges absolutely and uniformly for any $s\in [A_1,A_2]+i\RR$, where $[A_1,A_2]\subset (M_0+1,\infty)$.

Turning to \textit{(2)}, we assume (without loss of generality) that $\eta_k=\infty$ and $h_k=\pm\smatr 1 0 0 1$; hence $\Gamma_{\infty}=\pm \smatr{1}{\ZZ}{0}{1}$. Let $\scrR_B$ denote the rectangle $[0,1]+i(0,B)$, and $G(z)$ denote the $\Gamma_{\infty}$-invariant function $G(z)=F(z)y^s$. Observe that $\scrR_B$ is a fundamental domain for action of $\Gamma_{\infty}$ on $\RR+i(0,B)$. We now construct another such fundamental domain: 
\begin{equation*}
\scrD=\scrF\setminus\scrC_{k,B}\cup\bigcup_{\underset{\gamma\not\in\Gamma_{\infty}}{\gamma\in \Gamma_{\infty}\backslash\Gamma} }\gamma \cdot \scrF.
\end{equation*}
Note that since $\Re(s)>M_0+1$, we have $\int_{\scrR_B}|G(z)|\,d\mu(z)<\infty$ (as seen in the proof of \textit{(1)}). This permits the following manipulations:
\begin{equation*}
\int_{\scrR_B}G(z)\,d\mu(z)=\int_{\scrD} G(z)\,d\mu(z),
\end{equation*}
hence
\begin{align*}
\int_0^B \int_0^1 G(z)\,d\mu(z)=\int_{\scrF\setminus \scrC_{k,B}} G(z)\,d\mu(z)+\sum_{\underset{\gamma\not\in\Gamma_{\infty}}{\gamma\in \Gamma_{\infty}\backslash\Gamma} } \int_{\gamma\cdot \scrF} G(z)\,d\mu(z).
\end{align*} 
Note that
\begin{equation*}
\int_0^B \int_0^1 G(z)\,d\mu(z)=\int_0^B \int_0^1F(z)\,dx\, y^{s-2}\,dy= \int_0^{B} a_0^{(k)}(y) y^{s-2}\,dy.
\end{equation*}
Writing $\scrF=\scrF_B\cup\bigcup_{i=1}^{\kappa}\scrC_{i,B}$, we have
\begin{equation*}
\int_{\scrF\setminus \scrC_{k,B}} G(z)\,d\mu(z)= \int_{\scrF_B} G(z)\,d\mu(z)+\sum_{\underset{i\neq k}{i=1}}^{\kappa} \int_{\scrC_{i,B}} G(z)\,d\mu(z),
\end{equation*}
and 
\begin{equation*}
\int_{\gamma\cdot\scrF} G(z)\,d\mu(z)= \int_{\gamma\cdot\scrF_B} G(z)\,d\mu(z)+\sum_{i=1}^{\kappa} \int_{\gamma\cdot\scrC_{i,B}} G(z)\,d\mu(z),
\end{equation*}
hence
\begin{align*}
\int_0^B \int_0^1 G(z)\,d\mu(z)=&\int_{\scrF\setminus \scrC_{k,B}} G(z)\,d\mu(z)+\sum_{\underset{\gamma\not\in\Gamma_{\infty}}{\gamma\in \Gamma_{\infty}\backslash\Gamma} } \int_{\gamma\cdot \scrF} G(z)\,d\mu(z)\\&= \int_{\scrF_B} G(z)\,d\mu(z)+\sum_{\underset{i\neq k}{i=1}}^{\kappa} \int_{\scrC_{i,B}} G(z)\,d\mu(z)\\&\quad + \sum_{\underset{\gamma\not\in\Gamma_{\infty}}{\gamma\in \Gamma_{\infty}\backslash\Gamma} } \left( \int_{\gamma\cdot\scrF_B} G(z)\,d\mu(z)+\sum_{i=1}^{\kappa} \int_{\gamma\cdot\scrC_{i,B}} G(z)\,d\mu(z)\right).
\end{align*}
Exchanging summation and integration (again permitted by the absolute convergence in \eqref{RSint}) and recalling \eqref{Eisdef}, we obtain
\begin{align}\label{aint}
\int_0^B a_0^{(k)}(y) y^{s-2}\,dy=\int_{\scrF_B} F(z)E_k(z,s)\,d\mu(z)+\sum_{i=1}^{\kappa}\int_{\scrC_{i,B}} F(z)\big(E_k(z,s)-\delta_{i,k}y_i^s\big)\,d\mu(z).
\end{align}
 Denoting by $\Xi_i^1(y)$ the term $``\Xi_i(y)"$  in \eqref{Xikdef} for the renormalizable function $F(z)E_k(z,s)$, for $i\neq k$, we have
\begin{align}\label{Ciint}
\int_{\scrC_{i,B}} F(z)E_k(z,s)\,d\mu(z)&=\int_{\scrC_{i,B}} \big(F(z)E_k(z,s)-\Xi_i^1(y_i)+\Xi_i^1(y_i)\big)\,d\mu(z)\\\notag=&\int_{\scrC_{i,B}} \big(F(z)E_k(z,s)-\Xi_i^1(y_i)\big)\,d\mu(z)-\widehat{\Xi_i^1}(B).
\end{align}
For $i=k$, after noting that $\Xi_k^1(y)=\Xi_k(y)(y^s +\varphi_{k,k}(s)y^{1-s})$, $\scrC_{k,B}=[0,1]+i[B,\infty)$, $y_k =y$ and $a_0^{(k)}(y)=\int_0^1 F(x+iy)\,dx$, we obtain the following:
\begin{align}\label{Ckint}
\int_{\scrC_{k,B}} &F(z)\big(E_k(z,s)-y^s\big)\,d\mu(z)\\&\notag=\int_{\scrC_{i,B}} \big(F(z)E_k(z,s)-F(y)y^s -\Xi_k^1(y)+\Xi_k(y)(y^s +\varphi_{k,k}(s)y^{1-s})\big)\,d\mu(z)\\\notag=&\int_{\scrC_{i,B}} \big(F(z)E_k(z,s)-\Xi_i^1(y)\big)\,d\mu(z)-\int_B^{\infty} \big(a_0^{(k)}(y)-\Xi_k(y)\big)y^{s-2}\,dy\\\notag&\qquad\qquad\qquad\qquad+\varphi_{k,k}(s)\int_B^{\infty}\Xi_k y^{1-s}y^{-2}\,dy-\widehat{\Xi}_k^1(B)+\widehat{\Xi}_k^1(B).
\end{align}
After recalling the definitions of renormalized integrals \eqref{RNintdef} and $R_k(F,s)$ \eqref{RSint}, substituting \eqref{Ciint} and \eqref{Ckint} into \eqref{aint} gives
\begin{align*}
R_k(F,s)=&R.N.\left( \int_{\GaH} F(z)E_k(z,s)\,d\mu(z) \right)\\&+\widehat{\Xi}_k^1(B)+\int_B^{\infty}\varphi_{k,k}(s)y^{1-s}\Xi_k(y) y^{-2}\,dy-\int_0^B \Xi_k(y) y^{s-2}\,dy.
\end{align*}
An explicit calculation using the definitions of $\Xi_k$ and $\widehat{\Xi}_k^1$ shows that the second line of this expression is in fact zero.
\end{proof}
\begin{cor}\label{RScont}
The Rankin-Selberg transform $R_k(F,s)$ has a meromorphic continuation to the entire of $\CC$. The only possible poles are at $s= \alpha_{ji},\,1-\alpha_{ki}$ and the poles of the Eisenstein series $E_k(z,s)$. For $s$ not one of these points, we have $R_k(F,s)=R.N.\left( \int_{\GaH} F(z)E_k(z,s)\,d\mu(z)\right)$.
\end{cor}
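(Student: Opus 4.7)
The plan is to start from Lemma~\ref{RSprops}(2), which identifies $R_k(F,s) = R.N.\bigl(\int_{\GaH} F(z) E_k(z,s)\,d\mu(z)\bigr)$ for $\Re(s) > M_0+1$, and to show that the right-hand side extends to a meromorphic function of $s$ on all of $\CC$ with poles only at the claimed locations. Since $R_k(F,s)$ is holomorphic on the same half-plane by Lemma~\ref{RSprops}(1), the identity theorem for meromorphic functions then yields both the meromorphic continuation of $R_k(F,s)$ and the persistence of the stated identity.

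Fix $B \geq B_0$. The first step is to work out the cuspidal expansion of the renormalizable function $z \mapsto F(z) E_k(z,s)$ at each cusp $\eta_i$. Multiplying \eqref{renormform1} for $F$ with the Fourier expansion \eqref{EjkFourier} of $E_k$ at $\eta_i$ and using the exponential decay \eqref{Eisexp} of the nonzero Fourier modes, one finds that the polynomial-logarithmic part is
\[
\Psi_i(y;s) := \Xi_i(y)\bigl(\delta_{k,i}\, y^{s} + \varphi_{k,i}(s)\, y^{1-s}\bigr),
\]
where $\Xi_i$ is the expansion of $F$ from \eqref{Xikdef}. Substituting into the definition \eqref{RNintdef} decomposes the renormalized integral as
\[
\int_{\scrF_B} F(z) E_k(z,s)\,d\mu(z) + \sum_{i=1}^{\kappa}\int_{\scrC_{i,B}}\!\bigl(F(z) E_k(z,s) - \Psi_i(y_i;s)\bigr)\,d\mu(z) - \sum_{i=1}^{\kappa} \widehat{\Psi}_i(B;s).
\]

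It remains to track the meromorphic behaviour in $s$ of each of the three pieces. The compact integral $\int_{\scrF_B} F E_k\,d\mu$ is meromorphic with poles only at points of $\scrE_k$, by Morera's theorem applied to the jointly continuous integrand on a bounded region. The explicit formula \eqref{Xihatdef} shows that $\widehat{\Psi}_i(B;s)$ is meromorphic in $s$, with denominators $(\alpha_{ji}+s-1)^{n_{ji}-m+1}$ and $(\alpha_{ji}-s)^{n_{ji}-m+1}$ producing poles precisely at $s = 1-\alpha_{ji}$ and $s = \alpha_{ji}$, together with poles contributed by $\varphi_{k,i}(s)$, all of which lie in $\scrE_k$. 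The key step is the cusp integral: combining \eqref{EjkFourier}, \eqref{Eisexp}, and \eqref{renormform1}, one verifies that $F(z) E_k(z,s) - \Psi_i(y_i;s) = O(y_i^{-A})$ for every $A>0$, with implied constant locally uniform for $s$ in compact subsets of $\CC \setminus \scrE_k$, whence Morera applied under the integral gives meromorphicity in $s$ with poles only at $\scrE_k$. Assembling the three contributions identifies the full set of possible poles as $\{\alpha_{ji}\} \cup \{1-\alpha_{ji}\} \cup \scrE_k$, as asserted. The main obstacle throughout is ensuring these estimates are locally uniform in $s$ so that holomorphy can be passed through the integrals; this uniformity is a recurrent tool in the paper (see the proof of Lemma~\ref{maassel} and the argument around \eqref{PhiKbdd}), and follows from the uniformity of the implied constant in \eqref{Eisexp} on compact subsets of $\CC \setminus \scrE$.
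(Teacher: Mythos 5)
Your argument follows the same route as the paper: expand $F(z)E_k(z,s)$ in each cusp via \eqref{renormform1}, \eqref{EjkFourier}, and \eqref{Eisexp} to get $\Xi^1_j(y) = \Xi_j(y)(\delta_{k,j}y^s + \varphi_{k,j}(s)y^{1-s})$, decompose the renormalized integral into the bounded piece, the cuspidal remainder integrals, and $-\sum_j \widehat{\Xi}^1_j(B)$, then read off meromorphicity and pole locations term by term, with the Morera-under-the-integral argument driven by the locally uniform constant in \eqref{Eisexp}. One small imprecision worth flagging: Morera's theorem only yields \emph{holomorphy} of the two integral terms on $\CC\setminus\scrE_k$; to conclude they are \emph{meromorphic} across $\scrE_k$ (rather than having essential singularities there) you still need a supplementary argument, as the paper supplies by integrating $(s-s_0)^j \cdot(\text{integrand})$ around a small loop at each $s_0\in\scrE_k$ for $j$ at least the order of the pole of $E_k(z,\cdot)$ at $s_0$ and showing those contour integrals vanish, which bounds the Laurent tail. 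With that added, your proof is complete and coincides with the paper's.
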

\begin{proof}
By Lemma \ref{RSprops} \textit{(2)}, it suffices to show that $s\mapsto R.N.\left( \int_{\GaH} F(z)E_k(z,s)\,d\mu(z)\right)$ is a meromorphic function that is holomorphic everywhere except possibly at the specified values. Using the definition of renormalized integrals, for $s\in \CC$ that is not a pole of an Eisenstein series or equal to any $\alpha_{ji}$, $1-\alpha_{ki}$, we have 
\begin{align}\label{RSRenormexp}
R.N.&\left( \int_{\GaH} F(z)E_k(z,s)\,d\mu(z)\right)=\int_{\scrF_B} F(z)E_k(z,s)\,d\mu(z)\\\notag&\qquad\qquad\qquad+\sum_{j=1}^{\kappa} \int_{\scrC_{j,B}} \left(F(z)E_k(z,s)-(\delta_{k,j} y_j^s+\varphi_{k,j}(s)y_j^{1-s})\Xi_j(y_j)\right)\,d\mu(z)\\\notag&\qquad\qquad\qquad\qquad-\sum_{j=1}^{\kappa}\widehat{\Xi}^1_j(B),
\end{align}
where for $j\neq k$,
\begin{equation*}
\widehat{\Xi}^1_j(B)=\varphi_{k,j}(s)\left( \sum_{i=1}^{I_j}c_{ji}\sum_{m=0}^{n_{ji}}\frac{(-1)^{n_{ji}-m}}{m!}\frac{B^{\alpha_{ji}-s}\log^mB}{(\alpha_{ji}-s)^{n_{ji}-m+1}}\right),
\end{equation*}
and
\begin{align*}
\widehat{\Xi}^1_k(B)=&\left( \sum_{i=1}^{I_k}c_{ki}\sum_{m=0}^{n_{ki}}\frac{(-1)^{n_{ki}-m}}{m!}\frac{B^{s+\alpha_{ki}-1}\log^mB}{(s+\alpha_{ki}-1)^{n_{ki}-m+1}}\right)\\&\quad+\varphi_{k,k}(s)\left( \sum_{i=1}^{I_k}c_{ki}\sum_{m=0}^{n_{ki}}\frac{(-1)^{n_{ki}-m}}{m!}\frac{B^{\alpha_{ki}-s}\log^mB}{(\alpha_{ki}-s)^{n_{ki}-m+1}}\right).
\end{align*}
From these expressions, we see that the functions $s\mapsto \widehat{\Xi}^1_j(B)$ are meromorphic for all $j\in\lbrace 1\ldots,\kappa\rbrace $, with poles contained in the set described in the statement of the corollary. It remains to prove that the integrals in lines one and two of the right-hand side of \eqref{RSRenormexp} are meromorphic functions of $s$. Once again, the uniformity of \eqref{Eisexp} in $s$ over compact subsets of $\CC\setminus\scrE_k$ is key; this bound, used together with $F(z)=\Xi_j(y_j)+O(y_j^{-A})$ for $z\in \scrC_{j,B}$, allows the use of Lebesgue's dominated convergence theorem to conclude that the functions
\begin{align*}
s&\mapsto \int_{\scrF_B} F(z)E_k(z,s)\,d\mu(z)\\s &\mapsto \int_{\scrC_{j,B}} \left(F(z)E_k(z,s)-(\delta_{k,j} y_j^s+\varphi_{k,j}(s)y_j^{1-s})\Xi_j(y_j)\right)\,d\mu(z) 
\end{align*}
are continuous on $D=\CC\setminus\scrE_k$, as well as that
\begin{align*}
&\oint_{\gamma}\int_{\scrF_B} F(z)E_k(z,s)\,d\mu(z)\,ds=\int_{\scrF_B} F(z)\oint_{\gamma}E_k(z,s)\,ds\,d\mu(z)=0\\ &\oint_{\gamma} \int_{\scrC_{j,B}} \left(F(z)E_k(z,s)-(\delta_{k,j} y_j^s+\varphi_{k,j}(s)y_j^{1-s})\Xi_j(y_j)\right)\,d\mu(z)\,ds\\&\qquad\qquad= \int_{\scrC_{j,B}}\oint_{\gamma} \left(F(z)E_k(z,s)-(\delta_{k,j} y_j^s+\varphi_{k,j}(s)y_j^{1-s})\Xi_j(y_j)\right)\,ds\,d\mu(z)=0
\end{align*} 
for any null-homotopic $C^1$ curve $\gamma$ in $D$. Morera's theorem then gives that these are holomorphic functions on $D$. Finally, we note that the points of $\scrE_k$ are not essential singularities of these functions: for $s_0\in\scrE_k$, let $m_0$ be the order of $E_k(z,s)$ at $s_0$. Then for all $j\geq m_0$ and $\gamma$ a closed $C^1$ curve around $s_0$,  arguing as above gives
\begin{align*}
&\oint_{\gamma}(s-s_0)^j\int_{\scrF_B} F(z)E_k(z,s)\,d\mu(z)\,ds=\int_{\scrF_B} F(z)\oint_{\gamma}(s-s_0)^jE_k(z,s)\,ds\,d\mu(z)=0\\ &\oint_{\gamma}(s-s_0)^j \int_{\scrC_{j,B}} \left(F(z)E_k(z,s)-(\delta_{k,j} y_j^s+\varphi_{k,j}(s)y_j^{1-s})\Xi_j(y_j)\right)\,d\mu(z)\,ds\\&\qquad\qquad= \int_{\scrC_{j,B}}\oint_{\gamma}(s-s_0)^j \left(F(z)E_k(z,s)-(\delta_{k,j} y_j^s+\varphi_{k,j}(s)y_j^{1-s})\Xi_j(y_j)\right)\,ds\,d\mu(z)=0,
\end{align*}
proving that $s_0$ is a pole of order $\leq m_0$.    
\end{proof}

We now recall that we have fixed an Eisenstein series $E(z)=E_{k_0}(z,s_0)$, and are interested in the Rankin-Selberg L-function $L_k(s)=L_k(|E|^2,s)$. 
\begin{prop}\label{GL=RS}
For $\Re(s)>2 \sigma_0+1$, 
\begin{equation}
G(s)L_k(s)=R_k(|E|^2,s),
\end{equation}
where
\begin{equation}\label{GammaFactors}
G(s)=\frac{\Gamma(\frac{s}{2}+i t_0)\Gamma(\frac{s}{2}-i t_0)\Gamma(\frac{s}{2}+\sigma_0-\frac{1}{2})\Gamma(\frac{s}{2}+\frac{1}{2}-\sigma_0)}{8\pi^s\Gamma(s)}.
\end{equation}
Consequently, $G(s)L_k(s)$ has a meromorphic continuation to all of $\CC$. The poles of $G(s)L_k(s)$ are contained in $\scrE_k$ and the points $ 2\sigma_0, 2\sigma_0-1, 2-2\sigma_0, 1-2i t_0, 1+2 i t_0,$ as well as $ 1-2 \sigma_0,\pm 2it_0$ if $k=k_0$.
\end{prop}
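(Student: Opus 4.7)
The plan is to compute $R_k(|E|^2,s)$ directly from the integral representation \eqref{RSint} in Definition \ref{RSdef}, reducing it via Parseval to a Mellin transform of a product of two $K$-Bessel functions. First I would insert the Fourier expansion \eqref{EjkFourier} of $E(z)=E_{k_0}(z,s_0)$ at the cusp $\eta_k$ into the definition of $a_0^{(k)}(y)=\int_0^1 |E(h_k^{-1}\cdot(x+iy))|^2\,dx$. Using the orthonormality of $\{e(mx)\}_{m\in\ZZ}$ on $[0,1]$ and the fact that $\overline{K_{s_0-1/2}(u)}=K_{\overline{s_0}-1/2}(u)$ for $u>0$, I obtain
\begin{equation*}
a_0^{(k)}(y)=\big|\delta_{k_0,k}y^{s_0}+\varphi_{k_0,k}(s_0)y^{1-s_0}\big|^2 + \sum_{m\neq 0}|\psi_{m,k}^{(k_0)}(s_0)|^2\,y\,K_{s_0-\frac{1}{2}}(2\pi|m|y)\,K_{\overline{s_0}-\frac{1}{2}}(2\pi|m|y).
\end{equation*}
Expanding the square gives terms of the form $y^{2\sigma_0}$, $y^{1\pm 2it_0}$ (only when $k=k_0$) and $y^{2-2\sigma_0}$; by \eqref{Eisexp} the remaining Bessel sum is $O(e^{-2\pi y})$ as $y\to\infty$. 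Hence this first block is precisely the polynomial part $\Xi_k(y)$ of Definition \ref{Renormdef1}, and
\begin{equation*}
a_0^{(k)}(y)-\Xi_k(y)=\sum_{m\neq 0}|\psi_{m,k}^{(k_0)}(s_0)|^2\,y\,K_{s_0-\frac{1}{2}}(2\pi|m|y)\,K_{\overline{s_0}-\frac{1}{2}}(2\pi|m|y).
\end{equation*}

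Next I would substitute this into \eqref{RSint}, interchange sum and integral (justified for $\Re(s)>2\sigma_0+1$ by the Rankin--Selberg bound \eqref{RanSelbd} together with the exponential decay of $K$-Bessel functions at infinity and their known polynomial behaviour at $0$), and make the substitution $u=2\pi|m|y$:
\begin{equation*}
R_k(|E|^2,s)=\sum_{m\neq 0}\frac{|\psi_{m,k}^{(k_0)}(s_0)|^2}{(2\pi|m|)^s}\int_0^\infty u^{s-1}K_{s_0-\frac{1}{2}}(u)K_{\overline{s_0}-\frac{1}{2}}(u)\,du.
\end{equation*}
Applying the classical Mellin-type identity (Gradshteyn--Ryzhik 6.576.4)
\begin{equation*}
\int_0^\infty u^{s-1}K_\mu(u)K_\nu(u)\,du=\frac{2^{s-3}}{\Gamma(s)}\Gamma\bigl(\tfrac{s+\mu+\nu}{2}\bigr)\Gamma\bigl(\tfrac{s+\mu-\nu}{2}\bigr)\Gamma\bigl(\tfrac{s-\mu+\nu}{2}\bigr)\Gamma\bigl(\tfrac{s-\mu-\nu}{2}\bigr)
\end{equation*}
with $\mu=s_0-\tfrac 12$, $\nu=\overline{s_0}-\tfrac 12$ (so $\mu+\nu=2\sigma_0-1$, $\mu-\nu=2it_0$) and combining the factor $2^{s-3}/(2\pi)^s=1/(8\pi^s)$ with the gamma factors produces exactly $G(s)$ as given in \eqref{GammaFactors}. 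Pulling $G(s)$ out of the $m$-sum yields $G(s)L_k(s)=R_k(|E|^2,s)$, which is the identity to be proved.

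Finally, the meromorphic continuation and the claimed pole list follow by quoting Corollary \ref{RScont}: the possible poles are contained in $\scrE_k\cup\{\alpha_{ji}:1\leq j\leq\kappa,\,1\leq i\leq I_j\}\cup\{1-\alpha_{ki}:1\leq i\leq I_k\}$. Reading off the exponents from the $\Xi_j$'s computed above, the $\alpha_{ji}$ contribute $2-2\sigma_0$ (all $j$) and additionally $2\sigma_0,\,1\pm 2it_0$ from $j=k_0$, while $1-\alpha_{ki}$ contributes $2\sigma_0-1$ (all $k$) and additionally $1-2\sigma_0,\,\pm 2it_0$ only when $k=k_0$. This matches the stated pole set exactly.

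The main obstacle is bookkeeping rather than analysis: one must verify that the expansion of $|\delta_{k_0,k}y^{s_0}+\varphi_{k_0,k}(s_0)y^{1-s_0}|^2$ cleanly accounts for every term of $\Xi_k$ (so that the Bessel remainder decays exponentially), and then carefully partition the exponents $\alpha_{ji}$ and $1-\alpha_{ki}$ according to whether $j$, respectively $k$, equals $k_0$, to obtain precisely the asymmetric pole list in the statement.
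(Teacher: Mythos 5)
Your proposal is correct and follows essentially the same route as the paper's own proof: expand $|E|^2$ at the cusp $\eta_k$, invoke Parseval to isolate $\Xi_k(y)$ from the Bessel remainder, term-by-term Mellin-transform via \cite[6.576(4)]{GR} to produce $G(s)L_k(s)$, and then read off the meromorphic continuation and pole set from Corollary \ref{RScont}. The paper compresses the Parseval and change-of-variables steps, but the computation and the bookkeeping of the exponents $\alpha_{ji}$, $1-\alpha_{ki}$ are identical to what you describe.
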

\begin{proof}
The function $|E(z)|^2$ is renormalizable, with each $\Xi_j(y)$, $j\in\lbrace 1,\ldots,\kappa\rbrace$, given by 
\begin{align}\label{EisXi}
\Xi_j(y)=&|\delta_{k_0,j} y^{s_0}+\varphi_{k_0,j}(s_0)y^{1-s_0}|^2\\\notag&=\delta_{k_0,j}\big( y^{2 \sigma_0}+ \varphi_{k_0,j}(s_0)y^{1-2i t_0}+ \overline{\varphi_{k_0,j}(s_0)}y^{1+2i t_0}\big)+ |\varphi_{k_0,j}(s_0)|^2 y^{2-2\sigma_0}.
\end{align}
The Fourier expansion \eqref{EjkFourier} and Parseval's formula give
\begin{align*}
\int_0^1 |E(h_k^{-1}\cdot(x_k+iy_k))|^2\,dx_k= \Xi_k(y_k) + \sum_{\underset{m\neq 0}{m\in\ZZ}} |\psi_{m,k}^{(k_0)}(s_0)|^2y_k |K_{s_0-\frac{1}{2}}(2 \pi |m| y_k)|^2,
\end{align*}
hence for $s\in \CC$ with $\Re(s)> 2\sigma_0+1$, Lemma \ref{RSprops} gives
\begin{align*}
R_k(|E|^2,s)=&\int_0^{\infty} \int_0^1 \left(|E(h_k^{-1}\cdot(x_k+iy_k))|^2- \Xi_k(y_k)\right)\, dx_k\, y_k^{s-2}\,dy_k\\&= \sum_{\underset{m\neq 0}{m\in\ZZ}}  |\psi_{m,k}^{(k_0)}(s_0)|^2 \int_0^{\infty} |K_{s_0-\frac{1}{2}}(2 \pi |m| y)|^2 y^{s-1}\,dy 
\\&= \sum_{\underset{m\neq 0}{m\in\ZZ}}  |\psi_{m,k}^{(k_0)}(s_0)|^2 \cdot \frac{\Gamma(\frac{s+1-2\sigma_0}{2})\Gamma(\frac{s+2\sigma_0-1}{2})\Gamma(\frac{s+2i t_0}{2})\Gamma(\frac{s-2i t_0}{2})}{8\pi^s |m|^s\Gamma(s)}=L_k(s)G(s), 
\end{align*}
where we used \cite[6.576 (4), p.\ 684]{GR} for the third equality. The remainder of the proposition now follows from Corollary \ref{RScont}.
\end{proof}
We now record the following bound for $L_k(s)$ with $s$ in the vertical strip $[\frac{1}{2}, \frac{3}{2}]+i\RR$:
\begin{lem}\label{apriori}
Assume $s_0\in\frac{1}{2}+i\RR$. There then exist $T_0,\,a>0$, depending only on $s_0$ and $\Gamma$, such that
\begin{equation*}
\int_{T}^{T+1}|L_k(\sigma+it)|^2\,dt \ll e^{aT}
\end{equation*}
for all $\sigma\in[\frac{1}{2},\frac{3}{2}]$ and $T\geq T_0$.
\end{lem}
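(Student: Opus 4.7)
The plan is to bound $L_k(\sigma+it)$ pointwise on the strip $[1/2,3/2]+i\RR$ by combining Proposition \ref{GL=RS} with a Stirling estimate on $G(s)$, then integrate. Proposition \ref{GL=RS} and Corollary \ref{RScont} give the identity $L_k(s)=R_k(|E|^2,s)/G(s)$ as meromorphic functions on $\CC$, with only finitely many poles in the closed strip $[1/2,3/2]+i\RR$; choose $T_0$ exceeding the imaginary parts of all these poles, so the identity holds throughout $[1/2,3/2]+i[T_0,\infty)$.

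The first step is to bound $R_k(|E|^2,\sigma+it)$ polynomially in $|t|$. I would use the explicit formula \eqref{RSRenormexp} derived in the proof of Corollary \ref{RScont}, which writes $R_k(|E|^2,s)$ as the sum of a compact integral $\int_{\scrF_B}|E(z)|^2E_k(z,s)\,d\mu(z)$, cuspidal remainders $\int_{\scrC_{j,B}}\bigl(|E(z)|^2E_k(z,s)-(\delta_{k,j}y_j^s+\varphi_{k,j}(s)y_j^{1-s})\Xi_j(y_j)\bigr)\,d\mu(z)$, and explicit rational-in-$s$ terms $\widehat{\Xi}_j^1(B)$. On $\scrF_B$, $E_k(z,\sigma+it)$ has at most polynomial growth in $|t|$ uniformly in $z$, so the compact integral is $O((1+|t|)^C)$. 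On each $\scrC_{j,B}$, after subtracting the zeroth Fourier term one is left with $|E|^2$ times the non-constant part of the Fourier expansion of $E_k$, which by \eqref{Eisexp} (with its uniformity over compact subsets of $\CC\setminus\scrE_k$) is $O((1+|t|)^{C'}e^{-2(\pi-\epsilon)y_j})$ and contributes a polynomial-in-$|t|$ bound after integration. The $\widehat{\Xi}_j^1(B)$ pieces are rational in $s$ with coefficients involving $\varphi_{k,j}(s)$, which is known to grow at most polynomially on vertical strips. The combined bound is $|R_k(|E|^2,\sigma+it)|\ll (1+|t|)^{C''}$, uniformly for $\sigma\in[1/2,3/2]$ and $|t|\geq T_0$.

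The second step controls $1/|G(\sigma+it)|$ by Stirling's formula $|\Gamma(x+iy)|\sim \sqrt{2\pi}\,|y|^{x-1/2}e^{-\pi|y|/2}$ as $|y|\to\infty$. Each of the four numerator Gamma factors in \eqref{GammaFactors} is of order $|t|^{\ast}e^{-\pi|t|/4}$, contributing a total magnitude $\asymp |t|^{C_1}e^{-\pi|t|}$, while $|\Gamma(s)|\asymp |t|^{\sigma-1/2}e^{-\pi|t|/2}$; therefore $|G(\sigma+it)|^{-1}\ll |t|^{C_2}e^{\pi|t|/2}$. Combining with Step one gives $|L_k(\sigma+it)|\ll (1+|t|)^{C_3}e^{\pi|t|/2}$ for $\sigma\in[1/2,3/2]$ and $|t|\geq T_0$. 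Squaring and integrating over $t\in[T,T+1]$ produces the claimed bound for any $a>\pi$.

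The only non-routine ingredient is the polynomial growth, in $|t|$, of $E_k(z,\sigma+it)$ uniformly on compacta in $z$ and of the scattering entries $\varphi_{k,j}(\sigma+it)$ on the strip; both are classical facts for cofinite Fuchsian groups, following from the functional equation $\Phi(s)\Phi(1-s)=I$ together with unitarity $|\varphi_{k,j}(1/2+it)|\leq 1$ on the critical line and a Phragmén--Lindelöf convexity argument. Apart from invoking these standard facts, the proof is a direct combination of Proposition \ref{GL=RS}, the expression \eqref{RSRenormexp}, and Stirling's asymptotic.
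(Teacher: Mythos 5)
Your overall strategy—write $L_k(s)=R_k(|E|^2,s)/G(s)$ via Proposition~\ref{GL=RS}, bound $R_k$ pointwise using the expression from Corollary~\ref{RScont}, and apply Stirling to $G$—is the same route the paper takes. The place where you diverge is the crucial bound on $E_k(z,\sigma+it)$, and there you have a genuine gap.

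You assert that $E_k(z,\sigma+it)$ grows at most polynomially in $|t|$, uniformly for $z$ in compacta and $\sigma\in[\frac12,\frac32]$, and that this is a ``classical fact'' obtained from unitarity of the scattering matrix plus Phragm\'en--Lindel\"of. That derivation does not close: unitarity $|\varphi_{k,j}(\frac12+it)|\le 1$ gives control of the \emph{scattering matrix}, not a pointwise bound on the Eisenstein series itself, and your Phragm\'en--Lindel\"of step still requires both an a~priori upper bound for $E_k$ across the strip and a polynomial pointwise bound on the critical line. The only readily citable a~priori estimate for general (in particular non-arithmetic) cofinite~$\Gamma$ is Hejhal's
$E_k(z,\sigma+it)=\delta_{k,j}y_j^{\sigma+it}+\varphi_{k,j}(\sigma+it)y_j^{1-\sigma-it}+O\bigl(\sqrt{\omega(t)}\,e^{3|t|-\mu y_j}\bigr)$,
which is \emph{exponential} in $|t|$ (the saving $e^{-\mu y_j}$ vanishes once $z$ is confined to a compactum), with $\omega$ the spectral majorant satisfying only the \emph{mean-value} bound $\int_{-T}^{T}\omega(t)\,dt\ll T^2$. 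The paper uses precisely this exponential bound, and the $e^{aT}$ in the statement absorbs the resulting $e^{6|t|}\omega(t)$ after averaging over $t\in[T,T+1]$; the value of $a$ one obtains is around $\pi+6$, not the $\pi+\epsilon$ your polynomial claim would yield. If you want to assert a polynomial pointwise bound in $t$ you need a separate nontrivial input (resolvent/Green's function estimates, or a sup-norm bound) and a reference; as written, invoking ``Phragm\'en--Lindel\"of convexity'' without establishing the critical-line polynomial bound for $E_k$ (as opposed to $\varphi_{k,j}$) does not suffice. Replace your Step~1 with Hejhal's estimate and the spectral majorant mean-value bound and the proof closes along the same lines.
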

\begin{proof}
From Proposition \ref{GL=RS}, $L_k(s)=\frac{R_k(|E|^2,s)}{G(s)}$, where $\frac{1}{G(s)}= \frac{8\pi^s\Gamma(s)}{\Gamma(\frac{s}{2}+i t_0)\Gamma(\frac{s}{2}-i t_0)\Gamma(\frac{s}{2})^2}$. Note that for $s\in [\frac{1}{2},\frac{3}{2}]+i\RR$, $s\mapsto\frac{1}{G(s)}$ is holomorphic and (by Stirling's formula) $|\frac{1}{G(s)}|\ll_{s_0,\epsilon} e^{(\frac{\pi}{2}+\epsilon)|\Im(s)|} $. By Corollary \ref{RScont}, 
\begin{align*}
R_k(|E|^2,s)=&\int_{\scrF_B}|E_{k_0}(z,s_0)|^2E_k(z,s)\,d\mu(z)\\&+\sum_{j=1}^{\kappa} \left(\int_{\scrC_{j,B}} \left(|E_{k_0}(z,s_0)|^2E_k(z,s)-\Xi_j^1(y_j) \right)\,d\mu(z)-\widehat{\Xi}_j^1(B) \right), 
\end{align*}
where
\begin{equation*}
\Xi_j^1(y)=\big((1+\delta_{k_0,j})y+ \delta_{k_0,j}(\varphi_{k_0,j}(s_0)y^{1-2i t_0}+ \overline{\varphi_{k_0,j}(s_0)}y^{1+2i t_0})\big)(\delta_{k,j}y^s+\varphi_{k,j}(s)y^{1-s}),
\end{equation*}
and $\widehat{\Xi}_j^1(B)$ is the corresponding function as in \eqref{Xihatdef} and the proof of Corollary \ref{RScont}. We now recall \cite[Theorem 12.9, p. 168]{Hejhal} and its generalization \cite[pp.\ 300-301]{Hejhal}: for $\frac{1}{2}\leq \sigma\leq \frac{3}{2}$ and $t\gg 1$, 
\begin{equation*}
E_k(z,\sigma + it)=\delta_{k,j}y_j^{\sigma +it}+\varphi_{k,j}(\sigma+it)y_j^{1-\sigma-it} + O\big(\sqrt{\omega(t)}e^{3|t|-\mu y_j}\big)\qquad\mathrm{for}\; y_j\gg 1,
\end{equation*}
where $\mu>0$, and $\omega$ is the \emph{spectral majorant function}; this satisfies $\omega(t)\geq 0$ for all $t$, and $\int_{-T}^T \omega(t)\,dt\ll T^2$ (cf.\ \cite[p.\ 315]{Hejhal}). Furthermore, each $\varphi_{k,j}(\sigma+it)$ is uniformly bounded over the set of $\sigma$ and $t$ being considered (again, see \cite[pp.\ 300-301]{Hejhal}). Using these facts together with \eqref{EjkFourier} and \eqref{Eisexp} gives 
\begin{equation*}
\left|\int_{\scrF_B}|E_{k_0}(z,s_0)|^2E_k(z,\sigma+it)\,d\mu(z)\right|\ll_{s_0} \sqrt{\omega(t)} e^{3|t|},
\end{equation*}
and
\begin{equation*}
\left|\int_{\scrC_{j,B}} \left(|E_{k_0}(z,s_0)|^2E_k(z,\sigma+it)-\Xi_j^1(y_j) \right)\,d\mu(z)\right|\ll_{s_0} \sqrt{\omega(t)} e^{3|t|}.
\end{equation*}
Note also that the uniform boundedness of the functions $\varphi_{k,j}(s)$ and the definition of $\widehat{\Xi}_j^1(B)$ imply that $|\widehat{\Xi}_j^1(B)|\ll_{s_0} 1$ for all $s$ with large enough imaginary part. Combining these bounds gives $|R_k(|E|^2,\sigma+it)| \ll 1+ \sqrt{\omega(t)}e^{3|t|}$, hence 
\begin{align*}
\int_T^{T+1} |L_k(\sigma+it)|^2\,dt=\int_T^{T+1} |G(\sigma+it)|^{-2}|R_k(|E|^2,\sigma+it)|^2\,dt\ll e^{a' T}\int_T^{T+1} \omega(t)\,dt \ll e^{a T}
\end{align*}
for some $a>a'>0$.
\end{proof}

\subsection{Proof of Theorem \ref{thm1} assuming Theorem \ref{thm2}}\label{implication}
Note that $\overline{L_k(s)}=L_k(\overline{s})$, so by Proposition \ref{GL=RS}:
\begin{align*}
\int_{0}^{T}& |L_k(\sfrac{1}{2}+it)|^2\,dt=\int_{0}^{T} |R_k(|E|^2,\sfrac{1}{2}-it)|^2 |G(\sfrac{1}{2}-it)|^{-2}\,dt.
\end{align*}
Stirling's formula gives $|G(\frac{1}{2}-it)| \asymp e^{-\pi|t|/2}(1+|t|)^{-1}$, hence by Corollary \ref{RScont} and Theorem \ref{thm2}:
\begin{align*}
\int_{0}^{T}& |L_k(\sfrac{1}{2}+it)|^2\,dt \ll T^2 \int_{0}^{T} |R_k(|E|^2,\sfrac{1}{2}-it)|^2 e^{\pi t}\,dt\\& \leq T^2 \int_0^T \left|R.N.\left( \int_{\GaH} E_{k_0}(z,s_0)E_{k_0}(z,\overline{s_0})\overline{E_k(z,\sfrac{1}{2}+it)}\,d\mu(z)\right) \right|^2 e^{\pi t}\,dt\ll_{s_0,\epsilon} T^{5+\epsilon}.
\end{align*}

\hspace{425.5pt}\qedsymbol

\section{Representation theory and Eisenstein series}
We now recall the connection between Eisenstein series and the representation theory of $G=\SL(2,\RR)$. Our main references for this section are \cite[Chapters 15-17]{Borel} and \cite[Section 2]{AndJensKron}.

\subsection{Principal series representations} Let $N$, $A$, $K$ denote the following subgroups of $G$:
\begin{align*}
N&=\lbrace \smatr 1 x 0 1 \,:\, x\in \RR\rbrace\\
A&=\lbrace \smatr {a}{0}{0}{a^{-1}}\,:\, a\in\RR_{>0}\in \RR\rbrace\\
K&=\SO(2).
\end{align*}
We parametrize the subgroups $N$, $A$, and $K$ as follows: let 
\begin{equation}
n_x:=\smatr{1}{x}{0}{1}\!\in N,\, a_y:=\smatr{\sqrt{y}}{0}{0}{\sqrt{y}^{-1}}\!\in A,\, k_{\theta}:=\smatr{\cos\theta}{-\sin\theta}{\sin\theta}{\cos\theta}\!\in K,\, \forall x\in\RR,\,y\in\RR_{>0},\,\theta\in \RR/2\pi\ZZ.
\end{equation}
The \emph{Iwasawa decomposition} of $G$ gives $G=NAK$, and to every $g\in G$, there exist \emph{unique} $x$, $y$, and $\theta$ as above such that $g=n_xa_yk_{\theta}$. The Iwasawa decomposition is particularly convenient when working with M\"obius transformations:
\begin{equation*}
n_xa_y\kthe\cdot i=x+ i y\qquad\forall x\in\RR,\,y\in\RR_{>0},\,\theta\in \RR/2\pi\ZZ,
\end{equation*}
and $K=\mathrm{Stab}_G(i)$ (this last fact gives rise to the identification $\HH\cong G/K$). 

We define functions $\wty:G\rightarrow \RR_{>0}$ and $\wtk:G\rightarrow K$ through
\begin{equation*}\wty(n_xa_y\kthe):= y,\qquad\qquad \wtk( n_xa_y\kthe):=\kthe.
\end{equation*}
These functions are well-defined by the uniqueness of the Iwasawa decomposition. We note that $\wty(g)=\Im(g\cdot i)$, $\wty(a_y g)=y\,\wty(g)$,  $\wty(n g)=\wty(g)$, $\wty(gk)=\wty(g)$, and $\wtk(gk)=\wtk(g)k$ for all $g\in G$, $n\in N$, $k\in K$, and $y>0$.

For each $j\in\lbrace 1,\ldots,\kappa\rbrace$, we define $k_j:=\wtk(h_j)$, $N_j=k_j^{-1} N k_j$, $A_j=k_j^{-1}A k_j$, and note that $N_j A_j= h_j^{-1} N A h_j$, as well as $\pm N_jA_j =\mathrm{Stab}_G(\eta_j)$. This gives rise to an Iwasawa decomposition of $G$ for each $j\in\lbrace 1 ,\ldots,\kappa\rbrace$: $G=N_j A_j K$. Let $\wtk_j: G\rightarrow K$ be defined through
\begin{equation*}
\wtk_j(nak)=k; \qquad \forall n\in N_j,\, a\in A_j,\,k\in K,
\end{equation*}
and $\wty_j:G\rightarrow \RR_{>0}$ by
\begin{equation*}
\wty_j(g):=\wty\big(k_j g\big).
\end{equation*}
Observe that
\begin{equation*}
\wty_j(n_xa_y \kthe)=\wty(h_j)^{-1}\Im\big(h_j\cdot(x+ i y)\big)\qquad \forall x+ iy\in\HH,\,\theta\in \RR/2\pi\ZZ,\,j\in\lbrace 1,\ldots,\kappa\rbrace,
\end{equation*}
and (by writing $g= k_j^{-1}nak_j\wtk_j(g)$, where $na\in NA$),
\begin{equation*}
\wtk(h_j g)=\wtk \big(k_j(k_j^{-1}nak_j\wtk_j(g))\big)=k_j\wtk_j(g) \qquad \forall g\in G,\,j\in\lbrace 1,\ldots,\kappa\rbrace.
\end{equation*}
Let $V\subset L^2(K)$ be the subspace of $\pi$-periodic functions in $L^2(K)$, i.e.
\begin{equation*}
V=\lbrace \vecv\in L^2(K)\,:\, \vecv(k_{\theta+\pi})=\vecv(\kthe),\,\forall \theta\in \RR/2\pi\ZZ\rbrace.
\end{equation*}
We now define a family of linear representations of $G$ on $V$, cf.\ \cite[Chapter 15]{Borel}, \cite[Chapter 7]{Knapp}: 
\begin{defn}\label{psr}
For each $s\in \CC$, the principal series representation $\scrP^s=(\pi^s,V)$ is defined through
\begin{equation}\label{psreq}
[\pi^s(g)\vecv](k):= \wty(kg)^s\vecv\big(\wtk(kg)\big)\qquad \forall g\in G,\, \vecv\in V,\,k\in K.
\end{equation}
Similarly, for each $s\in\CC$ and $j\in\lbrace 1,\ldots, \kappa\rbrace$, let $\scrP_j^s=(\pi_j^s,V)$ be defined by
\begin{equation}\label{psrjeq}
[\pi_j^s(g)\vecv](k):= \wty_j(kg)^s\vecv\big(\wtk_j(kg)\big)\qquad \forall g\in G,\, \vecv\in V,\,k\in K.
\end{equation}
\end{defn} 
\begin{remark}
The definition of principal series representations given here differs slightly from that given in \cite[Chapter 15]{Borel}. This is due to the fact we use the ``compact picture" of the principal series representations in order to keep the underlying vector space the same as we let $s$ vary, while \cite{Borel} uses the ``induced picture" (cf.\ \cite[Chapter 7, pp.\ 167-169]{Knapp}). The map $\vece_{2\upsilon}\mapsto$  $``\varphi_{P_j,2\upsilon,2s-1}"$ ($\vece_{2\upsilon}$ as in \eqref{stdbasis} below) provides an isomorphism between each $\scrP_j^s$ and $``H(0,2s-1)"$; $\varphi_{P_j,2\upsilon,2s-1}$ and $H(0,2s-1)$ being the notation used in \cite[Chapter 15]{Borel}.
\end{remark}
We have $\scrP^s\cong\scrP_j^s$ (isomorphism of $G$-representations) for all $s\in \CC$ and $j\in \lbrace 1,\ldots,\kappa\rbrace$. That these representations are indeed isomorphic is seen by noticing that $\pi_j^s(k_j^{-1} g k_j)=\pi^s(g)$: for fixed $k\in K$ and $g\in G$, let $kg=n_xa_y\kthe$ and $\vecv\in V$. Then
\begin{align*}
[\pi_j^s(k_j^{-1} g k_j)\vecv](k)=& \wty_j(k_j^{-1} kg k_j)^s \vecv\big(\wtk_j(kk_j^{-1} g k_j) \big)=\wty(kg)^s\vecv\big( \wtk_j(k_j^{-1}n_x a_yk_j\kthe)\big) \\&=\wty(kg)^s \vecv(\kthe)=\wty(kg)^s\vecv\big( \wtk(kg)\big)=[\pi^s(g)\vecv](k),
\end{align*} 
the third equality holding due to the fact that $k_j^{-1}n_xa_y k_j\in N_j A_j$. The operators $\pi_j^s(k_j)$ may thus be viewed as isomorphisms from $\scrP_j^s$ to $\scrP^s$: let $\scrK_j=\pi_j^s(k_j)$ (note that by \eqref{psrjeq}, $K$ acts by rotation on $V$; $\scrK_j$ is thus just a rotation by an element of $K$, and hence independent of $s\in\CC$). Then
\begin{equation}\label{Psjinterwtwine}
\pi_j^s(g)\scrK_j\vecv=\scrK_j\pi^s(g)\vecv\qquad \forall j\in\lbrace 1,\ldots,\kappa\rbrace,\,s\in \CC,\, \vecv\in V.
\end{equation}

Recall the following integration formula (cf.\ \cite[Lemma 15.4]{Borel}, \cite[(7.4), p. 170]{Knapp}):
\begin{equation}\label{Kint}
\int_K f(k)\,dk=\int_K f\big( \wtk_j(k g)\big)\wty_j (kg)\,dk\qquad \forall g\in G,\,j\in\lbrace 1,\ldots,\kappa\rbrace, f\in C(K),
\end{equation}
here $dk$ denotes the Haar probability measure on $K$. From this, we see that if $s\in \frac{1}{2}+i\RR$, then each $\scrP_j^s$ is a unitary representation with respect to the usual inner product on $L^2(K)$. 

We let $\lbrace \vece_{2\upsilon}\rbrace_{\upsilon\in \ZZ}$ denote the standard orthonormal basis of $V$:
\begin{equation}\label{stdbasis}
\vece_{2\upsilon}(\kthe)=e^{2\upsilon i \theta}\qquad \forall \upsilon\in\ZZ,\,\theta\in \RR/2\pi\ZZ,
\end{equation}  
and $V^{\infty}:=V\cap C^{\infty}(K)$.
\begin{prop}\label{intertwine}
For $s\in\CC\setminus\ZZ_{\leq 0}$, let $\scrI_s:V^{\infty}\rightarrow V^{\infty}$ be the linear operator defined by
\begin{equation*}
\scrI_s\vece_{2\upsilon}=\mathfrak{i}_{s,2\upsilon}\vece_{2\upsilon},
\end{equation*}
where 
\begin{equation*}
\mathfrak{i}_{s,2\upsilon}=\frac{(-1)^{\upsilon}\Gamma(s)^2}{\Gamma(s+\upsilon)\Gamma(s-\upsilon)}.
\end{equation*}
Then 
\begin{equation*}
\scrI_s\pi_j^s(g)\vecv=\pi_j^{1-s}(g)\scrI_s\vecv\;\;
\mathit{and}\; \;\scrI_{1-s}\scrI_s\vecv=\vecv\qquad\forall s\in\CC\setminus\ZZ,\,\vecv\in V^{\infty},\,g\in G.
\end{equation*}
Moreover, if $s\in\frac{1}{2}+i\RR$, then $\scrI_s$ is unitary.
\end{prop}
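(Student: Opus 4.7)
The plan is to reduce the three assertions to a single Lie-algebra computation and then a short Gamma-function manipulation. First I would reduce the statements for $\pi_j^s$ to the analogous statements for $\pi^s$. A direct check from the definitions shows that for $k\in K$ one has $\wtk_j(kk_j) = kk_j$ and $\wty_j(kk_j) = 1$, so $\scrK_j = \pi_j^s(k_j)$ acts on $V$ as right translation by $k_j = k_{\phi_j}$; in particular $\scrK_j \vece_{2\upsilon} = e^{2i\upsilon\phi_j}\vece_{2\upsilon}$ is diagonal in the basis $\{\vece_{2\upsilon}\}$ and therefore commutes with $\scrI_s$ for every $s$. Using \eqref{Psjinterwtwine} to conjugate by $\scrK_j$ then transfers every intertwining/unitarity statement about $\pi^s$ to the corresponding one about $\pi_j^s$, so from now on I would work with $\pi^s$.

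Next I would use the $K$-type structure to reduce the intertwining to a single Lie-algebra check. Since $\pi^s(k_\phi)\vece_{2\upsilon} = e^{2i\upsilon\phi}\vece_{2\upsilon}$, the restriction $\pi^s|_K$ is independent of $s$, and the isotypic components $\CC\vece_{2\upsilon}$ are one-dimensional and pairwise distinct. Consequently, any $G$-intertwiner $\scrP^s\to\scrP^{1-s}$ is automatically diagonal in $\{\vece_{2\upsilon}\}$, and by the admissibility of the $(\mathfrak{g},K)$-module $V^\infty$ a diagonal operator on $V^\infty$ is a $G$-intertwiner as soon as it is a $(\mathfrak{g},K)$-intertwiner. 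Since the $K$-intertwining is automatic, it suffices to verify the infinitesimal identity $\scrI_s\pi^s(X) = \pi^{1-s}(X)\scrI_s$ for a single $X\in\mathfrak{g}_\CC$ outside $\mathfrak{k}_\CC$.

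I would take $X = X_\pm$, the $(\pm 2i)$-eigenvectors of $\mathrm{ad}(W)$ acting on $\mathfrak{p}_\CC$, where $W = F - E$ generates $\{k_\theta\}$; these shift the $K$-weight by $\pm 2$. Differentiating $[\pi^s(g)\vecv](k) = \wty(kg)^s\vecv(\wtk(kg))$ and computing the Iwasawa coordinates of $k_\theta \exp(tX_\pm)$ to first order in $t$ yields, after a routine but careful calculation, the standard raising/lowering formulas
\[
\pi^s(X_+)\vece_{2\upsilon} = (s+\upsilon)\vece_{2\upsilon+2}, \qquad \pi^s(X_-)\vece_{2\upsilon} = (s-\upsilon)\vece_{2\upsilon-2},
\]
for a suitable normalization of $X_\pm$. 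The required intertwining then reduces to the two scalar identities
\[
\mathfrak{i}_{s,2\upsilon+2}(s+\upsilon) = (1-s+\upsilon)\mathfrak{i}_{s,2\upsilon}, \qquad \mathfrak{i}_{s,2\upsilon-2}(s-\upsilon) = (1-s-\upsilon)\mathfrak{i}_{s,2\upsilon},
\]
both immediate from $\Gamma(z+1)=z\Gamma(z)$ applied to the explicit formula for $\mathfrak{i}_{s,2\upsilon}$ together with $\mathfrak{i}_{s,0}=1$. This is the step where essentially all the work sits, and the \emph{main technical obstacle} is the explicit differentiation of the Iwasawa decomposition and the bookkeeping of normalizations so that the raising/lowering coefficients come out in exactly the form needed for the recursion above.

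For the two remaining claims, the composition $\scrI_{1-s}\scrI_s$ is diagonal with eigenvalue $\mathfrak{i}_{s,2\upsilon}\mathfrak{i}_{1-s,2\upsilon}$, and Euler's reflection formula $\Gamma(z)\Gamma(1-z) = \pi/\sin\pi z$ together with $\sin\pi(s\pm\upsilon) = (-1)^\upsilon\sin\pi s$ (for $\upsilon\in\ZZ$) gives
\[
\Gamma(s+\upsilon)\Gamma(1-s-\upsilon)\Gamma(s-\upsilon)\Gamma(1-s+\upsilon) = \frac{\pi^2}{\sin\pi(s+\upsilon)\sin\pi(s-\upsilon)} = \frac{\pi^2}{\sin^2\pi s} = \Gamma(s)^2\Gamma(1-s)^2;
\]
the two $(-1)^\upsilon$ factors in $\mathfrak{i}_{s,2\upsilon}\mathfrak{i}_{1-s,2\upsilon}$ square to $1$, so the product equals $1$ and $\scrI_{1-s}\scrI_s = \mathrm{id}$ on $V^\infty$. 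For unitarity at $s\in\tfrac12 + i\RR$, the symmetry $\overline{s}=1-s$ combined with $\overline{\Gamma(w)}=\Gamma(\overline{w})$ yields $\mathfrak{i}_{1-s,2\upsilon} = \overline{\mathfrak{i}_{s,2\upsilon}}$, so $|\mathfrak{i}_{s,2\upsilon}|^2 = 1$ by the inverse identity just established; since $\{\vece_{2\upsilon}\}$ is orthonormal in $V$, $\scrI_s$ is a diagonal isometry on $V^\infty$ with unimodular eigenvalues and extends uniquely by density to a unitary operator on $V$.
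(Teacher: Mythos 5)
Your proof is correct but uses a different strategy from the paper's, which simply cites \cite[Proposition 2.6.3]{Bump}. The cited reference constructs the intertwining operator as the meromorphic continuation of the integral $\int_N\phi(wng)\,dn$ and identifies its diagonal action on the $K$-types by an explicit Beta-function computation; the intertwining property is then manifest from the integral formula. Your argument inverts the order: you posit the diagonal operator $\scrI_s$ with the stated eigenvalues $\mathfrak{i}_{s,2\upsilon}$ and verify the intertwining property infinitesimally, by computing the raising and lowering actions $d\pi^s(X_{\pm})\vece_{2\upsilon}=2(s\pm\upsilon)\vece_{2\upsilon\pm 2}$ for $X_{\pm}=H\mp i(E+F)$ from the Iwasawa decomposition, and then checking the resulting two-term scalar recursion for $\mathfrak{i}_{s,2\upsilon}$. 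The inverse relation $\scrI_{1-s}\scrI_s=\mathrm{id}$ and unitarity on $\tfrac{1}{2}+i\RR$ then reduce to Euler's reflection formula, exactly as you say, and those calculations all check out. This is self-contained and avoids any discussion of the integral operator, which is cleaner for the narrow claim being made. The one step you should spell out rather than assert is the passage from the $(\mathfrak{g},K)$-level to the $G$-level: a continuous operator on $V^{\infty}$ intertwining the $(\mathfrak{g},K)$-actions is a $G$-intertwiner. You attribute this to admissibility, but the honest justification is that $g\mapsto\pi^{s}(g)\vecv$ is real-analytic for $K$-finite $\vecv$ (so the Lie-algebra identity propagates to a $G$-identity on $K$-finite vectors by connectedness of $\SL(2,\RR)$), combined with continuity of $\scrI_s$ on $V^{\infty}$ (its eigenvalues grow only polynomially in $\upsilon$ by Stirling) and density of $K$-finite vectors in $V^{\infty}$. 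That is a standard chain of reasoning, but it should be named rather than folded into the word ``admissibility.''
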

\begin{proof}
See e.g.\ \cite[Proposition 2.6.3]{Bump}.
\end{proof}

\subsection{Functions on $\GaG$ and $\GaH$}\label{GaGGaH} We now abuse notation slightly, and identify $\Gamma$ with its inverse image under the map from $G$ to $\PSL(2,\RR)=\SL(2,\RR)/\pm I$ given by $\smatr a b c d \mapsto \pm \smatr a b c d$; that is to say, we view $\Gamma$ as a lattice in $G$ that contains $-I$. Recall that $\HH\cong G/K$; there is thus an identification $\GaG/K\cong \GaH$. Let $\rho$ denote the linear $G$-action on $C(\GaG)$ given by right-translation: for $f\in C(\GaG)$, $h,\,g\in G$, $[\rho(g)f](\Gamma h)=f(\Gamma h g)$. There is a standard bijection between $C(\GaH)$ and $\rho(K)$-invariant elements in $C(\GaG)$ given by
\begin{equation*}
f(x+iy)\leftrightarrow f(\Gamma n_x a_y \kthe)\qquad \forall x\in \RR,\, y\in \RR_{>0},\,\theta\in\RR/2\pi\ZZ.
\end{equation*} 
Recall that $\big(\rho,L^2(\GaG)\big)$ is a unitary representation of $G$. We may decompose $L^2(\GaG)$ as the following (orthogonal) direct sum:
\begin{equation*}
L^2(\GaG)=\bigoplus_{\upsilon\in \ZZ} L^2(\GaG)_{2\upsilon},
\end{equation*} 
where
\begin{equation*}
L^2(\GaG)_{2\upsilon}=\lbrace f\in L^2(\GaG)\,:\, \rho(k)f=\vece_{2\upsilon}(k)f\quad\forall k\in K\rbrace.
\end{equation*}
Using the correspondence between $\rho(K)$-invariant functions on $\GaG$ and functions on $\GaH$ (as well as the decomposition $d\mu_G(n_x a_y \kthe)=d\mu(x+iy)\,d\kthe= \frac{ dx\,dy\,d\theta}{2\pi y^2}$ of the Haar measure $\mu_G$ on $G$) we may identify $L^2(\GaG)_0$ with $L^2(\GaH)$. We will now recall a similar bijection for $L^2(\GaG)_{2\upsilon}$ (for any $\upsilon\in\ZZ$). Let $L^2(\GaH,-2\upsilon)$ denote the following set of functions:
\begin{align*}
\left\lbrace f:\HH\rightarrow\CC\;:\;\int_{\scrF} |f|^2\,d\mu<\infty,\; f(\gamma\cdot z)=f(z)( \sfrac{c z + d}{|c z +d|})^{-2\upsilon}\quad\forall z\in \HH,\gamma=\smatr a b c d \in \Gamma\right\rbrace.
\end{align*}
Note that if $f_1,f_2\in L^2(\GaH,-2\upsilon)$, then $f_1\overline{f_2}$ is a $\Gamma$-invariant function in $L^1(\scrF)$. We thus have that $L^2(\GaH,-2\upsilon)$ is a Hilbert space with respect to the inner product
\begin{equation*}
\langle f_1,f_2\rangle_{L^2(\GaH,-2\upsilon)}:=\int_{\GaH} f_1\overline{f_2}\,d\mu=\int_{\scrF} f_1\overline{f_2}\,d\mu.
\end{equation*} 

Viewing an element $f\in L^2(\GaG)_{2\upsilon}$ as a left $\Gamma$-invariant function on $G$, we have
\begin{align*}
f(n_x a_y)=f(\gamma n_x a_y)=&f\big( n_{\Re(\gamma\cdot (x+iy))}a_{\Im(\gamma\cdot (x+iy))}\wtk(\gamma n_x a_y)\big)\\&=f\big( n_{\Re(\gamma\cdot (x+iy))}a_{\Im(\gamma\cdot (x+iy))}\big)\vece_{2\upsilon}\big(\wtk(\gamma n_x a_y)\big),
\end{align*}
i.e.
\begin{equation*}
f\big( n_{\Re(\gamma\cdot (x+iy))}a_{\Im(\gamma\cdot (x+iy))}\big)= f( n_x a_y)\vece_{-2\upsilon}\big(\wtk(\gamma n_x a_y)\big).
\end{equation*}
Note that if $\gamma= \smatr a b c d$ and $z=x+i y$, then $\vece_{2\upsilon}\big(\wtk(\gamma n_x a_y)\big)=( \frac{c z + d}{|c z +d|})^{2\upsilon}$. We thus define, for $f\in L^2(\GaG)_{2\upsilon}$, $\widetilde{f}\in L^2(\GaH;-2\upsilon)$ by $\widetilde{f}(x+iy):=f(n_xa_y)$.
\begin{lem}\label{GaH/GaGint}
For $f_1,f_2\in L^2(\GaG)_{2\upsilon}$, let $\widetilde{f_1},\widetilde{f_2}\in L^2(\GaH,-2\upsilon)$ be defined as above. Then
\begin{equation*}
\int_{\GaG}f\overline{f_2}\,d\mu_{\GaG} = \int_{\GaH} \widetilde{f_1}\overline{\widetilde{f_2}}\,d\mu,
\end{equation*}
$\mu_{\GaG}$ being the canonical projection of $\mu_G$ to $\GaG$.
\end{lem}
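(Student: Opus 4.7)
The plan is to choose a fundamental domain for $\Gamma$ acting on $G$ that is compatible with the Iwasawa decomposition and then use the $K$-type assumption to collapse the $K$-integration. Specifically, since the action of $\Gamma$ on $G$ is by left multiplication and $K$ sits on the right, the set $\widetilde{\scrF}:=\lbrace n_xa_y\kthe \,:\, x+iy\in\scrF,\,\theta\in\RR/2\pi\ZZ\rbrace$ is a fundamental domain for $\Gamma\backslash G$, and the Haar measure decomposes as $d\mu_G(n_xa_y\kthe)= \frac{dx\,dy\,d\theta}{2\pi y^2}= d\mu(x+iy)\,d\kthe$ with $d\kthe$ the probability Haar measure on $K$.

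Next I would use the defining property of $L^2(\GaG)_{2\upsilon}$, namely $\rho(\kthe)f_i=\vece_{2\upsilon}(\kthe)f_i=e^{2\upsilon i\theta}f_i$, to obtain
\begin{equation*}
f_i(n_xa_y\kthe)=f_i(n_xa_y)e^{2\upsilon i\theta}=\widetilde{f_i}(x+iy)e^{2\upsilon i\theta}\qquad(i=1,2).
\end{equation*}
Consequently, the phase factors cancel in the pointwise product:
\begin{equation*}
f_1(n_xa_y\kthe)\overline{f_2(n_xa_y\kthe)}= \widetilde{f_1}(x+iy)\overline{\widetilde{f_2}(x+iy)}\,|e^{2\upsilon i\theta}|^2=\widetilde{f_1}(x+iy)\overline{\widetilde{f_2}(x+iy)},
\end{equation*}
so in particular the product is independent of $\theta$.

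I would then compute, using Fubini (justified by $f_1\overline{f_2}\in L^1(\GaG)$, which follows from Cauchy--Schwarz applied to $f_1,f_2\in L^2(\GaG)$),
\begin{equation*}
\int_{\GaG}f_1\overline{f_2}\,d\mu_{\GaG}=\int_{\scrF}\int_{K}\widetilde{f_1}(x+iy)\overline{\widetilde{f_2}(x+iy)}\,d\kthe\,d\mu(x+iy)=\int_{\GaH}\widetilde{f_1}\overline{\widetilde{f_2}}\,d\mu,
\end{equation*}
since $\int_K d\kthe=1$. The only step that requires any care is verifying that $\widetilde{\scrF}$ is genuinely a fundamental domain for $\GaG$, which I would justify by observing that $\Gamma$ acts on $G$ from the left, commutes with the right $K$-action, and $\scrF$ is a fundamental domain for $\Gamma$ acting on $\HH=G/K$; transversality is automatic from the uniqueness in the Iwasawa decomposition $G=NAK$. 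I do not expect any genuine obstacle here: once the measure decomposition and the $K$-type identity are in place, the lemma is essentially a bookkeeping statement.
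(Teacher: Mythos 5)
The paper does not provide a proof of this lemma, so there is nothing in the text to compare against; your approach---an Iwasawa-compatible description of $\GaG$, cancellation of the $\theta$-phase via the $K$-type identity, and Fubini---is exactly the one a reader would supply, and the measure decomposition you invoke is stated by the paper just before the lemma.

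One step deserves more care. You assert that $\widetilde{\scrF}=\lbrace n_xa_y\kthe \,:\, x+iy\in\scrF,\;\theta\in\RR/2\pi\ZZ\rbrace$ is a fundamental domain for $\Gamma$ on $G$, with transversality ``automatic from the uniqueness of the Iwasawa decomposition.'' But the paper has lifted $\Gamma$ to a subgroup of $G=\SL(2,\RR)$ containing $-I=k_{\pi}$, and since $-I$ is central, $(-I)\,n_xa_y\kthe=n_xa_yk_{\theta+\pi}$ stays inside $\widetilde{\scrF}$; a generic $\Gamma$-orbit therefore meets $\widetilde{\scrF}$ twice, and the Iwasawa-uniqueness argument alone does not rule this out. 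This is the familiar ``$-I$ factor of two.'' The paper tacitly absorbs it through its normalization of $\mu_{\GaG}$: the asserted isometry $L^2(\GaG)_0\cong L^2(\GaH)$ stated just before the lemma is precisely the convention under which integration over $\widetilde{\scrF}$ against $d\mu(x+iy)\,d\kthe$ computes $\int_{\GaG}\cdot\,d\mu_{\GaG}$. You should either flag that normalization explicitly, or pass to $\PSL(2,\RR)$ (where the $\theta$-period is $\pi$ and $\widetilde{\scrF}$ genuinely is a fundamental domain). With that caveat supplied, the $K$-type cancellation and the Fubini step (justified, as you note, by Cauchy--Schwarz) are correct and the proof is sound.
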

This shows that $f\mapsto \widetilde{f}$ is in fact an isomorphism (of Hilbert spaces) between $L^2(\GaG)_{2\upsilon}$ and $L^2(\GaH,-2\upsilon)$.

\subsection{Weighted Eisenstein series on $\GaH$ and the Eisenstein map}
We now recall the definition of \emph{weighted Eisenstein series} $E_j(z,s,2\upsilon)$; here $\upsilon\in\ZZ$, and $2\upsilon$ is referred to as a \emph{weight}. For $j\in\lbrace 1, \ldots,\kappa\rbrace$ and $\Re(s)>1$, we have (cf.\ \cite[pp.\ 63-69]{Kubota})
\begin{equation}
E_j(z,s,2\upsilon)=\sum_{\gamma\in\Gamma_{\eta_j}\backslash\Gamma} \Im(h_j\gamma\cdot z)^s\vece_{2\upsilon}\big(\wtk(h_j\gamma n_xa_y)\big)\qquad \forall z=x+iy\in\HH.
\end{equation}
Note that $E_j(z,s)=E_j(z,s,0)$. As is the case for weight zero Eisenstein series, each $E_j(z,s,2\upsilon)$ has a meromorphic continuation (in $s$), and the following Fourier decomposition at a cusp $\eta_k$ (cf.\ \cite[Lemma 2.6]{AndJensKron}; note that our $E_j(z,s,2\upsilon)$ corresponds to $``E_j(z,s,-2\upsilon)"$ in \cite{AndJensKron})
\begin{align}\label{Ej2uFour}
E_j(z,s,2\upsilon)=&\delta_{j,k}y_k^s+\mathfrak{i}_{s,2\upsilon} \varphi_{j,k}(s) y_k^{1-s}\\\notag&\quad+\sum_{\underset{m\neq 0}{m\in\ZZ}}\frac{(-1)^{\upsilon}\Gamma(s)}{2\Gamma(s-\upsilon\cdot\sgn(m))}\frac{\psi_{m,k}^{(j)}(s)}{\sqrt{|m|}}W_{-\upsilon\cdot\sgn(m),s-\frac{1}{2}}(4\pi |m|y_k)e(m x_k),
\end{align}
where $W_{\mu,\lambda}(r)$ is a Whittaker function (cf.\ \cite[9.22-2.23, pp.\ 1024-1028]{GR}), $\psi_{m,k}^{(j)}(s)$ is as in \eqref{EjkFourier}, and $\mathfrak{i}_{s,2\upsilon}$ is as in Proposition \ref{intertwine}. We have the following bound on the Eisenstein series of non-zero weight:
\begin{lem}\label{Ej2ubdd}
Let $\upsilon\in \ZZ\setminus\lbrace 0 \rbrace$. Then for $s\in\CC\setminus\scrE$ with $\Re(s)> \frac{1}{4}$ and $y_k\gg1$:
\begin{equation*}
E_j(z,s,2\upsilon)= \delta_{j,k}y_k^s+\mathfrak{i}_{s,2\upsilon} \varphi_{j,k}(s) y_k^{1-s}+O_s\big( \sqrt{1+\log|\upsilon|}\,|\upsilon|^{\max\lbrace \frac{1}{2}, \frac{3}{2}-2\Re(s) \rbrace} y_k^{-\min\lbrace\Re(s),\frac{1}{3}\rbrace}\big),
\end{equation*}
where the implied constant is uniform on compact subsets of $\lbrace s\in\CC\,:\, \Re(s)>\frac{1}{4}\rbrace\setminus\scrE$.
\end{lem}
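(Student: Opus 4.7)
The plan is to work from the Fourier expansion \eqref{Ej2uFour} of $E_j(z,s,2\upsilon)$. The two explicit terms $\delta_{j,k}y_k^s$ and $\mathfrak{i}_{s,2\upsilon}\varphi_{j,k}(s)y_k^{1-s}$ match exactly the first two summands appearing in the conclusion, so the problem reduces to showing that
\[
R(z):=\sum_{m\neq 0}\frac{(-1)^\upsilon\Gamma(s)}{2\Gamma(s-\upsilon\sgn(m))}\,\frac{\psi_{m,k}^{(j)}(s)}{\sqrt{|m|}}\,W_{-\upsilon\sgn(m),\,s-\frac12}(4\pi|m|y_k)\,e(mx_k)
\]
satisfies the claimed bound. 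I would attack $R(z)$ by Cauchy--Schwarz, writing the generic term as $\psi_{m,k}^{(j)}(s)\cdot a_m(s,\upsilon,y_k)$ and controlling the $\psi$-sum by \eqref{RanSelbd} (for $\Re(s)\in(\tfrac14,\tfrac12)$ first using the scattering-matrix functional equation of the Eisenstein series to reduce to the case $\Re(s)>\tfrac12$); the logarithm at the critical line in \eqref{RanSelbd} is the origin of the $\sqrt{1+\log|\upsilon|}$ factor.

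The Whittaker function $W_{\mu,\lambda}(r)$ with $\mu=-\upsilon\sgn(m)$ and $r=4\pi|m|y_k$ has a classical turning point at $r\asymp 4|\upsilon|$ precisely when $\sgn(m)=-\sgn(\upsilon)$ (so that $\mu>0$); for $\sgn(m)=\sgn(\upsilon)$ the effective potential is everywhere positive and $W$ decays exponentially throughout. Either way this translates to the cutoff $M_\star:=|\upsilon|/y_k$ in the $|m|$-variable. I would therefore truncate the $m$-sum at $M_\star$: the tail $|m|>M_\star$ contributes negligibly by the standard asymptotic $W_{\mu,\lambda}(r)\ll r^{|\mu|}e^{-r/2}$ with constants tracked uniformly in $\upsilon$, producing superpolynomial decay in $y_k$---the same mechanism as in \eqref{Eisexp} for weight zero.

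For the main range $0<|m|\leq M_\star$ I would invoke Olver-type uniform asymptotics across the turning point: the WKB amplitude $|W_{\mu,\lambda}(r)|^2\ll r/(\mu^2-r^2/16)^{1/2}$ outside an Airy boundary layer of width $\asymp|\upsilon|^{1/3}$ about $r=4|\upsilon|$, and an Airy-type replacement of order $r|\upsilon|^{-1/3}$ inside it. Combined with Stirling's formula for the gamma ratio $\Gamma(s)/\Gamma(s-\upsilon\sgn(m))$ (where the reflection formula is used to pair the factorial growth of the ``same-sign'' case with the matching exponential decay of $W$), these estimates yield a bound for $\sum_{0<|m|\leq M_\star}|a_m|^2$; routine dyadic optimization against the Rankin--Selberg bound then produces the claimed exponents. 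The factor $|\upsilon|^{\max\{1/2,\,3/2-2\Re(s)\}}$ arises from the interplay of Stirling applied to the gamma ratio with the $M_\star^{2\sigma_0}$ Rankin--Selberg growth (the dichotomy marking the transition at $\Re(s)=\tfrac12$), while the factor $y_k^{-\min\{\Re(s),\,1/3\}}$ emerges from summing the Whittaker amplitude over $|m|\leq M_\star$, with the Airy boundary layer producing the ceiling $\tfrac13$.

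The main obstacle is establishing the Whittaker-function estimates themselves with enough uniformity in all three parameters $(\upsilon,s,r)$ simultaneously, in particular across the transition at $r\asymp 4|\upsilon|$, where the Airy correction is responsible for the $y_k^{-1/3}$ ceiling. These are classical Olver-type uniform asymptotics that can either be invoked from standard references or derived from the Whittaker integral representation, but tracking the constants uniformly in $s\in\CC\setminus\scrE$ on compacta of $\{\Re(s)>\tfrac14\}$ demands some care. Once these bounds are in hand, the remaining steps---Cauchy--Schwarz, dyadic decomposition, Rankin--Selberg, Stirling, and the functional-equation reduction in the subrange $\Re(s)<\tfrac12$---are essentially bookkeeping.
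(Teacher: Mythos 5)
Your proposal captures the paper's strategy essentially completely: the split according to $\sgn(m)$ versus $\sgn(\upsilon)$, the recognition of the turning point of $W_{\mu,\lambda}(r)$ near $r\asymp 4|\upsilon|$ and hence a cutoff at $|m|\asymp|\upsilon|/y_k$, Cauchy--Schwarz against the Rankin--Selberg bound \eqref{RanSelbd}, the functional-equation reduction for $\tfrac14<\Re(s)<\tfrac12$ (yielding the $\max\{\Re(s),1-\Re(s)\}$ dichotomy), the $\sqrt{1+\log|\upsilon|}$ from the critical-line case of \eqref{RanSelbd}, and the $y_k^{-1/3}$ ceiling from the Airy boundary layer --- all of these are precisely the ingredients of the paper's Appendix~A proof. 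The genuine difference is one of implementation: you would invoke Olver-type uniform Airy asymptotics for $W_{\mu,\lambda}$ as a black box, whereas the paper starts from the integral representations of Proposition~\ref{WhitInt}, cancels the $\Gamma(s+\upsilon)$-growth against the prefactor, and proves the needed uniform bounds by elementary integration by parts and stationary phase on the resulting oscillatory integral $I_\upsilon(r,s)$ (Lemmas~\ref{trivbdd}--\ref{statphase1}). The paper's route is longer but self-contained and makes the $s$-uniformity (the issue you correctly flag as the main obstacle) completely explicit, because the constants in each range are tracked through the phase derivatives. One small inaccuracy in your sketch: the tail $|m|>M_\star$ does not decay superpolynomially in $y_k$ as it does in the weight-zero case; the $r^{|\mu|}$ amplitude in $W_{\mu,\lambda}(r)\ll r^{|\mu|}e^{-r/2}$ is large and the $1/\Gamma(s+\upsilon)$ cancellation leaves only polynomial decay in $r$ --- the paper gets $|I_\upsilon(r,s)|\ll r^{-2}$ for $r\geq 8|\upsilon|$, yielding a $y_k^{-(1+\Re(s))}$ tail contribution, which is still subordinate to $y_k^{-1/3}$ for $\Re(s)>\tfrac14$, so your conclusion stands even though the stated mechanism does not.
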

The proof of Lemma \ref{Ej2ubdd} consists of combining \eqref{RanSelbd} with estimates for Whittaker functions, and is relegated to Appendix \ref{AppA} (see Appendix \ref{Ej2ubddproof}). We note also that for $\upsilon\neq 0$, $E_j(z,s,2\upsilon)$ is holomorphic at $s=1$, and satisfies the following bound:
\begin{lem}\label{Eisu1} For $\upsilon\in \ZZ\setminus\lbrace 0\rbrace$, $E_j(z,s,2\upsilon)$ is holomorphic at $s=1$, and for all small enough $\epsilon>0$, 
\begin{equation*}
E_j(z,s,2\upsilon)= \delta_{j,k}y_k^s+\mathfrak{i}_{s,2\upsilon} \varphi_{j,k}(s) y_k^{1-s}+O_{\epsilon}\big( |\upsilon| ^{1+2\epsilon}y_k^{2\epsilon-1}\big)\qquad \forall y_k\gg 1
\end{equation*}
for all $s\in\CC$ such that $0<|s-1|<\epsilon$, and
\begin{equation*}
E_j(z,1,2\upsilon)= \delta_{j,k}y_k-\frac{\mu(\GaH)^{-1}}{|\upsilon|}+O_{\epsilon}\big( |\upsilon| ^{\frac{1}{2}+2\epsilon}y_k^{2\epsilon-1}\big)\qquad \forall y_k\gg1.
\end{equation*}
\end{lem}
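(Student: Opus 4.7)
The approach will be in two stages. First I would establish holomorphicity of $E_j(z,s,2\upsilon)$ at $s=1$ by analyzing the Fourier expansion \eqref{Ej2uFour}. Without loss of generality take $\upsilon>0$ (by the symmetry $\mathfrak{i}_{s,-2\upsilon}=\mathfrak{i}_{s,2\upsilon}$). The zeroth Fourier coefficient at $\eta_k$ is $\delta_{j,k}y_k^s+\mathfrak{i}_{s,2\upsilon}\varphi_{j,k}(s)y_k^{1-s}$. Since $\Gamma(s-\upsilon)$ has a simple pole at $s=1$ with residue $(-1)^{\upsilon-1}/(\upsilon-1)!$, a direct Laurent expansion of $\mathfrak{i}_{s,2\upsilon}=(-1)^\upsilon\Gamma(s)^2/(\Gamma(s+\upsilon)\Gamma(s-\upsilon))$ gives
\[
\mathfrak{i}_{s,2\upsilon}=-\frac{s-1}{|\upsilon|}+O\bigl((s-1)^2\bigr),\qquad s\to 1,
\]
which combined with Lemma \ref{Eis1pole} produces the cancellation $\mathfrak{i}_{s,2\upsilon}\varphi_{j,k}(s)=-\mu(\GaH)^{-1}/|\upsilon|+O(s-1)$. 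For $m\neq 0$, each $\psi_{m,k}^{(j)}(s)$ is holomorphic at $s=1$: by Lemma \ref{Eis1pole}, $\Res_{s=1}E_j(z,s)$ is the constant function $\mu(\GaH)^{-1}$, so its non-zero Fourier coefficients at $\eta_k$ must vanish, and since $\sqrt{y_k}K_{s-1/2}(2\pi|m|y_k)$ is non-vanishing at $s=1$, the $\psi_{m,k}^{(j)}$ are necessarily regular there. The Whittaker factor in \eqref{Ej2uFour} is entire in $s$, and $1/\Gamma(s-\upsilon\sgn(m))$ has a zero (not a pole) at $s=1$ when $\upsilon\sgn(m)>0$, so the full series is holomorphic at $s=1$.

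Next I would bound the non-constant Whittaker sum
\[
\sum_{m\neq 0}\frac{(-1)^\upsilon\Gamma(s)\psi_{m,k}^{(j)}(s)}{2\Gamma(s-\upsilon\sgn(m))\sqrt{|m|}}W_{-\upsilon\sgn(m),s-1/2}(4\pi|m|y_k)e(mx_k).
\]
Split the $m$-sum at the Whittaker turning point $|m|y_k\asymp|\upsilon|$: for $|m|y_k\gg|\upsilon|$ the Whittaker function decays essentially like $e^{-2\pi|m|y_k}(|m|y_k)^{-\upsilon\sgn(m)}$, contributing an exponentially small error; for $|m|y_k\lesssim|\upsilon|$ one uses the representation $W_{\mu,1/2}(x)=e^{-x/2}x\,U(1-\mu,2,x)$ (a polynomial in $x$ when $\mu$ is a non-positive integer) to obtain polynomial bounds on the Whittaker factor, then applies Cauchy--Schwarz together with the Rankin--Selberg estimate \eqref{RanSelbd}, which is valid uniformly in a neighbourhood of $s=1$ because each $\psi_{m,k}^{(j)}$ is holomorphic there. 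Careful bookkeeping of the gamma ratios $\Gamma(s)/\Gamma(s-\upsilon\sgn(m))$, and of the extra zero they contribute at $s=1$ when $\upsilon\sgn(m)>0$, yields the bound $|\upsilon|^{1/2+2\epsilon}y_k^{2\epsilon-1}$ at $s=1$. For $0<|s-1|<\epsilon$ this zero is no longer available, costing an additional factor of $|\upsilon|^{1/2}$ and producing the stated $|\upsilon|^{1+2\epsilon}y_k^{2\epsilon-1}$.

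The main technical obstacle is securing the sharp $y_k^{2\epsilon-1}$ decay (as opposed to the $y_k^{-1/3}$ of Lemma \ref{Ej2ubdd}) while retaining clean control in $|\upsilon|$. This forces one to perform a careful regime split at the Whittaker turning point and to exploit the specific simplification of $W_{\mu,1/2}$, then to track how the cancellation coming from the zero of $\mathfrak{i}_{s,2\upsilon}$ at $s=1$ propagates through the Fourier sum. Uniformity of \eqref{RanSelbd} in a punctured neighbourhood of $s=1$ --- secured by holomorphicity of the $\psi_{m,k}^{(j)}$ there --- is essential for the Cauchy--Schwarz step, and will have to be checked before applying the estimate.
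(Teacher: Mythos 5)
Your identification of the algebraic cancellations is correct and matches the paper: the Laurent expansion $\mathfrak{i}_{s,2\upsilon}=-\frac{s-1}{|\upsilon|}+O((s-1)^2)$, its cancellation against the pole of $\varphi_{j,k}$, and the holomorphicity of each $\psi_{m,k}^{(j)}$ at $s=1$ via the constancy of $\Res_{s=1}E_j$. However, there are two genuine gaps in the bounding stage.

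First, the sentence \emph{``\eqref{RanSelbd}\ldots is valid uniformly in a neighbourhood of $s=1$ because each $\psi_{m,k}^{(j)}$ is holomorphic there''} is not a proof. Holomorphicity of each individual coefficient gives no uniform bound on $\sum_{|m|\le M}|\psi_{m,k}^{(j)}(s)|^2$, and \eqref{RanSelbd} carries an implied constant $\ll_{\Gamma,s_0}$ whose behaviour as $s_0\to1$ is not claimed. The paper supplies the missing argument: it uses Cauchy's integral formula on the circle $|\zeta-1|=2\epsilon$ (where \eqref{RanSelbd} applies with constants depending continuously on $\zeta$) to derive the uniform bound $\sum_{|m|\le M}|\psi_{m,k}^{(j)}(s)|^2\ll_\epsilon M^{2+4\epsilon}$ on the whole disc $|s-1|\le\epsilon$ (equation \eqref{psis=1}). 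Something of this kind is indispensable, and also explains where the $+2\epsilon$ in the exponents of $|\upsilon|$ and $y_k$ comes from.

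Second, the polynomial identity $W_{\mu,1/2}(x)=e^{-x/2}x\,U(1-\mu,2,x)$ is specific to second index exactly $\frac{1}{2}$, i.e.\ to $s=1$. The lemma's first estimate is for $0<|s-1|<\epsilon$ with $s\ne1$, where the second index is $s-\frac{1}{2}\ne\frac{1}{2}$, so the polynomial simplification is unavailable and your sketch silently gives no bound there. The paper instead reuses unchanged the integral representations of Proposition \ref{WhitInt} and the stationary-phase estimates (Lemmas \ref{statphase}, \ref{statphase1}) already established for Lemma \ref{Ej2ubdd}, which work uniformly on a neighbourhood of $s=1$; with \eqref{psis=1} substituted for \eqref{RanSelbd}, the calculations of \eqref{sminusbdd1}--\eqref{sminusbdd3} go through verbatim. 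Your plan of tracking the zero of $\Gamma(s)/\Gamma(s-\upsilon\sgn(m))$ to gain $|\upsilon|^{1/2}$ at $s=1$ is the right heuristic, but to turn it into a proof for nearby $s$ you need the uniform Whittaker estimates rather than the $s=1$ polynomial shortcut.
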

The proof of this is similar to that of Lemma \ref{Ej2ubdd}, and is also left to Appendix \ref{AppA} (see Appendix \ref{Eisu1proof}).

Turning to $\GaG$, for $\vecv\in V^{\infty}$, $s\in\CC$, $\Re(s)>1$, and $j\in\lbrace 1,\ldots,\kappa\rbrace$, let $\EE_j^s(\cdot,\vecv):\GaG\rightarrow \CC$ be defined thus
\begin{equation}\label{EGAGdef}
\EE_j^s(\Gamma g,\vecv):=\sum_{\gamma\in \Gamma_{\eta_j}\backslash\Gamma} [\pi^s_j(\gamma g)\vecv](e).
\end{equation} 
That $\EE_j^s(\Gamma g,\vecv)$ is well-defined follows from the fact that if $\gamma_0\in \Gamma_{\eta_j}$, then $h_j \gamma_0 h_j^{-1}= \smatr{1}{x_0}{0}{1}\in N$, hence for any $\gamma\in\Gamma$,
\begin{align*}
 [\pi^s_j(\gamma_0\gamma g)\vecv](e)=&\wty_j(\gamma_0\gamma g)^s\vecv\big( \wtk_j(\gamma_0\gamma g)\big)\\&=\wty\big(k_j h_j^{-1}\smatr{1}{x_0}{0}{1} h_j \gamma g \big)^s \vecv\big( \wtk_j(h_j^{-1}\smatr{1}{x_0}{0}{1} h_j\gamma g)\big)\\&=\wty(h_j)^{-s}\wty\big( h_j \gamma g \big)^s \vecv\big( k_j^{-1}\wtk( h_j\gamma g)\big)
 \\&=\wty\big( k_j \gamma g \big)^s \vecv\big( \wtk_j( \gamma g)\big)
 =\wty_j\big( \gamma g \big)^s \vecv\big( \wtk_j( \gamma g)\big)
 \\&=[\pi^s_j(\gamma g)\vecv](e),
\end{align*}
as well as that the sum is absolutely convergent: for $z=x+iy\in \HH$ and $\theta\in \RR/2\pi\ZZ$, letting $g=n_xa_y\kthe$, we have
\begin{align*}
& \sum_{\gamma\in \Gamma_{\eta_j}\backslash\Gamma} \left| [\pi^s_j(\gamma g)\vecv](e) \right|= \sum_{\gamma\in \Gamma_{\eta_j}\backslash\Gamma} \wty_j(\gamma n_x a_y)^{\Re(s)}\left|\vecv\big(\wtk_j(\gamma n_x a_y)\kthe \big)\right|\\&\leq  \|\vecv\|_{L^{\infty}(K)} \wty(h_j)^{-\Re(s)} E_j(z,\Re(s)).
\end{align*}
\begin{remark}
The functions $\EE_j^s(\cdot,\vece_{2\upsilon})$ are the same as the Eisenstein series $``E(P_j,2\upsilon,2s-1)"$ defined in \cite[Chapters 10 and 11]{Borel}. As in \cite{Borel}, it is fairly common to consider $\EE_j^s(\cdot,\vecv)$ only for $\vecv$ which are \emph{$K$-finite}, i.e. $\vecv$ which are finite linear combinations of the $\vece_{2\upsilon}$. However, we will require $\EE_j^s(\cdot,\vecv)$ for more general $\vecv\in V^{\infty}$.
\end{remark}

From \eqref{EGAGdef}, we see that $\EE_j^s$ may be viewed as a linear operator from $V^{\infty}$ to $C^{\infty}(\GaG)$, which we call an \emph{Eisenstein map}. It is also apparent from \eqref{EGAGdef} that this map intertwines the linear $G$-actions $\rho$ (on $C(\GaG)$) and $\pi^s_j$ (on $V^{\infty}$):
\begin{equation}\label{Intertwine}
\rho(g)\EE_j^s(\Gamma h,\vecv)=\EE_j^s(\Gamma hg,\vecv)=\EE_j^s(\Gamma h,\pi_j^s(g)\vecv)\qquad \forall g,h\in G,\,\vecv\in V^{\infty},\,\Re(s)>1.
\end{equation}
This property will be key for what follows. Given $\vecv\in V^{\infty}$, we have the following Fourier decomposition of $\vecv$:
\begin{equation*}
\vecv=\sum_{\upsilon\in\ZZ} a_{2\upsilon}\vece_{2\upsilon}
\end{equation*}
where $a_{2\upsilon}=\langle \vecv,\vece_{2\upsilon}\rangle_{L^2(K)}=\int_{K} \vecv(k)\overline{\vece_{2\upsilon}(k)}\,dk$. The bound $|a_{2\upsilon}|\ll_{\vecv,n}(1+|\upsilon|)^{-n}\;(\forall n\in\NN)$ allows us to expand $\vecv$ in the basis $\lbrace \vece_{2\upsilon}\rbrace$ and exchange the sum over $\upsilon\in\ZZ$ with the sum over $\gamma\in \Gamma_{\eta_j}\backslash \Gamma$ in \eqref{EGAGdef}, i.e. 
\begin{align}\label{EismapKFour}
\EE_j^s(\Gamma g,\vecv)&=\sum_{\gamma\in \Gamma_{\eta_j}\backslash\Gamma} [\pi^s_j(\gamma g)\vecv](e)=\sum_{\gamma\in \Gamma_{\eta_j}\backslash\Gamma}\left[ \pi_j^s(g)\left( \sum_{\upsilon\in\ZZ}a_{2\upsilon}\vece_{2\upsilon}\right)\right](e)\\\notag&=\sum_{\gamma\in \Gamma_{\eta_j}\backslash\Gamma} \sum_{\upsilon\in\ZZ}a_{2\upsilon}\left[ \pi_j^s(g)\vece_{2\upsilon}\right](e)= \sum_{\upsilon\in\ZZ}a_{2\upsilon}\sum_{\gamma\in \Gamma_{\eta_j}\backslash\Gamma}\left[ \pi_j^s(g)\vece_{2\upsilon}\right](e)\\\notag&=\sum_{\upsilon\in\ZZ}a_{2\upsilon} \EE_j^s(\Gamma g,\vece_{2\upsilon}).
\end{align} 
Turning our attention to the functions $\EE_j^s(\cdot,\vece_{2\upsilon})$, using \eqref{psrjeq} (as well as the relations for $\wty_j$ and $\wtk_j$) we obtain
\begin{align}\label{Eismapexpr}
\EE_j^s(\Gamma n_xa_y\kthe,\vece_{2\upsilon})&=\sum_{\gamma\in \Gamma_{\eta_j}\backslash\Gamma} [\pi^s_j(\gamma n_x a_y\kthe)\vece_{2\upsilon}](e)=\sum_{\gamma\in \Gamma_{\eta_j}\backslash\Gamma} \wty_j(\gamma n_x a_y\kthe)^s\vece_{2\upsilon}\big(\wtk_j(\gamma n_x a_y \kthe) \big)\\\notag&=\sum_{\gamma\in \Gamma_{\eta_j}\backslash\Gamma} \wty_j\big(\gamma n_xa_y\big)^s\vece_{2\upsilon}\big(\wtk_j(\gamma n_x a_y ) \big)\vece_{2\upsilon}(\kthe)\\\notag&=\vece_{2\upsilon}(\kthe)\wty(h_j)^{-s} \sum_{\gamma\in \Gamma_{\eta_j}\backslash\Gamma} \Im\big(h_j\gamma\cdot(x+iy)\big)^s\vece_{2\upsilon}\big( k_j^{-1}\wtk(h_j\gamma n_x a_y)\big)\\\notag&=\frac{1}{\wty(h_j)^s \vece_{2\upsilon}(k_j)}E_j(x+i y ,s, 2\upsilon)\vece_{2\upsilon}(\kthe)\qquad\forall x+iy\in\HH,\,\theta\in \RR/2\pi\ZZ.
\end{align}
The meromorphic continuation of $E_j(z,s,2\upsilon)$ thus allows us to meromorphically continue $\EE_j^s(\Gamma g,s,\vece_{2\upsilon})$ to all of $\CC$. Note that $\EE_j^s(\Gamma gk,s,\vece_{2\upsilon})=\vece_{2\upsilon}(k) \EE_j^s(\Gamma g,s,\vece_{2\upsilon})$ for all $g\in G$ and $k\in K$.
\begin{prop}\label{Eisucont}
For any fixed $\vecv\in V^{\infty}$, $j\in\lbrace 1,\ldots,\kappa\rbrace$, and $g\in G$, $\EE_j^s(\Gamma g,\vecv)$ has a meromorphic continuation to all of $\lbrace z\in\CC\,:\,\Re(z)> \frac{1}{4}\rbrace$, with the set of poles contained in $\scrE$. Furthermore, this extended function satisfies
\begin{equation}\label{Eismapintertwine}
\rho(g)\EE_j^s(\cdot , \vecv)=\EE^s_j(\cdot,\pi_j^s(g)\vecv)\qquad\forall g\in G
\end{equation}
and  $\EE_j^s(\cdot,\vecv)\in C^{\infty}(\GaG)$ for all $s\not\in \scrE$.
\end{prop}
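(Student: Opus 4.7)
The plan is to exploit the $K$-Fourier decomposition $\EE_j^s(\Gamma g,\vecv)=\sum_{\upsilon\in\ZZ}a_{2\upsilon}\EE_j^s(\Gamma g,\vece_{2\upsilon})$ from \eqref{EismapKFour}, combined with the explicit formula \eqref{Eismapexpr} expressing each term in terms of the weighted Eisenstein series $E_j(z,s,2\upsilon)$. Since each $E_j(z,s,2\upsilon)$ admits a meromorphic continuation to all of $\CC$, we can continue each summand individually, and the task reduces to legitimizing the exchange of summation with limits and differentiations on the extended domain.

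First I will establish that for fixed $\vecv\in V^{\infty}$ and $s$ ranging over a compact subset of $\lbrace \Re(s)>\tfrac14\rbrace\setminus \scrE$, the series converges absolutely and locally uniformly in $(s,g)$ on $\GaG$. By Lemma \ref{Ej2ubdd}, for $\upsilon\neq 0$ and $y_k\gg 1$ we have $E_j(z,s,2\upsilon)=\delta_{j,k}y_k^s+\mathfrak{i}_{s,2\upsilon}\varphi_{j,k}(s)y_k^{1-s}+O_s\bigl(\sqrt{1+\log|\upsilon|}\,|\upsilon|^{\max\lbrace 1/2,\,3/2-2\Re(s)\rbrace}y_k^{-\min\lbrace\Re(s),1/3\rbrace}\bigr)$, with implied constant uniform on compact subsets of $\lbrace \Re(s)>\tfrac14\rbrace\setminus\scrE$, while the Fourier coefficients $a_{2\upsilon}=\langle\vecv,\vece_{2\upsilon}\rangle$ of a smooth $\vecv$ decay faster than any negative power of $1+|\upsilon|$. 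This rapid decay dominates the polynomial growth of the bound in $|\upsilon|$, and (again by Lemma \ref{Ej2ubdd}) an analogous uniform estimate holds over the compact part $\scrF_B$, giving the desired locally uniform convergence. The special behaviour at $s=1$, where the weight-zero Eisenstein series has a pole but every $E_j(z,s,2\upsilon)$ with $\upsilon\neq 0$ is regular, is handled separately by Lemma \ref{Eisu1}: only the $\upsilon=0$ summand contributes, so the pole at $s=1$ of the series lies in $\scrE$. A similar argument, using the Fourier expansion \eqref{Ej2uFour} and the fact that the non-constant Fourier coefficients decay exponentially, shows that the only possible poles of $s\mapsto E_j(z,s,2\upsilon)$ in $\Re(s)>\tfrac14$ come from $\varphi_{j,k}(s)$, whose poles lie in $\scrE$.

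From locally uniform convergence of meromorphic functions whose poles lie in the common discrete set $\scrE$, the extended function $\EE_j^s(\Gamma g,\vecv)$ is meromorphic in $s$ on $\Re(s)>\tfrac14$ with poles contained in $\scrE$. The intertwining relation \eqref{Eismapintertwine} is immediate for $\Re(s)>1$ from the defining formula \eqref{EGAGdef} and \eqref{Intertwine}; since $\pi_j^s(g)\vecv\in V^{\infty}$ (smooth vectors are preserved by the representation) and its Fourier coefficients still satisfy the required rapid decay, the right-hand side of \eqref{Eismapintertwine} also extends meromorphically with poles in $\scrE$, so the identity principle in $s$ yields \eqref{Eismapintertwine} throughout the extended domain. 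For the smoothness statement, I will use the intertwining relation: for any $X$ in the Lie algebra of $G$, one differentiates both sides of \eqref{Eismapintertwine} in the direction of $X$ at $g=e$ to obtain $\rho(X)\EE_j^s(\cdot,\vecv)=\EE_j^s(\cdot,d\pi_j^s(X)\vecv)$, and since $d\pi_j^s(X)$ preserves $V^{\infty}$, this relation iterates. Together with continuity (which follows from the locally uniform convergence), this shows that all iterated right-invariant derivatives exist and are continuous, so $\EE_j^s(\cdot,\vecv)\in C^{\infty}(\GaG)$.

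The main technical obstacle is ensuring that the bounds of Lemmas \ref{Ej2ubdd} and \ref{Eisu1} are indeed uniform in $s$ on compact subsets avoiding $\scrE$, and reconciling this uniformity with the rapid decay of $a_{2\upsilon}$ to push the convergence argument through including near $s=1$ and for all $g\in\GaG$. Once that is in hand, everything else reduces to standard facts about uniform limits of holomorphic or meromorphic families and the smooth action of $G$ on $V^{\infty}$.
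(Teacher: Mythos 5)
Your proposal follows the same line of attack as the paper's proof: decompose $\vecv$ into its $K$-Fourier modes, rewrite $\EE_j^s(\cdot,\vecv)$ as a sum of weighted Eisenstein series via \eqref{Eismapexpr}, and play the polynomial-in-$\upsilon$ bound from Lemma \ref{Ej2ubdd} against the rapid decay of $\langle\vecv,\vece_{2\upsilon}\rangle_{L^2(K)}$ to obtain locally uniform convergence away from $\scrE$, hence the meromorphic continuation, with Lemma~\ref{Eisu1} controlling the $\upsilon\neq 0$ terms near $s=1$. Two spots would want an explicit extra step if written out in full: to conclude that the points of $\scrE$ are genuine poles rather than essential singularities one should multiply by $(s-s_0)^{m_0}$ and invoke the maximum principle as the paper does, and your smoothness argument -- differentiating the intertwining relation \eqref{Eismapintertwine} in $g$ -- implicitly presupposes differentiability of $g\mapsto\rho(g)\EE_j^s(\cdot,\vecv)$; the cleanest way to supply it (and what the paper effectively does) is to observe that the coefficients $a_{j,2\upsilon}^s(g)=\langle\pi_j^s(g)\vecv,\vece_{2\upsilon}\rangle_{L^2(K)}$ are smooth in $g$ with rapid decay uniform on compacta in $(g,s)$, so the series and all its $g$-derivatives converge uniformly.
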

\begin{proof}
Given $\vecv\in V^{\infty}$, we write $\vecv=\sum_{\upsilon\in \ZZ} a_{2\upsilon}\vece_{\upsilon}$ and 
\begin{equation*}
\pi^s_j(g)\vecv=\sum_{\upsilon\in\ZZ} a_{j,2\upsilon}^s(g)\vece_{2\upsilon}\qquad \forall\ g\in G,s\in \CC.
\end{equation*}
(we have $a_{2\upsilon}=a_{j,2\upsilon}^s(e)$ for all $j\in\lbrace 1,\ldots,\kappa\rbrace$ and $s\in \CC$). Note that
\begin{align*}
a_{j,2\upsilon}^s(g)=\langle \pi_j^s(g)\vecv,\vece_{2\upsilon}\rangle_{L^2(K)}=\int_K \wty_j(kg)^s\vecv\big( \wtk_j(k g)\big)\overline{\vece_{2\upsilon}(k)}\,dk.
\end{align*}
Since the maps $(g,k)\mapsto \wty_j(kg)$ and $(g,k)\mapsto \wtk_j(kg)$ are smooth (from $G\times K$ to $\RR_{>0}$ and $K$, respectively), the map $s\mapsto a_{j,2\upsilon}^s(g)$ is holomorphic (for every $g\in G$). Moreover, combining the smoothness of these maps with the fact that $\vecv\in V^{\infty}$, for every pair of compact subsets $U_1\subset G$, $U_2\subset \CC$ and $n\in\NN$, there exists a constant $C=C(U_1,U_2,n)$ such that
\begin{equation}\label{aFourbdd}
|a_{j,2\upsilon}^s(g)|\leq \frac{C}{1+|\upsilon|^{n}}\qquad \forall g\in U_1,\,s\in U_2,\,\upsilon\in \ZZ.
\end{equation}
For $\Re(s)>1$ and $g\in G$ we have, using \eqref{Intertwine}, \eqref{EismapKFour}, and \eqref{Eismapexpr},
\begin{align*}
\rho(g)\EE_j^s(\Gamma n_x a_y\kthe, \vecv)&=\EE_j^s(\Gamma n_x a_y\kthe ,\pi_j^s(g) \vecv)\\&=\wty(h_j)^{-s}\sum_{\upsilon\in \ZZ} \frac{a_{j,2\upsilon}^s(g)}{ \vece_{2\upsilon}(k_j)}E_j(x+i y ,s, 2\upsilon)\vece_{2\upsilon}(\kthe).
\end{align*}
Combining Lemma \ref{Ej2ubdd} with the bound \eqref{aFourbdd} allows us to conclude that this sum converges absolutely and uniformly for $(g,s,\Gamma n_x a_y \kthe)$ in compact subsets $U_1\times U_2\times U_3\subset G\times (\lbrace z\in\CC\,:\,\Re(z)> \frac{1}{4}\rbrace\setminus\scrE)\times \GaG$. This suffices to meromorphically continue $\EE^s_j(\Gamma h ,\vecv)$. (Letting $s_0\in \scrE$ have $\Re(s_0)>\frac{1}{4}$ and $m_0$ be the order of the pole for $E_j(z,s,0)$, we have that for all $\upsilon\in\ZZ$, $(s-s_0)^{m_0}E_j(z,s,2\upsilon)$ is holomorphic at $s_0$. Now using the maximum principle and Lemma \ref{Ej2ubdd},  we have that $(s-s_0)^{m_0}\EE_j^s(\Gamma n_x a_y \kthe,\vecv)=\wty(h_j)^{-s}\sum_{\upsilon\in \ZZ} \frac{a_{j,2\upsilon}^s(g)}{ \vece_{2\upsilon}(k_j)}(s-s_0)^{m_0}E_j(x+i y ,s, 2\upsilon)\vece_{2\upsilon}(\kthe)$ remains bounded as $s\rightarrow s_0$; the points of $\scrE$ can indeed only be poles of $\EE_j^s(\Gamma g ,\vecv)$, and not essential sigularities.) The relation \eqref{Eismapintertwine} then also holds by meromorphic continuation.
\end{proof}
Let $V_0^{\infty}=\lbrace \vecv\in V^{\infty}\,:\, \int_K \vecv\,dk=0\rbrace$. Using Lemma \ref{Eisu1}, we can also define $\EE^1(\cdot,\vecv)$ for $\vecv\in V_0^{\infty}$:
\begin{prop}
Let $\vecv\in V_0^{\infty}$ and $j\in\lbrace 1,\ldots,\kappa\rbrace$. Then $\EE_j^s(\Gamma g,\vecv)$ is holomorphic at $s=1$, and
\begin{equation}\label{Eismapintertwine1}
\rho(g)\EE_j^1(\cdot , \vecv)=\EE^1_j(\cdot,\pi_j^1(g)\vecv)+\frac{1}{\wty(h_j)\mu(\GaH)}\left.\frac{d}{ds}\right|_{s=1} \langle \pi_j^s(g)\vecv,\vece_0\rangle_{L^2(K)}\qquad\forall g\in G.
\end{equation}
\end{prop}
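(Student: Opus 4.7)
The key observation driving the argument is that $\langle\pi_j^s(g)\vecv,\vece_0\rangle_{L^2(K)}$ \emph{vanishes identically at $s=1$} for any $\vecv\in V_0^\infty$. Indeed, by \eqref{psrjeq},
\begin{equation*}
\langle\pi_j^1(g)\vecv,\vece_0\rangle_{L^2(K)}=\int_K \wty_j(kg)\,\vecv\big(\wtk_j(kg)\big)\,dk,
\end{equation*}
and the integration formula \eqref{Kint} applied to $f=\vecv$ identifies this with $\int_K \vecv(k)\,dk=0$. In particular, $\pi_j^1(g)\vecv\in V_0^\infty$, so once holomorphy at $s=1$ is established, both sides of the claimed identity will make sense.

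To prove holomorphy of $\EE_j^s(\Gamma g,\vecv)$ at $s=1$, I would write $\vecv=\sum_{\upsilon\neq 0} a_{2\upsilon}\vece_{2\upsilon}$ (the $\upsilon=0$ coefficient vanishes since $\vecv\in V_0^\infty$), so that \eqref{EismapKFour} and \eqref{Eismapexpr} give
\begin{equation*}
\EE_j^s(\Gamma n_xa_y\kthe,\vecv)=\sum_{\upsilon\neq 0}\frac{a_{2\upsilon}}{\wty(h_j)^s\,\vece_{2\upsilon}(k_j)}\,E_j(x+iy,s,2\upsilon)\,\vece_{2\upsilon}(\kthe).
\end{equation*}
Each summand is holomorphic at $s=1$ by Lemma \ref{Eisu1}, and the Schwartz decay of the $a_{2\upsilon}$ combined with the polynomial-in-$\upsilon$ growth estimate of Lemma \ref{Eisu1} yields uniform absolute convergence on a neighbourhood of $s=1$, just as in the proof of Proposition \ref{Eisucont}. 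The sum therefore defines a holomorphic function at $s=1$.

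For the intertwining formula, the relation \eqref{Intertwine} gives, for $\Re(s)>1$,
\begin{equation*}
\rho(g)\EE_j^s(\Gamma n_xa_y\kthe,\vecv)=\sum_{\upsilon\in\ZZ} a_{j,2\upsilon}^s(g)\,\EE_j^s(\Gamma n_x a_y\kthe,\vece_{2\upsilon}),
\end{equation*}
and the identity persists by meromorphic continuation. For $\upsilon\neq 0$ each term is holomorphic at $s=1$, with limit $a_{j,2\upsilon}^1(g)\EE_j^1(\cdot,\vece_{2\upsilon})$; summing (and using that the $\upsilon=0$ component of $\pi_j^1(g)\vecv$ vanishes) recovers $\EE_j^1(\cdot,\pi_j^1(g)\vecv)$. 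The $\upsilon=0$ term is the apparent $0\cdot\infty$ expression
\begin{equation*}
a_{j,0}^s(g)\,\EE_j^s(\Gamma n_x a_y\kthe,\vece_0)=\frac{a_{j,0}^s(g)}{\wty(h_j)^s}\,E_j(x+iy,s,0),
\end{equation*}
to which l'H\^opital applies: by Lemma \ref{Eis1pole}, $E_j(z,s,0)$ has a simple pole at $s=1$ with residue $\mu(\GaH)^{-1}$, while by the first step $a_{j,0}^s(g)$ has a simple zero there, so the product extends continuously to
\begin{equation*}
\frac{1}{\wty(h_j)\mu(\GaH)}\left.\frac{d}{ds}\right|_{s=1}\!\langle\pi_j^s(g)\vecv,\vece_0\rangle_{L^2(K)},
\end{equation*}
which gives the asserted formula.

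The main technical point requiring care is the term-by-term passage to the limit $s\to 1$ in the above $K$-Fourier expansion. I would handle this exactly as in Proposition \ref{Eisucont}: the uniform decay \eqref{aFourbdd} of the coefficients $a_{j,2\upsilon}^s(g)$ on compact subsets of $(g,s)$, combined with the estimate of Lemma \ref{Eisu1} for $E_j(z,s,2\upsilon)$ valid uniformly on a neighbourhood of $s=1$, supplies a dominating function and justifies the interchange of the limit $s\to 1$ with the summation over $\upsilon$.
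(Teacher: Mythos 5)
Your proof is correct and follows essentially the same route as the paper: expand in the $K$-Fourier basis, use Lemma \ref{Eisu1} with the Schwartz decay of the coefficients to get holomorphy and to justify passing limits through the sum, observe via \eqref{Kint} that $\langle\pi_j^1(g)\vecv,\vece_0\rangle_{L^2(K)}=0$, and evaluate the $\upsilon=0$ term by cancelling the simple zero of $a_{j,0}^s(g)$ against the simple pole of $E_j(z,s,0)$ from Lemma \ref{Eis1pole}. The only cosmetic difference is that you phrase the $\upsilon=0$ computation as l'H\^opital, whereas the paper splits off the singular part $\mu(\GaH)^{-1}/(s-1)$ explicitly; these are the same calculation.
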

\begin{remark}
Note that by \eqref{Kint}, if $\vecv\in V_0^{\infty}$, then also $\pi_j^1(g)\vecv\in V_0^{\infty}$ for all $g\in G$.
\end{remark}
\begin{proof}
As in the proof of Proposition \ref{Eisucont}, we have $\vecv=\sum_{\underset{\upsilon\neq 0}{\upsilon\in\ZZ}}a_{2\upsilon}\vece_{2\upsilon}$, and
\begin{align*}
\EE^s_j(\Gamma n_x a_y \kthe,\vecv)= \wty(h_j)^{-s}\sum_{\underset{\upsilon\neq 0}{\upsilon\in\ZZ}}\frac{a_{2\upsilon}}{ \vece_{2\upsilon}(k_j)}E_j(x+i y ,s, 2\upsilon)\vece_{2\upsilon}(\kthe).
\end{align*}
The bound $|a_{2\upsilon}|\ll_{m,\vecv} (1+|\upsilon|)^{-m}$ (for all $m\in \NN$), combined with the bounds of Lemma \ref{Eisu1} shows (again, as in the proof of Proposition \ref{Eisucont}) that $\EE^s_j(\Gamma n_x a_y \kthe,\vecv)$ is indeed holomorphic at $s=1$. Now letting $a_{j,2\upsilon}^s(g)=\langle \pi_j^s(g)\vecv,\vece_{2\upsilon}\rangle_{L^2(K)}$, we note that $a_{j,2\upsilon}^1(g)=0$ for all $g\in G$. For $s\neq 1$ in a small neighbourhood of one as in Lemma \ref{Eisu1}, we have
\begin{align*}
\rho(g)\EE^s_j(\Gamma n_x a_y \kthe,\vecv)=\EE^s_j(\Gamma n_x a_y \kthe,\pi_j^s(g)\vecv)= \wty(h_j)^{-s}\sum_{\upsilon\in\ZZ}\frac{a_{j,2\upsilon}^s(g)}{ \vece_{2\upsilon}(k_j)}E_j(x+i y ,s, 2\upsilon)\vece_{2\upsilon}(\kthe),
\end{align*}
hence by Lemma \ref{Eis1pole},
\begin{align*}
\rho(g)\EE^s_j(\Gamma n_x a_y \kthe,\vecv)=& a_{j,0}^s(g)\wty(h_j)^{-s}\left(\frac{\mu(\GaH)^{-1}}{s-1}+\widetilde{E}_j(z,s) \right)\\&+ \wty(h_j)^{-s}\sum_{\underset{\upsilon\neq 0}{\upsilon\in\ZZ}}\frac{a_{j,2\upsilon}^s(g)}{ \vece_{2\upsilon}(k_j)}E_j(x+i y ,s, 2\upsilon)\vece_{2\upsilon}(\kthe).
\end{align*}
Letting $s\rightarrow1$ concludes the proof.
\end{proof}

For each $\beta\in\RR_{\geq 0}$ we define a \emph{Sobolev norm} $\scrS_{\beta}(\cdot)$ on $V^{\infty}$ by
\begin{equation}
\scrS_{\beta}(\vecv):=\left(\sum_{\upsilon\in\ZZ} (1+|\upsilon|^{\beta})^2|\langle\vecv,\vece_{2\upsilon}\rangle_{L^2(K)}|^2\right)^{1/2}.
\end{equation}
\begin{lem}\label{Eismapbd}
Let $z=x+iy\in \HH$, $\theta\in \RR/2\pi\ZZ$, $\upsilon\in\ZZ$, and $\vecv\in V^{\infty}$. Then for all $j,k\in\lbrace 1,\ldots,\kappa\rbrace$, $s\in\lbrace z\in\CC\,:\,\Re(z)\geq \frac{1}{3}\rbrace\setminus \scrE$, $ y_k\gg 1$, and any $\epsilon>0$, we have
\begin{align*}
\EE_j^s(\Gamma n_x a_y \kthe, \vecv)= \frac{1}{\wty(h_j)^s }&\left(\delta_{j,k} \vecv(\kthe k_j^{-1})y_k^s +\varphi_{j,k}(s)y_k^{1-s}[\scrI_s\vecv](\kthe k_j^{-1}) \right)\\&\qquad+O_{\epsilon,s}\big( \scrS_{\frac{1}{2}+{\max\lbrace \frac{1}{2}, \frac{3}{2}-2\Re(s) \rbrace}+\epsilon}(\vecv) y_k^{-\frac{1}{3}}\big),
\end{align*}
where the implied constant is uniform over any compact subset of $\lbrace z\in\CC\,:\,\Re(z)> \frac{1}{3}\rbrace\setminus\scrE$.
\end{lem}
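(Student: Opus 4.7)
The strategy is to reduce the claim to the single-basis-vector case, where we have the explicit formula \eqref{Eismapexpr} relating $\EE_j^s(\Gamma g,\vece_{2\upsilon})$ to the weighted Eisenstein series $E_j(z,s,2\upsilon)$, and then insert the Fourier expansion at $\eta_k$ from Lemma \ref{Ej2ubdd}. First I would write $\vecv=\sum_{\upsilon\in\ZZ}a_{2\upsilon}\vece_{2\upsilon}$ with $a_{2\upsilon}=\langle\vecv,\vece_{2\upsilon}\rangle_{L^2(K)}$, note the rapid decay $|a_{2\upsilon}|\ll_{\vecv,n}(1+|\upsilon|)^{-n}$, and use the absolute convergence established in the proof of Proposition \ref{Eisucont} to justify
\begin{equation*}
\EE_j^s(\Gamma n_x a_y\kthe,\vecv)=\sum_{\upsilon\in\ZZ}a_{2\upsilon}\EE_j^s(\Gamma n_xa_y\kthe,\vece_{2\upsilon})=\frac{1}{\wty(h_j)^s}\sum_{\upsilon\in\ZZ}\frac{a_{2\upsilon}}{\vece_{2\upsilon}(k_j)}E_j(z,s,2\upsilon)\vece_{2\upsilon}(\kthe).
\end{equation*}

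Next I would substitute the Fourier expansion from Lemma \ref{Ej2ubdd} into each term. The leading $\delta_{j,k}y_k^s$ terms combine, using the identity $\vece_{2\upsilon}(\kthe)/\vece_{2\upsilon}(k_j)=\vece_{2\upsilon}(\kthe k_j^{-1})$, into $\delta_{j,k}y_k^s\sum_\upsilon a_{2\upsilon}\vece_{2\upsilon}(\kthe k_j^{-1})=\delta_{j,k}y_k^s\vecv(\kthe k_j^{-1})$. The second group of terms, with factor $\mathfrak{i}_{s,2\upsilon}\varphi_{j,k}(s)y_k^{1-s}$, similarly assembles into $\varphi_{j,k}(s)y_k^{1-s}\sum_\upsilon a_{2\upsilon}\mathfrak{i}_{s,2\upsilon}\vece_{2\upsilon}(\kthe k_j^{-1})=\varphi_{j,k}(s)y_k^{1-s}[\scrI_s\vecv](\kthe k_j^{-1})$, using the definition of the intertwining operator $\scrI_s$ from Proposition \ref{intertwine}. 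Both rearrangements are justified by the rapid decay of $a_{2\upsilon}$ and the uniform boundedness of $\mathfrak{i}_{s,2\upsilon}$ on compact subsets of the relevant $s$-domain away from $\ZZ_{\leq 0}$.

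The remaining step is bounding the error. Each term contributes, by Lemma \ref{Ej2ubdd}, at most $O_s\bigl(\sqrt{1+\log|\upsilon|}\,|\upsilon|^{M}y_k^{-1/3}\bigr)$ with $M=\max\{\frac{1}{2},\frac{3}{2}-2\Re(s)\}$ (using $\Re(s)\geq\frac{1}{3}$ so that $\min\{\Re(s),\frac{1}{3}\}=\frac{1}{3}$). Summing and applying Cauchy--Schwarz with $\beta=\frac{1}{2}+M+\epsilon$,
\begin{equation*}
\sum_{\upsilon\in\ZZ}|a_{2\upsilon}|\sqrt{1+\log|\upsilon|}\,|\upsilon|^M\leq\scrS_\beta(\vecv)\left(\sum_{\upsilon\in\ZZ}\frac{(1+\log|\upsilon|)|\upsilon|^{2M}}{(1+|\upsilon|^\beta)^2}\right)^{\!1/2}\ll_\epsilon\scrS_\beta(\vecv),
\end{equation*}
since the tail behaves like $\log|\upsilon|/|\upsilon|^{1+2\epsilon}$ and is summable. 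This yields the claimed error term. Uniformity of the implied constant on compact subsets of $\{\Re(s)>\frac{1}{3}\}\setminus\scrE$ is inherited directly from the uniformity asserted in Lemma \ref{Ej2ubdd}.

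The main obstacle, and the only mildly subtle point, is the identification of the second leading term as $[\scrI_s\vecv](\kthe k_j^{-1})$: this relies on reading off the correct $\mathfrak{i}_{s,2\upsilon}$-factor from the Fourier coefficient of $y_k^{1-s}$ in \eqref{Ej2uFour} and matching it exactly with the diagonal action of $\scrI_s$ in Proposition \ref{intertwine}. Everything else is bookkeeping with the Iwasawa parametrization plus a single Cauchy--Schwarz estimate.
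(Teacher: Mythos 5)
Your proposal follows the paper's proof almost line for line: expanding $\vecv$ in the $\vece_{2\upsilon}$ basis, invoking \eqref{Eismapexpr}, inserting the expansion from Lemma \ref{Ej2ubdd}, recognizing the $\delta_{j,k}y_k^s\vecv(\kthe k_j^{-1})$ and $\varphi_{j,k}(s)y_k^{1-s}[\scrI_s\vecv](\kthe k_j^{-1})$ leading terms, and closing with a single Cauchy--Schwarz estimate against the Sobolev norm $\scrS_{\frac{1}{2}+\max\{\frac{1}{2},\frac{3}{2}-2\Re(s)\}+\epsilon}$. The argument is correct and essentially identical to the paper's.
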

\begin{proof}
Letting $\vecv=\sum_{\upsilon\in\ZZ} a_{2\upsilon} \vece_{2\upsilon}$, we have
\begin{equation*}
\EE^s_j (\Gamma n_x a_y \kthe,\vecv)=\sum_{\upsilon\in \ZZ}\frac{a_{2\upsilon}}{\wty(h_j)^s \vece_{2\upsilon}(k_j)}E_j(x+i y ,s, 2\upsilon)\vece_{2\upsilon}(\kthe).
\end{equation*}
Now defining $\upsilon^*:=(1+|\upsilon|^{\max\lbrace \frac{1}{2}, \frac{3}{2}-2\Re(s) \rbrace})\sqrt{1+\log(\max\lbrace1,|\upsilon|\rbrace)} $ and applying Lemma \ref{Ej2ubdd} yields 
\begin{align*}
\EE^s_j &(\Gamma n_x a_y \kthe,\vecv)=\sum_{\upsilon\in \ZZ}\frac{a_{2\upsilon}}{\wty(h_j)^s \vece_{2\upsilon}(k_j)}\Big( \delta_{j,k}y_k^s+\mathfrak{i}_{s,2\upsilon} \varphi_{j,k}(s) y_k^{1-s}+O_s\big( \upsilon^*y_k^{-\frac{1}{3}}\big)\Big)\vece_{2\upsilon}(\kthe)\\&=\frac{1}{\wty_j(h_j)^s}\bigg( \delta_{j,k}y_k^s\sum_{\upsilon\in \ZZ}a_{2\upsilon}\vece_{2\upsilon}(\kthe k_j^{-1})+\varphi_{j,k}(s) y_k^{1-s} \sum_{\upsilon\in \ZZ}\mathfrak{i}_{s,2\upsilon}a_{2\upsilon}\vece_{2\upsilon}(\kthe k_j^{-1})\\&\qquad\qquad\qquad\qquad\qquad\qquad\qquad\qquad\qquad\qquad\qquad\qquad+\sum_{\upsilon\in \ZZ}|a_{2\upsilon}|O_s\big(\upsilon^* y_k^{-\frac{1}{3}}\big)\bigg)\\&=\frac{1}{\wty_j(h_j)^s}\left( \delta_{j,k} y_k^{s}\vecv(\kthe  k_j^{-1})+\varphi_{j,k}(s)y_k^{1-s}[\scrI_s\vecv](\kthe k_j^{-1}) \right) +O_s\left(y_k^{-\frac{1}{3}}\left( \sum_{\upsilon\in\ZZ}  |a_{2\upsilon}|\upsilon^*\right) \right).
\end{align*}
The Cauchy-Schwarz inequality then gives
\begin{equation}
\sum_{\upsilon\in\ZZ} |a_{2\upsilon}|\upsilon^*\ll_{s,\epsilon} \scrS_{\frac{1}{2}+{\max\lbrace \frac{1}{2}, \frac{3}{2}-2\Re(s) \rbrace}+\epsilon}(\vecv).
\end{equation}
\end{proof}

\subsection{The direct integral decomposition of $\big(\rho,L^2(\GaG)\big)$ }
We now recall the \emph{direct integral decomposition} of the unitary representation $\big(\rho,L^2(\GaG)\big)$ into irreducible unitary representations, and how this relates to Eisenstein series. Given a unitary representation $(\pi,\scrH)$ of $G$, one can find a locally compact Hausdorff space $\mathsf{Z}$ equipped with a positive Radon measure $\lambda$ such that 
\begin{equation*}
(\pi,\mathcal{H})\cong \left(\int_{\mathsf{Z}}^{\oplus} \mathcal{\pi}_{\zeta}\,d\lambda(\zeta), \int_{\mathsf{Z}}^{\oplus} \mathcal{H}_{\zeta}\,d\lambda(\zeta) \right),
\end{equation*}
where each $(\pi_{\zeta},\scrH_{\zeta})$ is an irreducible unitary representation of $G$ (cf.\ \cite[Corollary 14.9.5]{Wallach}). For the particular case $(\pi,\scrH)=\big(\rho,L^2(\GaG)\big)$, we can say more (cf.\ \cite{Borel}, \cite{Lang}), namely 
\begin{equation}\label{L2decomp}
\big(\rho,L^2(\GaG)\big)\cong \bigoplus_{i=1}^{\infty}(\pi_i,\scrH_i)\oplus \bigoplus_{j=1}^{\kappa}\int_{\RR\geq 0}^{\oplus} \scrP_j^{1/2+it}\,dt,
\end{equation}
where all the direct sums are orthogonal and each $(\pi_i,\scrH_i)$ may be realised as $ (\pi_i,\scrH_i)\cong \big(\rho,\overline{\mathrm{span}\lbrace \rho(g)f_i\,:\,g\in G\rbrace}\big)$ for some $f_i\in L^2(\GaG)\cap C^{\infty}(\GaG)$. We may thus write $\scrH_i=\overline{\mathrm{span}\lbrace \rho(g)f_i\,:\,g\in G\rbrace}$, and let
\begin{equation*}
L^2(\GaG)_{\mathit{disc}}=\bigoplus_{i=1}^{\infty} \scrH_i.
\end{equation*}
The subrepresentation $\big(\rho, L^2(\GaG)_{\mathit{disc}}\big)= \bigoplus_{i=1}^{\infty}(\rho,\scrH_i)$ is called the \emph{discrete component} of the representation $(\rho,L^2(\GaG))$. Recall that one can further decompose $\big(\rho, L^2(\GaG)_{\mathit{disc}}\big)$ into the \emph{cuspidal} and \emph{residual} parts (we will not need these, however). Letting $L^2(\GaG)_{\mathit{cont}}=L^2(\GaG)_{\mathit{disc}}^{\perp}$, we have
\begin{equation*}
\big( \rho, L^2(\GaG)_{\mathit{cont}}\big)\cong \bigoplus_{j=1}^{\kappa}\int_{\RR\geq 0}^{\oplus} \scrP_j^{1/2+it}\,dt,
\end{equation*}
and call $\big( \rho, L^2(\GaG)_{\mathit{cont}}\big)$ the \emph{continuous component} of $\big(\rho,L^2(\GaG)\big)$. Recall that each direct integral of principal series $\int_{\RR\geq 0}^{\oplus} \scrP_j^{1/2+it}\,dt$ is a unitary representation $(\pi,\scrH)$ of $G$, where the underlying Hilbert space $\scrH$ is the vector space of functions $\vecv:\RR_{\geq 0}\rightarrow V$ such that $t\mapsto\langle \vecv(t),\vece_{2\upsilon}\rangle_{L^2(K)}$ is measurable for every $\upsilon\in\ZZ$ and $\int_0^{\infty}\|\vecv(t)\|^2\,dt<\infty$, and the action $\pi$ of $G$ on $\scrH$ is given by $[\pi(g)\vecv](t)=\pi_j^{1/2+it}(g)(\vecv(t))$ for a.e. $t\in \RR_{\geq 0}$ and all $g\in G$. As is standard, we write $\int_0^{\infty} \vecv(t)\,dt$ for the function $\vecv\in \scrH$

In the following theorem, we give an explicit isomorphism from $(\rho,L^2(\GaG))$ to the right-hand side of \eqref{L2decomp}: 
\begin{thm}\label{fdecomp}
The mapping from $C_c^{\infty}(\GaG)$ to $\bigoplus_{i=1}^{\infty}(\pi_i,\scrH_i)\oplus \bigoplus_{j=1}^{\kappa}\int_{\RR\geq 0}^{\oplus} \scrP_j^{1/2+it}\,dt$ given by
\begin{equation*}
f\mapsto\sum_{i=1}^{\infty} \Proj_i(f) + \sum_{j=1}^{\kappa} \int_0^{\infty}\vecv_{f,j}(t)\,dt,
\end{equation*}
where $\Proj_i(f) $ is the orthogonal projection of $f$ onto $\scrH_i\subset L^2(\GaG)$ and
\begin{equation}\label{vecvfjtdef}
\vecv_{f,j}(t)=\sfrac{\wty(h_j)^{1/2-it}}{\sqrt{2\pi}}\sum_{\upsilon\in \ZZ} \langle f, \EE_j^{1/2+it}(\cdot,\vece_{2\upsilon})\rangle_{L^2(\GaG)}\vece_{2\upsilon},
\end{equation}
may be extended to an isomorphism of unitary representations between $(\rho,L^2(\GaG))$ and $\bigoplus_{i=1}^{\infty}(\pi_i,\scrH_i)\oplus \bigoplus_{j=1}^{\kappa}\int_{\RR\geq 0}^{\oplus} \scrP_j^{1/2+it}\,dt$.
\end{thm}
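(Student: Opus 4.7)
The plan is to take the abstract direct-integral decomposition \eqref{L2decomp} as an input and show that the explicit map of the theorem implements this isomorphism, identifying the continuous factor via Eisenstein series and leaving the discrete projections $\Proj_i$ for the discrete factor. Three items require verification: (a) well-definedness and continuity of the map on $C_c^\infty(\GaG)$; (b) the intertwining property with the $G$-actions; (c) the Parseval identity. Extension to $L^2(\GaG)$ then follows by density.

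For (a) and (b): for $f\in C_c^\infty(\GaG)$, repeated integration by parts in the $K$-variable combined with the explicit bounds of Lemma \ref{Eismapbd} give rapid decay of $\langle f,\EE_j^{1/2+it}(\cdot,\vece_{2\upsilon})\rangle$ in $\upsilon$, uniformly for $t$ in compact sets; this makes the sum \eqref{vecvfjtdef} converge in $V$ and makes $t\mapsto\vecv_{f,j}(t)$ measurable and locally bounded. The intertwining property
\begin{equation*}
\vecv_{\rho(g)f,j}(t)=\pi_j^{1/2+it}(g)\vecv_{f,j}(t)
\end{equation*}
then follows from a direct computation: unitarity of $\rho$ together with \eqref{Eismapintertwine} gives $\langle\rho(g)f,\EE_j^s(\cdot,\vece_{2\upsilon})\rangle=\langle f,\EE_j^s(\cdot,\pi_j^s(g^{-1})\vece_{2\upsilon})\rangle$, and expanding $\pi_j^s(g^{-1})\vece_{2\upsilon}=\sum_\mu c_\mu\vece_{2\mu}$ in the basis $\{\vece_{2\mu}\}$, the relation $\pi_j^s(g^{-1})^*=\pi_j^s(g)$ (valid for $s=\tfrac12+it$ by unitarity of $\pi_j^{1/2+it}$) converts the conjugate $\overline{c_\mu}$ produced by the inner product into the matrix element $\langle\pi_j^s(g)\vece_{2\mu},\vece_{2\upsilon}\rangle$, precisely reassembling $\pi_j^s(g)\vecv_{f,j}(t)$.

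The heart of the proof, and the main obstacle, is the Parseval identity. The plan is to follow the classical Roelcke--Selberg argument, lifted to the principal-series setting. For $h\in C_c^\infty(\RR_{>0})$ and $\vecv\in V^\infty$, introduce the \emph{incomplete Eisenstein series}
\begin{equation*}
\Theta_j(\Gamma g;h,\vecv):=\sum_{\gamma\in\Gamma_{\eta_j}\backslash\Gamma}h\big(\wty_j(\gamma g)\big)\,\vecv\big(\wtk_j(\gamma g)\big),
\end{equation*}
which lies in $L^2(\GaG)\cap C^\infty(\GaG)$ by the compact support of $h$. Substituting the Mellin inversion $h(y)=\frac{1}{2\pi i}\int_{\Re(s)=\sigma}\tilde h(s)y^s\,ds$ for $\sigma>1$ and interchanging sum and integral (justified by \eqref{Eisdef}) yields $\Theta_j(\cdot;h,\vecv)=\frac{1}{2\pi i}\int_{\Re(s)=\sigma}\tilde h(s)\,\EE_j^s(\cdot,\vecv)\,ds$. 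Shifting the contour to $\Re(s)=\tfrac12$ collects residues at the finitely many poles of $\EE_j^s(\cdot,\vecv)$ in $(\tfrac12,1]$ (cf.\ Lemma \ref{Eis1pole} and Proposition \ref{Eisucont}), which land in $L^2(\GaG)_{\mathit{disc}}$, while the remaining line integral is folded onto $t\geq 0$ using the functional equation of Eisenstein series (a consequence of the scattering-matrix identities from Section \ref{Eispolessec} and Proposition \ref{intertwine}). A direct unfolding calculation computes $\|\Theta_j(\cdot;h,\vecv)\|_{L^2(\GaG)}^2$ in closed form, and the Mellin--Plancherel identity for $h$ together with the unitarity of $\scrI_s$ on $\tfrac12+i\RR$ then verifies Parseval, producing in particular the prefactor $\wty(h_j)^{1/2-it}/\sqrt{2\pi}$ of \eqref{vecvfjtdef}.

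To conclude, one invokes the classical fact (cf.\ \cite[Chapters 15--17]{Borel}) that the closed linear span of the incomplete Eisenstein series $\Theta_j(\cdot;h,\vecv)$ together with $L^2(\GaG)_{\mathit{disc}}$ exhausts $L^2(\GaG)$; Parseval on this dense subspace extends by continuity to all of $L^2(\GaG)$, and the intertwining property established above forces the resulting isometry to coincide with the abstract isomorphism in \eqref{L2decomp}. The most delicate technical step is the contour shift: justifying the exchange of sum and integral, controlling $\EE_j^s(\cdot,\vecv)$ as $|\Im(s)|\to\infty$ at fixed real part, and tracking the normalization factors (in particular the $\wty(h_j)$-dependence, which originates from the use of $\wty_j$ in the Iwasawa decomposition relative to the cusp $\eta_j$) all require careful application of Lemma \ref{Eismapbd} and the uniform estimates on $\varphi_{j,k}(s)$ recalled in the proof of Lemma \ref{apriori}.
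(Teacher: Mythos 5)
Your intertwining argument is essentially the same as the paper's (the paper also expands $\pi_j^{1/2+it}(g^{-1})\vece_{2\upsilon}$ in the basis $\{\vece_{2\sigma}\}$ and uses unitarity to move $g^{-1}$ over to $g$ in the matrix element), but your route to the Parseval identity is genuinely different. The paper takes a reduction approach: it decomposes $f$ into its $K$-isotypic components $f_{2\upsilon}$, observes that $f\mapsto f_{2\upsilon}$ commutes with the spectral decomposition, identifies $L^2(\GaG)_{2\upsilon}$ isometrically with $L^2(\GaH,-2\upsilon)$ (Lemma \ref{GaH/GaGint}), and then invokes the classical spectral expansion and Plancherel theorem of Hejhal for weight-$2\upsilon$ automorphic functions on $\GaH$. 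This makes the Parseval identity a short computation resting on a cited result. You instead propose to prove completeness from scratch in the group-theoretic picture, lifting the Roelcke--Selberg argument to $\GaG$ with vector-valued incomplete Eisenstein series $\Theta_j(\cdot;h,\vecv)$, Mellin inversion, a contour shift to $\Re(s)=\tfrac12$, unfolding, and a density argument. This is a legitimate and more self-contained route (it is close to what Borel does, and avoids translating between $\GaG$ and the weighted $\GaH$-picture), but it is substantially longer: you would need to carry out the Maass--Selberg-type computation of $\langle\Theta_j(\cdot;h,\vecv),\Theta_{j'}(\cdot;h',\vecv')\rangle$ (not just the diagonal $j=j'$, $\vecv=\vecv'$), verify that the residues picked up in the contour shift exhaust the residual spectrum, and establish growth estimates on $\EE_j^s(\cdot,\vecv)$ in $\Im(s)$ for the contour shift itself — none of which are present in the paper's toolbox as stated (Lemma \ref{Eismapbd} controls the $y_k\to\infty$ behaviour, not $|\Im(s)|\to\infty$; for the latter you would want the spectral-majorant bound used in Lemma \ref{apriori}). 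One small imprecision in your density claim: the closed span of the incomplete Eisenstein series is the orthogonal complement of the cusp forms (it already contains the residual part of $L^2_{\mathit{disc}}$), so the correct statement is that it plus the cuspidal spectrum exhausts $L^2(\GaG)$; as you wrote it, the union with all of $L^2(\GaG)_{\mathit{disc}}$ overlaps the residual part, which is harmless for density but should be stated carefully when doing Parseval, as you must not double-count the residual contribution.
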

\begin{remark}
Note that $\EE_j^{1/2+it}(\cdot,\vece_{2\upsilon})\not\in L^2(\GaG)$; hence $\langle f, \EE_j^{1/2+it}(\cdot,\vece_{2\upsilon})\rangle_{L^2(\GaG)}$ is undefined. However, since $f\in C_c^{\infty}(\GaG)$ in the above theorem,  $\int_{\GaG} |f\,\overline{\EE_j^{1/2+it}(\cdot,\vece_{2\upsilon})}|\,d\mu_{\GaG}<\infty$ for all $j,\,\upsilon$, and $t$. We thus define $ \langle f ,\EE_j^{1/2+it}(\cdot,\vece_{2\upsilon}) \rangle_{L^2(\GaG)}$ in \eqref{vecvfjtdef} to be $ \int_{\GaG}f\,\overline{\EE_j^{1/2+it}(\cdot,\vece_{2\upsilon})}\,d\mu_{\GaG}$. 
\end{remark}

\begin{remark}
The above result is certainly standard (cf.\ \cite[Chapters 16-17]{Borel}). However, we have not been able to find a formulation of the theorem as stated here, so we give the following fairly detailed proof:
\end{remark}

\begin{proof}
We need to prove that the map $f\mapsto\sum_{i=1}^{\infty} \Proj_i(f) + \sum_{j=1}^{\kappa} \int_0^{\infty}\vecv_{f,j}(t)\,dt$ is (the restriction to $C_c^{\infty}(\GaG)$ of) an isomorphism of unitary representations, i.e that $\Proj_i\big(\rho(g)f\big)=\rho(g)\Proj_i(f)$ and $\vecv_{\rho(g)f,j}(t)=\pi_j^{1/2+it}(g)(\vecv_{f,j}(t))$ for all $g\in G$, as well as
\begin{equation*}
\|f\|_{L^2(\GaG)}^2=\sum_{i=1}^{\infty}\|\Proj_i(f)\|^2+\sum_{j=1}^{\kappa}\int_0^{\infty} \|\vecv_{f,j}(t)\|_{L^2(K)}^2\,dt.
\end{equation*}
Starting with the intertwining property: each $\scrH_i$ is a closed, $\rho(G)$-invariant subspace of $L^2(\GaG)$, hence $\Proj_i\big(\rho(g)f\big)=\rho(g)\Proj_i(f)$ for all $f\in L^2(\GaG)$. We now consider $\vecv_{f,j}(t)$; let $c_{j,t}=\frac{\wty(h_j)^{1/2-it}}{\sqrt{2\pi}}$. Then
\begin{align*}
c_{j,t}^{-1}\vecv_{\rho(g)f,j}(t)=&\sum_{\upsilon\in\ZZ} \langle \rho(g)f, \EE^{1/2+it}_{j}(\cdot,\vece_{2\upsilon})\rangle_{L^2(\GaG)}\vece_{2\upsilon}=\sum_{\upsilon\in\ZZ} \langle f, \rho(g^{-1})\EE^{1/2+it}_{j}(\cdot,\vece_{2\upsilon})\rangle_{L^2(\GaG)}\vece_{2\upsilon}\\&=\sum_{\upsilon\in\ZZ} \langle f, \EE^{1/2+it}_{j}(\cdot,\pi_j^{1/2+it}(g^{-1})\vece_{2\upsilon})\rangle_{L^2(\GaG)}\vece_{2\upsilon}\\&=\sum_{\upsilon\in\ZZ} \left\langle f,\sum_{\sigma\in\ZZ} \langle \pi_j^{1/2+it}(g^{-1})\vece_{2\upsilon} ,\vece_{2\sigma}\rangle_{L^2(K)} \EE^{1/2+it}_{j}(\cdot,\vece_{2\sigma})\right\rangle_{L^2(\GaG)}\!\!\!\!\vece_{2\upsilon}
\end{align*}
The sum $\sum_{\sigma\in\ZZ} \langle \pi_j^s(g^{-1})\vece_{2\upsilon} ,\vece_{2\sigma}\rangle_{L^2(K)} \EE^{1/2+it}_{j}(\cdot,\vece_{2\sigma})$ converges absolutely and uniformly on compact subsets of $\GaG$, allowing us to write
\begin{align*}
c_{j,t}^{-1}\vecv_{\rho(g)f,j}(t)=&\sum_{\upsilon\in\ZZ}\sum_{\sigma\in\ZZ}  \langle \pi_j^{1/2+it}(g)\vece_{2\sigma} ,\vece_{2\upsilon}\rangle_{L^2(K)} \langle f,\EE^{1/2+it}_{j}(\cdot,\vece_{2\sigma})\rangle_{L^2(\GaG)}\vece_{2\upsilon}\\&=\sum_{\upsilon\in\ZZ} \left\langle \pi_j^{1/2+it}(g)\left(\sum_{\sigma\in \ZZ}  \langle f,\EE^{1/2+it}_{j}(\cdot,\vece_{2\sigma})\rangle_{L^2(\GaG)}\vece_{2\sigma}\right),\vece_{2\upsilon} \right\rangle_{L^2(K)}\vece_{2\upsilon}\\&=\sum_{\upsilon\in\ZZ}  \langle\pi_j^{1/2+it}(g) c_{j,t}^{-1}\vecv_{f,j}(t),\vece_{2\upsilon}\rangle\vece_{2\upsilon}=c_{j,t}^{-1}\pi_j^s(g)\vecv_{f,j}(t).
\end{align*}
It remains to confirm that the mapping is an isometry.
Recall that we have $L^2(\GaG)=\bigoplus_{\upsilon\in \ZZ} L^2(\GaG)_{2\upsilon}$, and that this is an orthogonal direct sum. For each $\upsilon\in \ZZ$ and $\psi\in L^2(\GaG)$, define
\begin{equation*}
\psi_{2\upsilon}(\Gamma g):=\int_K \psi(\Gamma g k)\overline{\vece_{2\upsilon}(k)}\,dk.
\end{equation*}
The mapping $\psi\mapsto \psi_{2\upsilon}$ is the orthogonal projection of $L^2(\GaG)$ onto $L^2(\GaG)_{2\upsilon}$; we have $\psi_{2\upsilon}\in L^2(\GaG)_{2\upsilon}$, and $\|\psi\|_{L^2(\GaG)}^2=\sum_{\upsilon\in\ZZ} \|\psi_{2\upsilon}\|_{L^2(\GaG)}^2$. Note that since $K$ is compact, for $f\in C_c^{\infty}(\GaG)$, we also have $f_{2\upsilon}\in C_c^{\infty}(\GaG)$; this allows us to now restrict our attention to $f\in L^2(\GaG)_{2\upsilon}\cap C_c^{\infty}(\GaG) $.

For each subrepresentation $(\rho,\scrH_i)$, either $L^2(\GaG)_{2\upsilon}\cap\scrH_i=\lbrace 0\rbrace$, or $L^2(\GaG)_{2\upsilon}\cap\scrH_i=\CC \phi_{i,2\upsilon}$ for some $\phi_{i,2\upsilon}\in\scrH_i$ with $\|\phi_{i,2\upsilon}\|_{L^2(\GaG)}=1$. By abusing notation slightly, we also let $\phi_{i,2\upsilon}=0$ if $L^2(\GaG)_{2\upsilon}\cap\scrH_i=\lbrace 0 \rbrace$. Then $\lbrace \phi_{i,2\upsilon}\rbrace_{\upsilon\in\ZZ}$ (minus the zeroes) is an orthonormal basis of $\scrH_i$, and $\rho(k)\phi_{i,2\upsilon}=\phi_{i,2\upsilon} \vece_{2\upsilon}(k)$ for all $k\in K$. Using the notation of Section \ref{GaGGaH}, let $\widetilde{f},\widetilde{\phi}_{i,2\upsilon}\in L^2(\GaH,-2\upsilon)$ (i.e.\ $\widetilde{f}(x+iy)=f(n_xa_y)$, and similarly for $\widetilde{\phi}_{i,2\upsilon}$). By \cite[p.\ 317, Proposition 5.3, p.\ 414, lines 12-16]{Hejhal}, we obtain
\begin{equation*}
\widetilde{f}(z)=\sum_{i=1}^{\infty} \langle \widetilde{f},\widetilde{\phi}_{i,2\upsilon}\rangle_{L^2(\GaH,-2\upsilon)} \widetilde{\phi}_{i,2\upsilon}(z)+\sum_{j=1}^{\kappa}\int_0^{\infty} g_j(t) E_j(z,\sfrac{1}{2}+it,2\upsilon)\,dt,
\end{equation*}
where $g_j(t)=\frac{1}{2\pi}\int_{\scrF}\widetilde{f}(z)\overline{E_j(z,\frac{1}{2}+it,2\upsilon)}\,d\mu(z)$ \cite[p. 243, Remark 2.4]{Hejhal} (recall that $E_j(z,s,2\upsilon)$ equals ``$E_j(z,s,-2\upsilon)$" in \cite{Hejhal,AndJensKron}; also note that $\widetilde{f}$ has compact support in $\scrF$). This decomposition also gives rise to the following Plancherel theorem:
\begin{equation*}
\|\widetilde{f}\|^2_{L^2(\GaH)} =\sum_{i=1}^{\infty} | \langle \widetilde{f},\widetilde{\phi}_{i,2\upsilon}\rangle_{L^2(\GaH,-2\upsilon)}|^2 +2\pi\sum_{j=1}^{\kappa} \int_0^{\infty} |g_j(t)|^2\,dt
\end{equation*}
(cf.\ \cite[p.\ 373 (item 12) and p.\ 374 (item 15)]{Hejhal}). Note that since $f\in L^2(\GaG)_{2\upsilon}$ and $\scrH_i\cap L^2(\GaG)_{2\upsilon}=\CC \phi_{i,2\upsilon}$, $f_i=\langle f, \phi_{i,2\upsilon}\rangle_{L^2(\GaG)}\phi_{i,2\upsilon}$. Furthermore, 
\begin{equation*}
\vecv_{f,j}(t)=c_{j,t}\langle f,\EE_j^{1/2+it}(\cdot,\vece_{2\upsilon})\rangle_{L^2(\GaG)}\vece_{2\upsilon}=\sqrt{\sfrac{1}{2\pi}}g_j(t)\vece_{2\upsilon}.
\end{equation*}
Using the fact that we have an isometry between $L^2(\GaG)_{2\upsilon}$ and $L^2(\GaH,-2\upsilon)$ (see Lemma \ref{GaH/GaGint}), we then obtain
\begin{align*}
\|f\|^2_{L^2(\GaG)}=&\sum_{i=1}^{\infty} | \langle f,\phi_{i,2\upsilon}\rangle_{L^2(\GaG)}|^2
+\sum_{j=1}^{\kappa} \frac{\wty(h_j)}{2\pi}  \int_0^{\infty} |\langle f,\EE_j^{1/2+it}(\cdot,\vece_{2\upsilon})\rangle_{L^2(\GaG)}|^2\,dt \\&=\sum_{i=1}^{\infty}\|f_i\|^2_{L^2(\GaG)}+\sum_{j=1}^{\kappa}\int_0^{\infty} \|\vecv_{f,j}(t)\|^2\,dt.
\end{align*}
The maps $f\mapsto f_i$ and $f\mapsto \int_0^{\infty }\vecv_{f,j}(t)\,dt$ in the statement of the theorem are thus densely defined and bounded with respect to $\|\cdot\|_{L^2(\GaG)}$; they therefore have a unique extension to all of $L^2(\GaG)$.
\end{proof}

\begin{cor}\label{fdecomp2}
The formulas for $f\mapsto f_i$ and $f\mapsto \int_0^{\infty }\vecv_{f,j}(t)\,dt$ given in Theorem \ref{fdecomp} also hold for $f\in L^2(\GaG)$ such that $f\in  C^{\infty}(\GaG)\cap L^p(\GaG)$ for some $p>2$. 
\end{cor}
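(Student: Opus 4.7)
My plan is to reduce to Theorem \ref{fdecomp} via a cutoff approximation, leveraging the $L^p$-integrability of $f$ to make sense of the pairings with Eisenstein series on the critical line.

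First, I would verify that the right-hand side of \eqref{vecvfjtdef} is well-defined for such $f$. Lemma \ref{Eismapbd} applied with $\vecv = \vece_{2\upsilon}$ and $s = \tfrac{1}{2}+it$ shows that $\EE_j^{1/2+it}(\cdot, \vece_{2\upsilon})$ grows at most like $y_k^{1/2}$ in every cusp, hence lies in $L^q(\GaG)$ for every $q<2$. Taking $p' = p/(p-1) < 2$, Hölder's inequality then makes the pairing $\int_{\GaG} f \, \overline{\EE_j^{1/2+it}(\cdot, \vece_{2\upsilon})} \, d\mu_{\GaG}$ absolutely convergent, so each Fourier coefficient in \eqref{vecvfjtdef} is meaningful.

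Next I would produce a sequence $(f_n)$ in $C_c^\infty(\GaG)$ approximating $f$ in $L^2$ and $L^p$ simultaneously. Fixing smooth $K$-invariant cutoffs $\chi_n : \GaG \to [0,1]$ that equal $1$ on $\{\sYGa \leq n\}$ and vanish on $\{\sYGa \geq 2n\}$, I would set $f_n := \chi_n f$. Since $f \in C^\infty$, we have $f_n \in C_c^\infty(\GaG)$, and since $|f_n| \leq |f|$ with $f_n \to f$ pointwise, dominated convergence gives $f_n \to f$ in both $L^2(\GaG)$ and $L^p(\GaG)$.

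Finally, I would apply Theorem \ref{fdecomp} to each $f_n$ and pass to the limit. The discrete part is automatic: $\Proj_i$ is $L^2$-continuous, so $\Proj_i(f_n) \to \Proj_i(f)$, yielding $f_i = \Proj_i(f)$. For the continuous part, $L^2$-convergence of $f_n \to f$ combined with the isometry of Theorem \ref{fdecomp} gives $\int_0^\infty \|\vecv_{f_n,j}(t) - \vecv_{f,j}(t)\|_{L^2(K)}^2 \, dt \to 0$, so a subsequence converges in $L^2(K)$ for a.e.\ $t$; in particular, the $\upsilon$-th Fourier coefficient of $\vecv_{f_n,j}(t)$ converges to that of $\vecv_{f,j}(t)$. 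Independently, $L^p$-convergence paired with Hölder yields $\langle f_n, \EE_j^{1/2+it}(\cdot, \vece_{2\upsilon})\rangle \to \langle f, \EE_j^{1/2+it}(\cdot, \vece_{2\upsilon})\rangle$ for every fixed $t$ and $\upsilon$. A diagonal subsequence argument over the countably many $\upsilon$ reconciles the two limits and gives the claimed identity of Fourier coefficients for a.e.\ $t$; since the left-hand side already lies in $L^2(K)$, the sum defining $\vecv_{f,j}(t)$ in \eqref{vecvfjtdef} converges in $L^2(K)$ to $\vecv_{f,j}(t)$ a.e.\ $t$, as desired. The only real obstacle, which is essentially bookkeeping, is this identification of two a priori distinct notions of limit---coefficient-wise from $L^p$ approximation versus $L^2$ in the direct integral---cleanly handled by the subsequence/diagonal argument.
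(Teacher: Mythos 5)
Your argument follows essentially the same route as the paper: cut off by an increasing $C_c^\infty$ sequence with $|f\chi_n|\leq|f|$, use $L^2$-boundedness of the projections $\Proj_i$ for the discrete part, pass to a subsequence along which $\vecv_{f\chi_n,j}(t)\to\vecv_{f,j}(t)$ in $L^2(K)$ for a.e.\ $t$, and use the $L^1$-integrability of $f\,\overline{\EE_j^{1/2+it}(\cdot,\vece_{2\upsilon})}$ (guaranteed by $f\in L^p$, $p>2$, and $\EE_j^{1/2+it}(\cdot,\vece_{2\upsilon})\in L^{p'}$, $p'<2$) to pass to the limit in the coefficient formula. The only cosmetic differences are that the paper invokes dominated convergence via the pointwise bound $|f\chi_n|\leq|f|$ where you invoke $L^p$-convergence and Hölder directly, and your diagonalization over $\upsilon$ is superfluous, since once $\vecv_{f\chi_n,j}(t)\to\vecv_{f,j}(t)$ in $L^2(K)$ for a given $t$ all Fourier coefficients converge simultaneously.
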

\begin{proof}
Let $\lbrace \chi_n \rbrace_{n=1}^{\infty}\subset C_c^{\infty}(\GaG)$ be a sequence of smooth approximations of indicator functions that ``fill out" all of $\GaG$, i.e. $\chi_{n+1}(\Gamma g)\geq \chi_{n}(\Gamma g)$, $0\leq \chi_n(\Gamma g)\leq 1$, $\lim_{n\rightarrow \infty} \chi_n(\Gamma g)=1$ for all $g\in G$, and $\lim_{n\rightarrow\infty}\|\chi_n-1\|_{L^2(\GaG)}=0 $. The key properties that we need are that $\lim_{n\rightarrow\infty}\|f\chi_n-f\|_{L^2(\GaG)} =0$ and $|f(\Gamma g)\chi_n(\Gamma g)| \leq|f(\Gamma g)|$. Since the projections $\Proj_i$ are bounded operators on $L^2(\GaG)$, we have $f_i=\lim_{n\rightarrow\infty}\Proj_i(f\chi_n)=\Proj_i(f)$. We have \begin{equation*}
\lim_{n\rightarrow\infty} \int_0^{\infty} \| \vecv_{f\chi_n,j}(t)-\vecv_{f,j}(t)\|_{L^2(K)}^2\,dt=0,
\end{equation*}
and now show that $\vecv_{f,j}$ is in fact as in Theorem \ref{fdecomp}. Since $\vecv_{f\chi_n,j}\rightarrow \vecv_{f,j}$ in $L^2(\RR_{\geq 0}, V)$, we may assume (after possibly passing to a subsequence) that the sequence $\lbrace\chi_n\rbrace_n$ has been chosen so that \begin{equation*}
\lim_{n\rightarrow\infty} \| \vecv_{f\chi_{n},j}(t)-\vecv_{f,j}(t)\|=0\qquad\mathrm{for \;a.e.\;} t\in\RR_{\geq 0}.
\end{equation*}
Given $\upsilon\in \ZZ$, we have
\begin{align*}
\langle \vecv_{f,j}(t),\vece_{2\upsilon}\rangle_{L^2(K)}=&\lim_{n\rightarrow\infty}\langle  \vecv_{f\chi_{n},j}(t),\vece_{2\upsilon}\rangle_{L^2(K)}\\&=\lim_{n\rightarrow\infty}\sfrac{\wty(h_j)^{1/2-it}}{\sqrt{2\pi}} \langle f\chi_n, \EE_j^{1/2+it}(\cdot,\vece_{2\upsilon})\rangle_{L^2(\GaG)}\\&=\sfrac{\wty(h_j)^{1/2-it}}{\sqrt{2\pi}} \langle f, \EE_j^{1/2+it}(\cdot,\vece_{2\upsilon})\rangle_{L^2(\GaG)},
\end{align*}
the third equality holding due to our choice of the functions $\chi_n$ (i.e.\ they were chosen so as to permit the use of dominated convergence at this stage).
\end{proof}

\subsection{Tensor products of representations}
We will now embed a copy of the representation $(\pi^s_j,V^{\infty})\otimes (\pi^r_k,V^{\infty})$ in $\big(\rho,L^2(\GaG) \big)$ (here $V^{\infty}\otimes V^{\infty}$ denotes the usual abstract tensor product of vector spaces). Our strategy will be somewhat similar to that of Definition \ref{Psirsdef} and Proposition \ref{Phiprops}: we will multiply together Eisenstein maps of elements of $V^{\infty}$ and then subtract other Eisenstein maps until we obtain a function in $L^2(\GaG)$.
\begin{defn}\label{Phiuv}
For $j,k\in\lbrace 1,\ldots,\kappa\rbrace$, $r,s\in\lbrace z\in\CC\,:\,\Re(z)>\frac{1}{4}\rbrace \setminus \scrE$ such that $r+s,1-r+s,r+1-s,2-r-s\in \lbrace z\in\CC\,:\,\Re(z)>\frac{1}{3}\rbrace \setminus \scrE$, and $\vecu,\vecv\in V^{\infty}$, let $\Psi_{j,k}^{r,s}(\vecu\otimes\vecv)\in C^{\infty}(\GaG)$ be defined by
\begin{align*}
\Psi_{j,k}^{r,s}(\vecu\otimes\vecv):=& \EE_j^r(\cdot,\scrK_j\vecu)\EE_k^s(\cdot,\scrK_k\vecv)-\delta_{j,k}\EE_j^{r+s}\big(\cdot,\scrK_j(\vecu\vecv)\big)\\ &-\varphi_{k,j}(s)\wty(h_j)^{1- s}\wty(h_k)^{-s}\EE_j^{r+1-s}\big(\cdot,\scrK_j(\vecu\scrI_s\vecv)\big)\\&-\varphi_{j,k}(r)\wty(h_j)^{-r}\wty(h_k)^{1-r}\EE_k^{1-r+s}\big(\cdot,\scrK_k(\vecv\scrI_r\vecu)\big)\\&-\sum_{i=1}^{\kappa} \varphi_{j,i}(r)\varphi_{k,i}(s)\wty(h_i)^{2-s-r}\wty(h_j)^{- s}\wty(h_k)^{-r}\EE_i^{2-r-s}(\cdot, \scrK_i\big((\scrI_r\vecu)(\scrI_s\vecv)\big)\big).
\end{align*}
\end{defn}
Here we have used $\veca \vecb$ to denote the pointwise product of $\veca,\,\vecb\in V^{\infty}$; thus $\veca\vecb\in V^{\infty}$. By Proposition \ref{intertwine}, $\scrI_s\vecu,\,\scrI_r\vecv\in V^{\infty}$. Furthermore, $\scrK_j$, $\scrK_k$ are just rotations, so $\scrK_j\veca,\,\scrK_k\veca\in V^{\infty}$ for all $\veca\in V^{\infty}$, showing that $\vecu\otimes\vecv\mapsto\Psi_{j,k}^{r,s}(\vecu\otimes\vecv)$ is a well-defined map on $V^{\infty}\otimes V^{\infty}$. That $\Psi_{j,k}^{r,s}(\vecu\otimes\vecv)\in C^{\infty}(\GaG)$ follows from Proposition \ref{Eisucont}. As before (cf.\ Proposition \ref{Phiprops}), we shall see that $\Psi_{j,k}^{r,s}(\vecu\otimes\vecv)$ can be extended to a holomorphic function for $(r,s)\in\big(\frac{1}{2}+i\RR\big)\times \big(\frac{1}{2}+i\RR\big)$. We will start by considering $\Psi_{j,k}^{r,s}(\vece_{2\sigma}\otimes\vece_{2\upsilon})$ ($\sigma,\upsilon\in\ZZ$), and then use Fourier decomposition to pass to general functions $\vecu,\vecv\in V^{\infty}$. 
\begin{prop}\label{Phiupssig}
The map $(r,s)\mapsto \left[\Psi_{j,k}^{r,s}(\vece_{2\sigma}\otimes\vece_{2\upsilon})\right](\Gamma g)$ is holomorphic on $(\frac{1}{2}+i\RR)\times (\frac{1}{2}+i\RR)$ for all $\upsilon,\sigma\in\ZZ$, $j,k\in\lbrace 1,\ldots,\kappa\rbrace$, and $g\in G$. 
\end{prop}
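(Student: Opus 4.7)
The plan is to reduce the statement to an explicit scalar pole-cancellation argument analogous to that used in Proposition \ref{Phiprops}. First, I would apply \eqref{Eismapexpr} together with the identities $\vece_{2\sigma}\vece_{2\upsilon}=\vece_{2(\sigma+\upsilon)}$, $\scrI_r\vece_{2\sigma}=\mathfrak{i}_{r,2\sigma}\vece_{2\sigma}$, and $\scrK_j\vece_{2\upsilon}=\vece_{2\upsilon}(k_j)\vece_{2\upsilon}$ (the last obtained from the definition of $\pi_j^s$) to rewrite $\bigl[\Psi_{j,k}^{r,s}(\vece_{2\sigma}\otimes\vece_{2\upsilon})\bigr](\Gamma n_xa_y\kthe)$ explicitly as $\vece_{2(\sigma+\upsilon)}(\kthe)$ times a linear combination of five weighted Eisenstein series $E_{\bullet}(z,\bullet,2(\sigma+\upsilon))$, with coefficients built from the holomorphic factors $\wty(h_{\bullet})^{\bullet}$, $\mathfrak{i}_{r,2\sigma}$, $\mathfrak{i}_{s,2\upsilon}$, and $\varphi_{\bullet,\bullet}(\cdot)$. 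This reduction turns the problem into a meromorphy question for an explicit finite sum.

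Next, I would split into two cases based on $\sigma+\upsilon$. If $\sigma+\upsilon\neq 0$, every weighted Eisenstein series in the sum has nonzero weight, so by Lemma \ref{Eisu1} it is holomorphic at $s=1$; the scattering entries $\varphi_{\bullet,\bullet}(\cdot)$ are holomorphic on the critical line (property (3) recalled inside the proof of Proposition \ref{Phiprops}); and the intertwining scalars $\mathfrak{i}_{r,2\sigma},\mathfrak{i}_{s,2\upsilon}$ are holomorphic there because $\Gamma$ has neither zeros nor poles on $\tfrac12-\ZZ+i\RR$. Joint holomorphy on $(\tfrac12+i\RR)^2$ follows immediately.

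If $\sigma+\upsilon=0$, set $\upsilon=-\sigma$. All weighted Eisenstein series appearing now have weight zero, so the expression is, up to holomorphic coefficients in $r,s$, structurally parallel to the scalar function $\Phi_{j,k}^{r,s}(z)$ of Definition \ref{Psirsdef}. I would then replicate the Hartogs argument from Proposition \ref{Phiprops}: since each factor is separately meromorphic in $r$ and in $s$, it suffices to verify boundedness near the two potential pole-loci in $(\tfrac12+i\RR)^2$, namely $s=1-r$ and $s=r$. At $s=1-r+w$ I would expand the pole contributions of $E_j(z,1+w)$ (in the ``$\delta_{j,k}$'' term) and $E_i(z,1-w)$ (in the sum over $i$) via Lemma \ref{Eis1pole} and apply the unitarity identity $\sum_i\varphi_{j,i}(r)\varphi_{k,i}(1-r+w)=\delta_{j,k}+O(|w|)$. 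The residual cancellation of the auxiliary factors then requires $\mathfrak{i}_{r,2\sigma}\mathfrak{i}_{1-r,-2\sigma}=1$, which is the scalar content of $\scrI_{1-s}\scrI_s=\id$ on $\vece_{2\sigma}$ (Proposition \ref{intertwine}); a direct check using the Euler reflection formula confirms this. At $s=r$, the analogous cancellation between the $E_j(z,1-w)$ and $E_k(z,1+w)$ residues requires $\varphi_{k,j}(r)\mathfrak{i}_{r,-2\sigma}=\varphi_{j,k}(r)\mathfrak{i}_{r,2\sigma}$, which follows from symmetry of the scattering matrix together with the parity identity $\mathfrak{i}_{r,-2\sigma}=\mathfrak{i}_{r,2\sigma}$ (valid since $\sigma\in\ZZ$).

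The main obstacle is bookkeeping: tracking all of the factors $\wty(h_{\bullet})^{\bullet}$, $\vece_{2\upsilon}(k_{\bullet})$, and the intertwining scalars carefully enough that the two cancellation identities emerge in precisely the form predicted by Proposition \ref{intertwine} and the scattering matrix relations. Modulo this bookkeeping, the proof is a direct adaptation of the argument for Proposition \ref{Phiprops}.
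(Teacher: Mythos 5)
Your proposal follows the paper's proof essentially verbatim: express $\Psi_{j,k}^{r,s}(\vece_{2\upsilon}\otimes\vece_{2\sigma})$ via \eqref{Eismapexpr} as a combination of weighted Eisenstein series of weight $2(\upsilon+\sigma)$, split on whether $\upsilon+\sigma=0$, invoke Lemma \ref{Eisu1} when the weight is nonzero, and in the weight-zero case perform the pole cancellation at $s=1-r$ (and $s=r$) using Lemma \ref{Eis1pole}, the scattering-matrix unitarity, and the identity $\mathfrak{i}_{r,2\upsilon}\mathfrak{i}_{1-r,-2\upsilon}=1$. The only imprecision is the remark that when $\sigma=-\upsilon$ ``all weighted Eisenstein series have weight zero''---the first product $E_j(\cdot,r,2\upsilon)E_k(\cdot,s,-2\upsilon)$ retains nonzero weight, but since $r,s$ are constrained away from $\scrE$ this term is harmless and the argument is unaffected.
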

\begin{proof}
As in the proof of Proposition \ref{Phiprops}, we start by fixing $T>0$ and finding $\frac{1}{12}>\delta(=\delta_T)>0$ such that $\scrE\cap\big([\frac{1}{2}-2\delta,\frac{1}{2}+2\delta]+i[-T-\delta,T+\delta]\big)=\emptyset$, and $\lbrace r+s,1-r+s,r+1-s,2-r-s \rbrace \cap \scrE\subset \lbrace 1\rbrace$ for all $r,s\in [\frac{1}{2}-\delta,\frac{1}{2}+\delta]+i[-T-\delta,T+\delta]$. Letting $U_T=(\frac{1}{2}-\delta,\frac{1}{2}+\delta)+i(-T,T)$, from Definition \ref{Phiuv} we have that $(r,s)\mapsto \left[\Psi_{j,k}^{r,s}(\vece_{2\sigma}\otimes\vece_{2\upsilon})\right](\Gamma g)$ is well-defined and holomorphic at all $(r,s)\in U_T\times U_T$, except possibly the complex lines $s=r$ and $s=1-r$. Fixing $r\in U_T$, we consider the function $s\mapsto \left[\Psi_{j,k}^{r,s}(\vece_{2\sigma}\otimes \vece_{2\upsilon})\right](\Gamma n_x a_y \kthe)$. For $s\in U_T\setminus\lbrace r,1-r\rbrace$, Definition \ref{Phiuv} and \eqref{Eismapexpr} give 
\begin{align*}
\left[\Psi_{j,k}^{r,s}(\vece_{2\upsilon}\otimes \vece_{2\sigma})\right](\Gamma n_x a_y \kthe)=&\frac{\vece_{2(\upsilon+\sigma)}(\kthe)}{\wty(h_j)^r \wty(h_k)^s}\bigg(E_j(x+i y ,r, 2\upsilon)E_k(x+i y ,s, 2\sigma)\\&\qquad-\delta_{j,k}E_j\big(x+i y ,r+s, 2(\upsilon+\sigma)\big)\\&\qquad-\varphi_{k,j}(s)\mathfrak{i}_{s,2\sigma}E_j\big(x+i y ,r+1-s, 2(\upsilon+\sigma)\big)\\&\qquad-\varphi_{j,k}(r)\mathfrak{i}_{r,2\upsilon}E_k\big(x+i y ,1-r+s, 2(\upsilon+\sigma)\big)\\&-\mathfrak{i}_{r,2\upsilon}\mathfrak{i}_{s,2\sigma}\sum_{i=1}^{\kappa} \varphi_{j,i}(r)\varphi_{k,i}(s)E_i\big(x+i y ,2-r-s, 2(\upsilon+\sigma)\big)\bigg).
\end{align*}
By Lemmas \ref{Ej2ubdd} and \ref{Eisu1}, if $\sigma\neq-\upsilon$, then all terms in this expression are holomorphic (in $s$) for all $s\in U_T$; in particular, also at $s=r$ or $s=1-r$. By symmetry, fixing $s\in U_T$, $r\mapsto  \left[\Psi_{j,k}^{r,s}(\vece_{2\sigma}\otimes \vece_{2\upsilon})\right](\Gamma n_x a_y \kthe)$ is also holomorphic, hence $(r,s)\mapsto  \left[\Psi_{j,k}^{r,s}(\vece_{2\sigma}\otimes \vece_{2\upsilon})\right](\Gamma n_x a_y \kthe)$ is jointly holomorphic on $U_T\times U_T$.

In the case $\sigma=-\upsilon$, we have
\begin{align*}
\left[\Psi_{j,k}^{r,s}(\vece_{2\upsilon}\otimes \vece_{-2\upsilon})\right](\Gamma n_x a_y \kthe)=&\frac{1}{\wty(h_j)^r \wty(h_k)^s}\bigg(E_j(x+i y ,r, 2\upsilon)E_k(x+i y ,s, -2\upsilon)\\&\qquad-\delta_{j,k}E_j(x+i y ,r+s)\\&\qquad-\varphi_{k,j}(s)\mathfrak{i}_{s,-2\upsilon}E_j(x+i y ,r+1-s)\\&\qquad-\varphi_{j,k}(r)\mathfrak{i}_{r,2\upsilon}E_k(x+i y ,1-r+s)\\&-\sum_{i=1}^{\kappa} \varphi_{j,i}(r)\varphi_{k,i}(s)\mathfrak{i}_{r,2\upsilon}\mathfrak{i}_{s,-2\upsilon}E_i(x+i y ,2-r-s)\bigg).
\end{align*}
As in Proposition \ref{Phiprops}, there are now three cases to consider: \textit{i)} $s=1-r$, $r\neq \frac{1}{2}$, \textit{ii)} $s= r$, $r\neq \frac{1}{2}$, \textit{iii)} $s=r=\frac{1}{2}$, and for brevity we focus only on $s=1-r$, $r\neq\frac{1}{2}$, the other cases being dealt with analogously. We let $s=1-r+w$, where $w\in\CC$ is contained in a small neighbourhood of zero. By Lemma \ref{Eis1pole}, we have
\begin{align*}
&\left[\Psi_{j,k}^{r,1-r+w}(\vece_{2\upsilon}\otimes \vece_{-2\upsilon})\right](\Gamma n_x a_y \kthe)\\&\quad\qquad\qquad\qquad=\frac{1}{\wty(h_j)^r \wty(h_k)^{1-r+w}}\Bigg(E_j(x+i y ,r, 2\upsilon)E_k(x+i y ,1-r+w, -2\upsilon)\\&\qquad\qquad\qquad\qquad-\delta_{j,k}\left( \frac{\mu(\GaH)^{-1}}{w}+\widetilde{E}_j(z,1+w)\right)\\&\qquad\qquad\qquad\qquad-\varphi_{k,j}(1-r+w)\mathfrak{i}_{1-r+w,-2\upsilon}E_j(x+i y ,2r-w)\\&\qquad\qquad\qquad\qquad-\varphi_{j,k}(r)\mathfrak{i}_{r,2\upsilon}E_k(x+i y ,2-2r+w)\\&\qquad\qquad\qquad\qquad-\sum_{i=1}^{\kappa} \varphi_{j,i}(r)\varphi_{k,i}(1-r+w)\mathfrak{i}_{r,2\upsilon}\mathfrak{i}_{1-r+w,-2\upsilon}\left( \frac{\mu(\GaH)^{-1}}{-w}+\widetilde{E}_i(z,1-w)\right)\Bigg).
\end{align*}
Now, $\mathfrak{i}_{r,2\upsilon}\mathfrak{i}_{1-r+w,-2\upsilon}=1+O_{r,\upsilon}(|w|)$, and $\sum_{i=1}^{\kappa} \varphi_{j,i}(r)\varphi_{k,i}(1-r+w)=\delta_{j,k}+O_{r}(|w|)$, showing that $\left|\left[\Psi_{j,k}^{r,1-r+w}(\vece_{2\upsilon}\otimes \vece_{-2\upsilon})\right](\Gamma n_x a_y \kthe) \right| $ remains bounded as $w\rightarrow0$. Since $s\mapsto \left[\Psi_{j,k}^{r,s}(\vece_{2\upsilon}\otimes \vece_{-2\upsilon})\right](\Gamma n_x a_y \kthe) $ is (by construction) meromorphic in $s$, this shows that the map is in fact holomorphic at $s=1-r$.
\end{proof}
\begin{prop}\label{PsiupssigFour}
Given $T>0$, let $0<\delta<\frac{1}{12}$ be such that $\big([\frac{1}{2}-2\delta,\frac{1}{2}+2\delta]+i[-T-\delta,T+\delta]\big)\cap \scrE=\emptyset$ and $\lbrace r+s,1-r+s,r+1-s,2-r-s\rbrace \cap \scrE\subset\lbrace 1\rbrace$ for all $r,s\in [\frac{1}{2}-2\delta,\frac{1}{2}+2\delta]+i[-T-2\delta,T+2\delta]$. Then for all $r,s\in [\frac{1}{2}-\delta,\frac{1}{2}+\delta]+i[-T,T]$, $j,k,l\in \lbrace 1,\ldots,\kappa\rbrace$, $\upsilon,\sigma\in\ZZ$, $x+iy\in\HH$ such that $y_l\gg 1$, and $\theta\in \RR/2\pi\ZZ$, 
\begin{align}\label{Psisigupsbdd}
\left|\left[\Psi_{j,k}^{r,s}(\vece_{2\upsilon}\otimes \vece_{2\sigma})\right](\Gamma n_x a_y \kthe)\right|\ll_{\delta,T}(1+|\upsilon|^{\frac{1}{2}+5\delta})(1+|\sigma|^{\frac{1}{2}+5\delta})y_l^{\frac{1}{6}+2\delta}.
\end{align}
\end{prop}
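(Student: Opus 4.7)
My plan is to combine the cancellation structure from the proof of Proposition \ref{Phiprops} with the quantitative Fourier-expansion bound of Lemma \ref{Ej2ubdd}, using Proposition \ref{Phiupssig} only to remove the excluded loci $s=r,\,s=1-r,\,r=\frac{1}{2}$ at the end.

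\emph{Step 1: Fourier expansion at $\eta_l$.} For $(r,s)$ with $s\ne r,\,1-r$ and $r\ne\frac{1}{2}$, I start from the explicit identity derived inside the proof of Proposition \ref{Phiupssig}, which writes $\left[\Psi_{j,k}^{r,s}(\vece_{2\upsilon}\otimes\vece_{2\sigma})\right](\Gamma n_xa_y\kthe)$ as $\frac{\vece_{2(\upsilon+\sigma)}(\kthe)}{\wty(h_j)^{r}\wty(h_k)^{s}}$ times the same five-term expression appearing there. I then plug in the asymptotic expansion from Lemma \ref{Ej2ubdd} for each weighted Eisenstein series $E_i(z,s',2m)$ in the cusp $\eta_l$, giving a leading piece $\delta_{i,l}y_l^{s'}+\mathfrak{i}_{s',2m}\varphi_{i,l}(s')y_l^{1-s'}$ plus an error $O_{s'}\!\left(\sqrt{1+\log|m|}\,|m|^{\max\{1/2,\,3/2-2\Re s'\}}y_l^{-\min\{\Re s',\,1/3\}}\right)$.

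\emph{Step 2: Cancellation of leading terms.} Expanding the product $E_j(z,r,2\upsilon)E_k(z,s,2\sigma)$ produces four ``main$\times$main'' pieces with $y_l$-exponents $r+s,\,r+1-s,\,1-r+s,\,2-r-s$ — each with real part in $[1-2\delta,\,1+2\delta]$. I verify, exactly as in the proof of Proposition \ref{Phiprops}, that each of these matches (including the factors $\mathfrak{i}_{r,2\upsilon},\,\mathfrak{i}_{s,2\sigma}$ and $\delta$-symbols) the leading $y_l^{s'}$-term of one of the four subtracted Eisenstein series in Definition \ref{Phiuv}; the match uses the symmetry $\varphi_{j,k}=\varphi_{k,j}$ of the scattering matrix. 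Thus all terms of size up to $y_l^{1+2\delta}$ cancel.

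\emph{Step 3: Bounding the residue.} What remains splits into (a) the ``secondary main'' pieces $\mathfrak{i}_{\cdot,\cdot}\varphi_{\cdot,l}(\cdot)y_l^{1-r-s},\,y_l^{s-r},\,y_l^{r-s},\,y_l^{r+s-1}$ coming from the four subtracted Eisenstein series, and (b) the cross/error terms from Lemma \ref{Ej2ubdd}. For (a), Stirling gives $|\mathfrak{i}_{s',2m}|\ll (1+|m|)^{|2\Re s'-1|}\ll(1+|m|)^{2\delta}$ on the shifted strip, so these are bounded by $(1+|\upsilon|)^{O(\delta)}(1+|\sigma|)^{O(\delta)}y_l^{2\delta}$. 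For (b), the dominant contribution is ``main$\times$(Lemma error)'', which is $\ll y_l^{\Re s}\cdot|\upsilon|^{1/2+2\delta}\sqrt{\log|\upsilon|}\,y_l^{-1/3+O(\delta)}\ll (1+|\upsilon|)^{1/2+3\delta}y_l^{1/6+2\delta}$ (and symmetrically in $\sigma$); the ``error$\times$error'' and the errors from the subtracted terms are of lower order in $y_l$. Combining (a) and (b) gives \eqref{Psisigupsbdd} in the open region $(r,s)\in U_T\times U_T$ avoiding the three loci.

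\emph{Step 4: Removing the excluded loci.} By Proposition \ref{Phiupssig}, $(r,s)\mapsto[\Psi_{j,k}^{r,s}(\vece_{2\upsilon}\otimes\vece_{2\sigma})](\Gamma n_xa_y\kthe)$ is holomorphic on the polystrip $U_T\times U_T$. Given a target point $(r_0,s_0)\in[\frac{1}{2}-\delta,\frac{1}{2}+\delta]+i[-T,T]$, I apply the multivariate maximum modulus principle on the torus $\{|r-r_0|=\delta/2,\,|s-s_0|=\delta/2\}\subset U_T\times U_T$; the singular varieties $\{s=r\},\{s=1-r\},\{r=\frac{1}{2}\}$ are complex-analytic of positive codimension, so generic torus points avoid them and Step 3 applies there. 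The extra $\delta$-room in the exponents $\frac{1}{2}+5\delta,\,\frac{1}{6}+2\delta$ of \eqref{Psisigupsbdd} absorbs both the $O(\delta)$ losses in Step 3 and the slight excursion allowed by the max-modulus argument. The main obstacle is the bookkeeping in Step 2 — correctly tracking all $\mathfrak{i}_{s,2m}$, $\wty(h_j)^{\pm s}$, and $\varphi_{j,k}$ factors so that the would-be $y_l^{1+O(\delta)}$ contributions cancel exactly — since any surviving leading term would make the bound false.
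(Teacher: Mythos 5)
Your proposal follows essentially the same route as the paper: expand all five terms at the cusp $\eta_l$ via the weighted-Eisenstein Fourier decomposition and Lemma \ref{Ej2ubdd}, verify that the four $y_l^{1\pm O(\delta)}$ terms cancel exactly against the subtracted Eisenstein maps, bound the surviving $y_l^{O(\delta)}$ pieces together with the $y_l^{1/6+O(\delta)}$ ``main$\times$error'' cross terms, and then use holomorphy (Proposition \ref{Phiupssig}) plus the maximum principle to remove the excluded loci. The only cosmetic difference is that you run a two-variable maximum modulus on the distinguished boundary torus, whereas the paper fixes $r$ and applies the one-variable maximum modulus on the circles $|s-r|=\delta$, $|s-(1-r)|=\delta$; both work because the direct estimate holds on a slightly enlarged strip.

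One small slip worth flagging: in Step 3(a) you write $|\mathfrak{i}_{s',2m}|\ll (1+|m|)^{|2\Re s'-1|}\ll(1+|m|)^{2\delta}$, and the second inequality is only valid when $\Re s'$ lies within $O(\delta)$ of $\frac{1}{2}$. For the factors $\mathfrak{i}_{s',2(\upsilon+\sigma)}$ attached to the subtracted Eisenstein series, the parameter $s'\in\{r+s,\,r+1-s,\,1-r+s,\,2-r-s\}$ has $\Re s'\approx 1$, so $|2\Re s'-1|\approx 1$ and the middle quantity is $\approx(1+|\upsilon+\sigma|)$, not $(1+|\upsilon+\sigma|)^{2\delta}$. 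The final conclusion still holds, but for a different reason: there one should use the signed bound $|\mathfrak{i}_{s',2m}|\ll(1+|m|)^{1-2\Re s'}\ll 1$ directly, which is what the paper does. Your argument is otherwise sound.
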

\begin{proof}
For $r\in [\frac{1}{2}-\delta,\frac{1}{2}+\delta]+i[-T,T]$ and $s\in [\frac{1}{2}-2\delta,\frac{1}{2}+2\delta]+i[-T-\delta,T+\delta]$ such that $|r-s|\geq\delta$ and $|r+s-1|\geq\delta$, we have
\begin{align*}
\left|\left[\Psi_{j,k}^{r,s}(\vece_{2\upsilon}\otimes \vece_{2\sigma})\right](\Gamma n_x a_y \kthe)\right|=&\frac{1}{\wty(h_j)^{\Re(r)} \wty(h_k)^{\Re(s)}}\Bigg|\bigg(E_j(x+i y ,r, 2\upsilon)E_k(x+i y ,s, 2\sigma)\\&\qquad-\delta_{j,k}E_j\big(x+i y ,r+s, 2(\upsilon+\sigma)\big)\\&\qquad-\varphi_{k,j}(s)\mathfrak{i}_{s,2\sigma}E_j\big(x+i y ,r+1-s, 2(\upsilon+\sigma)\big)\\&\qquad-\varphi_{j,k}(r)\mathfrak{i}_{r,2\upsilon}E_k\big(x+i y ,1-r+s, 2(\upsilon+\sigma)\big)\\&-\mathfrak{i}_{r,2\upsilon}\mathfrak{i}_{s,2\sigma}\sum_{i=1}^{\kappa} \varphi_{j,i}(r)\varphi_{k,i}(s)E_i\big(x+i y ,2-r-s, 2(\upsilon+\sigma)\big)\bigg)\Bigg|.
\end{align*}
Now using either Lemma \ref{Ej2ubdd}, or \eqref{EjkFourier} and \eqref{Eisexp}, we have (for all $\epsilon>0$)
\begin{align*}
&\Bigg|\bigg(E_j(x+i y ,r, 2\upsilon)E_k(x+i y ,s, 2\sigma)-\delta_{j,k}E_j\big(x+i y ,r+s, 2(\upsilon+\sigma)\big)\\&\quad-\varphi_{k,j}(s)\mathfrak{i}_{s,2\sigma}E_j\big(x+i y ,r+1-s, 2(\upsilon+\sigma)\big)-\varphi_{j,k}(r)\mathfrak{i}_{r,2\upsilon}E_k\big(x+i y ,1-r+s, 2(\upsilon+\sigma)\big)\\&\qquad\qquad\qquad\qquad\qquad\qquad\qquad-\mathfrak{i}_{r,2\upsilon}\mathfrak{i}_{s,2\sigma}\sum_{i=1}^{\kappa} \varphi_{j,i}(r)\varphi_{k,i}(s)E_i\big(x+i y ,2-r-s, 2(\upsilon+\sigma)\big)\bigg)\Bigg|
\end{align*}
\begin{align*}
&=\Bigg| \left( \delta_{j,l}y_l^r+\mathfrak{i}_{r,2\upsilon} \varphi_{j,l}(r) y_l^{1-r}+O\big( (1+|\upsilon|^{\frac{1}{2}+2\delta+\epsilon} )y_l^{-\frac{1}{3}}\big)\right)\\&\qquad\qquad\times\left( \delta_{j,l}y_l^s+\mathfrak{i}_{s,2\sigma} \varphi_{j,l}(s) y_l^{1-s}+O\big( (1+|\sigma|^{\frac{1}{2}+4\delta+\epsilon} )y_l^{-\frac{1}{3}}\big)\right)
\\&\qquad-\delta_{j,k} \left( \delta_{j,l}y_l^{s+r}+\mathfrak{i}_{{s+r},2(\upsilon+\sigma)} \varphi_{j,l}(s+r) y_l^{1-r-s}+O\big( (1+|\upsilon+\sigma|^{\frac{1}{2}+\epsilon} )y_l^{-\frac{1}{3}}\big)\right)
\\&\qquad-\varphi_{k,j}(s)\mathfrak{i}_{s,2\sigma} \Big( \delta_{j,l}y_l^{r+1-s}+\mathfrak{i}_{{r+1-s},2(\upsilon+\sigma)} \varphi_{j,l}(1+r-s) y_l^{s-r}\\&\qquad\qquad\qquad\qquad\qquad\qquad\qquad\qquad\qquad+O\big( (1+|\upsilon+\sigma|^{\frac{1}{2}+\epsilon} )y_l^{-\frac{1}{3}}\big)\Big)
\\&\qquad-\varphi_{j,k}(r)\mathfrak{i}_{r,2\upsilon} \Big( \delta_{k,l}y_l^{1-r+s}+\mathfrak{i}_{{1-r+s},2(\upsilon+\sigma)} \varphi_{k,l}(1-r+s) y_l^{r-s}\\&\qquad\qquad\qquad\qquad\qquad\qquad\qquad\qquad\qquad+O\big( (1+|\upsilon+\sigma|^{\frac{1}{2}+\epsilon} )y_l^{-\frac{1}{3}}\big)\Big)
\\&\qquad-\mathfrak{i}_{r,2\upsilon}\mathfrak{i}_{s,2\sigma}\sum_{i=1}^{\kappa} \varphi_{j,i}(r)\varphi_{k,i}(s) \Big( \delta_{i,l}y_l^{2-r-s}+\mathfrak{i}_{{2-r-s},2(\upsilon+\sigma)} \varphi_{i,l}(2-r-s) y_l^{r+s-1}\\&\qquad\qquad\qquad\qquad\qquad\qquad\qquad\qquad\qquad\qquad\quad+O\big( (1+|\upsilon+\sigma|^{\frac{1}{2}+\epsilon} )y_l^{-\frac{1}{3}}\big)\Big)\Bigg|
\end{align*}
\begin{align*}
&= \Bigg|\delta_{j,k}\mathfrak{i}_{{s+r},2(\upsilon+\sigma)} \mathfrak{i}_{r+1-s,2(\sigma+\upsilon)}\mathfrak{i}_{s,2\upsilon}  y_l^{s-r}+\varphi_{k,j}(s)\varphi_{j,l}(r+1-s)\mathfrak{i}_{r+1-s,2(\sigma+\upsilon)}\mathfrak{i}_{s,2\upsilon}  y_l^{s-r}\\&\qquad\qquad\qquad+\varphi_{j,k}(r)\varphi_{k,l}(1-r+s)\mathfrak{i}_{1-r+s,2(\sigma+\upsilon)}\mathfrak{i}_{r,2\sigma}  y_l^{r-s}\\&\qquad\qquad\qquad\qquad\qquad\qquad+\mathfrak{i}_{r,2\sigma}\mathfrak{i}_{s,2\upsilon}\mathfrak{i}_{2-r-s,2(\sigma+\upsilon)}y_l^{r+s-1} \sum_{i=1}^{\kappa} \varphi_{j,i}(r)\varphi_{k,i}(s)\varphi_{i,l}(2-r-s) \Bigg|\\&+O \Big(\!(1+|\sigma|^{\frac{1}{2}+4\delta+\epsilon})(y_l^{\Re(r)-\frac{1}{3}}+|\mathfrak{i}_{r,2\upsilon}| y_l^{\frac{2}{3}-\Re(r)})\!+\!(1+|\upsilon|^{\frac{1}{2}+2\delta+\epsilon})(y_l^{\Re(s)-\frac{1}{3}}+|\mathfrak{i}_{s,2\sigma}| y_l^{\frac{2}{3}-\Re(s)}) \\&\qquad\qquad\qquad+ (1+|\sigma|^{\frac{1}{2}+4\delta+\epsilon})(1+|\upsilon|^{\frac{1}{2}+2\delta+\epsilon})y_l^{-\frac{2}{3}}
\\&\qquad\qquad+(1+|\sigma+\upsilon|^{\frac{1}{2}+\epsilon})\big(1+|\mathfrak{i}_{s,2\sigma}|+|\mathfrak{i}_{r,2\upsilon}|+|\mathfrak{i}_{r,2\upsilon}\mathfrak{i}_{s,2\sigma}|  \big)y_k^{-\frac{1}{3}} \Big).
\end{align*}
Now using $|\mathfrak{i}_{z,2\upsilon}|\ll_z (1+|\upsilon|)^{1-2\Re(z)}$ (see e.g.\ \cite[Lemma 2.10]{Sarnak}), and that $\frac{1}{2}-\delta \leq \Re(r)\leq\frac{1}{2}+\delta$, $\frac{1}{2}-2\delta \leq \Re(s)\leq\frac{1}{2}+2\delta$, we obtain (for all $\epsilon>0$; in particular for $\epsilon=\delta$)
\begin{align}\label{maxdelt}
\left|\left[\Psi_{j,k}^{r,s}(\vece_{2\upsilon}\otimes \vece_{2\sigma})\right](\Gamma n_x a_y \kthe)\right|&\ll_{\delta,\epsilon,r,s} (1+|\sigma|^{\frac{1}{2}+4\delta+\epsilon})(1+|\upsilon|^{\frac{1}{2}+4\delta+\epsilon})y_l^{\frac{1}{6}+2\delta}
\\&\notag\ll_{\delta,r,s} (1+|\sigma|^{\frac{1}{2}+5\delta})(1+|\upsilon|^{\frac{1}{2}+5\delta})y_l^{\frac{1}{6}+2\delta}.
\end{align} 
Assuming now that $r,s\in[\frac{1}{2}-\delta,\frac{1}{2}+\delta]+i[-T,T]$ and either $|s-r|<\delta$ or $|1-r-s|<\delta$, since $s\mapsto \left[\Psi_{j,k}^{r,s}(\vece_{2\upsilon}\otimes \vece_{2\sigma})\right](\Gamma n_x a_y \kthe)$ is holomorphic for all $r,s\in[\frac{1}{2}-\delta,\frac{1}{2}+\delta]+i[-T,T]$, in particular at $s=r$ or $1-r$ (see the proof of Proposition \ref{Phiupssig}), by the maximum principle,
\begin{align}\label{maxprinciple}
&\left|\left[\Psi_{j,k}^{r,s}(\vece_{2\upsilon}\otimes \vece_{2\sigma})\right](\Gamma n_x a_y \kthe)\right|\leq \max_{\underset{|r-\zeta|=\delta\,\mathrm{or}\,|1-r-\zeta|=\delta}{\zeta\in\CC}}\left| \left[\Psi_{j,k}^{r,\zeta}(\vece_{2\upsilon}\otimes \vece_{2\sigma})\right](\Gamma n_x a_y \kthe) \right|\\\notag&\qquad\qquad\ll_{\delta,r} (1+|\sigma|^{\frac{1}{2}+5\delta})(1+|\upsilon|^{\frac{1}{2}+5\delta})y_l^{3\delta},
\end{align}
by \eqref{maxdelt}. Together, \eqref{maxdelt} and \eqref{maxprinciple} prove the proposition.
\end{proof}
\begin{cor}\label{Phiuvdef}
The map $(r,s)\mapsto\left[\Psi_{j,k}^{r,s}(\vecu\otimes\vecv)\right](\Gamma g)$ is holomorphic for all $(r,s)\in(\frac{1}{2}+i\RR)\times(\frac{1}{2}+i\RR)$, all $\vecu,\vecv\in V^{\infty}$, and all $g\in G$.
\end{cor}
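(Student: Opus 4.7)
The plan is to reduce the claim to the basis case already handled in Propositions \ref{Phiupssig} and \ref{PsiupssigFour} by expanding $\vecu$ and $\vecv$ in the Fourier basis $\{\vece_{2\upsilon}\}$. Since $\vecu,\vecv\in V^\infty$, their Fourier coefficients $a_{2\upsilon}=\langle\vecu,\vece_{2\upsilon}\rangle_{L^2(K)}$ and $b_{2\sigma}=\langle\vecv,\vece_{2\sigma}\rangle_{L^2(K)}$ decay faster than any polynomial. The main identity to be established is
\begin{equation*}
\bigl[\Psi_{j,k}^{r,s}(\vecu\otimes\vecv)\bigr](\Gamma g)=\sum_{\upsilon,\sigma\in\ZZ}a_{2\upsilon}b_{2\sigma}\bigl[\Psi_{j,k}^{r,s}(\vece_{2\upsilon}\otimes\vece_{2\sigma})\bigr](\Gamma g),
\end{equation*}
with the series converging absolutely and locally uniformly in $(r,s,\Gamma g)$. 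Granted this, each summand is holomorphic in $(r,s)\in(\tfrac{1}{2}+i\RR)\times(\tfrac{1}{2}+i\RR)$ by Proposition \ref{Phiupssig}, and Weierstrass's theorem then yields holomorphy of the limit.

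First I would derive the expansion identity on the region where Definition \ref{Phiuv} directly applies, i.e.\ for $(r,s)$ with $\Re(r),\Re(s)$ large enough and avoiding the exceptional lines $s=r$, $s=1-r$, $s=r+1$, $s=-r+1$ and the poles of the $\varphi_{m,n}$. Each of the five summands in Definition \ref{Phiuv} is an Eisenstein map of a pointwise product (or one of $\vecu,\vecv$), and the key observation $\vece_{2\upsilon}\vece_{2\sigma}=\vece_{2(\upsilon+\sigma)}$ together with the linearity of the Eisenstein maps and the rapidly convergent expansion \eqref{EismapKFour} allows the interchange of summation and Eisenstein map. The $(\upsilon,\sigma)$-term is then, by construction, exactly $a_{2\upsilon}b_{2\sigma}\bigl[\Psi_{j,k}^{r,s}(\vece_{2\upsilon}\otimes\vece_{2\sigma})\bigr](\Gamma g)$.

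Next I would combine the pointwise bound of Proposition \ref{PsiupssigFour} with the rapid decay of $a_{2\upsilon},b_{2\sigma}$ to obtain uniform convergence of the double sum on compact subsets of $((\tfrac{1}{2}+i\RR)\times(\tfrac{1}{2}+i\RR))\times\GaG$. For $\Gamma g$ in the cuspidal regions one applies Proposition \ref{PsiupssigFour} directly; the polynomial factor $(1+|\upsilon|^{1/2+5\delta})(1+|\sigma|^{1/2+5\delta})$ is dominated by any negative power of $(1+|\upsilon|)(1+|\sigma|)$. For $\Gamma g$ in the bounded part of $\scrF$, each $\bigl[\Psi_{j,k}^{r,s}(\vece_{2\upsilon}\otimes\vece_{2\sigma})\bigr](\Gamma g)$ is jointly continuous in $(r,s,\Gamma g)$ (by Proposition \ref{Eisucont}) with polynomial dependence on $\upsilon,\sigma$ coming from Lemma \ref{Ej2ubdd} and the bound $|\mathfrak{i}_{s,2\upsilon}|\ll_s(1+|\upsilon|)^{1-2\Re(s)}$, and a covering argument supplies a uniform polynomial estimate. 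Weierstrass's theorem then gives that the series defines a holomorphic function of $(r,s)$ on $(\tfrac{1}{2}+i\RR)^2$ which extends $[\Psi_{j,k}^{r,s}(\vecu\otimes\vecv)](\Gamma g)$ from its original domain of definition.

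The main obstacle I anticipate is the bookkeeping in the expansion identity: while each individual Eisenstein map expands via \eqref{EismapKFour}, the second, third, and fourth summands involve Eisenstein maps of pointwise products such as $\vecu\vecv$ and $\vecu\,\scrI_s\vecv$, and one must verify that the resulting triple sum over $\upsilon$, $\sigma$, and the weight index in \eqref{EismapKFour} may indeed be freely rearranged into the desired double sum. This is guaranteed by absolute convergence, which in turn rests on the Schwartz decay of $a_{2\upsilon},b_{2\sigma}$, the polynomial growth bounds for $\EE_j^s(\Gamma g,\vece_{2n})$ from Proposition \ref{Eisucont}, and the fact that for $\vecu,\vecv\in V^\infty$ the pointwise product $\vecu\vecv$ also lies in $V^\infty$ with rapidly decreasing Fourier coefficients.
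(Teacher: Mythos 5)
Your proof is correct and takes essentially the same approach as the paper: expand $\vecu,\vecv$ in the Fourier basis, establish the termwise expansion $\Psi_{j,k}^{r,s}(\vecu\otimes\vecv)=\sum_{\upsilon,\sigma}a_{2\upsilon}b_{2\sigma}\Psi_{j,k}^{r,s}(\vece_{2\upsilon}\otimes\vece_{2\sigma})$ away from the exceptional lines, then invoke Proposition \ref{PsiupssigFour} together with the rapid decay of the Fourier coefficients to obtain locally uniform absolute convergence and deduce holomorphy by Weierstrass. One small remark: your separate treatment of the bounded part of $\scrF$ is unnecessary, since the paper's convention $y_l\gg 1$ means only $y_l\ge\delta_B$, which holds for \emph{every} $z\in\scrF$ and some $l$; the bound \eqref{Psisigupsbdd} therefore already applies uniformly on all of $\GaG$.
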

\begin{proof}
For arbitrary $T>0$, let $\delta=\delta_T>0$ be as in Proposition \ref{PsiupssigFour}. Writing $\vecu=\sum_{\upsilon\in\ZZ} a_{2\upsilon}\vece_{2\upsilon}$ and $\vecv=\sum_{\upsilon\in\ZZ} b_{2\sigma}\vece_{2\sigma}$, we then have $\EE_j^r(\cdot,\scrK_j\vecu)=\sum_{\upsilon\in\ZZ}a_{2\upsilon}\EE_j^r(\cdot,\scrK_j\vece_{2\upsilon})$, and a similar decomposition of the other terms in Definition \ref{Phiuv}. Expanding all Eisenstein maps in such infinite sums and collecting terms gives that for $r,s\in (\frac{1}{2}-\delta,\frac{1}{2}+\delta)+i\RR$ with $s\neq r,\,1-r$, $\Psi_{j,k}^{r,s}(\vecu\otimes\vecv)$, we have
\begin{align*}
\left[\Psi_{j,k}^{r,s}(\vecu\otimes\vecv)\right](\Gamma g)=\sum_{\upsilon,\sigma\in\ZZ} a_{2\upsilon}b_{2\sigma}\left[\Psi_{j,k}^{r,s}(\vece_{2\upsilon}\otimes\vece_{2\sigma})\right](\Gamma g),
\end{align*}
with the sum converging absolutely and uniformly for $r,s$ in $[\frac{1}{2}-\delta,\frac{1}{2}+\delta]+i[-T,T]$; this is due to the bound \eqref{Psisigupsbdd} and $|a_{2\upsilon}|\ll_{\vecu,n} (1+|\upsilon|)^{-n}$, $|b_{2\sigma}|\ll_{\vecv,n} (1+|\sigma|)^{-n}$ (for all $n\in \NN$). Since each $(r,s)\mapsto\left[\Psi_{j,k}^{r,s}(\vece_{2\upsilon}\otimes\vece_{2\sigma})\right](\Gamma g)$ is holomorphic on $\big([\frac{1}{2}-\delta,\frac{1}{2}+\delta]+i[-T,T]\big)\times\big([\frac{1}{2}-\delta,\frac{1}{2}+\delta]+i[-T,T]\big)$, so is  $(r,s)\mapsto\left[\Psi_{j,k}^{r,s}(\vecu\otimes\vecv)\right](\Gamma g)$.
\end{proof}
\begin{cor}\label{Phiuvbd}
$\Psi_{j,k}^{r,s}(\vecu\otimes\vecv)\in L^3(\GaG)$ for all $\vecu,\vecv\in V^{\infty}$, $r,s\in \frac{1}{2}+i\RR$. Moreover,
\begin{equation*}
\|\Psi_{j,k}^{r,s}(\vecu\otimes\vecv)\|_{L^2(\GaG)}\ll_{r,s,\beta} \scrS_{\beta}(\vecu)\scrS_{\beta}(\vecv)
\end{equation*}
for all $\beta>1$.
\end{cor}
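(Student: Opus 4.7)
The plan is to combine the Fourier expansion $\vecu=\sum_\upsilon a_{2\upsilon}\vece_{2\upsilon}$, $\vecv=\sum_\sigma b_{2\sigma}\vece_{2\sigma}$ with the pointwise bound of Proposition~\ref{PsiupssigFour} and Cauchy--Schwarz to show that $\Psi_{j,k}^{r,s}(\vecu\otimes\vecv)$ grows at most like $\sYGa^{1/6+2\delta}$ for $\delta>0$ arbitrarily small; the $L^p$ estimate then follows routinely from~\eqref{cuspint}.

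First, I would fix $r,s\in\frac{1}{2}+i\RR$, take $T>\max(|\Im(r)|,|\Im(s)|)$, and choose $\delta>0$ satisfying both $1+5\delta<\beta$ and $\delta<1/12$, so that in particular $\delta$ is small enough for Proposition~\ref{PsiupssigFour} to apply. The proof of Corollary~\ref{Phiuvdef} gives the absolutely convergent expansion
\[
[\Psi_{j,k}^{r,s}(\vecu\otimes\vecv)](\Gamma g)=\sum_{\upsilon,\sigma\in\ZZ}a_{2\upsilon}b_{2\sigma}\,[\Psi_{j,k}^{r,s}(\vece_{2\upsilon}\otimes\vece_{2\sigma})](\Gamma g).
\]
On each cusp region $\scrC_{l,B}$, Proposition~\ref{PsiupssigFour} dominates the general term by a constant multiple of $(1+|\upsilon|^{1/2+5\delta})(1+|\sigma|^{1/2+5\delta})y_l^{1/6+2\delta}$. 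Applying Cauchy--Schwarz in the form
\[
\sum_{\upsilon\in\ZZ}|a_{2\upsilon}|(1+|\upsilon|^{\tfrac{1}{2}+5\delta})\leq\scrS_\beta(\vecu)\Bigl(\sum_{\upsilon\in\ZZ}\frac{(1+|\upsilon|^{\tfrac{1}{2}+5\delta})^2}{(1+|\upsilon|^\beta)^2}\Bigr)^{\!1/2}
\]
(with the last series convergent by our choice of $\delta$) and similarly for $\vecv$ yields, on $\scrC_{l,B}$,
\[
\bigl|[\Psi_{j,k}^{r,s}(\vecu\otimes\vecv)](\Gamma g)\bigr|\ll_{r,s,\beta}\scrS_\beta(\vecu)\scrS_\beta(\vecv)\,y_l^{\tfrac{1}{6}+2\delta}.
\]

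A similar but easier argument handles the compact piece $\scrF_B$: Lemma~\ref{Eismapbd} furnishes a crude polynomial-in-$|\upsilon|,|\sigma|$ pointwise bound on $[\Psi_{j,k}^{r,s}(\vece_{2\upsilon}\otimes\vece_{2\sigma})](\Gamma g)$ there via the definition of $\Psi_{j,k}^{r,s}$ as a finite linear combination of products of Eisenstein maps, and the same Cauchy--Schwarz step then yields a uniform bound $\ll_{r,s,\beta}\scrS_\beta(\vecu)\scrS_\beta(\vecv)$ on $\scrF_B$. Combining the two regimes and invoking~\eqref{sYkjbd} produces
\[
\bigl|[\Psi_{j,k}^{r,s}(\vecu\otimes\vecv)](\Gamma g)\bigr|\ll_{r,s,\beta}\sYGa(\Gamma g)^{\tfrac{1}{6}+2\delta}\scrS_\beta(\vecu)\scrS_\beta(\vecv)\qquad\forall g\in G.
\]

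Finally, I would raise this bound to the $p$th power and reduce to cuspidal integrals via~\eqref{cuspint}: on each $\scrC_{l,B}$ the integral $\int_B^\infty y_l^{p(\tfrac16+2\delta)-2}\,dy_l$ converges exactly when $p(\tfrac16+2\delta)<1$. The choice $\delta<1/12$ permits $p=3$, giving $\Psi_{j,k}^{r,s}(\vecu\otimes\vecv)\in L^3(\GaG)$; taking $p=2$ and tracking constants delivers the asserted bound on $\|\cdot\|_{L^2(\GaG)}$. The only mildly delicate point in this program is the treatment of $\scrF_B$, since Proposition~\ref{PsiupssigFour} is only stated for $y_l\gg 1$; but this is essentially free from Lemma~\ref{Eismapbd} and the joint continuity of $\Psi_{j,k}^{r,s}$ in $(\Gamma g,r,s)$, so I anticipate no serious obstacle.
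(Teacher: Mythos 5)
Your argument is correct and is essentially the paper's proof, just reorganized: the paper first computes $\|\Psi_{j,k}^{r,s}(\vece_{2\upsilon}\otimes\vece_{2\sigma})\|_{L^p(\GaG)}\ll(1+|\upsilon|^{1/2+5\delta})(1+|\sigma|^{1/2+5\delta})$ from Proposition~\ref{PsiupssigFour} and then sums over $\upsilon,\sigma$ with the triangle inequality (using Cauchy--Schwarz only at the end to introduce $\scrS_{\beta}$), whereas you perform Cauchy--Schwarz at the pointwise level to get $\bigl|[\Psi_{j,k}^{r,s}(\vecu\otimes\vecv)](\Gamma g)\bigr|\ll\scrS_\beta(\vecu)\scrS_\beta(\vecv)\,y_l^{1/6+2\delta}$ and then integrate; the two orders of operation give the same constants. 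One small comment: the extra care you devote to $\scrF_B$ via Lemma~\ref{Eismapbd} is harmless but unnecessary. Recall that in Section~\ref{prelims} the paper \emph{defines} the symbol ``$y\gg 1$'' to mean $y\geq\delta_B$, and $\delta_B$ is chosen so that $y_l\geq\delta_B$ on all of $\scrF_B\cup\scrC_{l,B}$; thus the hypothesis ``$y_l\gg 1$'' of Proposition~\ref{PsiupssigFour} is already satisfied everywhere on the fundamental domain, and the bound $(1+|\upsilon|^{1/2+5\delta})(1+|\sigma|^{1/2+5\delta})y_l^{1/6+2\delta}$ applies uniformly (with $y_l$ bounded on $\scrF_B$), so no separate argument for the compact piece is needed.
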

\begin{proof}
Choosing $T>\max\lbrace |\Im(r)|,|\Im(s)|\rbrace$ and $0<\delta<\frac{1}{12}$ as in Proposition \ref{PsiupssigFour} shows that $|\Psi_{j,k}^{r,s}(\vece_{2\upsilon}\otimes \vece_{2\sigma})|^p=O(y_l^{\eta})$ for some $\eta<1$ as $y_l\rightarrow\infty$. Since the measure $d\mu_{\GaG}$ is given by $\frac{dx_ldy_ld\theta}{y_l^2}$ in the cusp $\eta_l$, this shows that 
\begin{equation*}
\|\Psi_{j,k}^{r,s}(\vece_{2\upsilon}\otimes\vece_{2\sigma})\|_{L^2(\GaG)}\ll (1+|\sigma|^{\frac{1}{2}+5\delta})(1+|\upsilon|^{\frac{1}{2}+5\delta}),
\end{equation*}
hence writing $\vecu=\sum_{\upsilon\in\ZZ} a_{2\upsilon}\vece_{2\upsilon}$ and $\vecu=\sum_{\upsilon\in\ZZ} b_{2\sigma}\vece_{2\sigma}$, 
\begin{align*}
\|\Psi_{j,k}^{r,s}(\vecu\otimes\vecv)\|_{L^p(\GaG)}\leq &\sum_{\upsilon,\sigma\in\ZZ} |a_{2\upsilon}||b_{2\sigma}|\|\Psi_{j,k}^{r,s}(\vece_{2\upsilon}\otimes\vece_{2\sigma})\|_{L^p(\GaG)}\\& \ll \sum_{\upsilon,\sigma\in\ZZ} |a_{2\upsilon}||b_{2\sigma}| (1+|\sigma|^{\frac{1}{2}+5\delta})(1+|\upsilon|^{\frac{1}{2}+5\delta})<\infty.
\end{align*}
To prove the second part of the claim, given $\beta>1$, choose $\delta>0$ as in Proposition \ref{PsiupssigFour} so that $\alpha:=\beta-\frac{1}{2}-5\delta>\frac{1}{2}$. Then
\begin{align*}
\|\Psi_{j,k}^{r,s}(\vecu\otimes\vecv)\|_{L^2(\GaG)} \ll& \sum_{\upsilon,\sigma\in\ZZ} |a_{2\upsilon}||b_{2\sigma}| (1+|\sigma|^{\frac{1}{2}+5\delta})(1+|\upsilon|^{\frac{1}{2}+5\delta})\\&\ll \left(\sum_{\upsilon\in\ZZ} |a_{2\upsilon}|(1+|\upsilon|^{\beta})(1+|\upsilon|)^{-\alpha}\right)\left(\sum_{\sigma\in\ZZ}|b_{2\sigma}| (1+|\sigma|^{\beta})(1+|\sigma|)^{-\alpha}\right)\\&\quad\ll\scrS_{\beta}(\vecu)\scrS_{\beta}(\vecv).
\end{align*}
\end{proof}
\begin{remark}
In fact, $\Psi_{j,k}^{r,s}(\vecu\otimes\vecv)\in L^p(\GaG)$ for all $\vecu,\vecv\in V^{\infty}$, $r,s\in \frac{1}{2}+i\RR$ and all $p<\infty$. Higher order Sobolev norms are required to prove this, however.
\end{remark}

\begin{lem}\label{Phiuvintertwine} For all $r,s\in \frac{1}{2}+i\RR$,
\begin{equation*}
\rho(g)\Psi_{j,k}^{r,s}(\vecu\otimes\vecv)=\Psi_{j,k}^{r,s}(\pi^r(g)\vecu\otimes\pi^s(g)\vecv)\qquad \forall g\in G,\, \vecu,\vecv\in V^{\infty}.
\end{equation*}
Consequently, $\Psi_{j,k}^{r,s}(\vecu\otimes\vecv)\in C^{\infty}(\GaG)$ for all $\vecu,\vecv\in V^{\infty}$.
\end{lem}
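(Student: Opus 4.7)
The plan is to first verify the identity on pure-tensor basis elements $\vece_{2\upsilon}\otimes\vece_{2\sigma}$ at \emph{generic} parameters $(r,s)\in(\frac{1}{2}+i\RR)^2$ where every summand in Definition \ref{Phiuv} is regular, then extend by Fourier expansion to all $\vecu,\vecv\in V^{\infty}$, and finally remove the genericity assumption by analytic continuation.

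The algebraic ingredient is the ``cocycle'' identity $\pi^r(g)\vecu\cdot\pi^s(g)\vecv=\pi^{r+s}(g)(\vecu\vecv)$ for pointwise products in $V^{\infty}$, which is immediate from \eqref{psreq} since $\wty(kg)^r\wty(kg)^s=\wty(kg)^{r+s}$. Combining this with \eqref{Psjinterwtwine} (so that $\scrK_j\pi^z=\pi_j^z\scrK_j$), Proposition \ref{intertwine} (so that $\scrI_s\pi_j^s=\pi_j^{1-s}\scrI_s$), and the observation that $\scrK_j$ and $\scrI_s$ are both diagonal in the basis $\{\vece_{2\upsilon}\}$ and hence commute, one also obtains the transported relation $\scrI_s\pi^s(g)=\pi^{1-s}(g)\scrI_s$.

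Fix $r,s\in\frac{1}{2}+i\RR$ with $s\neq r$ and $s\neq 1-r$, so that none of the five summands in Definition \ref{Phiuv} is singular. Applying $\rho(g)$ distributes over the pointwise product in the first summand (each factor is smooth) and over the linear combination in the rest; every Eisenstein-map factor $\EE_i^z(\cdot,\scrK_i\vecw)$ is transformed by \eqref{Eismapintertwine} into $\EE_i^z(\cdot,\pi_i^z(g)\scrK_i\vecw)=\EE_i^z(\cdot,\scrK_i\pi^z(g)\vecw)$. Using the algebraic identities above, the inner arguments reassemble exactly as required; for instance, in the $(r+1-s)$ summand,
\[
\scrK_j\pi^{r+1-s}(g)(\vecu\,\scrI_s\vecv)=\scrK_j\bigl(\pi^r(g)\vecu\cdot\pi^{1-s}(g)\scrI_s\vecv\bigr)=\scrK_j\bigl(\pi^r(g)\vecu\cdot\scrI_s\pi^s(g)\vecv\bigr),
\]
matching the corresponding term of $\Psi_{j,k}^{r,s}(\pi^r(g)\vecu\otimes\pi^s(g)\vecv)$; the other four summands check in the same way. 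For general $\vecu,\vecv\in V^{\infty}$, the Schwartz decay of Fourier coefficients of smooth vectors combined with Proposition \ref{PsiupssigFour} justifies exchanging the double sum $\sum_{\upsilon,\sigma}a_{2\upsilon}b_{2\sigma}\Psi_{j,k}^{r,s}(\vece_{2\upsilon}\otimes\vece_{2\sigma})$ with $\rho(g)$ and with $\Psi_{j,k}^{r,s}(\,\cdot\,)$, so the identity transfers from pure tensors to arbitrary $\vecu\otimes\vecv$.

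Both sides extend to holomorphic functions of $(r,s)$ on $(\frac{1}{2}+i\RR)^2$ by Corollary \ref{Phiuvdef} --- the left side directly, the right side after noting that $r\mapsto\pi^r(g)\vecu$ and $s\mapsto\pi^s(g)\vecv$ are holomorphic families of smooth vectors whose Fourier coefficients remain rapidly decaying locally uniformly in $(r,s)$ --- so the identity extends to the exceptional lines $s=r$, $s=1-r$ by analytic continuation. Smoothness of $\Psi_{j,k}^{r,s}(\vecu\otimes\vecv)$ on $\GaG$ is immediate at generic $(r,s)$ from Proposition \ref{Eisucont}; at exceptional $(r_0,s_0)$ it follows from Cauchy's integral formula applied to the holomorphic function $s\mapsto\Psi_{j,k}^{r_0,s}(\vecu\otimes\vecv)(\Gamma g)$ on a small circle around $s_0$, whose integrand is smooth in $g$ with bounds uniform on the contour (by Proposition \ref{PsiupssigFour}), permitting differentiation in $g$ under the integral sign. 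The main technical delicacy is precisely this commutation of the infinite double Fourier sum with the analytic continuation in $(r,s)$, because individual summands in Definition \ref{Phiuv} do have singularities on the exceptional lines --- only their full combination is regular.
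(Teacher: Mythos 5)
Your proof is correct and follows essentially the same route as the paper: apply the intertwining relations \eqref{Psjinterwtwine}, \eqref{Eismapintertwine}, Proposition \ref{intertwine}, together with the multiplicativity identity $\pi^{c+d}_i(g)(\veca\vecb)=(\pi_i^c(g)\veca)(\pi_i^d(g)\vecv)$ term-by-term in Definition \ref{Phiuv} at generic $(r,s)$ with $s\neq r,1-r$, then invoke analytic continuation for the exceptional lines. You are somewhat more careful than the paper in two places the paper treats tersely: you spell out why the right-hand side $(r,s)\mapsto\Psi_{j,k}^{r,s}(\pi^r(g)\vecu\otimes\pi^s(g)\vecv)(\Gamma g')$ is itself holomorphic (via the uniform Fourier-coefficient bounds \eqref{aFourbdd} and Proposition \ref{PsiupssigFour} on pure tensors), and you give an explicit Cauchy-integral argument for smoothness of $\Psi_{j,k}^{r,s}(\vecu\otimes\vecv)$ at the exceptional parameters --- both are appropriate elaborations of what the paper compresses into the single clause ``analytic continuation then shows\dots''.
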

\begin{proof}
The key components of the proof are \eqref{Psjinterwtwine}, Proposition \ref{intertwine}, and \eqref{Eismapintertwine}, as well as the identity $\pi_i^{c+d}(g)(\veca\vecb)=\big(\pi_i^{c}(g)\veca\big)\big(\pi_i^{d}(g)\vecb\big)$ for all $i\in\lbrace 1,\ldots,\kappa\rbrace$, $\veca,\vecb\in V^{\infty}$, $c,d\in \CC$, and $g\in G$. Combining these facts, we obtain, for $s\neq r,1-r$:
\begin{align*}
\rho(g)\Psi_{j,k}^{r,s}(\vecu\otimes\vecv)&:= \big(\rho(g)\EE_j^r(\cdot,\scrK_j\vecu)\big)\big(\rho(g)\EE_k^s(\cdot,\scrK_k\vecv)\big)-\delta_{j,k}\rho(g)\EE_j^{r+s}\big(\cdot,(\scrK_j\vecu)(\scrK_k\vecv)\big)\\ &-\varphi_{k,j}(s)\wty(h_j)^{1- s}\wty(h_k)^{-s}\rho(g)\EE_j^{r+1-s}\big(\cdot,(\scrK_j\vecu)(\scrI_s\scrK_j\vecv)\big)\\&-\varphi_{j,k}(r)\wty(h_j)^{-r}\wty(h_k)^{1-r}\rho(g)\EE_k^{1-r+s}\big(\cdot,(\scrI_r\scrK_k\vecu)(\scrK_k\vecv)\big)\\&-\sum_{i=1}^{\kappa} \varphi_{j,i}(r)\varphi_{k,i}(s)\wty(h_i)^{2-s-r}\wty(h_j)^{-s}\wty(h_k)^{-r}\rho(g)\EE_i^{2-r-s}(\cdot, (\scrI_r\scrK_i\vecu)(\scrI_s\scrK_i\vecv)\big)
\end{align*}
\begin{align*}=&\EE_j^r(\cdot,\pi_j^r(g)\scrK_j\vecu)\EE_k^s(\cdot,\pi_k^s(g)\scrK_k\vecv)-\delta_{j,k}\EE_j^{r+s}(\cdot,\pi^{r+s}_j(g)\big((\scrK_j\vecu)(\scrK_k\vecv)\big))\\ &-\varphi_{k,j}(s)\wty(h_j)^{1- s}\wty(h_k)^{-s}\EE_j^{r+1-s}(\cdot,\pi_j^{r+1-s}(g)\big((\scrK_j\vecu)(\scrI_s\scrK_j\vecv)\big))\\&-\varphi_{j,k}(r)\wty(h_j)^{-r}\wty(h_k)^{1-r}\EE_k^{1-r+s}(\cdot,\pi_j^{1-r+s}(g)\big((\scrI_r\scrK_k\vecu)(\scrK_k\vecv)\big))\\&\quad-\sum_{i=1}^{\kappa} \varphi_{j,i}(r)\varphi_{k,i}(s)\wty(h_i)^{2-s-r}\wty(h_j)^{-s}\wty(h_k)^{-r}\EE_i^{2-r-s}(\cdot, \pi_i^{2-r-s}(g)\big((\scrI_r\scrK_i\vecu)(\scrI_s\scrK_i\vecv)\big))
\end{align*}
\begin{align*}
=&\EE_j^r(\cdot,\pi_j^r(g)\scrK_j\vecu)\EE_k^s(\cdot,\pi_k^s(g)\scrK_k\vecv)-\delta_{j,k}\EE_j^{r+s}\big(\cdot,(\pi^{r}_j(g)\scrK_j\vecu)(\pi^{s}_k(g)\scrK_k\vecv)\big)\\ &-\varphi_{k,j}(s)\wty(h_j)^{1- s}\wty(h_k)^{-s}\EE_j^{r+1-s}\big(\cdot,(\pi^{r}_j(g)\scrK_j\vecu)(\pi^{1-s}_j(g)\scrI_s\scrK_j\vecv)\big)\\&-\varphi_{j,k}(r)\wty(h_j)^{-r}\wty(h_k)^{1-r}\EE_k^{1-r+s}\big(\cdot,(\pi_k^{1-r}(g)\scrI_r\scrK_k\vecu)(\pi_k^s(g)\scrK_k\vecv)\big)\\&\quad-\sum_{i=1}^{\kappa} \varphi_{j,i}(r)\varphi_{k,i}(s)\wty(h_i)^{2-s-r}\wty(h_j)^{-s}\wty(h_k)^{-r}\EE_i^{2-r-s}\big(\cdot, (\pi_i^{1-r}(g)\scrI_r\scrK_i\vecu)(\pi_i^{1-s}(g)\scrI_s\scrK_i\vecv)\big)
\end{align*}
\begin{align*}
=&\EE_j^r(\cdot,\scrK_j\pi^r(g)\vecu)\EE_k^s(\cdot,\scrK_k\pi^s(g)\vecv)-\delta_{j,k}\EE_j^{r+s}\big(\cdot,(\scrK_j\pi^{r}(g)\vecu)(\scrK_k\pi^{s}(g)\vecv)\big)\\ &-\varphi_{k,j}(s)\wty(h_j)^{1- s}\wty(h_k)^{-s}\EE_j^{r+1-s}\big(\cdot,(\scrK_j\pi^{r}(g)\vecu)(\scrI_s\scrK_j\pi^{s}(g)\vecv)\big)\\&-\varphi_{j,k}(r)\wty(h_j)^{-r}\wty(h_k)^{1-r}\EE_k^{1-r+s}\big(\cdot,(\scrI_r\scrK_k\pi^{r}(g)\vecu)(\scrK_k\pi^s(g)\vecv)\big)\\&\quad-\sum_{i=1}^{\kappa} \varphi_{j,i}(r)\varphi_{k,i}(s)\wty(h_i)^{2-s-r}\wty(h_j)^{-s}\wty(h_k)^{-r}\EE_i^{2-r-s}\big(\cdot, (\scrI_r\scrK_i\pi^{r}(g)\vecu)(\scrI_s\scrK_i\pi^{s}(g)\vecv)\big)\\&=\Psi_{j,k}^{r,s}(\pi^r(g)\vecu\otimes\pi^s(g)\vecv).
\end{align*}
Analytic continuation then shows that this also holds for $s=r$ or $s=1-r$.
\end{proof}

\section{Analytic continuation of representations}

\subsection{Analytic continuation of principal series representations} We shall recall one of the central ideas of \cite{Bern}, in which analytic continuation is used to ``extend" the  action of $G$ on $K$-finite vectors in $\scrP^s$ to a larger domain $U$, $G\subset U\subset \SL(2,\CC)$. By analytic continuation, certain relations that hold for $g\in G$ continue to hold for $g\in U$. However, the operators $\pi^s(g)$ cease to be unitary for $g\in U$; in particular, the precise asymptotic behaviour of $\|\pi^s(g)\vece_0\|$ as $g$ approaches the boundary of $U$ is key to the proof of Theorem \ref{thm2}.

We note also that the previous occurrences of analytic continuation in this article have been of $\CC$-valued meromorphic functions. In this section, however, we consider Hilbert space-valued analytic functions and their analytic continuations, cf.\ e.g.\ \cite[Chapter 3.5]{Rudin} 

The following result summarises the key results of \cite{Bern} that we will need:
\begin{prop}\label{Psancon}(cf.\ \cite[Proposition 2.2]{Bern}, as well as Appendix \ref{Analytic}).
Let $U\subset \SL(2,\CC)$ be  given by $U=\SL(2,\RR) I \SO(2,\CC)$, where $I=\lbrace \smatr{a}{0}{0}{a^{-1}}\,:\, a\in \CC,\,|\arg(a)|<\frac{\pi}{4}\rbrace $. Then
\begin{enumerate}[i)]
\item For each $s\in\CC$, the map from $G$ to $V$ given by $g\mapsto \pi^s(g)\vece_0$ may be analytically continued to 
all of $U$, and $\pi^s(g)\vece_0\in V^{\infty}$ for all $g\in U$.
\item For $0<\epsilon<\frac{1}{5}$, let $g_{\epsilon}=\smatr{e^{(\pi/4-\epsilon)i}}{}{}{e^{-(\pi/4-\epsilon)i}}\in U$. Then for all $t\in \RR$, we have the bound
\begin{equation*}
\|\pi^{1/2+it}(g_{\epsilon})\vece_0\|^2_{L^2(K)} \geq c  e^{(\pi-12\epsilon)|t|},
\end{equation*}
where the constant $c>0$ is independent of $t$ and $\epsilon$.
\end{enumerate}
\end{prop}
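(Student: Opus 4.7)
The plan is to give an explicit formula for $\pi^s(g)\vece_0$ on $K$, use it to analytically continue $g \mapsto \pi^s(g)\vece_0$ from $G$ to $U$, and then estimate the $L^2(K)$-norm at $g_\epsilon$ by a direct argument.

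For the formula: from $g\cdot i = (ac+bd+i)/(c^2+d^2)$ for $g = \smatr{a}{b}{c}{d} \in G$ one reads off $\wty(g) = (c^2+d^2)^{-1}$, and combined with $\pi^s(k)\vece_0 = \vece_0$ for $k \in K$ this yields
\begin{equation*}
[\pi^s(g)\vece_0](k_\theta) = P(g,\theta)^{-s}, \qquad P(g,\theta) := (a\sin\theta+c\cos\theta)^2 + (b\sin\theta+d\cos\theta)^2.
\end{equation*}
Since $P$ is polynomial in the entries of $g$, it extends naturally to $g \in \SL(2,\CC)$. To promote this into an analytic continuation of $\pi^s(\cdot)\vece_0$ to $U$ valued in $V^\infty$, I would verify that $P(g,\cdot)$ takes values in the open right half-plane $\{\Re z > 0\}$ for every $g \in U$, so that $P^{-s}$ is well-defined via the principal branch. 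The core check is for the diagonal factor $a_y$ with $y = \rho e^{i\phi}$, $|\phi| < \pi/2$:
\begin{equation*}
P(a_y,\theta) = y\sin^2\theta + y^{-1}\cos^2\theta, \qquad \Re P(a_y,\theta) = \cos\phi\,(\rho\sin^2\theta + \rho^{-1}\cos^2\theta) > 0.
\end{equation*}
The left $\SL(2,\RR)$-factor acts by a rotation in $\theta$ (preserving positivity), and the right $\SO(2,\CC)$-factor acts trivially on $\vece_0$ by analytic continuation of the identity $\pi^s(k)\vece_0 = \vece_0$; smoothness of $\pi^s(g)\vece_0$ in $\theta$ (hence membership in $V^\infty$) follows from the non-vanishing of $P$.

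For part (ii), direct computation at $g_\epsilon = a_{y_\epsilon}$ with $y_\epsilon = e^{i(\pi/2-2\epsilon)}$ gives $P(g_\epsilon,\theta) = \sin(2\epsilon) - i\cos(2\epsilon)\cos(2\theta)$, and the identity $|P^{-s}|^2 = |P|^{-1}e^{2t\arg P}$ (for $s = 1/2+it$, principal branch) yields
\begin{equation*}
\|\pi^{1/2+it}(g_\epsilon)\vece_0\|_{L^2(K)}^2 = \int_0^{2\pi} |P(g_\epsilon,\theta)|^{-1}\, e^{2t\arg P(g_\epsilon,\theta)}\,\frac{d\theta}{2\pi}.
\end{equation*}
For $t > 0$, $\arg P$ attains its maximum $\pi/2 - 2\epsilon$ at $\theta = \pi/2$; setting $\theta = \pi/2 + \phi$ one obtains $\arg P = \arctan(\cot(2\epsilon)\cos(2\phi))$, and the inequality $\arg P \geq \pi/2 - 6\epsilon$ reduces (by taking $\tan$) to $\cos(2\phi) \geq \tan(2\epsilon)/\tan(6\epsilon)$. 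Since $\tan(2\epsilon)/\tan(6\epsilon)$ is bounded above by a constant strictly less than $1$ uniformly for $\epsilon \in (0,1/5)$, there is an absolute $c_0 > 0$ such that this inequality holds for all $|\phi| \leq c_0$. On this range $|P(g_\epsilon,\theta)| \leq 1$, so $|P|^{-1} \geq 1$, and integrating gives
\begin{equation*}
\|\pi^{1/2+it}(g_\epsilon)\vece_0\|_{L^2(K)}^2 \geq \frac{c_0}{\pi}\,e^{t(\pi-12\epsilon)};
\end{equation*}
the case $t < 0$ follows symmetrically using a neighbourhood of $\theta = 0$ (where $\cos(2\theta) = 1$).

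The main technical difficulty I anticipate is Step~2: showing that the explicit formula $P(g,\theta)^{-s}$ genuinely provides the analytic continuation of the representation rather than being a convenient ansatz. This reduces to checking that the cocycle relation $\pi^s(g_1 g_2)\vece_0 = \pi^s(g_1)(\pi^s(g_2)\vece_0)$ persists on $U$ and that the extended map lies in $V^\infty$ uniformly on compacta of $U$; both follow by analytic continuation from the corresponding identities on $G$, but require careful control on the non-vanishing of $P$ and the smooth dependence of the complexified Iwasawa decomposition on parameters.
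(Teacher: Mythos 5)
Your proof is correct, and for part~(i) it is essentially the same as the paper's: Appendix~\ref{Analytic} (Proposition~\ref{vgprops}) writes $[\pi^s(g)\vece_0](\kthe)=\vecv_g(\kthe)^{-s}$ with $\vecv_g(\kthe)=(a^2+b^2)\sin^2\theta+(ac+bd)\sin 2\theta+(c^2+d^2)\cos^2\theta$, which is exactly your $P(g,\theta)$ expanded, asserts $\Re\vecv_g>0$ on $U$, and concludes analyticity via weak analyticity (Rudin, Thm.\ 3.31). You supply the verification of $\Re P>0$ by factoring through $U=\SL(2,\RR)\,I\,\SO(2,\CC)$, checking the diagonal factor $a_y$, and noting the left $\SL(2,\RR)$-factor only shifts $\theta$ and scales by a positive constant while the right $\SO(2,\CC)$-factor is invisible in $P$; this is a clean way to justify what the paper states without proof.

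One remark: the worry you flag at the end about the cocycle relation $\pi^s(g_1g_2)\vece_0=\pi^s(g_1)\pi^s(g_2)\vece_0$ persisting on $U$ is a non-issue for part~(i). The statement only requires analytic continuation of the $V$-valued map $g\mapsto\pi^s(g)\vece_0$; since $U$ is not a group, no cocycle identity is even asserted. Once you know $P(\cdot,\theta)^{-s}$ is analytic in $g\in U$, smooth in $\theta$, and agrees with $\pi^s(g)\vece_0$ on the real form $G$, uniqueness of analytic continuation does the rest. (Where the paper does need an analytically continued identity — namely \eqref{vgident} in the proof of Theorem~\ref{thm2} — it is handled separately via Lemma~\ref{Phigb}, not via Proposition~\ref{Psancon}.)

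For part~(ii) the paper simply cites \cite[Proposition 2.2]{Bern} and proves only the \emph{upper} Sobolev bound (Lemma~\ref{gTSobbdd}), which uses the same expression $P(g_\epsilon,\theta)=\sin 2\epsilon - i\cos 2\epsilon\cos 2\theta$ you derive. Your lower bound argument is correct and self-contained: $\arg P$ is maximized at $\theta=\pi/2$ with value $\pi/2-2\epsilon$, the condition $\arg P\geq\pi/2-6\epsilon$ near $\theta=\pi/2$ reduces (via $\cot 6\epsilon/\cot 2\epsilon=\tan 2\epsilon/\tan 6\epsilon$) to $\cos 2\phi\geq \tan 2\epsilon/\tan 6\epsilon<1/3$ for $\epsilon\in(0,1/5)$, and on the fixed interval $|\phi|\leq c_0=\frac{1}{2}\arccos(1/3)$ one has $|P|\leq 1$, giving $\|\pi^{1/2+it}(g_\epsilon)\vece_0\|^2_{L^2(K)}\geq (c_0/\pi)e^{(\pi-12\epsilon)t}$ for $t>0$ and symmetrically for $t<0$. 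This produces a constant independent of $t$ and $\epsilon$, as required.
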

\begin{remark}
We parametrize the principal series representations by $s\in \CC$, while in \cite{Bern}, the representations are parametrized by $\lambda=1-2 s$.
\end{remark}

\subsection{Analytic continuation of $\big(\rho,L^2(\GaG)\big)$}
We will now use the map $\Psi_{j,k}^{r,s}:V^{\infty}\otimes V^{\infty}\rightarrow L^2(\GaG)$ to carry the analytic continuation of the representations $\scrP^r$, $\scrP^s$ into $L^2(\GaG)$. We start by making the following definition:
\begin{defn}\label{Psigdef}
For $j,k\in\lbrace 1,\ldots,\kappa\rbrace$, $r,s\in\frac{1}{2}+i\RR$, and $g\in U$, let $\Psi_g\in L^2(\GaG)$ be defined by
\begin{equation*}
\Psi_g:=\Psi_{j,k}^{r,s}(\pi^r(g)\vece_0\otimes\pi^s(g)\vece_0).
\end{equation*}
\end{defn}
Note that since $\pi^r(g)\vece_0,\pi^s(g)\vece_0\in V^{\infty}$ (by Proposition \ref{Psancon} \textit{i)}) $\Psi_g$ is well-defined by Corollary \ref{Phiuvdef} and lies in $L^2(\GaG)$ by Corollary \ref{Phiuvbd}. Moreover, by Lemma \ref{Phiuvintertwine}, $\Psi_g=\rho(g)\Psi_{e}$ for all $g\in G$ and $\Psi_g\in C^{\infty}(\GaG)$ for all $g\in U$. This allows us to view the map $g\mapsto \Psi_g$ as an ``extension" to $U$ of the action of $\rho(G)$ on $\Psi_e$. The following lemma shows that this extension is in fact analytic:
\begin{lem}\label{Phiga}
The map $g\mapsto \Psi_g$ is an $L^2(\GaG)$-valued analytic function on $U$.
\end{lem}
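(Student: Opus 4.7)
The strategy is to reduce the analyticity of $g\mapsto\Psi_g$ to the (known) coefficient-wise analyticity of the principal series continuation together with the continuity of the bilinear map $\Psi_{j,k}^{r,s}$. Writing $a_{2\upsilon}^r(g):=\langle \pi^r(g)\vece_0,\vece_{2\upsilon}\rangle_{L^2(K)}$ and analogously $a_{2\sigma}^s(g)$, each of these is a scalar holomorphic function on $U$ by Proposition \ref{Psancon} \textit{i)}. Mimicking the proof of Corollary \ref{Phiuvdef}, the bilinearity of $\Psi_{j,k}^{r,s}$ together with the Fourier expansions of $\pi^r(g)\vece_0$ and $\pi^s(g)\vece_0$ in $V^{\infty}$ yields
$$
\Psi_g \;=\; \sum_{\upsilon,\sigma\in\ZZ} a_{2\upsilon}^r(g)\,a_{2\sigma}^s(g)\,\Psi_{j,k}^{r,s}(\vece_{2\upsilon}\otimes\vece_{2\sigma}) \qquad (g\in U),
$$
with convergence in $L^2(\GaG)$. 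Each $(\upsilon,\sigma)$-summand is an $L^2(\GaG)$-valued holomorphic function on $U$ (a product of two holomorphic scalars times a fixed vector). Since locally uniform limits of Hilbert-space-valued holomorphic functions are holomorphic, it suffices to establish locally uniform convergence of the partial sums in $L^2(\GaG)$ on $U$.

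For this, I would combine two bounds. From the proof of Corollary \ref{Phiuvbd}, for suitable $\delta>0$,
$$
\|\Psi_{j,k}^{r,s}(\vece_{2\upsilon}\otimes\vece_{2\sigma})\|_{L^2(\GaG)} \ll_{r,s,\delta} (1+|\upsilon|^{\frac{1}{2}+5\delta})(1+|\sigma|^{\frac{1}{2}+5\delta}).
$$
From the other side, I would use that for every $\beta>0$ and every compact $Q\subset U$,
$$
\sup_{g\in Q}\,\scrS_\beta(\pi^s(g)\vece_0) \;<\; \infty,
$$
and similarly for $\pi^r(g)\vece_0$. Granted this, picking $\beta>\tfrac{3}{2}+5\delta$ and applying Cauchy--Schwarz to the tail $\max(|\upsilon|,|\sigma|)>N$ produces an $L^2(\GaG)$-error of order $O_Q(N^{-\eta})$ with some $\eta>0$, uniformly for $g\in Q$, completing the proof.

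The main obstacle is precisely the local Sobolev boundedness of $g\mapsto \pi^s(g)\vece_0$ on $U$; this upgrades the scalar statement of Proposition \ref{Psancon} \textit{i)} to analyticity with values in the Fr\'echet space $V^{\infty}$ topologised by the seminorms $\scrS_\beta$. I expect this to follow from a closer inspection of the integral representation used to establish Proposition \ref{Psancon} (cf.\ Appendix \ref{Analytic}): on $G$ one has $a_{2\upsilon}^s(g) = \int_K \wty(kg)^s \overline{\vece_{2\upsilon}(k)}\,dk$, and the Bernstein--Reznikov continuation replaces $\wty(kg)^s$ by its holomorphic extension on $U$. Integrating by parts in $\theta$ finitely many times, using smoothness of the extended phase on compact subsets of $U$, should yield decay $|a_{2\upsilon}^s(g)|\ll_{Q,N}(1+|\upsilon|)^{-N}$ for every $N$, uniformly on each compact $Q\subset U$, which is more than enough. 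A more conceptual alternative is to verify weak analyticity $g\mapsto\langle\Psi_g,F\rangle_{L^2(\GaG)}$ for each $F\in L^2(\GaG)$ directly from the scalar expansion and invoke Dunford's theorem to pass to strong analyticity; this route still requires the same Sobolev control to justify the termwise summation inside the inner product.
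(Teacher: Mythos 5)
Your proposal is correct and follows essentially the same route as the paper: expand $\Psi_g$ as in \eqref{Psigdecomp1} and deduce analyticity from uniform convergence over compact subsets of $U$ (the paper argues weak analyticity and invokes \cite[Theorem 3.31]{Rudin}, while you prefer direct locally uniform $L^2$-norm convergence — an immaterial difference). The rapid decay of the coefficients $\langle\vecv_g^{-s},\vece_{2\upsilon}\rangle_{L^2(K)}$ that you identify as the remaining obstacle, and propose to derive by integration by parts, is exactly Proposition \ref{vgprops} \textit{ii)}, already established in Appendix \ref{Analytic}, so no further work is needed there.
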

\begin{proof}
See Appendix \ref{AppB1}.
\end{proof}
We have the following decomposition of $\Psi_g$ in accordance with \eqref{L2decomp}:
\begin{equation}\label{Psigdecomp}
\Psi_g\mapsto\sum_{i=1}^{\infty} \Proj_i(\Psi_g)+\sum_{m=1}^{\kappa}\int_0^{\infty} \vecv_{\Psi_g,m}(t)\,dt.
\end{equation}

\begin{lem}\label{Phigb}
The maps $g\mapsto\Proj_i(\Psi_g)\in \scrH_i$, $g\mapsto \vecv_{\Psi_{g},m}(t)\in V$ are analytic on $U$ for all $i$, $m$, and almost every $t$.
\end{lem}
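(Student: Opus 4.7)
The plan is to reduce both claims to Lemma \ref{Phiga} by exploiting the fact that bounded linear maps between Hilbert spaces preserve Hilbert-space-valued analyticity. The two Hilbert-space projections of interest are both bounded linear operators on $L^2(\GaG)$ by Theorem \ref{fdecomp}, so composing them with the $L^2(\GaG)$-valued analytic map $g\mapsto \Psi_g$ immediately yields analyticity of $g\mapsto\Proj_i(\Psi_g)$ (into $\scrH_i$) and of $g\mapsto \vecv_{\Psi_g,m}$ (into $L^2(\RR_{\geq 0},V)$, identified with the $m$-th continuous summand via the isomorphism of Theorem \ref{fdecomp}). This handles the first statement completely, since $\scrH_i$-valued analyticity on $U$ is exactly what is claimed.

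For the continuous component, the conclusion so far is only that $g\mapsto \vecv_{\Psi_g,m}(\cdot)$ is $L^2(\RR_{\geq 0},V)$-valued analytic; we still need to upgrade this to pointwise analyticity in $g$ for almost every $t$. To do so, I would cover $U$ by countably many complex polydiscs $D_k$ centered at points $g_k\in U$, and on each $D_k$ use the Hilbert-space power series expansion
\begin{equation*}
\vecv_{\Psi_g,m}(\cdot)=\sum_{n} a_{k,n}(\cdot)(g-g_k)^n,\qquad a_{k,n}\in L^2(\RR_{\geq 0},V),
\end{equation*}
together with Cauchy-type bounds $\|a_{k,n}\|_{L^2(\RR_{\geq 0},V)}\leq M_k\rho_k^{-|n|}$ valid on a closed sub-polydisc of polyradius $\rho_k$.

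The key estimate is that, for any $0<r<\rho_k$ and $T>0$, Cauchy--Schwarz gives
\begin{equation*}
\int_0^T \sum_{n} r^{|n|}\|a_{k,n}(t)\|_V\,dt\leq T^{1/2}\sum_{n} r^{|n|}\|a_{k,n}\|_{L^2(\RR_{\geq 0},V)}\leq T^{1/2}M_k\sum_n (r/\rho_k)^{|n|}<\infty,
\end{equation*}
so the integrand is finite for a.e.\ $t\in[0,T]$. Taking a countable union over $k$, a countable sequence $r\nearrow \rho_k$, and $T\to\infty$ yields a single null set $N\subset\RR_{\geq 0}$ outside of which $\sum_{n}r^{|n|}\|a_{k,n}(t)\|_V<\infty$ for every admissible $k$ and $r$. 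For $t\notin N$, the corresponding $V$-valued power series thus converges absolutely and uniformly on closed sub-polydiscs of every $D_k$, defining a $V$-valued analytic function of $g$ on all of $U$. Since $L^2$-norm convergence forces pointwise a.e.\ convergence along a subsequence, this analytic function agrees with $\vecv_{\Psi_g,m}(t)$ for a.e.\ $t$; we may redefine the (equivalence-class) representative of $\vecv_{\Psi_g,m}$ on $N$ to achieve the stated pointwise analyticity.

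The only non-formal step is the $L^2$-to-pointwise-a.e.\ upgrade in the continuous case; everything else is mechanical, reducing to Lemma \ref{Phiga} via bounded linear operators. I expect no further obstacle, since the Cauchy estimates together with Fubini handle the upgrade cleanly.
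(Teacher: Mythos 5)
Your handling of the discrete component $g\mapsto\Proj_i(\Psi_g)$ matches the paper exactly: Lemma \ref{Phiga} gives analyticity of $g\mapsto\Psi_g$ into $L^2(\GaG)$, and composing with the bounded projection $\Proj_i$ preserves analyticity.

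For the continuous component you take a genuinely different route. The paper does \emph{not} pass through $L^2(\RR_{\geq 0},V)$-valued analyticity at all; since $\Psi_g\in C^{\infty}(\GaG)\cap L^p(\GaG)$ for $p<\infty$, it invokes Corollary \ref{fdecomp2} to get the explicit pointwise-in-$t$ formula $\vecv_{\Psi_g,m}(t)=c_{m,t}\sum_{\upsilon}\langle\Psi_g,\EE_m^{1/2+it}(\cdot,\vece_{2\upsilon})\rangle\vece_{2\upsilon}$, then expands $\Psi_g$ over the basis $\Psi_{j,k}^{r,s}(\vece_{2\tau}\otimes\vece_{2\sigma})$ via \eqref{Psigdecomp1}, uses the $K$-type selection rule to kill all terms with $\upsilon\neq\tau+\sigma$, and uses the bounds \eqref{Psisigupsbdd} and Proposition \ref{vgprops}\,\textit{ii)} to get a sum of analytic functions of $g$ converging absolutely and locally uniformly. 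This exhibits a canonical representative that is analytic in $g$ for each fixed $t$, with no measure-theoretic subtleties. Your abstract route — $L^2(\RR_{\geq 0},V)$-valued analyticity, local power series with $L^2$-valued coefficients, Cauchy estimates, Cauchy--Schwarz and Fubini to extract a.e.-$t$ absolute convergence — is sound in principle, and arguably illuminates what is going on at a structural level, but it does not automatically produce the representative that the rest of the argument needs. Two points in particular deserve care. First, the local $V$-valued power series on overlapping polydiscs must be shown to glue into a single analytic function for each $t\notin N$; a.e.-$g$ agreement only gives this after invoking the identity theorem (e.g.\ via a countable dense set of $g$'s with a $t$-null set attached to each). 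Second, and more substantively, the representative you construct by redefinition on $N$ must still satisfy the intertwining identity \eqref{vgident} for each $g\in G$ and a.e.\ $t$ — this is what the analytic continuation in the proof of Theorem \ref{thm2} actually uses — and that compatibility is not automatic from a.e.\ agreement in $(g,t)$ alone; one has to propagate the identity from a dense set of $g$'s using analyticity again. The paper's choice of the Corollary \ref{fdecomp2} representative makes both of these issues disappear, which is why it is preferable here; your version would work, but needs these two patches to be complete.
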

\begin{proof}
See Appendix \ref{AppB2}.
\end{proof}

\subsection{Proof of Theorem \ref{thm2}} Recall that in the statement of the theorem, we have fixed $j,k,l\in\lbrace 1,\ldots,\kappa\rbrace$, $s,r\in\frac{1}{2}+i\RR$, and consider $T> 1$. We start by considering $\Psi_e=\Psi_{j,k}^{r,s}(\vece_0\otimes\vece_0)$. By Corollary \ref{Phiuvbd} and Lemma \ref{Phiuvintertwine}, $\Psi_e\in L^2(\GaG)_0\cap C^{\infty}(\GaG)\cap L^p(\GaG)$ for all $p>1$. We may therefore apply Corollary \ref{fdecomp2} to $\Psi_e$, and obtain
\begin{align*}
\vecv_{\Psi_e,m}(t)&=\sfrac{\wty(h_m)^{1/2-it}}{\sqrt{2\pi}}\sum_{\upsilon\in\ZZ} \langle \Psi_{j,k}^{r,s}(\vece_0\otimes\vece_0), \EE_m^{1/2+it}(\cdot,\vece_{2\upsilon})\rangle_{L^2(\GaG)}\vece_{2\upsilon}\\&=\sfrac{\wty(h_m)^{1/2-it}}{\sqrt{2\pi}}\langle \Psi_{j,k}^{r,s}(\vece_0\otimes\vece_0), \EE_m^{1/2+it}(\cdot,\vece_{0})\rangle_{L^2(\GaG)}\vece_{0}.
\end{align*}
Since $\Psi_g =\rho (g)\Psi_e $ for all $g\in G$, and $\vecv_{\rho(g)f,m}(t)=\pi_m^{1/2+it}(g)\vecv_{f,m}(t)$ for all $f\in L^2(\GaG)$, we have
\begin{equation}\label{vgident}
\vecv_{\Psi_g,m}(t)=\sfrac{\wty(h_m)^{1/2-it}}{\sqrt{2\pi}}\langle \Psi_{j,k}^{r,s}(\vece_0\otimes\vece_0), \EE_m^{1/2+it}(\cdot,\vece_{0})\rangle_{L^2(\GaG)}\pi_m^{1/2+it}(g)\vece_{0} \qquad\forall g\in G. 
\end{equation}
By Proposition \ref{Psancon} and Lemma \ref{Phigb}, both sides of this relation are analytic functions of $g$, and by analytic continuation \eqref{vgident} holds for all $g\in U$. Entering this into \eqref{Psigdecomp}, we obtain
\begin{equation*}
\Psi_g=\sum_{i=1}^{\infty} \Proj_i(\Psi_g)+\sum_{m=1}^{\kappa} \int_0^{\infty}\sfrac{\wty(h_m)^{1/2-it}}{\sqrt{2\pi}} \langle \Psi_{j,k}^{r,s}(\vece_0\otimes\vece_0), \EE_m^{1/2+it}(\cdot,\vece_{0})\rangle_{L^2(\GaG)}\pi_m^{1/2+it}(g)\vece_{0} \,dt
\end{equation*}
for all $g\in U$, and hence
\begin{equation*}
\|\Psi_g\|_{L^2(\GaG)}^2 \geq \sfrac{\wty(h_l)}{2\pi}\int_0^{\infty} |\langle \Psi_{j,k}^{r,s}(\vece_0\otimes\vece_0), \EE_l^{1/2+it}(\cdot,\vece_{0})\rangle_{L^2(\GaG)}|^2\|\pi_l^{1/2+it}(g)\vece_{0}\|_{L^2(K)}^2 \,dt.
\end{equation*}
Restricting our attention to the case $g=g_{\epsilon}$, we let $\epsilon=\frac{1}{T} < \frac{1}{5}$, by Proposition \ref{Psancon} \textit{ii)} we have 
\begin{align*}
\|\Psi_{g_{T^{-1}}}\|_{L^2(\GaG)}^2& \geq \sfrac{\wty(h_l)}{2\pi}\int_0^{T} |\langle \Psi_{j,k}^{r,s}(\vece_0\otimes\vece_0), \EE_l^{1/2+it}(\cdot,\vece_{0})\rangle_{L^2(\GaG)}|^2  c e^{(\pi-12/T)t} \,dt\\&\gg  \int_0^{T} |\langle \Psi_{j,k}^{r,s}(\vece_0\otimes\vece_0), \EE_l^{1/2+it}(\cdot,\vece_{0})\rangle_{L^2(\GaG)}|^2  e^{\pi t} \,dt.
\end{align*}
Observe that both $ \Psi_{j,k}^{r,s}(\vece_0\otimes\vece_0)$ and $\EE_l^{1/2+it}(\cdot,\vece_{0})$ are $\rho(K)$-invariant; we may thus view them as functions on $\GaH$. In fact, we have (see Definition \ref{Psirsdef}, \ref{Psigdef}, and \eqref{Eismapexpr})
\begin{align*}
\big[\Psi_{j,k}^{r,s}(\vece_0\otimes\vece_0)\big](\Gamma n_x a_y)&=\frac{1}{\wty(h_j)^r\wty(h_k)^s}\Phi_{j,k}^{r,s}(x+i y)\\\EE_l^{1/2+it}(\Gamma n_x a_y,\vece_{0})&=\frac{1}{\wty(h_l)^{1/2+it}}E_l(x+i y, \sfrac{1}{2}+it)\qquad\forall x+i y \in \HH,
\end{align*}
and so by Lemma \ref{GaH/GaGint} and Proposition \ref{Phiprops},
\begin{align*}
&\langle \Psi_{j,k}^{r,s}(\vece_0\otimes\vece_0), \EE_l^{1/2+it}(\cdot,\vece_{0})\rangle_{L^2(\GaG)}=\wty(h_j)^{-r}\wty(h_k)^{-s} \wty(h_l)^{it-1/2}\langle \Phi_{j,k}^{r,s},E_l(\cdot, \sfrac{1}{2}+it)\rangle_{L^2(\GaH)}\\&= \wty(h_j)^{-r}\wty(h_k)^{-s} \wty(h_l)^{it-1/2} \,R.N.\left(\int_{\GaH} E_j(z,r)E_k(z,s)\overline{E_l(z, \sfrac{1}{2}+it)}\,d\mu(z) \right).
\end{align*}
This gives
\begin{align*}
\int_0^T \left|R.N.\left(\int_{\GaH} E_j(z,r)E_k(z,s)\overline{E_l(z, \sfrac{1}{2}+it)}\,d\mu(z) \right) \right|^2 e^{\pi t}\,dt \ll \|\Psi_{g_{T^{-1}}}\|_{L^2(\GaG)}^2
\end{align*}
By Corollary \ref{Phiuvbd} and Lemma \ref{gTSobbdd}, for any $\eta>0$,
\begin{equation*}
\|\Psi_{g_{\frac{1}{T}}}\|_{L^2(\GaG)}\ll \scrS_{1+\eta}(\pi^r(g_{T^{-1}})\vece_0)\scrS_{1+\eta}(\pi^s(g_{T^{-1}})\vece_0)\ll T^{2+2\eta}.
\end{equation*}

\hspace{425.5pt}\qedsymbol

\appendix

\section{Bounds for $E_j(z,s,2\upsilon)$}\label{AppA}

\subsection{Whittaker Functions}
We start by recalling the following integral representations of the Whittaker functions $W_{k,\mu}(r)$ (cf. \cite[p.\ 313, p.\ 431]{MOS}):
\begin{prop}\label{WhitInt}
Let $k,r>0$. 
\begin{enumerate}[i)]
\item For $s\in \CC$ with $\Re(s)>0$,
\begin{equation*}
W_{-k,s-\frac{1}{2}}(r)=\frac{r^s e^{-\frac{r}{2}}}{\Gamma(s+k)}\int_0^{\infty} e^{-r u} \left( \frac{u}{u+1}\right)^k u^{s-1} (u+1)^{s-1}\,du.
\end{equation*}
\item For $s\in \CC$ with $\Re(s)>\frac{1}{2}$,
\begin{equation*}
W_{k,s-\frac{1}{2}}(r)=4^{s-1}\pi^{-1}(-1)^{k}\Gamma(s+k) r^{1-s}\int_{-\infty}^{\infty} \left( \frac{1}{1+u^2}\right)^s\left( \frac{u-i}{u+i}\right)^{k} e^{-i\frac{r}{2}u}\,du.
\end{equation*}
\end{enumerate}
\end{prop}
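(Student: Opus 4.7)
The plan is to derive both identities from the most classical integral representation of the Whittaker function, namely
\begin{equation*}
W_{\kappa,\mu}(r)=\frac{r^{\mu+\frac{1}{2}}e^{-r/2}}{\Gamma(\frac{1}{2}+\mu-\kappa)}\int_0^{\infty}e^{-ru}\,u^{\mu-\kappa-\frac{1}{2}}(1+u)^{\mu+\kappa-\frac{1}{2}}\,du,
\end{equation*}
valid whenever $\Re(\tfrac{1}{2}+\mu-\kappa)>0$; this is the representation recorded on p.~313 of \cite{MOS}.

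For (i), I would specialize $\mu=s-\tfrac{1}{2}$ and $\kappa=-k$. The convergence hypothesis becomes $\Re(s+k)>0$, which, since $k>0$, is implied by $\Re(s)>0$. The integrand then reads $u^{s+k-1}(1+u)^{s-k-1}$, and the elementary regrouping
\begin{equation*}
u^{s+k-1}(1+u)^{s-k-1}=\left(\frac{u}{u+1}\right)^{k}u^{s-1}(u+1)^{s-1}
\end{equation*}
together with the prefactor $\Gamma(s+k)^{-1}\cdot r^{s}e^{-r/2}$ yields the stated formula. No further analysis is needed beyond checking convergence at $u=0$ (where the factor $u^{s+k-1}$ is integrable precisely when $\Re(s+k)>0$) and at $u=\infty$ (controlled by $e^{-ru}$).

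For (ii), the idea is to convert the Fourier-type integral on $\RR$ into a standard Hankel contour integral for $W_{k,\mu}$. Writing
\begin{equation*}
(1+u^{2})^{-s}\Bigl(\frac{u-i}{u+i}\Bigr)^{k}=(u-i)^{k-s}(u+i)^{-k-s},
\end{equation*}
exhibits the integrand in (ii) as a Fourier kernel $e^{-iru/2}$ multiplied by a function with branch points at $\pm i$. For $r>0$ one closes the contour of integration into the lower half-plane and collapses it onto the branch cut issuing downward from $u=-i$. The resulting loop integral can be straightened via the substitution $u=-i(1+t)$ for $t\in(0,\infty)$, which produces a Laplace-type integrand of the form $t^{-k-s}(2+t)^{k-s}e^{-r(1+t)/2}$; after an affine rescaling $t=2v$ this matches the standard integral representation of $W_{k,s-\frac{1}{2}}(r)$ from p.~431 of \cite{MOS}. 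The prefactor $4^{s-1}\pi^{-1}(-1)^{k}\Gamma(s+k)$ is the combined effect of the Jacobian of the substitution, the branch discontinuity of $(u+i)^{-k-s}$ across the cut, and the reciprocal-gamma identity $2\pi i/\Gamma(s+k)=\cdots$ used to close up the Hankel loop. The condition $\Re(s)>\tfrac{1}{2}$ in (ii) is precisely the decay at $|u|\to\infty$ of $(1+u^{2})^{-s}$ needed for absolute convergence of the Fourier integral on $\RR$.

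The main obstacle is the careful branch-cut and phase bookkeeping in (ii): one must pick consistent branches of $(u-i)^{k-s}$ and $(u+i)^{-k-s}$ on $\RR$ and compare with the branches of $u^{s-1}$, $(u+1)^{s-1}$ on the positive real axis encountered in (i), and then verify that the deformation of the original horizontal contour into a Hankel contour around $-i$ encloses no additional singularities. As both identities appear verbatim in \cite{MOS}, in practice the proof reduces to this algebraic verification plus a standard application of Cauchy's theorem, and I would be content to refer to those pages for the detailed branch analysis.
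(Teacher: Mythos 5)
The paper gives no proof of Proposition~\ref{WhitInt}; both formulas are simply recorded from \cite[p.~313, p.~431]{MOS}, so there is no argument to compare against. Your derivation of \textit{(i)} is complete and correct: it is exactly the specialization $\mu=s-\tfrac12$, $\kappa=-k$ of the classical Laplace-type representation of $W_{\kappa,\mu}$, and the regrouping $u^{s+k-1}(1+u)^{s-k-1}=\big(\tfrac{u}{u+1}\big)^{k}u^{s-1}(u+1)^{s-1}$ together with the convergence check at $u=0$ (which needs $\Re(s+k)>0$, guaranteed by $\Re(s)>0$ and $k>0$) is all that is required.

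Your sketch of \textit{(ii)} is plausible in outline but has a gap you should be aware of: after the substitution $u=-i(1+t)$ the putative integrand behaves like $t^{-k-s}$ near $t=0$, and with $k\ge 1$ and $\Re(s)>\tfrac12$ (the only range the paper uses) this is \emph{not} locally integrable, so the contour cannot simply be ``collapsed onto the branch cut'' as stated. One must keep a genuine Hankel loop encircling $u=-i$, so that the small circle around the branch point contributes, and then match against a loop-integral representation of $W_{k,\mu}$ rather than a Laplace integral; alternatively one proves the identity for $\Re(s)<1-k$ and continues analytically in $s$ (as the paper does separately for $I_k(r,s)$ via integration by parts). Since you ultimately defer the branch and phase bookkeeping to \cite{MOS} --- exactly as the paper does --- this does not invalidate the proposal, but the phrase ``collapses it onto the branch cut'' as written would fail.
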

We will now bound the integrals appearing in these formulas. It turns out that it is easier to obtain a satisfactory bound on the integral appearing in case \textit{i)} compared with that in case \textit{ii)}. We therefore start with the easier of the two cases: 
\begin{lem}\label{trivbdd}
For $k,r>0$ and $s\in \CC$ with $\Re(s)=\sigma>0$,
\begin{equation*}
\left|\int_0^{\infty} e^{-r u} \left( \frac{u}{u+1}\right)^k u^{s-1} (u+1)^{s-1}\,du\right|\leq F(r,\sigma),
\end{equation*}
where
\begin{equation*}
F(r,\sigma)= \begin{cases} r^{-\sigma}\Gamma\big( \sigma\big)\quad&\mathrm{if\;} \sigma\leq 1\\
r^{-\sigma}2^{\sigma-1} \big(\Gamma(\sigma)+r^{1-\sigma}\Gamma(2\sigma-1)\big)\quad&\mathrm{if\;} \sigma> 1.\end{cases}
\end{equation*}
\end{lem}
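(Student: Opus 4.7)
The plan is to reduce the integral to a product of standard Gamma integrals by applying two elementary estimates to the integrand.

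First I would take the absolute value inside the integral. Since the exponents on $u$ and $u+1$ can be stripped of their imaginary parts (these contribute a unimodular factor), we have
\begin{equation*}
|e^{-ru}(u/(u+1))^k u^{s-1}(u+1)^{s-1}| = e^{-ru}\left(\frac{u}{u+1}\right)^k u^{\sigma-1}(u+1)^{\sigma-1}.
\end{equation*}
Because $k>0$ and $u>0$, the factor $(u/(u+1))^k$ lies in $(0,1)$ and may simply be discarded.

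Second I would split into the two cases on $\sigma$. For $\sigma \leq 1$ the exponent $\sigma-1$ on $u+1$ is nonpositive, so $(u+1)^{\sigma-1}\leq 1$ and the remaining integral $\int_0^\infty e^{-ru}u^{\sigma-1}\,du = r^{-\sigma}\Gamma(\sigma)$ is standard, giving the first branch of $F(r,\sigma)$ on the nose. For $\sigma > 1$ the key estimate is the elementary inequality $(u+1)^{\sigma-1}\leq 2^{\sigma-1}(1+u^{\sigma-1})$, valid for all $u\geq 0$ and any $\sigma>1$; this is checked by considering $u\leq 1$ (where $(u+1)^{\sigma-1}\leq 2^{\sigma-1}$) and $u\geq 1$ (where $(u+1)^{\sigma-1}\leq (2u)^{\sigma-1}$) separately. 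Applying this splits the remaining integral into $\int_0^\infty e^{-ru}u^{\sigma-1}\,du + \int_0^\infty e^{-ru}u^{2\sigma-2}\,du = r^{-\sigma}\Gamma(\sigma) + r^{1-2\sigma}\Gamma(2\sigma-1)$, which after factoring out $r^{-\sigma}2^{\sigma-1}$ is exactly the second branch of $F(r,\sigma)$.

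There is no real obstacle here; the whole argument is a short and routine majorization followed by two evaluations of the Gamma integral. The only mild choice to make is the inequality used in the case $\sigma>1$, and the one above is both simple and gives precisely the constants claimed in the statement.
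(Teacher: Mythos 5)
Your proof is correct and follows essentially the same route as the paper: take absolute values, drop the factor $(u/(u+1))^k\leq 1$, and then in each case majorize $(u+1)^{\sigma-1}$ (by $1$ when $\sigma\leq 1$, and by $2^{\sigma-1}(1+u^{\sigma-1})$ when $\sigma>1$) to reduce to Gamma integrals. The only difference is that you make explicit the elementary inequality behind the $2^{\sigma-1}$ factor, which the paper applies without comment.
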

\begin{proof}
\begin{align*}
\left|\int_0^{\infty} e^{-r u} \left( \frac{u}{u+1}\right)^k u^{s-1} (u+1)^{s-1}\,du\right|&\leq \int_0^{\infty} e^{-r u} \left( \frac{u}{u+1}\right)^k u^{\sigma-1} (u+1)^{\sigma-1}\,du\\&\leq \int_0^{\infty} e^{-r u} u^{\sigma-1} (u+1)^{\sigma-1}\,du.
 \end{align*}
For $\sigma\leq 1$, we have
\begin{equation*}
\leq \int_0^{\infty} e^{-r u} u^{\sigma-1} \,du= r^{-\sigma}\Gamma(\sigma).
\end{equation*}
Otherwise (when $\sigma>1$),
\begin{align*}
\leq 2^{\sigma-1}\int_0^{\infty} e^{-r u } \left(u^{\sigma-1}+u^{2\sigma-2}\right)\,du = 2^{\sigma-1}\left(r^{-\sigma}\Gamma(\sigma)+r^{1-2\sigma}\Gamma(2\sigma-1) \right).
\end{align*}
\end{proof}
Turning now to case \textit{ii)} in Proposition \ref{WhitInt}, we define, for $r$, $s$ and $k$ as in the aforementioned proposition, the following function:
\begin{equation*}
I_k(r,s):=\int_{-\infty}^{\infty} \left( \frac{1}{1+u^2}\right)^s\left( \frac{u-i}{u+i}\right)^{k} e^{-i\frac{r}{2}u}\,du=(-1)^k\int_{-\infty}^{\infty} (u^2+1)^{-s} e^{-i(\frac{r}{2}u-2 k \arctan u)}\,du.
\end{equation*}
Note that we can use repeated integration by parts to analytically continue $I_k(r,s)$ in $s$ to all of $\CC$. Indeed, for $r>0$, $k\in\ZZ_{>0}$, and $s\in \CC$ with $\Re(s)>0$, we have
\begin{align}\label{Ikdef2}
&I_k(r,s)=\frac{4(-1)^k}{r}\int_{-\infty}^{\infty} (k+isu)(u^2+1)^{-(s+1)}e^{-i(\frac{r}{2}u-2 k \arctan u)}\,du\\&\notag= \frac{8(-1)^k}{r^2}\int_{-\infty}^{\infty} \big(-(2 s +1)s u^2+ 2 i k(2 s +1)u +(2k^2+s)\big)(u^2+1)^{-(s+2)}e^{-i(\frac{r}{2}u-2 k \arctan u)}\,du.
\end{align}
The triangle inequality then gives the following:
\begin{lem}\label{ibpbdd}
For $r>0$, $k\geq 1$, and $s\in\CC$ with $\Re(s)\geq \frac{1}{4}$, $|I_k(r,s)|\ll \frac{|s|+k}{r}$ and $|I_k(r,s)|\ll \frac{|s|^2+k^2}{r^2}$. The implied constants are absolute.
\end{lem}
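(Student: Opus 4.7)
The proof is almost immediate from the two integration-by-parts identities \eqref{Ikdef2} already derived just above the lemma statement, so the plan is simply to apply the triangle inequality carefully to each and check that the elementary integrals that appear are bounded.

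For the first bound, I would start from the one-integration-by-parts formula $I_k(r,s)=\frac{4(-1)^k}{r}\int_{-\infty}^{\infty}(k+isu)(u^2+1)^{-(s+1)}e^{-i(\frac{r}{2}u-2k\arctan u)}\,du$. Since the oscillatory exponential has modulus $1$ and $|(u^2+1)^{-(s+1)}|=(u^2+1)^{-(\Re(s)+1)}$, the triangle inequality gives $|I_k(r,s)|\leq \frac{4}{r}\bigl(k\,J_0+|s|\,J_1\bigr)$, where $J_0=\int_{\RR}(u^2+1)^{-(\Re(s)+1)}\,du$ and $J_1=\int_{\RR}|u|(u^2+1)^{-(\Re(s)+1)}\,du$. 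Both integrals are finite and uniformly bounded for $\Re(s)\geq \frac14$: $J_0\leq \int_{\RR}(u^2+1)^{-5/4}\,du$ is an absolute constant, and $J_1=1/\Re(s)\leq 4$. This yields $|I_k(r,s)|\ll (k+|s|)/r$ with an absolute implied constant.

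For the second bound I would use the two-integrations-by-parts formula, again taking absolute values under the integral. The polynomial factor has modulus at most $|2s+1|\,|s|\,u^2+2k|2s+1|\,|u|+|2k^2+s|$, and I would estimate each coefficient crudely using the elementary inequalities $|2s+1|\,|s|\ll 1+|s|^2\ll k^2+|s|^2$, $\;k|2s+1|\ll k+k|s|\ll k^2+|s|^2$, and $|2k^2+s|\ll k^2+|s|^2$ (all valid once we use $k\geq 1$ and AM-GM on $k|s|$). The remaining integrals $\int_{\RR}u^{2}(u^2+1)^{-(\Re(s)+2)}\,du$, $\int_{\RR}|u|(u^2+1)^{-(\Re(s)+2)}\,du$, and $\int_{\RR}(u^2+1)^{-(\Re(s)+2)}\,du$ are all bounded above by a single absolute constant on $\Re(s)\geq \frac14$, so we conclude $|I_k(r,s)|\ll (k^2+|s|^2)/r^2$.

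There is no real obstacle here—everything is a routine application of the triangle inequality combined with two or three one-variable integral estimates; the only place to be slightly careful is the crude bookkeeping that turns the mixed terms $|s|$, $k|s|$, $|s|\,|2s+1|$, and $|2k^2+s|$ into clean expressions of the form $k+|s|$ or $k^2+|s|^2$ using $k\geq 1$.
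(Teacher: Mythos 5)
Your proof is correct and matches the paper's intended argument exactly: the paper states only ``The triangle inequality then gives the following:'' after displaying the identities \eqref{Ikdef2}, and you have carried out precisely that, filling in the routine estimates on $\int_{\RR}(u^2+1)^{-(\Re(s)+j)}u^m\,du$ and the bookkeeping that converts the coefficients into $k+|s|$ and $k^2+|s|^2$ using $k\geq 1$.
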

We will now use the stationary phase method to obtain more precise bounds for $I_k(r,s)$:
\begin{lem}\label{statphase} For $r>0$, $k\geq 1$, and $s\in\CC$ with $\Re(s)\geq \frac{1}{4}$:
\begin{enumerate}
\item If $r\geq 8 k$, then $|I_k(r,s)|\ll\frac{|s|^2+1}{r^2}$.
\item If $\frac{1}{(\sqrt{3}-\sqrt{2})^4 k}\leq r\leq  k$, then $|I_k(r,s)|\ll \frac{1+|s|^2}{\Re(s)}k^{1/4-\Re(s)}r^{\Re(s)-3/4}+ \frac{|s|^2+1}{r^2}$.
\end{enumerate} 
The implied constants are absolute.
\end{lem}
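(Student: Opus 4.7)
The plan is to exploit the oscillation in
\[
I_k(r,s)=(-1)^k\int_{\RR}(u^2+1)^{-s}e^{-i\phi(u)}\,du, \qquad \phi(u):=\tfrac{r}{2}u-2k\arctan u,
\]
whose critical equation $\phi'(u)=\tfrac{r}{2}-\tfrac{2k}{u^2+1}=0$ has no real solution when $r\geq 8k$ (Case (1)), and has two solutions $\pm u_0:=\pm\sqrt{4k/r-1}$ with $u_0\geq\sqrt{3}$ when $r\leq k$ (Case (2)).

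For Case (1), we have the uniform lower bound $|\phi'(u)|\geq r/4$ on $\RR$, so I would apply the integration-by-parts operator $L^*g:=-\tfrac{d}{du}\bigl(\tfrac{g}{-i\phi'}\bigr)$ twice to the amplitude $(u^2+1)^{-s}$; boundary terms vanish since $\Re(s)\geq\tfrac{1}{4}$ and $\phi'$ is bounded away from zero. The resulting integrand is a finite sum of terms of the schematic form $\tfrac{s^a u^b(\phi'')^c(\phi''')^d}{(\phi')^{a+c+d}(u^2+1)^{s+e}}$ with $a+c+d=2$. Using $|\phi'|\geq r/4$, the bounds $|\phi''|,|\phi'''|\ll k/(u^2+1)$, and absolute convergence of $\int(u^2+1)^{-\Re(s)-j}du$ for $j\geq 2$, every term carrying a positive power of $k$ is controlled by $k^c/r^{2+c}\leq 1/r^2$ (since $k\leq r/8$), while the purely $s$-dependent terms contribute $\ll|s|^2/r^2$. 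Summing yields $|I_k(r,s)|\ll(|s|^2+1)/r^2$.

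For Case (2), I would introduce a smooth partition $1=\chi_+(u)+\chi_-(u)+\chi_{\mathrm{tail}}(u)$ with $\chi_\pm$ supported in a neighborhood of $\pm u_0$ of radius a fixed fraction of $u_0$. A direct computation (separately on $|u|\leq u_0/2$ and $|u|\geq 2u_0$) shows $|\phi'(u)|\gg r$ on the support of $\chi_{\mathrm{tail}}$, so the non-stationary-phase argument of Case (1) delivers a tail contribution $\ll(|s|^2+1)/r^2$. On the support of $\chi_\pm$, the substitution $u=u_0 v$ together with $|\phi''(u_0)|=r^2u_0/(4k)$ and $(u_0^2+1)^{-s}=(4k/r)^{-s}$ reduces the integral to a classical stationary-phase problem with leading contribution
\[
\sqrt{\tfrac{2\pi}{|\phi''(u_0)|}}\,(u_0^2+1)^{-s}\,e^{-i\phi(\pm u_0)\mp i\pi/4},
\]
of modulus $\ll k^{1/4-\Re(s)}r^{\Re(s)-3/4}$.

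The main obstacle is making the stationary-phase remainder uniform in $s$. Because $(u^2+1)^{-s}$ decays only as $|u|^{-1/2}$ when $\Re(s)=\tfrac{1}{4}$, the next-order correction is delicate: applying Morse's lemma to straighten $\phi$ near $\pm u_0$ and integrating by parts against the resulting Gaussian, derivatives of the amplitude contribute $|s|^2$ factors, while integrating $(u^2+1)^{-\Re(s)-1}$ against a bump of width $\sim u_0$ produces the $1/\Re(s)$ denominator. The lower bound $r\geq 1/((\sqrt{3}-\sqrt{2})^4 k)$ is precisely what is needed so that the stationary-phase remainder and the tail estimate jointly fit into the stated bound $\tfrac{1+|s|^2}{\Re(s)}k^{1/4-\Re(s)}r^{\Re(s)-3/4}+\tfrac{|s|^2+1}{r^2}$.
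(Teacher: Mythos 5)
Your Case (1) matches the paper's argument essentially verbatim: two rounds of integration by parts exploiting $|\phi'(u)|\geq r/4$ uniformly, and the resulting amplitude is bounded directly. No issues there.

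For Case (2) you propose a genuinely different route, and it is here that a real gap appears. The paper does not use a smooth partition plus classical stationary phase. Instead it makes a \emph{hard} cut at $u_0\pm\epsilon$ where $\epsilon=k^{1/4}r^{-3/4}$ is chosen to be $\tfrac12|\phi''(u_0)|^{-1/2}$; the central pieces of width $2\epsilon$ are then disposed of by a \emph{trivial} estimate $2\epsilon\cdot p(u_0-\epsilon)^{-\Re(s)}$, which already yields the term $k^{1/4-\Re(s)}r^{\Re(s)-3/4}$, and the remaining pieces are handled by integration by parts whose boundary terms at $u_0\pm\epsilon$ produce the $|s|/\Re(s)$ amplification. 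In other words, no asymptotic expansion, no Morse lemma, and no need to understand the oscillation \emph{within} the stationary window. By contrast, your cutoffs $\chi_\pm$ have width a fixed fraction of $u_0$, which is \emph{much} larger than $|\phi''(u_0)|^{-1/2}$ (the ratio is $\asymp(kr)^{1/4}$); so a trivial bound on the central region is hopeless, and you genuinely need the full stationary-phase cancellation there. This is exactly the step you flag as ``delicate'' and then only gesture at: you assert that Morse's lemma plus integration by parts against the resulting Gaussian gives a remainder with the right $\tfrac{1+|s|^2}{\Re(s)}$ dependence, but you do not verify it. At $\Re(s)=\tfrac14$ the amplitude $(u^2+1)^{-s}$ is far from absolutely integrable, the Morse change of variables distorts the amplitude in a way that must be tracked explicitly, and the straightened phase $\phi''(u_0)(u-u_0)^2/2+\cdots$ has $|\phi''(u_0)|$ growing without bound, so the remainder estimate is not a textbook citation. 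As written, the proposal stops precisely where the work has to happen.

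Two further small points worth noting. First, your claim that the tail contributes only $O\bigl((|s|^2+1)/r^2\bigr)$ is in tension with the paper's computation: in the paper the ``tail'' pieces (after the hard cut at $u_0\pm\epsilon$) contribute boundary terms of size $k^{1/4-\Re(s)}r^{\Re(s)-3/4}\cdot|s|/\Re(s)$, not merely $1/r^2$. Your smooth cutoff moves these boundary terms into derivatives of $\chi$, and one must check that integrating those against the oscillation over the transition region of length $\asymp u_0$ stays within budget; a rough count shows it does, but it is not automatic. Second, your heuristic that the $1/\Re(s)$ comes from ``integrating $(u^2+1)^{-\Re(s)-1}$ against a bump of width $\sim u_0$'' is not obviously correct: $\int(u^2+1)^{-\Re(s)-1}du$ converges absolutely and uniformly for $\Re(s)\geq\tfrac14$, so it cannot by itself produce a $1/\Re(s)$ blow-up. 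In the paper the $1/\Re(s)$ arises from the antiderivative $-p(u)^{-\Re(s)}/\Re(s)$ of $p(u)^{-(\Re(s)+1)}p'(u)$ on a semi-infinite interval, which is a different mechanism. So the quantitative structure of your claimed remainder bound needs to be rederived, not guessed.

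In summary: your plan is a legitimate alternative and, carried out carefully, should give the same result, but the central estimate — a stationary-phase remainder bound uniform in $s$ with $\Re(s)$ as small as $\tfrac14$ and with explicit $(|s|^2+1)/\Re(s)$ dependence — is left unproved, and the specific heuristics offered for where the $1/\Re(s)$ comes from do not match the actual source of that factor.
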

\begin{proof} We start by assuming that $\Re(s)>\frac{1}{2}$, and let $p(u)=u^2+1$ and $q(u)=\frac{r}{2}u-2 k \arctan u$.

Starting with case \textit{(1)}, we have $q'(u) =\frac{r}{2}(1- \frac{4k/r}{1+u^2})$. By assumption $r\geq 8k\geq 8$, hence $q'(u)\geq q'(0)\geq \frac{r}{4}>0$ for all $u\in\RR$. We now use integration by parts:
\begin{align*}
I_k(r,s)=&(-1)^k\int_{-\infty}^{\infty} \frac{p(u)^{-s}}{-iq'(u)} (-iq'(u)e^{-iq(u)})\,du\\&=(-1)^k\left(\left[\frac{p(u)^{-s}}{-iq'(u)}e^{-iq(u)} \right]_{u=-\infty}^{\infty}- \int_{-\infty}^{\infty}\frac{d}{du}\left\lbrace\frac{p(u)^{-s}}{-iq'(u)} \right\rbrace e^{-iq(u)}\,du\right)\\&=-i(-1)^k \int_{-\infty}^{\infty}\frac{d}{du}\left\lbrace\frac{p(u)^{-s}}{q'(u)} \right\rbrace e^{-iq(u)}\,du
\\&=-i(-1)^k \int_{-\infty}^{\infty}\frac{(-s)p(u)^{-(s+1)}p'(u)q'(u)-p(u)^{-s}q''(u)}{(q'(u))^2}  e^{-iq(u)}\,du
\\&=-i(-1)^k \int_{-\infty}^{\infty}\frac{(-s)p(u)^{-(s+1)}p'(u)q'(u)-p(u)^{-s}q''(u)}{(q'(u))^2}\frac{1}{-iq'(u)}  (-iq'(u)e^{-iq(u)})\,du\\&=(-1)^k\left[\frac{(-s)p(u)^{-(s+1)}p'(u)q'(u)-p(u)^{-s}q''(u)}{(q'(u))^3} e^{-iq(u)}\right]_{u=-\infty}^{\infty}\\&\qquad\qquad-(-1)^k\int_{-\infty}^{\infty} \frac{d}{du}\left \lbrace\frac{(-s)p(u)^{-(s+1)}p'(u)q'(u)-p(u)^{-s}q''(u)}{(q'(u))^3} \right\rbrace e^{-i q(u)}\,du\\&=(-1)^k\int_{-\infty}^{\infty} \frac{d}{du}\left \lbrace\frac{sp(u)^{-(s+1)}p'(u)q'(u)+p(u)^{-s}q''(u)}{(q'(u))^3} \right\rbrace e^{-i q(u)}\,du\\&=(-1)^k\int_{-\infty}^{\infty} \frac{1}{q'(u)^2 p(u)^s}Q(u) e^{-i q(u)}\,du,
\end{align*}
where (suppressing the dependencies on $u$)
\begin{equation*}
Q= s \frac{p''}{p}+\frac{q'''}{q'}-\left( \frac{q''}{q'}\right)^2-s\left( \frac{p'}{p}\right)^2-\left(s\frac{p'}{p}+2\frac{q''}{q'} \right) \left(s\frac{p'}{p}+\frac{q''}{q'} \right).
\end{equation*}
Recalling that $p(u)=1+u^2$, we have
\begin{equation*}
\left| \frac{p'(u)}{p(u)}\right|\leq \frac{2}{\sqrt{1+u^2}}, \;\left| \frac{p''(u)}{p(u)}\right|= \frac{2}{1+u^2}.
\end{equation*}
Recall also that $q'(u)=\frac{r}{2}-\frac{2k}{1+u^2}$. By assumption, $r\geq 8k$, hence $\frac{r}{4k}-1\geq 1$, and thus
\begin{align*}
&\left| \frac{q''(u)}{q'(u)}\right|=\left| \frac{\frac{4ku}{(1+u^2)^2}}{\frac{r}{2}-\frac{2k}{1+u^2}}\right|=\left| \frac{\frac{2u}{1+u^2}}{(\frac{r}{4k}-1)+\frac{r}{4k}u^2}\right|\ll (1+u^2)^{-\frac{3}{2}},\\
&\left| \frac{q'''(u)}{q'(u)}\right|=\left| \frac{\frac{4k(1-3u^2)}{(1+u^2)^3}}{\frac{r}{2}-\frac{2k}{1+u^2}}\right|=\left| \frac{\frac{2(1-3u^2)}{(1+u^2)^2}}{(\frac{r}{4k}-1)+\frac{r}{4k}u^2}\right|\ll (1+u^2)^{-2},
\end{align*}
hence $|Q(u)|\ll\frac{|s|^2+1}{1+u^2} $. Using this bound,  see that 
\begin{equation*}
I_k(r,s)=(-1)^k\int_{-\infty}^{\infty} \frac{1}{q'(u)^2 p(u)^s}Q(u) e^{-i q(u)}\,du
\end{equation*}
for all $s\in\CC$ with $\Re(s)>-\frac{1}{2}$. In particular, for all $s$ with $\Re(s)>0$,
\begin{align*}
|I_k(r,s)|\leq \int_{-\infty}^{\infty} \frac{1}{|q'(u)|^2}|Q(u)| \,du\ll \frac{|s|^2+1}{r^2}.
\end{align*}
Turning to case \textit{(2)}, we once again start by assuming that $\Re(s)>\frac{1}{2}$, and locate the zeroes of $q'(u)$:
\begin{equation*}
q'(u)=0 \Leftrightarrow u^2=\frac{4k}{r}-1.
\end{equation*}
We thus let $u_0=\sqrt{\frac{4k}{r}-1}$, and note that $|q''(\pm u_0)|=\frac{4ku_{0}}{(1+u_0^2)^2}=\frac{r^2\sqrt{\frac{4k}{r}-1}}{4k}$. Using the assumption $r\leq k$, we have
\begin{equation*}
\frac{1}{2}\frac{1}{\sqrt{|q''(\pm u_0)|}}=\frac{\sqrt{k}}{r(\frac{4k}{r}-1)^{1/4}}\leq \frac{\sqrt{k}}{r(\frac{k}{r})^{1/4}}=\frac{k^{1/4}}{r^{3/4}}.
\end{equation*}
Letting $\epsilon=\frac{k^{1/4}}{r^{3/4}}$, observe that $\frac{1}{(\sqrt{3}-\sqrt{2})^4 k}\leq r \leq k$ implies that $u_0-\epsilon\geq \sqrt{\frac{3k}{r}}-\epsilon \geq \sqrt{\frac{2k}{r}}>1$. We now split the integral defining $I_k(r,s)$ into several parts:
\begin{align*}
(-1)^kI_k(r,s) =\left(\int_{-\infty}^{-u_0-\epsilon}+\int_{-u_0-\epsilon}^{-u_0+\epsilon}+\int_{-u_0+\epsilon}^{u_0-\epsilon}+\int_{u_0-\epsilon}^{u_0+\epsilon}+\int_{u_0+\epsilon}^{\infty}\right)&p(u)^{-s}e^{-iq(u)}\,du.
\end{align*}
Note that $s\mapsto \int_{\scrI}p(u)^{-s}e^{-iq(u)}\,du$, where $\scrI$ is any of the three finite-length intervals in the above partition of $\RR$, is a holomorphic function that is well-defined for all $s\in \CC$. We will use integration by parts (similarly to as in case \textit{(1)}) to extend the formulas for the maps $s\mapsto\int_{\scrI}p(u)^{-s}e^{-iq(u)}\,du$, where $\scrJ=(-\infty,-u_0-\epsilon]$ or $[u_0+\epsilon,\infty)$, to $\lbrace s\in\CC\,:\, \Re(s)>0\rbrace$. Note that $p(-u)=p(u)$ and $q(-u)=-q(u)$; these relations will show that the bounds we obtain on the integrals $\int_{u_0-\epsilon}^{u_0+\epsilon}p(u)^{-s}e^{-iq(u)}\,du$ and (the extension of) $\int_{u_0+\epsilon}^{\infty}p(u)^{-s}e^{-iq(u)}\,du$ will also hold for $\int_{-u_0-\epsilon}^{\epsilon-u_0}p(u)^{-s}e^{-iq(u)}\,du$ and (the extension of) $\int_{-\infty}^{-u_0-\epsilon}p(u)^{-s}e^{-iq(u)}\,du$, respectively. We thus need only consider the integrals over the intervals $[\epsilon-u_0,u_0-\epsilon]$, $[u_0-\epsilon,u_0+\epsilon]$, and $[u_0+\epsilon,\infty)$.

Starting with the integral $\int_{u_0-\epsilon}^{u_0+\epsilon}p(u)^{-s}e^{-iq(u)}\,du$, we note that $u\mapsto |p(u)^{-s}|=p(u)^{-\Re(s)}$ is non-increasing on $\RR_{\geq 0}$, hence
\begin{align}\label{ibp8}
\left|\int_{u_0-\epsilon}^{u_0+\epsilon}p(u)^{-s}e^{-iq(u)}\,du\right| &\leq 2\epsilon\cdot p(u_0-\epsilon)^{-\Re(s)} \leq 2\epsilon\cdot p\left(\sqrt{\sfrac{2k}{r}}\right)^{-\Re(s)}\\\notag&=2 k^{1/4} r^{-3/4}\left(1+\sfrac{2k}{r}\right)^{-\Re(s)} \ll 2^{-\Re(s)}k^{1/4-\Re(s)} r^{\Re(s)-3/4}.
\end{align}

We now consider $\int_{u_0+\epsilon}^{\infty}p(u)^{-s}e^{-iq(u)}\,du$. Integration by parts gives
\begin{align}\label{ibp1}
\int_{u_0+\epsilon}^{\infty}p(u)^{-s}e^{-iq(u_0+\epsilon)}\,du=\frac{p(u_0+\epsilon)^{-s}}{i q'(u_0+\epsilon)}e^{-iq(u)}+i \int_{u_0+\epsilon}^{\infty} \frac{p(u)^{-s}}{q'(u)}\left( \frac{2 s u}{1+u^2}+\frac{q''(u)}{q'(u)}\right)e^{-iq(u)}du.
\end{align}
Note that the expression in the right-hand side is well-defined and holomorphic for all $s\in \CC$ with $\Re(s)>0$. 
Observe now that $u_0>1$, and $q''(u)=\frac{4 k u}{(1+u^2)^2}>0$ is non-increasing on $\RR_{\geq 1}$; for $u\geq u_0$ we therefore have
\begin{align}\label{ftc}
|q'(u)|=\left| \int_{u_0}^{u}q''(t)\,dt\right|\geq (u-u_0) q''(u).
\end{align}
Since $u_0+\epsilon\leq 2u_0$, for $u\geq u_0+\epsilon$, we have
\begin{align}\label{q'bdd}
\left|\frac{1}{ q'(u)} \right|\leq \left|\frac{1}{ q'(u_0+\epsilon)} \right|\leq&\frac{1}{\epsilon q''(2u_0)}=\frac{1}{\frac{k^{1/4}}{r^{3/4}}\frac{8k\sqrt{\frac{4k}{r}-1}}{(1+4(\frac{4k}{r}-1))^2}}\\\notag&=k^{-5/4}r^{3/4}\frac{(\frac{16k}{r}-3)^2}{8\sqrt{\frac{4k}{r}-1}}\ll \frac{ k^{3/4} r^{-5/4}}{\sqrt{\frac{3k}{r}}}\ll k^{1/4} r^{-3/4}.
\end{align}
Note also that $|p(u)|^{-\Re(s)}$ is non-increasing on $\RR_{\geq 0}$, hence
\begin{equation}\label{u0bdd}
\left|\frac{p(u_0+\epsilon)^{-s}}{ q'(u_0+\epsilon)} \right|\ll k^{1/4}r^{-3/4} p(u_0)^{-\Re(s)}=4^{-\Re(s)}k^{1/4-\Re(s)}r^{\Re(s)-3/4} .
\end{equation}
We also use the bound $\frac{1}{|q'(u)|}\ll k^{1/4} r^{-3/4}$ when bounding the integral in \eqref{ibp1}. Firstly, 
\begin{align}\label{ibp2}
\left|\int_{u_0+\epsilon}^{\infty} \!\frac{p(u)^{-s}}{q'(u)} \frac{2 s u}{1+u^2}e^{-iq(u)}\,du\right|&\leq \frac{|s|}{|q'(u_0+\epsilon)|}\int_{u_0+\epsilon}^{\infty}  p(u)^{-(1+\Re(s))}2 u\,du\\\notag=\frac{|s|}{|q'(u_0+\epsilon)|}& \frac{p(u_0+\epsilon)^{-\Re(s)}}{\Re(s)}\ll \frac{ |s|k^{1/4-\Re(s)}r^{\Re(s)-3/4}}{4^{\Re(s)}\Re(s)}.
\end{align}
To bound the second term in the integral in \eqref{ibp1}, we observe that for $u\geq \sqrt{2\cdot\frac{4k}{r}-1}$,
\begin{equation*}
\frac{q''(u)}{q'(u)}=\frac{2u}{1+u^2} \frac{\frac{2 k}{1+u^2}}{\frac{r}{2}-\frac{2 k}{1+u^2}}=\frac{2u}{1+u^2} \frac{1}{\frac{r}{4k}(1+u^2)-1}\leq \frac{2 u}{1+u^2},
\end{equation*}
hence
\begin{align*}
&\left|\int_{u_0+\epsilon}^{\infty} p(u)^{-s}\frac{q''(u)}{q'(u)^2}e^{-iq(u)}\,du\right|\leq \int_{u_0+\epsilon}^{\sqrt{2\cdot\frac{4k}{r}}} p(u)^{-\Re(s)}\frac{q''(u)}{q'(u)^2}\,du+\int_{\sqrt{2\cdot\frac{4k}{r}}}^{\infty} p(u)^{-\Re(s)}\frac{q''(u)}{q'(u)^2}\,du
\\&\qquad\qquad\qquad\qquad\qquad\leq p(u_0+\epsilon)^{-\Re(s)} \left[\frac{-1}{q'(u)} \right]_{u=u_0+\epsilon}^{u=\sqrt{2\cdot\frac{4k}{r}}}+\int_{u_0+\epsilon}^{\infty} \frac{p(u)^{-\Re(s)}}{q'(u)} \frac{2  u}{1+u^2}\,du
\\&\qquad\qquad\qquad\qquad\qquad\qquad\qquad\leq \frac{p(u_0+\epsilon)^{-\Re(s)}}{q'(u_0+\epsilon)}+\frac{1}{q'(u_0+\epsilon)}\int_{u_0+\epsilon}^{\infty} p(u)^{-(\Re(s)+1)}2 u \,du.
\end{align*}
Thus, by \eqref{u0bdd} and \eqref{ibp2},
\begin{equation}\label{ibp3}
\left|\int_{u_0+\epsilon}^{\infty} p(u)^{-s}\frac{q''(u)}{q'(u)^2}e^{-iq(u)}\,du\right|\ll 4^{-\Re(s)} k^{1/4-\Re(s)} r^{\Re(s)-3/4}.
\end{equation}
Now combining \eqref{u0bdd}, \eqref{ibp2}, and \eqref{ibp3} permits us to bound the right-hand side of \eqref{ibp1} by 
\begin{align}\label{I2bdd}
&\left|\frac{p(u_0+\epsilon)^{-s}}{i q'(u_0+\epsilon)}e^{-iq(u_0+\epsilon)}\!+i\! \int_{u_0+\epsilon}^{\infty} \!\!\frac{p(u)^{-s}}{q'(u)}\!\left( \frac{2 s u}{1+u^2}\!+\!\frac{q''(u)}{q'(u)}\right)e^{-iq(u)}du\right|\!\\\notag&\qquad\qquad\qquad\qquad\qquad\qquad\qquad\ll\!\frac{ |s|  k^{1/4-\Re(s)} r^{\Re(s)-3/4}}{4^{\Re(s)}\Re(s)}.
\end{align}
It remains to bound the integral $\int_{\epsilon-u_0}^{u_0-\epsilon}p(u)^{-s}e^{-iq(u)}\,du$. Integration by parts yields
\begin{equation}\label{ibp7}
\int_{\epsilon-u_0}^{u_0-\epsilon}p(u)^{-s}e^{-iq(u)}\,du=\left[\frac{p(u)^{-s}e^{-iq(u)}}{-i q'(u)}\right]_{u=\epsilon-u_0}^{u=u_0-\epsilon}\!\!\!\!+i \int_{\epsilon-u_0}^{u_0-\epsilon} \frac{p(u)^{-s}}{q'(u)}\left( \frac{2 s u}{1+u^2}+\frac{q''(u)}{q'(u)}\right)e^{-iq(u)}du.
\end{equation}
Arguing as in \eqref{ftc} and \eqref{q'bdd}, $\frac{1}{|q'(\pm(u_0-\epsilon))|}\ll k^{1/4}r^{-3/4}$. Furthermore, $u_0-\epsilon\geq \sqrt{\frac{2 k}{r}}$, hence $|p(\pm(u_0-\epsilon))^{-s}|\leq p\big(\sqrt{\frac{2 k}{r}}\big)^{-\Re(s)}\leq 2^{-\Re(s)}k^{-\Re(s)} r^{\Re(s)}$. We thus obtain
\begin{equation}\label{ibp6}
\left| \left[\frac{p(u)^{-s}e^{-iq(u)}}{-i q'(u)}\right]_{u=\epsilon-u_0}^{u=u_0-\epsilon}\right|\ll 2^{-\Re(s)} k^{1/4-\Re(s)}r^{\Re(s)-3/4}.
\end{equation}
Noting now that if $|u|\leq \sqrt{\frac{2k}{r}-1}$ then $\frac{1}{|q'(u)|}\leq \frac{2}{r}$, we first split the integral obtained from integrating by parts as follows:
\begin{align*}
&\int_{\epsilon-u_0}^{u_0-\epsilon}\frac{p(u)^{-s}}{q'(u)}\left( \frac{2 s u}{1+u^2}+\frac{q''(u)}{q'(u)}\right)e^{-iq(u)}\,du\\&\qquad\quad\quad=\left(\int_{\epsilon-u_0}^{-\sqrt{\frac{2k}{r}-1}}+\int_{-\sqrt{\frac{2k}{r}-1}}^{\sqrt{\frac{2k}{r}-1}}+\int_{\sqrt{\frac{2k}{r}-1}}^{u_0-\epsilon}\right)\frac{p(u)^{-s}}{q'(u)}\left( \frac{2 s u}{1+u^2}+\frac{q''(u)}{q'(u)}\right)e^{-iq(u)}\,du,
\end{align*} 
and use integration by parts once more on the integral over $\left[-\sqrt{\frac{2k}{r}-1},\sqrt{\frac{2k}{r}-1}\right]$:
\begin{align*}
&\int_{-\sqrt{\frac{2k}{r}-1}}^{\sqrt{\frac{2k}{r}-1}}\frac{p(u)^{-s}}{q'(u)}\left( \frac{2 s u}{1+u^2}+\frac{q''(u)}{q'(u)}\right)e^{-iq(u)}\,du\\&\qquad\qquad\qquad\qquad=\left[\frac{p(u)^{-s}}{-iq'(u)^2}\left( \frac{2 s u}{1+u^2}+\frac{q''(u)}{q'(u)}\right)e^{-iq(u)}\right]_{u=-\sqrt{\frac{2k}{r}-1}}^{u=\sqrt{\frac{2k}{r}-1}}\\&\qquad\qquad\qquad\qquad\qquad\qquad-i\int_{-\sqrt{\frac{2k}{r}-1}}^{\sqrt{\frac{2k}{r}-1}} \frac{d}{du}\left\lbrace \frac{p(u)^{-s}}{q'(u)^2}\left( \frac{2 s u}{1+u^2}+\frac{q''(u)}{q'(u)}\right)\right\rbrace e^{-iq(u)}\,du.
\end{align*}  
As in case \textit{(1)},
\begin{equation*}
\int_{-\sqrt{\frac{2k}{r}-1}}^{\sqrt{\frac{2k}{r}-1}} \frac{d}{du}\left\lbrace \frac{p(u)^{-s}}{q'(u)^2}\left( \frac{2 s u}{1+u^2}+\frac{q''(u)}{q'(u)}\right)\right\rbrace e^{-iq(u)}\,du=\int_{-\sqrt{\frac{2k}{r}-1}}^{\sqrt{\frac{2k}{r}-1}} \frac{p(u)^{-s}}{q'(u)^2}Q(u) e^{-iq(u)}\,du,
\end{equation*}
where $Q= s \frac{p''}{p}+\frac{q'''}{q'}-\left( \frac{q''}{q'}\right)^2-s\left( \frac{p'}{p}\right)^2-\left(s\frac{p'}{p}+2\frac{q''}{q'} \right) \left(s\frac{p'}{p}+\frac{q''}{q'} \right)$. As previously, $\left| \frac{p'(u)}{p(u)}\right|\leq \frac{2}{\sqrt{1+u^2}}$ and $\left| \frac{p''(u)}{p(u)}\right|= \frac{2}{1+u^2}$. Moreover, $|u|\leq \sqrt{\frac{2k}{r}-1}$ implies that 
\begin{align*}
&\left| \frac{q''(u)}{q'(u)}\right|=\left| \frac{\frac{4ku}{(1+u^2)^2}}{\frac{r}{2}-\frac{2k}{1+u^2}}\right|= \frac{\frac{4k|u|}{(1+u^2)^2}}{\frac{2k}{1+u^2}-\frac{r}{2}}= \frac{\frac{2|u|}{1+u^2}}{1-\frac{r}{4k}(1+u^2)}\ll \frac{1}{\sqrt{1+u^2}},\\
&\left| \frac{q'''(u)}{q'(u)}\right|=\left| \frac{\frac{4k(1-3u^2)}{(1+u^2)^3}}{\frac{r}{2}-\frac{2k}{1+u^2}}\right|= \frac{\frac{2|1-3u^2|}{(1+u^2)^2}}{1-\frac{r}{4k}(1+u^2)}\ll \frac{1}{1+u^2},
\end{align*}
hence 
\begin{align*}\left| \frac{p(u)^{-s}}{q'(u)^2}Q(u) e^{-iq(u)} \right|\leq\frac{4}{r^2}(1+u^2)^{-\Re(s)}\bigg( 2|s|(1+u^2)^{-1}+12(1+u^2)^{-1}+16(1+u^2)^{-1}\\+4|s|(1+u^2)^{-1}+8|s|^2(1+u^2)^{-1}+128(1+u^2)^{-1}\bigg)\\\ll\frac{|s|^2+1}{r^2}\cdot\frac{1}{1+u^2}, 
\end{align*}
giving
\begin{equation}\label{ibp4}
\left|\int_{-\sqrt{\frac{2k}{r}-1}}^{\sqrt{\frac{2k}{r}-1}} \frac{d}{du}\left\lbrace \frac{p(u)^{-s}}{q'(u)^2}\left( \frac{2 s u}{1+u^2}+\frac{q''(u)}{q'(u)}\right)\right\rbrace e^{-iq(u)}\,du\right|\ll \frac{|s|^2+1}{r^2}.
\end{equation}
Combining \eqref{ibp4} with
\begin{align*}
&\left| \left[\frac{p(u)^{-s}}{-iq'(u)^2}\left( \frac{2 s u}{1+u^2}+\frac{q''(u)}{q'(u)}\right)e^{-iq(u)}\right]_{u=\sqrt{-\frac{2k}{r}-1}}^{u=\sqrt{\frac{2k}{r}-1}}\right|\\&\qquad\qquad\qquad\leq  \frac{2p(\sqrt{\frac{2k}{r}-1})^{-\Re(s)}}{q'(\sqrt{\frac{2k}{r}-1})^2}\left( \frac{2 |s| \sqrt{\frac{2k}{r}-1}}{1+(\sqrt{\frac{2k}{r}-1})^2}+\frac{|q''(\sqrt{\frac{2k}{r}-1})|}{|q'(\sqrt{\frac{2k}{r}-1})|}\right) \\&\quad\quad\qquad\qquad\qquad\ll 2^{-\Re(s)}|s| k^{-(\Re(s)+1/2)}r^{\Re(s)-3/2}\leq  2^{-\Re(s)}|s|r^{-2}
\end{align*}
yields
\begin{align}\label{ibp5}
&\left|\int_{-\sqrt{\frac{2k}{r}-1}}^{\sqrt{\frac{2k}{r}-1}}\frac{p(u)^{-s}}{q'(u)}\left( \frac{2 s u}{1+u^2}+\frac{q''(u)}{q'(u)}\right)e^{-iq(u)}\,du\right|\ll \frac{|s|^2+|s|2^{-\Re(s)}+1}{r^2}.
\end{align}
The two integrals that remain are $\int_{\scrA}\frac{p(u)^{-s}}{q'(u)}\left( \frac{2 s u}{1+u^2}+\frac{q''(u)}{q'(u)}\right)e^{-iq(u)}\,du$, where $\scrA$ is either of the two intervals $\left[\epsilon-u_0,-\sqrt{\frac{2k}{r}-1}\right]$ or $\left[\sqrt{\frac{2k}{r}-1},u_0-\epsilon\right]$. These integrals may be bounded in a symmetric way; for brevity, we consider only $\scrA=\left[\sqrt{\frac{2k}{r}-1},u_0-\epsilon\right]$:
\begin{align*}
&\left|\int_{\sqrt{\frac{2k}{r}-1}}^{u_0-\epsilon}\frac{p(u)^{-s}}{q'(u)}\left( \frac{2 s u}{1+u^2}+\frac{q''(u)}{q'(u)}\right)e^{-iq(u)}\,du\right|\\&\qquad\leq \int_{\sqrt{\frac{2k}{r}-1}}^{u_0-\epsilon} \left(\frac{|s|}{|q'(u_0-\epsilon)|} p(u)^{-(\Re(s)+1)}2 u+ p\big(\sqrt{\sfrac{2k}{r}-1}\big)^{-\Re(s)} \frac{q''(u)}{q'(u)^2} \right)\,du
\\&\qquad= \left[\frac{|s|}{|q'(u_0-\epsilon)|} \frac{-p(u)^{-\Re(s)}}{\Re(s)}+ p\big(\sqrt{\sfrac{2k}{r}-1}\big)^{-\Re(s)} \frac{-1}{q'(u)}  \right]_{u=\sqrt{\frac{2k}{r}-1}}^{u=u_0-\epsilon}\\&\qquad\leq \left(1+\frac{|s|}{\Re(s)} \right)\frac{p\big(\sqrt{\sfrac{2k}{r}-1}\big)^{-\Re(s)}}{|q'(u_0-\epsilon)|}\leq \left(1+\frac{|s|}{\Re(s)} \right)\frac{(\frac{2k}{r})^{-\Re(s)}}{\epsilon q''(u_0)}\\& \qquad= \left(1+\frac{|s|}{\Re(s)} \right)\frac{2^{-\Re(s)}k^{-\Re(s)}r^{\Re(s)}}{\frac{k^{1/4}}{r^{3/4}}\frac{4k\sqrt{\frac{4k}{r}-1}}{(\frac{4k}{r})^2}}\leq \left(1+\frac{|s|}{\Re(s)} \right) \frac{4}{\sqrt{3}}2^{-\Re(s)}k^{1/4-\Re(s)}r^{\Re(s)-3/4}.
\end{align*}
Entering this bound, together with \eqref{ibp6} and \eqref{ibp5}, into \eqref{ibp7} gives
\begin{equation*}
\left|\int_{\epsilon-u_0}^{u_0-\epsilon}p(u)^{-s}e^{-iq(u)}\,du \right|\ll \frac{|s|2^{-\Re(s)}}{\Re(s)} k^{1/4-\Re(s)}r^{\Re(s)-3/4} +\frac{ |s|^2+1}{r^2}.
\end{equation*}
This, combined with \eqref{ibp8} and \eqref{I2bdd}, completes the proof.
\end{proof}
When $k<r<8k$, further subdivisons are required to obtain adequate bounds on $I_k(r,s)$:

\begin{lem}\label{statphase1} For $r>0$, $k\geq 52$, and $s\in\CC$ with $\Re(s)\geq \frac{1}{4}$:
\begin{enumerate}
\item If $4k(1+k^{-2/3})\leq r\leq  8k$, then $|I_k(r,s)|\ll \frac{1+|s|}{r-4k}$.
\item If $4k\leq r\leq  4k(1+k^{-2/3})$, then $|I_k(r,s)|\ll\frac{1+|s|}{k^{1/3}}$.
\item If $\frac{4k}{1+k^{-2/3}}\leq r\leq  4k$, then $|I_k(r,s)|\ll\frac{1+|s|}{k^{1/3}}$.
\item If $k\leq r\leq \frac{4k}{1+k^{-2/3}}$, then $|I_k(r,s)|\ll\frac{1+|s|}{\left(k\sqrt{\frac{4k}{r}-1}\right)^{1/2}}$.
\end{enumerate} 
The implied constants are absolute.
\end{lem}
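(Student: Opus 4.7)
The plan is to continue the stationary-phase analysis begun in Lemma \ref{statphase}, treating the intermediate regime $k \leq r \leq 8k$ by a case-by-case examination of where the critical points of the phase $q(u) = \frac{r}{2}u - 2k\arctan u$ lie (or fail to lie) on the real line. Recall $q'(u) = \frac{r}{2} - \frac{2k}{1+u^2}$, $q''(u) = \frac{4ku}{(1+u^2)^2}$, and $q'''(0) = 4k$. The real critical points are $\pm u_0 = \pm\sqrt{\frac{4k}{r}-1}$ when $r \leq 4k$, and disappear for $r > 4k$; at $r = 4k$ the critical point at $0$ is degenerate of order three. The quantities in the four bounds correspond exactly to these regimes: in (1) the $\frac{1}{r-4k}$ loss comes from $\min |q'|$; in (4) the $(k\sqrt{4k/r-1})^{-1/2}$ comes from $|q''(u_0)|^{-1/2}$; in (2) and (3) the $k^{-1/3}$ is the van der Corput rate for a cubic degeneracy with $|q'''| \asymp k$.

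For case (1), since $q'(u) \geq (r-4k)/2$ uniformly, I will integrate by parts once (as in case (1) of Lemma \ref{statphase}), replacing the oscillatory integral by $(-1)^k \int \frac{d}{du}\bigl\{ \frac{p(u)^{-s}}{-iq'(u)}\bigr\} e^{-iq(u)}\,du$. The boundary terms vanish at $\pm\infty$; the resulting integrand is bounded by $\frac{|s|+1}{(r-4k)(1+u^2)}$ using $|p'/p| \ll (1+u^2)^{-1/2}$, $|q''/q'| \leq \frac{2|u|/(1+u^2)}{r/(4k) - 1/(1+u^2)} \ll 1$, giving the desired bound. For case (4), I will mimic case (2) of Lemma \ref{statphase}: set $\varepsilon = |q''(u_0)|^{-1/2} \asymp r^{-1}\sqrt{k}(4k/r-1)^{-1/4}$, estimate the integral over $[u_0-\varepsilon, u_0+\varepsilon]$ and the symmetric interval trivially by $2\varepsilon\cdot p(u_0)^{-\Re(s)}$, and on the outer intervals use integration by parts together with the lower bound $|q'(u)| \geq |u-u_0|\, q''(u_0)$ (coming from $q''>0$ on $\RR_{>0}$ and monotonicity of $q''$ away from its maximum, as in \eqref{ftc}). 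The dominant term is the trivial bound on the near-critical intervals, which gives $\varepsilon \asymp (k\sqrt{4k/r-1})^{-1/2}$.

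For the transitional cases (2) and (3), the phase has either no real critical point but a nearly-zero $q'(0) = (r-4k)/2$, or two critical points $\pm u_0$ with $u_0 \leq k^{-1/3}$. In both cases I will partition $\RR = [-k^{-1/3}, k^{-1/3}] \cup \{|u| > k^{-1/3}\}$. On the central interval I apply a van der Corput-type estimate: writing $q(u) = q(0) + q'(0)u + \frac{q'''(\xi)}{6}u^3$ for some $\xi$ between $0$ and $u$, and using $q'''(u) = \frac{4k(1-3u^2)}{(1+u^2)^3} \asymp k$ on this interval, I will obtain an $O(k^{-1/3})$ bound for $\int_{-k^{-1/3}}^{k^{-1/3}} p(u)^{-s} e^{-iq(u)}\,du$ by the standard cubic van der Corput lemma, with a derivative-of-amplitude correction of size $O(|s|k^{-1/3})$. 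On the outer region $|u| > k^{-1/3}$ the lower bound $|q'(u)| \gtrsim \frac{2ku^2}{(1+u^2)^2} + |r-4k|/2 \gtrsim k^{1/3}$ holds (after adjusting constants for (3) so that $|u| > u_0 + k^{-1/3}$), and a single integration by parts together with the bounds on $p'/p, q''/q'$ from case (1) of Lemma \ref{statphase} delivers $O((1+|s|)k^{-1/3})$.

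The main obstacle will be case (2) (and its mirror (3)), where one must obtain the $k^{-1/3}$ rate without benefit of a clean real critical point of $q$ that is far from $0$. The cubic van der Corput argument is sharp here and requires careful tracking of the $s$-dependent amplitude derivatives: writing the amplitude $p(u)^{-s} = 1 + O(|s|u^2)$ on $[-k^{-1/3}, k^{-1/3}]$ shows that the amplitude variation is $O(|s|k^{-2/3})$, which integrates against the oscillatory factor to contribute $O(|s|k^{-1/3})$ consistent with the stated bound. The condition $k \geq 52$ appearing in the statement is just to ensure $k^{-1/3}$ is small enough that all the Taylor estimates on $q'''$ and on $p$ remain uniformly valid throughout $[-k^{-1/3}, k^{-1/3}] \subset (-1,1)$.
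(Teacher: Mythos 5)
Your strategy (cubic van der Corput on the central interval for cases (2), (3); a single integration by parts plus a $q''$-based lower bound for (1) and (4)) differs from the paper's. The paper treats (1) and (2) together by writing $r=4k(1+\eta)$, splitting at $\pm A$, bounding $\int_{-A}^{A}$ \emph{trivially} by $2A$, doing a single integration by parts outside, and then optimizing $A$ ($A=(k\eta)^{-1}$ when $\eta\geq k^{-2/3}$, $A=k^{-1/3}$ otherwise); for (3), (4) it sets $r=\frac{4k}{1+\eta}$ and splits around $\pm\sqrt{\eta}$ with a window $B$ optimized at $B=(k\sqrt{\eta})^{-1/2}$. No van der Corput lemma is ever invoked; the trivial bound on the short window already delivers the stated rate. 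Your cubic van der Corput for (2), (3) is a legitimate alternative (on $[-k^{-1/3},k^{-1/3}]$ one has $q'''\asymp k$ and the amplitude contributes $\int |\tfrac{d}{du}p^{-s}|\ll |s|k^{-2/3}$), but it is heavier machinery than needed; the payoff is the same.

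However, two of your intermediate claims are actually false and must be repaired. In case (1) you assert $|q''/q'|\ll 1$, but writing $r=4k(1+\eta)$ one computes $\frac{q''}{q'}=\frac{2u/(1+u^2)}{\eta+(1+\eta)u^2}$, which peaks at $\asymp\eta^{-1/2}$ near $u=\sqrt{\eta}$; for $\eta$ as small as $k^{-2/3}$ this is $\asymp k^{1/3}$, so your pointwise integrand bound $\frac{1+|s|}{(r-4k)(1+u^2)}$ cannot hold. The correct move is not to bound $q''/q'$ pointwise but to integrate $\frac{q''}{(q')^2}$ exactly, $\int_0^\infty\frac{q''}{(q')^2}\,du=\frac{2}{r-4k}-\frac{2}{r}$; combined with $\int\frac{|s|\,|p'|\,p^{-\Re(s)-1}}{q'}\,du\ll\frac{|s|}{r-4k}$ from $q'\geq(r-4k)/2$, this recovers the stated bound. (This is in effect what the paper's boundary-term bookkeeping does.) In case (4) your lower bound $|q'(u)|\geq|u-u_0|\,q''(u_0)$ does not hold on the left outer interval: here $u_0=\sqrt{4k/r-1}$ ranges over $[k^{-1/3},\sqrt{3}]$, so $u_0$ can be well below the maximum of $q''$ at $1/\sqrt{3}$, and for $u<u_0<1/\sqrt{3}$ one has $q''(v)<q''(u_0)$ throughout $(u,u_0)$, so the inequality goes the wrong way; at the boundary $u_0=k^{-1/3}$, $\varepsilon=(ku_0)^{-1/2}=u_0$, and a direct check gives $|q'(0)|\approx 2k^{1/3}$ versus $\varepsilon\,q''(u_0)\approx 4k^{1/3}$, so the bound fails outright. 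The paper instead expands $q'(\sqrt{\eta}\pm B)$ directly and proves $\frac{1}{|q'(\sqrt{\eta}\pm B)|}\ll\frac{1}{kB\sqrt{\eta}}$, which avoids any monotonicity assumption on $q''$; you should do the same.
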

\begin{proof}
Starting with \textit{(1)} and \textit{(2)}, we write $r=4k(1+\eta)$, where $\eta\in[0,1]$. As in the proof of Lemma \ref{statphase}, we first assume that $\Re(s)>\frac{1}{2}$, hence
\begin{align*}
I_k(r,s)=(-1)^k\int_{-\infty}^{\infty} p(u)^{-s} e^{-iq(u)}\,du,
\end{align*}
where $p(u)=1+u^2$ and $q(u)=\frac{r}{2}u-2k\arctan u$. We now split this integral into three parts; let $A>0$. Then
\begin{equation*}
(-1)^k I_k(r,s)=\left( \int_{-\infty}^{-A} +\int_{-A}^{A} +\int_{A}^{\infty} \right)p(u) ^{-s}e^{-i q(u)}\,du.
\end{equation*} 
Using integration by parts yields
\begin{align*}
\int_{A}^{\infty} p(u) e^{-i q(u)}\,du=\frac{-i p(A)^{-s}e^{-iq(A)}}{q'(A)}-i\int_{A}^{\infty} \frac{e^{-iq(u)}}{q'(u)}\left(\frac{-s p'(u)}{p(u)^{s+1}}-\frac{1}{p(u)^{s}}\cdot\frac{q''(u)}{q'(u)} \right)\,du
\end{align*}
and
\begin{align*}
\int_{-\infty}^{-A} p(u) e^{-i q(u)}\,du=\frac{i p(-A)^{-s}e^{-iq(-A)}}{q'(-A)}-i\int_{-\infty}^{-A} \frac{e^{-iq(u)}}{q'(u)}\left(\frac{-s p'(u)}{p(u)^{s+1}}-\frac{1}{p(u)^{s}}\cdot\frac{q''(u)}{q'(u)} \right)\,du.
\end{align*}
Observe that for $s$ with $\Re(s)\geq \frac{1}{4}$, $\left|\frac{p'(u)}{p(u)^{s+1}}\right|\leq \frac{2}{(1+u^2)^{3/4}}$. Also,
\begin{equation}\label{q''/q'}
\left| \frac{q''(u)}{q'(u)}\right|=\left| \frac{\frac{4 k u}{(1+u^2)^2}}{\frac{r}{2}-\frac{2 k }{1 + u^2}}\right|=\left|\frac{\frac{2 u }{1+u^2}}{\eta+(1+\eta)u^2}\right|,
\end{equation}
hence the integrals from $\pm A$ to $\pm\infty$ are absolutely convergent. This allows us to express $I_k(r,s)$ as
\begin{align}\label{ibp}
(-1)^k& I_k(r,s)=\int_{-A}^{A}p(u) ^{-s}e^{-i q(u)}\,du+\frac{i p(-A)^{-s}e^{-iq(-A)}}{q'(-A)}-\frac{i p(A)^{-s}e^{-iq(A)}}{q'(A)}\\\notag&+is \left(\int_{-\infty}^{-A}+\int_{A}^{\infty}\right)\frac{p'(u)e^{-iq(u)}}{q'(u) p(u)^{s+1}}\,du+i \left(\int_{-\infty}^{-A}+\int_{A}^{\infty}\right)\frac{e^{-iq(u)}}{p(u)^{s}}\cdot\frac{q''(u)}{q'(u)^2}\,du
\end{align}
for all $s$ with $\Re(s)\geq \frac{1}{4}$. We now bound the various terms occurring in this expression: firstly,
\begin{equation*}
\left| \int_{-A}^{A}p(u) ^{-s}e^{-i q(u)}\,du\right| \leq 2 A
\end{equation*}
and
\begin{equation*}
\left|\frac{ p(\pm A)^{-s}e^{-iq(\pm A)}}{q'( \pm A)} \right| \leq \frac{1}{q'(A)}=\frac{1}{\frac{r}{2}-\frac{2k}{1+A^2}}=\frac{1}{2k}\cdot \frac{1}{1+\eta-\frac{1}{1+A^2}}\leq \frac{1}{k}\cdot \frac{1}{A^2+A^2\eta+\eta}.
\end{equation*}
We then bound the integrals from $\pm A$ to $\pm \infty$ (for the sake of simplicity, we only show the calculations for the integrals from $A$ to $\infty$; the same bounds hold for the integrals over $(-\infty,-A]$):
\begin{align*}
\left|\int_A^{\infty}\frac{p'(u)e^{-i q(u)}}{q'(u) p(u)^{s+1}}\,du  \right|\leq \frac{1}{q'(A)}\int_A^{\infty}  \frac{2}{(1+u^2)^{3/4}} \,du\ll \frac{1}{q'(A)}\leq \frac{1}{k}\cdot \frac{1}{A^2+A^2\eta+\eta}
\end{align*}
and by \eqref{q''/q'} (and assuming that $A\leq 1$),
\begin{align*}
\left|\int_A^{\infty}\frac{e^{-i q(u)}}{p(u)^{s}}\cdot\frac{q''(u)}{q'(u)^2}\,du \right|\leq &\int_A^1 \frac{q''(u)}{q'(u)^2}\,du+\int_1^{\infty} \frac{1}{q'(u)}\cdot \frac{1}{(1+u^2)^{1/4}}\cdot\frac{2 u}{1+u^2}\,du\\&\leq \left[ \frac{-1}{q'(u)} \right]_{u=A}^{u=1}+\frac{2}{q'(A)}\int_1^{\infty}\frac{1}{(1+u^2)^{3/4}}\,du \\&\ll\frac{1}{q'(A)}\leq  \frac{1}{k}\cdot \frac{1}{A^2+A^2\eta+\eta}.
\end{align*}
Combining these bounds gives
\begin{equation*}
|I_k(r,s)|\ll(1+|s|)\left( A +  \frac{1}{k}\cdot \frac{1}{A^2+A^2\eta+\eta}\right).
\end{equation*}
This bound is optimized by choosing $A=\frac{1}{k\eta}$ if $\eta \geq k^{-2/3}$ and $A=k^{-1/3}$ if $0\leq\eta\leq k^{-2/3}$, proving \textit{(1)} and \textit{(2)}.

Turning to \textit{(3)} and \textit{(4)}, we now write $r=\frac{4k}{1+\eta}$, where $\eta\in [0,3]$. Note that $q'(\pm \sqrt{\eta})=0$, $q'(u)$ is increasing on $[\sqrt{\eta},\infty)$, decreasing on $(-\infty,-\sqrt{\eta}]$, and $q'(\pm 2)\geq \frac{k}{10}$. Using these facts in \eqref{ibp} with $A=2$ gives
\begin{align*}
| I_k(r,s)|=&\bigg|\int_{-2}^{2}p(u) ^{-s}e^{-i q(u)}\,du+\frac{i p(-2)^{-s}}{q'(-2)}-\frac{i p(2)^{-s}}{q'(2)}\\\notag&+is \left(\int_{-\infty}^{-2}+\int_{2}^{\infty}\right)\frac{e^{-i q(u)} p'(u)}{q'(u) p(u)^{s+1}}\,du+i \left(\int_{-\infty}^{-2}+\int_{2}^{\infty}\right)\frac{e^{-i q(u)}}{p(u)^{s}}\cdot\frac{q''(u)}{q'(u)^2}\,du\bigg|\\&\ll\bigg|\int_{-2}^{2}p(u) ^{-s}e^{-i q(u)}\,du\bigg|+\frac{1+|s|}{k}.
\end{align*}
Considering first case \textit{(3)}, note that since $k\geq 52$, $\sqrt{\eta}+k^{-1/3}\leq 2$. Now,
\begin{align*}
\int_{-2}^{2}p(u) ^{-s}e^{-i q(u)}\,du=\left( \int_{-2}^{-(\sqrt{\eta}+k^{-1/3}}+\int_{-(\sqrt{\eta}+k^{-1/3})}^{\sqrt{\eta}+k^{-1/3}}+\int_{\sqrt{\eta}+k^{-1/3}}^{2}\right)p(u) ^{-s}e^{-i q(u)}\,du,
\end{align*}
and
\begin{equation*}
\left|\int_{-(\sqrt{\eta}+k^{-1/3})}^{\sqrt{\eta}+k^{-1/3}}p(u) ^{-s}e^{-i q(u)}\,du \right| \leq 2(\sqrt{\eta}+k^{-1/3})\leq 4 k^{-1/3}.
\end{equation*}
Using integration by parts as before yields
\begin{align*}
\left|\int_{\sqrt{\eta}+k^{-1/3}}^{2} p(u) ^{-s}e^{-i q(u)}\,du\right|\ll& \frac{1+|s|}{q'(\sqrt{\eta}+k^{-1/3})}=\frac{1+|s|}{\frac{r}{2}-\frac{2k}{1+(\sqrt{\eta}+k^{-1/3})^2}}\ll \frac{1}{k}\cdot\frac{1+|s|}{k^{-2/3}+2\sqrt{\eta}k^{-1/3}}\\&\leq \frac{1+|s|}{k^{1/3}},
\end{align*}
hence $|I_k(r,s)|\ll \frac{1+|s|}{k^{1/3}}$.

Finally, for case \textit{(4)}, we let $B\leq k^{-1/3}$, and have
\begin{align*}
\int_{0}^{2}p(u) ^{-s}e^{-i q(u)}\,du=\left( \int_{0}^{\sqrt{\eta}-B}+\int_{\sqrt{\eta}-B}^{\sqrt{\eta}+B}+\int_{\sqrt{\eta}+B}^{2}\right)p(u) ^{-s}e^{-i q(u)}\,du,
\end{align*}
(the integral from $-2$ to $0$ is again computed in a completely analogous way). As in previous cases, we obtain the bound
\begin{equation*}
|I_k(r,s)| \ll (1+|s|)\left( B+\frac{1}{|q'(\sqrt{\eta}-B)|}+\frac{1}{q'(\sqrt{\eta}+B)} \right).
\end{equation*}
Observing that since $B\leq k^{-1/3}\leq \sqrt{\eta}$, we have
\begin{equation*}
\frac{1}{|q'(\sqrt{\eta}\pm B)|}=\frac{1}{2k}\cdot\frac{1}{|\frac{1}{1+\eta}-\frac{1}{1+(\sqrt{\eta}\pm B)^2}|}\ll \frac{1}{k}\cdot\frac{1}{|B^2\pm 2 \sqrt{\eta}B|}\leq \frac{1}{kB\sqrt{\eta}}.
\end{equation*}
Then choosing $B=(k\sqrt{\eta})^{-1/2}$ completes the proof.
\end{proof}

\subsection{Proof of Lemma \ref{Ej2ubdd}}\label{Ej2ubddproof}
The Fourier decomposition of $E_j(z,s,2\upsilon)$ at a cusp $\eta_k$ reads 
\begin{align*}
E_j(z,s,2\upsilon)=&\delta_{j,k}y_k^s+\mathfrak{i}_{s,2\upsilon} \varphi_{j,k}(s) y_k^{1-s}\\\notag&\quad+\sum_{\underset{m\neq 0}{m\in\ZZ}}\frac{(-1)^{\upsilon}\Gamma(s)}{2\Gamma(s-\upsilon\cdot\sgn(m))}\frac{\psi_{m,k}^{(j)}(s)}{\sqrt{|m|}}W_{-\upsilon\cdot\sgn(m),s-\frac{1}{2}}(4\pi |m|y_k)e(m x_k).
\end{align*}
(see \eqref{Ej2uFour}). We thus need to bound
\begin{equation*}
\sum_{\underset{m\neq 0}{m\in\ZZ}}\frac{(-1)^{\upsilon}\Gamma(s)}{2\Gamma(s-\upsilon\cdot\sgn(m))}\frac{\psi_{m,k}^{(j)}(s)}{\sqrt{|m|}}W_{-\upsilon\cdot\sgn(m),s-\frac{1}{2}}(4\pi |m|y_k)e(m x_k).
\end{equation*}
From now on we assume that $\upsilon>0$ (the proof in the case $\upsilon<0$ is completely similar) and split the preceding sum into two parts:
\begin{align*}
\sum_{m=-\infty}^{-1}\frac{(-1)^{\upsilon}\Gamma(s)}{2\Gamma(s+\upsilon)}&\frac{\psi_{m,k}^{(j)}(s)}{\sqrt{|m|}}W_{\upsilon,s-\frac{1}{2}}(4\pi |m|y_k)e(m x_k)\\&+\sum_{m=1}^{\infty}\frac{(-1)^{\upsilon}\Gamma(s)}{2\Gamma(s-\upsilon)}\frac{\psi_{m,k}^{(j)}(s)}{\sqrt{m}}W_{-\upsilon,s-\frac{1}{2}}(4\pi m y_k)e(m x_k).
\end{align*}
We now use Proposition \ref{WhitInt} \textit{i)} to rewrite the second sum as follows:
\begin{align*}
\sum_{m=1}^{\infty}\frac{(-1)^{\upsilon}\Gamma(s)}{2\Gamma(s-\upsilon)}\frac{\psi_{m,k}^{(j)}(s)}{\sqrt{m}}e(m x_k)&\frac{(4\pi my_k)^s e^{-2\pi |m|y_k}}{\Gamma(s+\upsilon)}\\&\times\int_0^{\infty} e^{-4\pi |m|y_k u} \left( \frac{u}{u+1}\right)^{\upsilon} u^{s-1} (u+1)^{s-1}\,du,
\end{align*}
hence
\begin{align*}
&\left|\sum_{m=1}^{\infty}\frac{(-1)^{\upsilon}\Gamma(s)}{2\Gamma(s-\upsilon)}\frac{\psi_{m,k}^{(j)}(s)}{\sqrt{m}}W_{-\upsilon,s-\frac{1}{2}}(4\pi m y_k)e(m x_k)\right|\\&\qquad\leq \frac{(4\pi y_k)^{\Re(s)}|\mathfrak{i}_{s,2\upsilon}|}{|\Gamma(s)|}\sum_{m=1}^{\infty}|\psi_{m,k}^{(j)}(s)| m^{\Re(s)-1/2}  e^{-2\pi m y_k} F(4\pi m y_k,s),
\end{align*}
where $F(4\pi m y_k,s)$ is as in Lemma \ref{trivbdd}. For $\Re(s)\geq \frac{1}{4}$, and $y_k\gg 1$, Lemma \ref{trivbdd} gives $F(4\pi m y_k,s)\ll_s (4\pi m y_k)^{-\Re(s)}$, with the implied constant depending continuously on $s$. (For the remainder of the proof, when writing $\ll_s$, we mean that the implied constant is uniformly bounded for $s$ in compact subsets of $\lbrace z\in\CC\,:\,\Re(z)> \frac{1}{4}\rbrace\setminus\scrE$.)

Using \eqref{RanSelbd} and the reflection identity for the column-vector Eisenstein series and the scattering matrix
\begin{equation*}
\big(E_j(z,s) \big)_{j=1,\ldots,\kappa}=\big( \varphi_{i,l}(s)\big)_{1\leq i,l,\leq\kappa}\big(E_j(z,1-s) \big)_{j=1,\ldots,\kappa}
\end{equation*}
(valid for $s,1-s\not\in \scrE$), together with \eqref{RanSelbd} yields
\begin{equation}\label{3mod}
|\psi_{m,k}^{(j)}(s)|\ll_s (1+\log |m|)|m|^{s^*},\qquad  \sum_{\underset{0<|m|\leq M}{m\in\ZZ}} |\psi_{m,l}^{(j)}(s_0)|^2 \ll_{\Gamma,s} (1+\log M) M^{2 s^*},
\end{equation} 
where $s^*=\max\lbrace \Re(s),1-\Re(s)\rbrace$. Combining the bounds above gives
\begin{align*}
&\left|\sum_{m=1}^{\infty}\frac{(-1)^{\upsilon}\Gamma(s)}{2\Gamma(s-\upsilon)}\frac{\psi_{m,k}^{(j)}(s)}{\sqrt{|m|}}W_{-\upsilon,s-\frac{1}{2}}(4\pi |m|y_k)e(m x_k)\right|\\&\qquad\ll_s |\mathfrak{i}_{s,2\upsilon}| \sum_{m=1}^{\infty} (1+\log m)m^{s^*-1/2} e^{-2\pi m y_k}\\&\qquad\leq |\mathfrak{i}_{s,2\upsilon}| e^{-2\pi y_k} \sum_{m=0}^{\infty} (m+1)^{s^*} e^{-2\pi m y_k}.
\end{align*}
Recall that we have assumed $y_k\gg1$, i.e.\ $y_k\geq \delta >0$. We thus have
 \begin{equation*}
 \sum_{m=0}^{\infty} (m+1)^{s^*} e^{-2\pi m y_k}\leq \sum_{m=0}^{\infty} (m+1)^{s^*} e^{-2\pi m \delta}\ll_{s} 1.
 \end{equation*}
We have now bounded the sum over $m>0$ by $\ll_s |\mathfrak{i}_{s,2\upsilon} | e^{-2\pi y_k}$. To complete this case, we must bound $\mathfrak{i}_{s,2\upsilon}$ as in the statement of the lemma. The reflection formula for $\Gamma(z)$ gives
\begin{align*}
|\mathfrak{i}_{s,2\upsilon}|=\left| \frac{\Gamma(s)^2\sin(\pi s)}{\pi}\right|\left| \frac{\Gamma(\upsilon+1-s)}{\Gamma(\upsilon+s)}\right| 
\end{align*}
For $\upsilon$ such that $\upsilon>\Re(s)+1$, using Stirling's approximation, we obtain
\begin{align*}
&\left| \frac{\Gamma(\upsilon+1-s)}{\Gamma(\upsilon+s)}\right|=\left|(\upsilon-s)\frac{\Gamma(\upsilon-s)}{\Gamma(\upsilon+s)}\right|\\\quad&=\left|(\upsilon-s)\frac{(\upsilon-s)^{\upsilon-s-1/ 2} e^{\upsilon+s}}{(\upsilon+s)^{\upsilon+s-1/ 2} e^{\upsilon-s}}\left(\frac{1+O(|\upsilon-s|^{-1})}{1+O(|\upsilon+s|^{-1})}\right)\right|\\&\ll_s \left|\frac{(\upsilon-s)^{1/ 2 +\upsilon-s} }{(\upsilon+s)^{\upsilon+s-1/ 2} }\right|= \upsilon^{1 -2\Re(s)}\left|\frac{(1-\frac{s}{\upsilon})^{1/ 2 + \upsilon-s}}{(1+\frac{s}{\upsilon})^{\upsilon+s-1/ 2 }}\right|\ll_s \upsilon^{1/ 2},
\end{align*}
since $\Re(s)> \frac{1}{4}$. This bound was proved for $\upsilon>\Re(s)+1$, and since there are only finitely many $\upsilon\in\ZZ_{>0}$ with $\upsilon\leq \Re(s) + 1$ (for given $s$), the bound in fact holds for all $\upsilon\in\ZZ_{>0}$. In conclusion:
\begin{equation}\label{sumplusbdd}
\left|\sum_{m=1}^{\infty}\frac{(-1)^{\upsilon}\Gamma(s)}{2\Gamma(s-\upsilon)}\frac{\psi_{m,k}^{(j)}(s)}{\sqrt{|m|}}W_{-\upsilon,s-\frac{1}{2}}(4\pi |m|y_k)e(m x_k)\right|\ll_s \sqrt{\upsilon}e^{-2\pi y_k}.
\end{equation}

Turning now to $\sum_{m=-\infty}^{-1}\frac{(-1)^{\upsilon}\Gamma(s)}{2\Gamma(s+\upsilon)}\frac{\psi_{m,k}^{(j)}(s)}{\sqrt{|m|}}W_{\upsilon,s-\frac{1}{2}}(4\pi |m|y_k)e(m x_k)$, by Proposition \ref{WhitInt} \textit{ii)} and \eqref{Ikdef2}, we have 
\begin{equation*}
W_{\upsilon,s-\frac{1}{2}}(4\pi|m|y_k)=4^{s-1}\pi^{-1}(-1)^{\upsilon}\Gamma(s+\upsilon)(4\pi |m| y_k)^{1-s} I_{\upsilon}(4\pi|m|y_k,s).
\end{equation*}
The sum we are considering may therefore be rewritten as
\begin{equation*}
\frac{\Gamma(s)}{2\pi^s}y_k^{1-s}\sum_{m=1}^{\infty} m^{1/ 2 -s} \psi_{-m,k}^{(j)}(s) I_{\upsilon} (4 \pi m y_k,s)e(-m x_k).
\end{equation*}
Let $\upsilon_0\in \ZZ_{>0}$ be such that $ \upsilon_0\geq \max\lbrace\frac{1}{(\sqrt{3}-\sqrt{2})^44\pi \delta},52\rbrace$, hence Lemma \ref{statphase1} and $4\pi m  y_k\geq \frac{1}{(\sqrt{3}-\sqrt{2})^4 \upsilon}$ apply for all $\upsilon\geq \upsilon_0$, $z\in \scrF_{B}\cup \scrC_{k,B}$, and $m\in\ZZ_{\geq 1}$. 
For $\upsilon<\upsilon_0$ and $\Re(s)>\frac{1}{4}$, \eqref{3mod} and Lemma \ref{ibpbdd}  give
\begin{align}\label{sminusbdd1}
&\left| \frac{\Gamma(s)}{2\pi^s}y_k^{1-s}\sum_{m=1}^{\infty} m^{1/ 2 -s} \psi_{-m,k}^{(j)}(s) I_{\upsilon} (4 \pi m y_k,s)e(-m x_k)\right|\\\notag&\qquad\qquad\ll_s y_k ^{1-\Re(s)}\sum_{m=1}^{\infty} m^{\frac{1}{2}-\Re(s)} (1+\log m) m^{s^*} \frac{\upsilon^2}{(4\pi m y_k)^2}\\\notag&\qquad\qquad\qquad \ll_s \upsilon_0^2 \,y_k^{-(1+\Re(s))}\ll y_k^{-(1+\Re(s))} .
\end{align}
To deal with the case $\upsilon\geq \upsilon_0$, we split the sum into three parts: 
\begin{equation}\label{NNpart}
\sum_{m=1}^{\infty}= \sum_{1\leq m \leq \frac{\upsilon}{4\pi y_k}}+\sum_{\frac{\upsilon}{4\pi y_k}< m < \frac{2\upsilon}{\pi y_k} }+\sum_{m\geq\frac{2\upsilon}{\pi y_k}}. 
\end{equation}
We use \eqref{3mod} and Lemma \ref{statphase} \textit{(2)} to bound the first sum in the right-hand side of this expression:
\begin{align}\label{sminusbdd2}
&\left| \frac{\Gamma(s)}{2\pi^s}y_k^{1-s} \sum_{1\leq m \leq \frac{\upsilon}{4\pi y_k}} m^{1/2 -s} \psi_{-m,k}^{(j)}(s) I_{\upsilon} (4 \pi m y_k,s)e(-m x_k)\right|\\\notag&\ll_s y_k^{1-\Re(s)}\!\!\!\!\! \!\!\!\sum_{1\leq m \leq \frac{\upsilon}{4\pi y_k}}\!\! \! \!\left(\!\!  m^{1/ 2 -\Re(s)}\!|\psi_{-m,k}^{(j)}(s)|\upsilon^{1/4-\Re(s)}\!(m y_k)^{\Re(s)-3/4}\!\!+\!\!\frac{(1\!+\!\log m) m^{\max\lbrace \frac{1}{2}, \frac{3}{2}-2 \Re(s)\rbrace }}{( m y_k)^2}\!\!\right)\\&\notag\qquad \ll_s y_k^{-(1+\Re(s))} + \upsilon^{1/4-\Re(s)} y_k^{1/4}\sum_{1\leq m \leq \frac{\upsilon}{4\pi y_k}}|\psi_{-m,k}^{(j)}(s)| m^{-1/ 4 } \\&\qquad\qquad\leq\notag  y_k^{-(1+\Re(s))} + \upsilon^{1/4-\Re(s)} y_k^{1/4}\sqrt{\sum_{1\leq m \leq \frac{\upsilon}{4\pi y_k}} | \psi_{-m,k}^{(j)}(s) | ^2} \sqrt{\sum_{1\leq m \leq \frac{\upsilon}{4\pi y_k}} m^{-1/2}}\\\notag&\qquad\qquad\ll_s  y_k^{-(1+\Re(s))}  +\upsilon^{1/4-\Re(s)} y_k^{1/4} \sqrt{1+\log \upsilon} \left(\frac{\upsilon}{y_k} \right)^{s^*}\left(\frac{\upsilon}{y_k} \right)^{1/4}\\\notag&\qquad\qquad\qquad \ll_s \sqrt{1+\log\upsilon}\,\upsilon^{\max\lbrace \frac{1}{2}, \frac{3}{2}- 2 \Re(s) \rbrace} y_k^{-\Re(s)}.
\end{align}
(Note that we have used the fact that for this sum to be non-empty, we must have $\frac{\upsilon}{4\pi y_k}\geq 1$, hence $0\leq \log \left( \frac{\upsilon}{4\pi y_k}\right)\leq \log \left( \frac{\upsilon}{4\pi \delta}\right)\ll 1 + \log\upsilon$. ) We bound the third sum in the right-hand side of \eqref{NNpart} using \eqref{3mod} and Lemma \ref{statphase} \textit{(1)}:
\begin{align}\label{sminusbdd4}
&\left| \frac{\Gamma(s)}{2\pi^s}y_k^{1-s}\sum_{m\geq\frac{2\upsilon}{\pi y_k}} m^{1/2 -s} \psi_{-m,k}^{(j)}(s) I_{\upsilon} (4 \pi m y_k,s)e(-m x_k)\right|\\\notag&\qquad\qquad\ll_s y_k^{1-\Re(s)} \sum_{m\geq\frac{2\upsilon}{\pi y_k}} m^{1/ 2 -\Re(s)}  |\psi_{-m,k}^{(j)}(s) | \frac{1}{y_k^2m^2}\\\notag&\qquad\qquad\quad\ll_s  y_k^{-(1+\Re(s))} \sum_{m\geq\frac{2\upsilon}{\pi y_k}}\frac{\log m\, m^{\max\lbrace\frac{1}{2},\frac{3}{2}-2\Re(s)\rbrace}}{m^{^2}}\ll_s y_k^{-(1+\Re(s))}.
\end{align}
To bound the second sum in the right-hand side of \eqref{NNpart}, we proceed in a similar manner to \eqref{sminusbdd2} above:
\begin{align*}
&\left| \frac{\Gamma(s)}{2\pi^s}y_k^{1-s} \sum_{\frac{\upsilon}{4\pi y_k}< m < \frac{2\upsilon}{\pi y_k} } m^{1/2 -s} \psi_{-m,k}^{(j)}(s) I_{\upsilon} (4 \pi m y_k,s)e(-m x_k)\right|\\\quad&\ll_s y_k^{1-\Re(s)} \sqrt{\sum_{\frac{\upsilon}{4\pi y_k}< m < \frac{2\upsilon}{\pi y_k} } |\psi_{-m,k}^{(j)}(s) |^2 }\sqrt{\sum_{\frac{\upsilon}{4\pi y_k}< m < \frac{2\upsilon}{\pi y_k} }  m^{1-2\Re(s)}|I_{\upsilon} (4 \pi m y_k,s)|^2 }
\\\qquad \ll_s& y_k^{1-\Re(s)}\sqrt{1+\log \upsilon}\left(\frac{\upsilon}{y_k}\right)^{s^*}\left(\frac{\upsilon}{y_k}\right)^{\frac{1}{2}-\Re(s)}\sqrt{\sum_{\frac{\upsilon}{4\pi y_k}< m < \frac{2\upsilon}{\pi y_k} }  |I_{\upsilon} (4 \pi m y_k,s)|^2 }
\end{align*} 
The remaining sum in this expression is split into three parts in accordance with Lemma \ref{statphase1}:
\begin{equation*}
\sum_{\frac{\upsilon}{4\pi y_k}< m \leq \frac{2\upsilon}{\pi y_k} }=\sum_{\frac{\upsilon}{4\pi y_k}< m \leq \frac{\upsilon}{\pi y_k(1+\upsilon^{-2/3})} }+\sum_{\frac{\upsilon}{\pi y_k(1+\upsilon^{-2/3})}\leq m \leq \frac{\upsilon(1+\upsilon^{-2/3})}{\pi y_k}}+\sum_{\frac{\upsilon(1+\upsilon^{-2/3})}{\pi y_k}< m < \frac{2\upsilon}{\pi y_k} }
\end{equation*}
Firstly, Lemma \ref{statphase1} \textit{(4)} gives
\begin{align*}
&\sum_{\frac{\upsilon}{4\pi y_k}< m \leq \frac{\upsilon}{\pi y_k(1+\upsilon^{-2/3})} }|I_{\upsilon} (4 \pi m y_k,s)|^2\ll_s\sum_{\frac{\upsilon}{\pi y_k}< m \leq \frac{\upsilon}{\pi y_k(1+\upsilon^{-2/3})} } \frac{1}{\upsilon\sqrt{\frac{\upsilon}{\pi y_k m}-1}}\\&\ll \frac{1}{\upsilon^{1/2}}\sum_{\frac{\upsilon}{4\pi y_k}< m \leq \frac{\upsilon}{\pi y_k(1+\upsilon^{-2/3})} } \frac{1}{\sqrt{\upsilon-\pi y_k m}} \\&\leq \frac{1}{\upsilon^{1/2}}\left(\int_{\frac{\upsilon}{4\pi y_k}}^{\frac{\upsilon}{\pi y_k (1+ \upsilon^{-2/3})}}(\upsilon-\pi y_k x)^{-1/2}\,dx + \sqrt{\upsilon^{-1/3}+\upsilon^{-1}}\right)\\&\ll  \frac{1}{\upsilon^{1/2}}\left(\frac{\upsilon^{1/2}}{y_k} +\upsilon^{-1/6}\right)\ll y_k^{-2/3}
\end{align*}
(the last inequality holding due to the fact that for the sum to be non-empty, we must have $\upsilon\gg y_k$). Combining cases \textit{(2)} and \textit{(3)} of Lemma \ref{statphase1} yields
\begin{align*}
\sum_{\frac{\upsilon}{\pi y_k(1+\upsilon^{-2/3})}\leq m \leq \frac{\upsilon(1+\upsilon^{-2/3})}{\pi y_k}}\!\!\!\!\!\!\!|I_{\upsilon} (4 \pi m y_k,s)|^2\ll_s \upsilon^{-2/3}\left(\frac{\upsilon(1+\upsilon^{-2/3})}{\pi y_k}-\frac{\upsilon}{\pi y_k(1+\upsilon^{-2/3})} \right)\ll \frac{\upsilon^{-1/3}}{y_k}.
\end{align*}
The final sum is then bounded using case \textit{(1)} of Lemma \ref{statphase1}
\begin{align*}
\sum_{\frac{\upsilon(1+\upsilon^{-2/3})}{\pi y_k}< m < \frac{2\upsilon}{\pi y_k} }&|I_{\upsilon} (4 \pi  m y_k ,s)|^2 \ll_s \sum_{\frac{\upsilon(1+\upsilon^{-2/3})}{\pi y_k}< m < \frac{2\upsilon}{\pi y_k} } \frac{1}{(4\pi m y_k -4\upsilon)^2}\\&\leq \frac{1}{\upsilon^{2/3}}+\int_{\frac{\upsilon(1+\upsilon^{-2/3})}{\pi y_k}}^{\frac{2\upsilon}{\pi y_k}}\frac{1}{(4\pi x y_k -4\upsilon)^2}\,dx\ll \frac{1}{\upsilon^{2/3}}+\frac{1}{y_k \upsilon^{1/3}}\ll \frac{1}{y_k^{2/3}},
\end{align*}
hence
\begin{equation*}
\sqrt{\sum_{\frac{\upsilon}{4\pi y_k}< m < \frac{2\upsilon}{\pi y_k} }  |I_{\upsilon} (4 \pi m y_k,s)|^2 }\ll_s \frac{1}{y_k^{1/3}}.
\end{equation*}
This gives
\begin{align}\label{sminusbdd3}
&\left| \frac{\Gamma(s)}{2\pi^s}y_k^{1-s}\sum_{\frac{\upsilon}{4\pi y_k}< m < \frac{2\upsilon}{\pi y_k} } m^{1/2 -s} \psi_{-m,k}^{(j)}(s) I_{\upsilon} (4 \pi m y_k,s)e(-m x_k)\right|\\\notag&\qquad\qquad\ll_s \sqrt{1+\log \upsilon} \upsilon^{\max\lbrace \frac{1}{2}, \frac{3}{2}-2\Re(s)\rbrace} y_k^{-\frac{1}{3}}.
\end{align}

Combining \eqref{sumplusbdd}, \eqref{sminusbdd1}, \eqref{sminusbdd2}, \eqref{sminusbdd4}, and \eqref{sminusbdd3} completes the proof.

\hspace{425.5pt}\qedsymbol

\subsection{Proof of Lemma \ref{Eisu1}}\label{Eisu1proof}
We start by proving a version of \eqref{RanSelbd} for $s$ in a neighbourhood of $1$. By Lemma \ref{Eis1pole}, $\Res_{s=1} E_j(z,s)=\Res_{s=1}\varphi_{j,k}(s)= \frac{1}{\mu(\GaH)}$ for all $j,k$; the maps $s\mapsto \psi_{m,k}^{(j)}(s)$ are thus holomorphic at $s=1$.

Let $\epsilon>0$ be such that $D_{2\epsilon}=\lbrace s\in\CC\,:\, 0<|s-1|\leq 2\epsilon\rbrace \subset \CC\setminus\scrE$. By Cauchy's integral formula, for all $s\in D_{\epsilon}=\lbrace s\in\CC\,:\, |s-1|\leq \epsilon\rbrace$, we have  $\psi_{m,k}^{(j)}(s)=\frac{1}{2\pi i} \oint_{\partial D_{2\epsilon}} \frac{\psi_{m,k}^{(j)}(\zeta)}{\zeta-s}\,d\zeta$, hence
\begin{align*}
\sum_{1\leq |m|\leq M} |\psi_{m,k}^{(j)}(s)|^2= &\sum_{1\leq |m|\leq M} \left|\sfrac{1}{2\pi i} \oint_{\partial D_{\epsilon}} \sfrac{\psi_{m,k}^{(j)}(\zeta)}{\zeta-s}\,d\zeta \right|^2 \leq \frac{1}{2\pi}\int_0^{2\pi} \sum_{1\leq |m|\leq M}|\psi_{m,k}^{(j)}(1+2\epsilon e^{i\theta})|^2 \,d\theta\\&\ll_{\epsilon} M^{2+4\epsilon},
\end{align*}
i.e.
\begin{equation}\label{psis=1}
\sum_{1\leq |m|\leq M} |\psi_{m,k}^{(j)}(s)|^2\ll_{\epsilon} M^{2+4\epsilon}\qquad \forall s\in D_{\epsilon}.
\end{equation}
For $s\in D_{\epsilon}$, the Fourier decomposition of $E_j(z,s,2\upsilon)$ at a cusp $\eta_k$ reads
\begin{align*}
E_j(z,s,2\upsilon)=&\delta_{j,k}y_k^s+\mathfrak{i}_{s,2\upsilon} \varphi_{j,k}(s) y_k^{1-s}\\\notag&\quad+\sum_{\underset{m\neq 0}{m\in\ZZ}}\frac{(-1)^{\upsilon}\Gamma(s)}{2\Gamma(s-\upsilon\cdot\sgn(m))}\frac{\psi_{m,k}^{(j)}(s)}{\sqrt{|m|}}W_{-\upsilon\cdot\sgn(m),s-\frac{1}{2}}(4\pi |m|y_k)e(m x_k).
\end{align*}
Again by Lemma \ref{Eis1pole}, for $s\in D_{\epsilon}$, $\varphi_{j,k}(s)=\frac{\mu(\GaH)^{-1}}{s-1}+\widetilde{\varphi}_{j,k}(s)$, and $\widetilde{\varphi}_{j,k}$ is holomorphic on $D_{\epsilon}\cup\lbrace 1 \rbrace$. From the definition of $\mathfrak{i}_{s,2\upsilon}$, we then have (for $s\in D_{\epsilon}$)
\begin{align*}
\mathfrak{i}_{s,2\upsilon}\varphi_{j,k}(s)y_k^{1-s} =\frac{(-1)^{\upsilon}\Gamma(s)^2}{\Gamma(s+\upsilon)\Gamma(s-\upsilon)}\left( \frac{\mu(\GaH)^{-1}}{s-1}+\widetilde{\varphi}_{j,k}(s)\right)y_k^{1-s}.
\end{align*}
We have
\begin{equation*}
\mathfrak{i}_{s,2\upsilon}=\frac{(-1)^{|\upsilon|}\Gamma(s)^2}{\Gamma(s+|\upsilon|)\Gamma(s-|\upsilon|)}=\frac{(1-s)(2-s)\ldots(|\upsilon|-s)\Gamma(s)}{\Gamma(s+|\upsilon|)},
\end{equation*}
hence
\begin{equation*}
\lim_{s\rightarrow 1} \mathfrak{i}_{s,2\upsilon}\varphi_{j,k}(s)y_k^{1-s}= -\frac{\mu(\GaH)^{-1}}{|\upsilon|},
\end{equation*}
and so
\begin{align*}
E_j(z,1,2\upsilon)=&\delta_{j,k}y_k-\frac{\mu(\GaH)^{-1}}{|\upsilon|}\\\notag&\quad+\sum_{\underset{m\neq 0}{m\in\ZZ}}\frac{(-1)^{\upsilon}}{2\Gamma(1-\upsilon\cdot\sgn(m))}\frac{\psi_{m,k}^{(j)}(1)}{\sqrt{|m|}}W_{-\upsilon\cdot\sgn(m),\frac{1}{2}}(4\pi |m|y_k)e(m x_k)\\&=\delta_{j,k}y_k-\frac{\mu(\GaH)^{-1}}{|\upsilon|}\\\notag&\quad+\sum_{\underset{\sgn(m)=-\sgn(\upsilon)}{m\in\ZZ}}\frac{(-1)^{\upsilon}}{2\Gamma(1+|\upsilon|)}\frac{\psi_{m,k}^{(j)}(1)}{\sqrt{|m|}}W_{|\upsilon|,\frac{1}{2}}(4\pi |m|y_k)e(m x_k).
\end{align*}
The rest of the proof is the same as for Lemma \ref{Ej2ubdd}, though with \eqref{psis=1} being used in place of \eqref{RanSelbd} in the calculations leading to, \eqref{sminusbdd1}, \eqref{sminusbdd2}, \eqref{sminusbdd4}, and \eqref{sminusbdd3}.

\hspace{425.5pt}\qedsymbol

\section{Analytic continuation of principal series representations}\label{Analytic}

Here we recall some of the key facts regarding the analytic continuation of principal series representations. 
\subsection{Vector-valued analytic functions}
Recall that for $g=\smatr a b c d\in \SL(2,\RR)$, $\wty\smatr a b c d = (c^2+d^2)^{-1}$, hence
\begin{align*}
\quad\left[ \pi^s(g)\vece_0\right](\kthe)=\wty(\kthe g)^s =\big((a^2+b^2)\sin^2\theta+(ac+bd)&\sin2\theta+(c^2+d^2)\cos^2\theta\big)^{-s}\\&\qquad \forall g=\smatr a b c d \in G,\,\theta\in \RR/2\pi\ZZ.
\end{align*}
Note that
\begin{equation*}
0<\min_{\theta\in \RR/2\pi\ZZ} \Re\Big((a^2+b^2)\sin^2\theta+(ac+bd)\sin2\theta+(c^2+d^2)\cos^2\theta \Big)\qquad \forall \smatr a b c d \in U.
\end{equation*}
For each $g=\smatr a b c d\in U$, we define a function $\vecv_g$ on $K$ by
\begin{equation}\label{vgdef}
\vecv_g(\kthe):=(a^2+b^2)\sin^2\theta+(ac+bd)\sin2\theta+(c^2+d^2)\cos^2\theta\qquad \forall \theta\in\RR/2\pi\ZZ.
\end{equation}
\begin{prop}\label{vgprops}$ $
\begin{enumerate}[i)]
\item $\vecv_g^{-s}\in V^{\infty}$ for all $s\in \CC$, $g\in U$.
\item Let $D$ and $S$ be compact subsets of $U$ and $\CC$, respectively. Then  $|\langle \vecv_g^{-s},\vece_{2\upsilon}\rangle_{L^2(K)}|\ll_{D,S,n} (1+|\upsilon|)^{-n}$ for all $g\in D$, $s\in S$, $\upsilon\in\ZZ$, $n\in \NN$.
\item For each fixed $s\in\CC$, $g\mapsto \vecv_g^{-s}$ is an analytic function from $U$ to $V$. Consequently, $g\mapsto\pi^s(g)\vece_0$ has an analytic continuation to all of $U$.
\end{enumerate}
\end{prop}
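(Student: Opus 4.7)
My plan is to prove the three parts of Proposition \ref{vgprops} in sequence, where the only nontrivial input is the verification that for every $g\in U$ the trigonometric polynomial $\vecv_g$ takes values in the slit plane $\CC\setminus(-\infty,0]$, so that the complex power $\vecv_g^{-s}$ is well-defined via the principal branch of $\log$. Once this is in hand, the remaining steps are routine, and the identity $\pi^s(g)\vece_0=\vecv_g^{-s}$ for $g\in G$ (which follows from $[\pi^s(g)\vece_0](\kthe)=\wty(\kthe g)^s$ together with $\wty(\kthe g)^{-1}=\vecv_g(\kthe)$) will automatically extend to $U$.

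For i), the formula \eqref{vgdef} extends verbatim to $g\in U\subset \SL(2,\CC)$ and yields a trigonometric polynomial in $\theta$ (hence $C^\infty$) that is manifestly $\pi$-periodic, since $\sin^2\theta$, $\cos^2\theta$, and $\sin 2\theta$ all are. To obtain $\Re(\vecv_g(\kthe))>0$, I decompose $g=g_1\iota g_2$ with $g_1\in \SL(2,\RR)$, $\iota\in I$, $g_2\in\SO(2,\CC)$: for $g_1\in \SL(2,\RR)$ one has $\vecv_{g_1}(\kthe)=\wty(\kthe g_1)^{-1}>0$; for $\iota=\diag(e^{i\alpha},e^{-i\alpha})\in I$ with $|\alpha|<\pi/4$ a direct computation gives $\vecv_{\iota}(\kthe)=e^{2i\alpha}\sin^2\theta+e^{-2i\alpha}\cos^2\theta$ with real part $\cos(2\alpha)>0$; and for $g_2\in\SO(2,\CC)$ one checks that right multiplication by $g_2$ only effects a complex rotation of the $\theta$-variable that preserves the positivity of the real part. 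Composing these three contributions (using that the bottom row of $\kthe g_1\iota g_2$ factors accordingly) yields $\Re(\vecv_g(\kthe))>0$ throughout $U$. With positivity secured, $\vecv_g^{-s}:=\exp(-s\log\vecv_g)$ defined via the principal branch is smooth, $\pi$-periodic, and bounded, i.e.\ an element of $V^{\infty}$.

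Part ii) is the classical rapid-decay estimate for Fourier coefficients of a smooth periodic function. Integrating by parts $n$ times in
\begin{equation*}
\langle \vecv_g^{-s},\vece_{2\upsilon}\rangle_{L^2(K)}=\int_0^{2\pi} \vecv_g(\kthe)^{-s}\,e^{-2i\upsilon\theta}\,\frac{d\theta}{2\pi}
\end{equation*}
produces a factor $(2i\upsilon)^{-n}$ (the boundary terms cancel by $2\pi$-periodicity) multiplied by the $n$-th $\theta$-derivative of $\vecv_g^{-s}$. This derivative is jointly continuous in $(g,s,\theta)\in D\times S\times \RR$ and $\pi$-periodic in $\theta$, hence uniformly bounded; the $\upsilon=0$ case is trivial. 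For iii), for each fixed $\kthe$ and $s$ the map $g\mapsto\vecv_g(\kthe)^{-s}$ is holomorphic on $U$, being the composition of the polynomial map $g\mapsto\vecv_g(\kthe)$, whose image lies in $\CC\setminus(-\infty,0]$, with the principal power $z\mapsto z^{-s}$. Differentiating under the integral (justified by the uniform bound from ii)) then shows each scalar Fourier coefficient $g\mapsto\langle\vecv_g^{-s},\vece_{2\upsilon}\rangle_{L^2(K)}$ is holomorphic on $U$; combined again with ii), the partial sums $\sum_{|\upsilon|\leq N}\langle\vecv_g^{-s},\vece_{2\upsilon}\rangle_{L^2(K)}\vece_{2\upsilon}$ converge in the norm of $V$ uniformly on compact subsets of $U$, so $g\mapsto\vecv_g^{-s}$ is a $V$-valued holomorphic function on $U$. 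Since this coincides with $g\mapsto\pi^s(g)\vece_0$ on $G$, this gives the claimed analytic continuation. The main obstacle is the positivity check in part i); all other steps are standard.
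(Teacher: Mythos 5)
Your argument is essentially correct and proves the statement, but two points deserve attention.

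The main place where your proof adds content over the paper is part i): the paper asserts the positivity of $\Re(\vecv_g)$ without proof, whereas you argue it via the factorization $g=g_1\iota g_2$ with $g_1\in\SL(2,\RR)$, $\iota\in I$, $g_2\in\SO(2,\CC)$. That is a genuinely useful addition. However, your description of the $\SO(2,\CC)$ factor is imprecise and, read literally, would make the argument harder than it is. You say right multiplication by $g_2$ ``effects a complex rotation of the $\theta$-variable that preserves the positivity of the real part.'' If $g_2$ genuinely rotated $\theta$ by a complex angle, positivity of $\Re(\vecv_{g_1\iota})$ on \emph{real} $\theta$ would not suffice; one would have to control the real part on a complex strip, which is not obvious. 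The correct and simpler observation is that $\vecv_g(\kthe)$ is the sum of the squares of the entries of the bottom row of $\kthe g$, and the quadratic form $c^2+d^2$ on $\CC^2$ is invariant under right multiplication by $\SO(2,\CC)$. Hence $\vecv_{g_1\iota g_2}=\vecv_{g_1\iota}$ identically, and the $\SO(2,\CC)$ factor plays no role at all; all that remains is your computation $\Re(\vecv_{g_1\iota}(\kthe))=(c_1(\theta)^2+d_1(\theta)^2)\cos 2\alpha>0$. Once stated this way the positivity is airtight; as written, your phrasing suggests a step (controlling a complexified $\theta$) that you neither carry out nor need.

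For parts ii) and iii) your approach differs slightly from the paper's in iii): the paper reduces to weak analyticity (holomorphy of $g\mapsto\langle\vecv_g^{-s},\vecu\rangle$ for each $\vecu\in V$) and then invokes the theorem that weakly holomorphic Banach-space-valued maps are strongly holomorphic; you instead show directly that the Fourier partial sums converge in $V$-norm uniformly on compact subsets of $U$, so the limit is a $V$-valued holomorphic map as a locally uniform limit of holomorphic maps. Both routes are standard and correct; yours is the more self-contained, the paper's the more compact.
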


\begin{proof}
Starting with \textit{i)}, it suffices to note that $\theta\mapsto v_g(\kthe)$ is smooth, and
\begin{equation*}
\min_{\theta\in \RR/2\ZZ} \Re(\vecv_g(\kthe))\geq C_g >0.
\end{equation*}
Likewise, \textit{ii)} follows from a standard application of integration by parts; note that $\vecv_g(\theta)$ depends polynomially on the coefficients of $g$, there therefore exists, for each $n\in\NN$, a polynomial $P_n$ in the coefficients of $g$ and $|s|$ such that
\begin{equation*}
\left| \sfrac{d^n}{d\theta^n}\vecv_g(\kthe)\right|\ll  \max\left\lbrace 1, \max_{s\in S,g\in D,\theta\in\RR/2\pi\ZZ} \left|\vecv_g(\kthe)^{-(s+n)}\right|\right\rbrace P_n( |a|,|b|,|c|,|d|,|s|).
\end{equation*}

Finally, in order to prove \textit{iii)}, we use \cite[Theorem 3.31]{Rudin}; in order to prove that $g\mapsto\vecv_g^{-s}$ is analytic, it suffices to prove that $g\mapsto\langle \vecv_g^{-s},\vecu\rangle_{L^2(K)}$ is analytic for all $\vecu\in V$. By \textit{ii)}, we have
\begin{align*}
\langle \vecv_g^{-s},\vecu\rangle_{L^2(K)}=\sum_{\upsilon\in\ZZ}\langle \vecv_g^{-s},\vece_{2\upsilon}\rangle_{L^2(K)}\langle \vece_{2\upsilon},\vecu\rangle_{L^2(K)},
\end{align*}
with the sum converging absolutely and uniformly over $g$ in compact subsets of $U$. Each $g\mapsto\langle \vecv_g^{-s},\vece_{2\upsilon}\rangle_{L^2(K)}$ is an analytic function, hence so is $g\mapsto\langle \vecv_g^{-s},\vecu\rangle_{L^2(K)}$.
\end{proof}
Recall now that for $0<\epsilon<\frac{1}{5}$, $g_{\epsilon}=\smatr{e^{(\pi/4-\epsilon)i}}{}{}{e^{-(\pi/4-\epsilon)i}}\in U$.
\begin{lem}\label{gTSobbdd} For $1\leq\beta\leq 2$ and $0<\epsilon<\frac{1}{5}$, 
\begin{equation*}
\scrS_{\beta}\big(\pi^{1/2+it}(g_{\epsilon})\vece_0\big)\ll_{t,\beta} \epsilon^{-\beta}.
\end{equation*}
\end{lem}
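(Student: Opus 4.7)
The strategy is to write $\vecv := \pi^{1/2+it}(g_\epsilon)\vece_0$ explicitly via the analytic continuation of Proposition~\ref{vgprops}, to identify $\scrS_\beta(\vecv)$ with a standard Sobolev norm on the circle, and then to reduce the bound to a routine singular-integral estimate. Substituting the diagonal matrix $g_\epsilon$ into~\eqref{vgdef} and simplifying with $\sin^2\theta - \cos^2\theta = -\cos 2\theta$ yields
\begin{equation*}
\vecv_{g_\epsilon}(\kthe) = \sin 2\epsilon - i\cos 2\epsilon \cos 2\theta,
\qquad
|\vecv_{g_\epsilon}(\kthe)|^2 = \sin^2 2\epsilon + \cos^2 2\epsilon \cos^2 2\theta \asymp \epsilon^2 + \cos^2 2\theta
\end{equation*}
for $\epsilon \in (0, \tfrac{1}{5})$. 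By Proposition~\ref{vgprops}(iii), $\vecv \in V^\infty$ is represented by the smooth function $\kthe \mapsto \phi(2\theta)^{-(1/2+it)}$ with $\phi(u) := \sin 2\epsilon - i\cos 2\epsilon \cos u$, and its modulus concentrates in shrinking neighbourhoods of $\theta = \pi/4$ and $\theta = 3\pi/4$ as $\epsilon \to 0$.

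Since $\vece_{2\upsilon}(\kthe) = e^{2i\upsilon\theta}$ and $1 + |\upsilon|^\beta \asymp_\beta (1+\upsilon^2)^{\beta/2}$, the norm $\scrS_\beta$ is equivalent (up to $\beta$-dependent constants) to the standard $H^\beta$ Sobolev norm of $\theta \mapsto \vecv(\kthe)$ on $\RR/\pi\ZZ$; and for integer $m$, $\scrS_m(\vecv)^2 \asymp \|\vecv\|_{L^2(K)}^2 + \|\partial_\theta^m \vecv\|_{L^2(K)}^2$. Applying H\"older's inequality to the Fourier series (writing $(1+\upsilon^2)^\beta = (1+\upsilon^2)^{2-\beta}\cdot((1+\upsilon^2)^2)^{\beta-1}$ with conjugate exponents $(2-\beta)^{-1}$, $(\beta-1)^{-1}$) gives the interpolation inequality $\scrS_\beta(\vecv) \ll_\beta \scrS_1(\vecv)^{2-\beta}\,\scrS_2(\vecv)^{\beta-1}$ for $\beta \in [1,2]$. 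Hence it suffices to prove $\|\partial_\theta^m \vecv\|_{L^2(K)} \ll_{|t|, m} \epsilon^{-m}$ for $m \in \{1,2\}$.

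Setting $s = 1/2 + it$, iterated application of the chain rule (or Fa\`a di Bruno) expresses $\partial_\theta^m \vecv$ as a finite sum of terms of the form $C_{t,m,\vec{k}}\, \phi(2\theta)^{-s-j}\prod_{i} \phi^{(k_i)}(2\theta)$ with $k_1+\dots+k_j = m$, $j \le m$. Since $|\phi^{(k)}(u)| \le 1$ uniformly in $u, k, \epsilon$ and $|\phi(u)|$ is bounded, this yields the pointwise bound $|\partial_\theta^m \vecv(\kthe)| \ll_{|t|, m} |\phi(2\theta)|^{-1/2-m}$. Integrating and substituting $u = 2\theta$ reduces everything to the elementary estimate
\begin{equation*}
\int_0^{2\pi} (\epsilon^2 + \cos^2 u)^{-k}\, du \ll_k \epsilon^{1-2k} \qquad (k > \tfrac{1}{2}),
\end{equation*}
which follows from the substitution $\cos u = \epsilon\tan\psi$ in neighbourhoods of $u = \pi/2, 3\pi/2$ (the remainder of the integrand being bounded). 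Taking $k = (1+2m)/2$ gives $\|\partial_\theta^m \vecv\|_{L^2}^2 \ll_{|t|, m} \epsilon^{-2m}$, and the interpolation step then completes the proof. The only substantive analytic step is this final integral bound; the rest is routine bookkeeping.
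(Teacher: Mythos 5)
Your proposal is correct and follows essentially the same route as the paper's proof: both compute the explicit formula $\pi^{1/2+it}(g_\epsilon)\vece_0(\kthe)=(\sin 2\epsilon-i\cos 2\epsilon\cos 2\theta)^{-1/2-it}$, reduce $\scrS_1,\scrS_2$ to $L^2$ norms of $\theta$-derivatives via the singular-integral estimate $\int_0^{2\pi}(\epsilon^2+\cos^2 u)^{-k}\,du\ll_k\epsilon^{1-2k}$, and then interpolate with exactly the same H\"older argument. The only difference is cosmetic: you organise the derivative bounds via Fa\`a di Bruno and a uniform pointwise estimate $|\partial_\theta^m\vecv|\ll|\phi(2\theta)|^{-1/2-m}$, while the paper writes out the two derivatives explicitly; you should note in passing that $\|\vecv\|_{L^2(K)}$ itself is $\ll(\log(1/\epsilon))^{1/2}\ll\epsilon^{-1}$ (the $m=0$ case), since $\scrS_n\ll\|\vecv\|_{L^2}+\|\partial_\theta^n\vecv\|_{L^2}$ requires that term as well, and that the pointwise domination $|\phi|^{-1/2-j}\le|\phi|^{-1/2-m}$ for $j\le m$ uses $|\phi|\le 1$.
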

\begin{proof}
For $\vecv\in V^{\infty}$ and $n=1,2$, we have 
$\scrS_n(\vecv)\ll \|\vecv\|_{L^2(K)} +\|\frac{d^n}{d\theta^n}\vecv\|_{L^2(K)}$ (when viewing $\vecv$ as a function of $\theta$ under the map $\theta\mapsto \vecv(\kthe)$).
We have
\begin{align*}
\left[ \pi^{1/2+it}(g_{\epsilon})\vece_0\right](\kthe)&=\big(i e^{-2\epsilon i}\sin^2\theta-i e^{2\epsilon i}\cos^2\theta\big)^{-1/2-it}\\&=(\cos2\epsilon)^{-1/2-it}(\tan2\epsilon-i\cos 2\theta)^{-1/2-it},
\end{align*}
hence
\begin{align*}
&\sfrac{d}{d\theta}\left(\left[ \pi^{1/2+it}(g_{\epsilon})\vece_0\right] \right)(\kthe)=-(1+2 i t)(\cos2\epsilon)^{-1/2-it}\sin 2\theta(\tan2\epsilon-i\cos 2\theta)^{-3/2-it}\\
&\sfrac{d^2}{d\theta^2}\left(\left[ \pi^{1/2+it}(g_{\epsilon})\vece_0\right] \right)(\kthe)\\&\quad=-(\cos2\epsilon)^{-1/2-it}(2+4 i t)\big(1+(\sfrac 1 2 + it )\sin^2 2\theta + i \tan 2\epsilon \cos 2 \theta\big)(\tan2\epsilon-i\cos 2\theta)^{-5/2-it}.
\end{align*}
This gives
\begin{equation*}
\|\pi^{1/2+it}(g_{\epsilon})\vece_0\|_{L^2(K)}\ll_t|\log \epsilon|,\qquad \|\sfrac{d^n}{d\theta^n}\pi^{1/2+it}(g_{\epsilon})\vece_0\|_{L^2(K)}\ll_t \epsilon^{-n}.
\end{equation*}
 hence $\scrS_n\big(\pi^{1/2+it}(g_{\epsilon})\vece_0\big)\ll_t \epsilon^n$ (for $n=1,2$). Now assuming $1<\beta<2$, we use H\"older's inequality to interpolate between the cases $\beta=1$ or $\beta=2$: let $p=\frac{1}{2-\beta}$ and $q=\frac{1}{\beta-1}$. Writing $a_{2\upsilon}=\langle \pi^{1/2+it}(g_{\epsilon})\vece_0,\vece_{2\upsilon}\rangle_{L^2(K)}$, we have
 \begin{align*}
\scrS_{\beta}\big(\pi^{1/2+it}(g_{\epsilon})\vece_0\big)^2&=\sum_{\upsilon\in\ZZ} (1+|\upsilon|^{\beta})^2|a_{2\upsilon}|^2\\&=\sum_{\upsilon\in\ZZ} \left((1+|\upsilon|^{\beta})^{\frac{2}{\beta p}}|a_{2\upsilon}|^{\frac{2}{p}}\right)\left((1+|\upsilon|^{\beta})^{\frac{4}{\beta q}}|a_{2\upsilon}|^{\frac{2}{q}}\right)\\&\leq  \left(\sum_{\upsilon\in\ZZ}(1+|\upsilon|^{\beta})^{\frac{2}{\beta }}|a_{2\upsilon}|^{2}\right)^{\frac{1}{p}}\left(\sum_{\upsilon\in\ZZ}(1+|\upsilon|^{\beta})^{\frac{4}{\beta }}|a_{2\upsilon}|^{2}\right)^{\frac{1}{q}}\\&\ll_{\beta} \scrS_{1}\big(\pi^{1/2+it}(g_{\epsilon})\big)^{\frac{2}{p}} \scrS_{2}\big(\pi^{1/2+it}(g_{\epsilon})\big)^{\frac{2}{q}}
\\&= \scrS_{1}\big(\pi^{1/2+it}(g_{\epsilon})\big)^{4-2\beta} \scrS_{2}\big(\pi^{1/2+it}(g_{\epsilon})\big)^{2\beta-2}\\&\ll_t \epsilon^{-2\beta}. 
\end{align*}
\end{proof}
\subsection{Proof of Lemma \ref{Phiga}}\label{AppB1}

Proposition \ref{vgprops} \textit{ii)} and \eqref{Psisigupsbdd} allow us to write
\begin{equation}\label{Psigdecomp1}
\Psi_g=\sum_{\upsilon,\sigma\in \ZZ}  \langle \vecv_g^{-r},\vece_{2\upsilon}\rangle_{L^2(K)}\langle \vecv_g^{-s},\vece_{2\sigma}\rangle_{L^2(K)}\Psi_{j,k}^{r,s}(\vece_{2\upsilon}\otimes \vece_{2\sigma}).
\end{equation}
We then have, for all $f\in L^2(\GaG)$,
\begin{align*}
&|\langle \Psi_g, f \rangle_{L^2(\GaG)}|\\&= \left|\sum_{\upsilon,\sigma\in \ZZ}\langle \vecv_g^{-r},\vece_{2\upsilon}\rangle_{L^2(K)}\langle \vecv_g^{-s},\vece_{2\sigma}\rangle_{L^2(K)}\langle\Psi_{j,k}^{r,s}(\vece_{2\upsilon}\otimes \vece_{2\sigma}),f \rangle_{L^2(\GaG)} \right|\\&\quad\leq  \sum_{\upsilon,\sigma\in \ZZ} |\langle \vecv_g^{-r},\vece_{2\upsilon}\rangle_{L^2(K)}||\langle \vecv_g^{-s},\vece_{2\sigma}\rangle_{L^2(K)}| \|\Psi_{j,k}^{r,s}(\vece_{2\upsilon}\otimes \vece_{2\sigma})\|_{L^2(\GaG)} \|f\|_{L^2(\GaG)},
\end{align*}
with the sum converging absolutely and uniformly over $g$ in compact subsets of $U$ (again by Proposition \ref{vgprops} \textit{ii)} and \eqref{Psisigupsbdd}). Since each
\begin{equation*}
g\mapsto \langle \vecv_g^{-r},\vece_{2\upsilon}\rangle_{L^2(K)}\langle \vecv_g^{-s},\vece_{2\sigma}\rangle_{L^2(K)}\langle\Psi_{j,k}^{r,s}(\vece_{2\upsilon}\otimes \vece_{2\sigma}),f \rangle_{L^2(\GaG)} 
\end{equation*}
is an analytic function on $U$, so is $g\mapsto \langle \Psi_g, f \rangle_{L^2(\GaG)}$. We have thus shown that $g\mapsto  \Psi_g$ is weakly analytic. Again, by \cite[Theorem 3.31]{Rudin}, $g\mapsto\Psi_g$ is analytic.

\hspace{425.5pt}\qedsymbol

\subsection{Proof of Lemma \ref{Phigb}}\label{AppB2}

We start by proving that $g\mapsto \Proj_i(\Psi_g)$ is analytic. By Lemma \ref{Phiga}, $g\mapsto\Psi_g$ is analytic (from $U$ to $L^2(\GaG)$) since $\Proj_i:L^2(\GaG)\rightarrow \scrH_i$ is a bounded linear operator, $g\mapsto 
\Proj_i(\Psi_g)$ is analytic (from $U$ to $\scrH_i$).

Turning to $g\mapsto \vecv_{\Psi_{g},m}(t)$, Since $\Psi_g\in C^{\infty}(\GaG)\cap L^p(\GaG)$ for all $p<\infty$, Corollary \ref{fdecomp2} gives
\begin{equation*}
 \vecv_{\Psi_{g},m}(t)=\sfrac{\wty(h_m)^{1/2-it}}{\sqrt{2\pi}}\sum_{\upsilon\in \ZZ} \left(\int_{\GaG} \Psi_g\, \overline{\EE_m^{1/2+it}(\cdot,\vece_{2\upsilon})}\,d\mu_{\GaG}\right)\vece_{2\upsilon}.
\end{equation*}
Once again using \eqref{Psigdecomp1}, Proposition \ref{vgprops} \textit{ii)} and \eqref{Psisigupsbdd}, we have
\begin{align*}
&\langle \Psi_g, \EE_m^{1/2+it}(\cdot,\vece_{2\upsilon})\rangle_{L^2(\GaG)}\\&\qquad=\sum_{\tau,\sigma\in \ZZ}  \langle \vecv_g^{-r},\vece_{2\tau}\rangle_{L^2(K)}\langle \vecv_g^{-s},\vece_{2\sigma}\rangle_{L^2(K)} \left(\int_{\GaG} \Psi_{j,k}^{r,s}(\vece_{2\tau}\otimes \vece_{2\sigma})\, \overline{\EE_m^{1/2+it}(\cdot,\vece_{2\upsilon})}\,d\mu_{\GaG}\right)\!.
\end{align*}
Now noting that $\rho(k) \Psi_{j,k}^{r,s}(\vece_{2\tau}\otimes \vece_{2\sigma})=\vece_{2(\tau+\sigma)}(k)\Psi_{j,k}^{r,s}(\vece_{2\tau}\otimes \vece_{2\sigma})$ and $\rho(k)\EE_m^{1/2+it}(\cdot,\vece_{2\upsilon})=\vece_{2\upsilon}(k)\EE_m^{1/2+it}(\cdot,\vece_{2\upsilon})$, by Lemma \ref{Ej2ubdd}, or \eqref{EjkFourier} and \eqref{Eisexp}, and \eqref{Psisigupsbdd}, we have
\begin{equation*}
\left|\int_{\GaG} \Psi_{j,k}^{r,s}(\vece_{2\tau}\otimes \vece_{2\sigma})\, \overline{\EE_m^{1/2+it}(\cdot,\vece_{2\upsilon})}\,d\mu_{\GaG}\right|=\begin{cases} 0\qquad&\mathrm{if\;}\upsilon\neq \sigma+\tau\\ O_{s,r,t}\big((1+|\sigma|)^3(1+|\tau|)^3\big)&\mathrm{otherwise}.\end{cases}
\end{equation*}
Thus, as in the proof of Lemma \ref{Phiga}, Proposition \ref{vgprops} \textit{ii)}, and \eqref{Psisigupsbdd} give that for any $\vecu\in V$, 
\begin{align*}
\langle \vecv_{\Psi_{g},m}(t), \vecu\rangle_{L^2(K)}&=\sfrac{\wty(h_m)^{1/2-it}}{\sqrt{2\pi}}\sum_{\tau,\sigma\in\ZZ} \langle \vecv_g^{-r},\vece_{2\tau}\rangle_{L^2(K)}\langle \vecv_g^{-s},\vece_{2\sigma}\rangle_{L^2(K)}\langle \vece_{2(\tau+\sigma)},\vecu\rangle_{L^2(K)}\\&\qquad\qquad\qquad\qquad\quad\times\left(\int_{\GaG} \Psi_{j,k}^{r,s}(\vece_{2\tau}\otimes \vece_{2\sigma})\, \overline{\EE_m^{1/2+it}(\cdot,\vece_{2(\tau+\sigma)})}\,d\mu_{\GaG}\right)\!,
\end{align*}
with the sum converging absolutely and uniformly over $g$ in compact subsets of $U$, proving that $g\mapsto\langle \vecv_{\Psi_{g},m}(t), \vecu\rangle_{L^2(K)}$ is analytic, hence (once again by \cite[Theorem 3.31]{Rudin}) $g\mapsto\vecv_{\Psi_{g},m}(t)$ is analytic.

\hspace{425.5pt}\qedsymbol

\section{Contour integrals of $L_k(|E|^2,s)$}\label{AppC}
The goal of this section is demonstrate how one deduces Corollary \ref{Fccor} from Theorem \ref{thm1}. 
\subsection{The Phragm\'en-Lindel\"of principle} We start by proving the following variation of the Phragm\'en-Lindel\"of principle:
\begin{lem}\label{PhragLind}
Assume that $f:\CC\rightarrow\CC$ is holomorphic on $[\sigma_1,\sigma_2]+i\RR$, $f(s)\in\RR$ for $s\in[\sigma_1,\sigma_2]$, and that $\int_T^{T+1} |f(\sigma_j+it)|^2\,dt\leq 1$ for all $T\in \RR$ and $j\in\lbrace 1,2\rbrace$. Assume furthermore that there exists $B>0$ such that $\int_T^{T+1} |f(\sigma+it)|^2\,dt\leq e^{B(1+|T|)}$ for all $\sigma\in[\sigma_1,\sigma_2]$ and $T\in\RR$. Then $\int_T^{T+1}|f(\sigma+it)|^2\,dt\leq 1$ for all $\sigma\in[\sigma_1,\sigma_2]$ and $T\in\RR$. 
\end{lem}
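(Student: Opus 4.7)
The approach is a standard Phragmén--Lindelöf argument once we replace the $L^2$-mean of $f$ on a short vertical interval with the value of a genuinely holomorphic auxiliary function. The crucial preliminary observation is that, since $f$ is real on $[\sigma_1,\sigma_2]\subset\RR$, Schwarz reflection gives $\overline{f(s)}=f(\bar s)$ throughout the strip $[\sigma_1,\sigma_2]+i\RR$.

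For fixed $T\in\RR$ the plan is to introduce
\[
F_T(s)\;:=\;\int_{0}^{1} f\bigl(s+i(T+t)\bigr)\,f\bigl(s-i(T+t)\bigr)\,dt ,
\]
which is holomorphic in $s$ (the integrand is continuous in $(s,t)$ and, for each $t$, a product of two holomorphic functions in $s$). For real $\sigma\in[\sigma_1,\sigma_2]$ the reflection identity turns the integrand into $|f(\sigma+i(T+t))|^2$, so after the change of variable $u=T+t$ one has
\[
F_T(\sigma)\;=\;\int_{T}^{T+1}|f(\sigma+iu)|^2\,du ,
\]
which is precisely the quantity we wish to bound.

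The next step is to control $|F_T|$ throughout the strip. By the Cauchy--Schwarz inequality applied to the integral in $t$,
\[
|F_T(\sigma+iv)|\;\leq\;\Bigl(\int_{v+T}^{v+T+1}|f(\sigma+i\tau)|^2\,d\tau\Bigr)^{1/2}\Bigl(\int_{v-T-1}^{v-T}|f(\sigma+i\tau)|^2\,d\tau\Bigr)^{1/2}.
\]
On the boundary lines $\sigma=\sigma_j$ each factor is $\leq 1$ by hypothesis, giving $|F_T(\sigma_j+iv)|\leq 1$ for all $v\in\RR$; in the interior the weaker hypothesis gives $|F_T(\sigma+iv)|\leq e^{B(3/2+|v|+|T|)}$, so $F_T$ grows at most exponentially in $|v|$.

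Finally I would apply Phragmén--Lindelöf to $F_T$. Because $B$ is a priori arbitrary, the interior growth rate in $|v|$ may exceed $\pi/(\sigma_2-\sigma_1)$, so a direct appeal to the three-lines theorem is not available; instead I multiply by the Gaussian damping factor and consider $G_\delta(s):=F_T(s)\,e^{\delta s^{2}}$ for small $\delta>0$. On the boundary lines $|G_\delta(\sigma_j+iv)|\leq e^{\delta\sigma_j^{2}-\delta v^{2}}$ tends to $0$ as $|v|\to\infty$, and in the interior the factor $e^{-\delta v^{2}}$ swamps the $e^{B|v|}$ growth, so $G_\delta\to 0$ at vertical infinity. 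The maximum principle applied on large rectangles then yields $|G_\delta(\sigma)|\leq e^{\delta M^{2}}$ with $M=\max(|\sigma_1|,|\sigma_2|)$, i.e.\ $|F_T(\sigma)|\leq e^{\delta(M^{2}-\sigma^{2})}$; letting $\delta\to 0^{+}$ gives $|F_T(\sigma)|\leq 1$, which is the desired conclusion. The only genuine (and mild) subtlety is the need for the $e^{\delta s^{2}}$ damping, since one has no a priori smallness of the exponent $B$ in the interior bound.
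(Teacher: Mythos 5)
Your proof is correct and follows essentially the same route as the paper: you construct an auxiliary holomorphic function whose value on a suitable horizontal segment recovers the short-interval mean square, bound it by Cauchy--Schwarz, and apply Phragm\'en--Lindel\"of. The only differences are cosmetic: the paper defines $F_U(s)=\int_0^1 f(s+i\tau)f(s-2iU-i\tau)\,d\tau$ and evaluates on the line $\Im s = U$, whereas you shift the interval into the argument and evaluate on the real segment $[\sigma_1,\sigma_2]$ --- one is a vertical translate of the other. You also spell out a self-contained Phragm\'en--Lindel\"of step via the Gaussian damping factor $e^{\delta s^2}$ rather than citing Titchmarsh \cite[5.65]{Titch} as the paper does; this is a good choice since the a priori bound $e^{B|v|}$ with unrestricted $B$ rules out a naive appeal to the bounded three-lines theorem, and the $e^{\delta s^2}$ trick is precisely the standard way one proves the needed Phragm\'en--Lindel\"of extension.
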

\begin{proof}
For $U\in\RR$ and $s\in [\sigma_1,\sigma_2]+i\RR$, define $F_U$ by
\begin{equation*}
F_U(s):=\int_0^1 f(s+i\tau)f(s-i2U-i\tau)\,d\tau.
\end{equation*}
Since $f$ is holomorphic on $[\sigma_1,\sigma_2]+i\RR$, so is $F_U$ (for fixed $U$). For $s\in [\sigma_1,\sigma_2]+iU$, we have $s-i2U-i\tau=\overline{s+i\tau}$, giving $F_U(s)=\int_0^1|f(s+i\tau)|^2\,d\tau$. Note also that for all $s\in[\sigma_1,\sigma_2]+i\RR$,
\begin{align*}
|F_U(s)|\leq \sqrt{\int_0^1 |f(s+i\tau)|^2\,d\tau}\sqrt{\int_0^1 |f(s-i2U-i\tau)|^2\,d\tau},
\end{align*}
hence
\begin{equation*}
|F_U(\sigma_j+it)|\leq 1 \qquad \forall t\in \RR,\,j\in\lbrace 1, 2\rbrace,
\end{equation*}
as well as
\begin{equation*}
|F_U(\sigma+it)|\leq e^{\frac{1}{2}B(1+|t|+1+|t-2U-1|)}\qquad\forall \sigma\in[\sigma_1,\sigma_2],\,t\in\RR.
\end{equation*}
We may now apply the Phragm\'en-Lindel\"of principle to $F_U$: by \cite[5.65]{Titch}, $|F_U(s)|\leq 1$ for all $s\in[\sigma_1,\sigma_2]+i\RR$. In particular, for $s\in [\sigma_1,\sigma_2]+i\RR$ with $\Im(s)=U$, $1\geq |F_U(s)|=\int_0^1 |f(s+i\tau)|^2\,d\tau$, as desired.
\end{proof}
\subsection{Proof of Corollary \ref{Fccor}}
We start by following \cite[pp.\ 120-121]{Petridis95}. For $M>U>2$, let $\psi_U\in C^{\infty}(\RR)$ be such that $\psi_U(x)\geq 0$ for all $x\in\RR$, and
\begin{equation*}
\psi_U(x)=\begin{cases}0\qquad&\mathrm{if\;} x\geq 1+U^{-1}\\1\qquad&\mathrm{if\;}x\leq 1-U^{-1}. \end{cases}
\end{equation*}
We choose $\psi_U$ so that $|\frac{d^n}{dx^n}\psi_U(x)|\ll_n U^n$. Let $\Psi_U(s)$ denote the Mellin transform of $\psi_U$, that is $\Psi_U(s)=\int_0^{\infty} \psi_U(x) x^{s-1}\,dx$ (where $\Re(s)>0$). The bound on $\frac{d^n}{dx^n}\psi_U(x)$ implies (cf.\ \cite[(28)-(30)]{Petridis95}) that
\begin{equation}\label{PsiUbdd}
\left|\Psi_U(s) \right|\ll_c \frac{1}{|s|}\left(\frac{U}{1+|s|}\right)^{c-1}\qquad \forall c\in \RR_{\geq 0},
\end{equation}
where the implied constant is uniformly bounded for $s\in [\sigma_1,\sigma_2]+i\RR$, $0<\sigma_1<\sigma_2<\infty$ and $c$ in compact subsets of $\RR_{\geq 0}$. Now using the bound on $\Psi_U$ above and the Mellin inversion formula, 
\begin{align}\label{Mellin}
\frac{1}{2\pi i}\int_{\frac{3}{2}+i\RR} L_k(s)\Psi_U(s) M^s\,ds&=\sum_{\underset{m\neq 0}{m\in\ZZ}} |\psi_{m,k}^{(k_0)}(\sfrac 1 2 + i t_0)|^2\frac{1}{2\pi i}\int_{\frac{3}{2}+i\RR}\Psi_U(s) \left(\sfrac{M}{|m|}\right)^s\,ds\\\notag=&\sum_{\underset{m\neq 0}{m\in\ZZ}} |\psi_{m,k}^{(k_0)}(\sfrac 1 2 + i t_0)|^2\psi_U\left(\sfrac{|m|}{M} \right)
\end{align}
(here $L_k(s)=L_k(|E|^2,s)$).  We will now move the contour of integration from $\frac{3}{2}+i\RR$ to $\frac{1}{2}+i\RR$. By Proposition \ref{GL=RS}, the poles of $L_k(s)$ contained in $[\frac{1}{2},\frac{3}{2}]+i\RR$ are $\lbrace 1, 1-2it_0,1+2it_0\rbrace\cup (\scrE_k\cap (\frac{1}{2},1])$. We denote by $\scrE'$ the multiset of poles of $L_k(s)$ in $[\frac{1}{2},\frac{3}{2}]+i\RR$, and let
\begin{equation*}
f(s)=L_k(s)\Psi_U(s)\prod_{\zeta\in\scrE'}(s-\zeta)
\end{equation*}
(note that $\#\scrE' <\infty$). Our goal will now be to apply Lemma \ref{PhragLind} to $f(s)$. By construction, $f$ is holomorphic on $[\frac{1}{2},\frac{3}{2}]+i\RR$. Furthermore, using \eqref{Lkdef}, $|L_k(\frac{3}{2}+it)|\ll 1$, hence by this and \eqref{PsiUbdd}, $|f(\frac{3}{2}+it)|\ll 1$ for all $t\in \RR$. This gives $\int_T^{T+1} |f(\frac{3}{2}+it)|^2\,dt\ll1$ for all $T\in\RR$. Since $f$ is holomorphic, $|f(\frac{1}{2}+it)|\ll 1$ (the implied constants depend on $U$; this is of no importance for moving the contour of integration) for $|t|\leq T_0$ (for all $T_0>0$). For $|T|\geq T_0$,  Theorem \ref{thm1} gives
\begin{align*}
\int_T^{T+1} |f(\sfrac{1}{2}+it)|^2\,dt \leq &\left(\max_{t\in[T,T+1] }|\Psi_U(\sfrac{1}{2}+it)|^2 \prod_{\zeta\in\scrE'}|\sfrac{1}{2}+it-\zeta|^2\right)\int_T^{T+1} |L_k(\sfrac{1}{2}+it)|^2\,dt\\&\ll \left(\max_{t\in[T,T+1] }|\Psi_U(\sfrac{1}{2}+it)|^2 \prod_{\zeta\in\scrE'}|\sfrac{1}{2}+it-\zeta|^2\right)T^6
\end{align*}
Choosing $c$ large enough in \eqref{PsiUbdd} then gives $\int_T^{T+1} |f(\frac{1}{2}+it)|^2\,dt\ll1$ for all $T\in\RR$. Arguing in a similar manner, though using Lemma \ref{apriori} in place of Theorem \ref{thm1} gives $\int_T^{T+1} |f(\sigma+it)|^2\,dt\ll e^{B(1+|T|)}$ for all $\sigma\in[\frac{1}{2},\frac{3}{2}]$ and some $B>0$. Now applying Lemma \ref{PhragLind} gives
\begin{equation*}
\int_T^{T+1} |f(\sigma+it)|^2\,dt\leq C\qquad\forall \sigma\in[\sfrac{1}{2},\sfrac{3}{2}],\; T\in\RR
\end{equation*}
for some $C>0$ (which depends on $U$). This then gives
\begin{equation*}
\int_{1/2}^{3/2}\int_T^{T+1} |f(\sigma+it)|^2\,dt \,d\sigma= \int_T^{T+1}\int_{1/2}^{3/2}|f(\sigma+it)|^2\,d\sigma\,dt\leq C \qquad \forall T\in\RR.
\end{equation*} 
Thus, for every $T\in \RR$ there exists $T\leq T'\leq T+1$ such that $\int_{1/2}^{3/2}|f(\sigma+iT')|^2\,d\sigma\leq C$. We may therefore find a sequence $\lbrace T_j\rbrace_{j=0}^{\infty}$ such that $0<T_j\leq T_{j+1}$, $T_j\rightarrow\infty$ as $j\rightarrow\infty$, and
\begin{equation}\label{Tjintbdd}
\int_{1/2}^{3/2}|f(\sigma\pm iT_j)|^2\,d\sigma\leq C\qquad \forall j\in\NN
\end{equation}
(observe that $\overline{f(s)}=f(\overline{s})$). Once again using $|L_k(\frac{3}{2}+it)|\ll1$ and \eqref{PsiUbdd}, we note that the integral $\int_{\frac{3}{2}+i\RR} L_k(s)\Psi_U(s)M^s\,ds $ converges absolutely, hence
\begin{align*}
\int_{\frac{3}{2}+i\RR} L_k(s)\Psi_U(s) M^s\,ds=\lim_{j\rightarrow\infty} \int_{-T_j}^{T_j} L_k(\sfrac 3 2 + it )\Psi_U(\sfrac 3 2 + it) M^{\sfrac 3 2 + it}\,dt.
\end{align*}
By Cauchy's integral theorem, for all large enough $T_j$, 
\begin{align*}
 \int_{-T_j}^{T_j} L_k(\sfrac 3 2 + it )\Psi_U(\sfrac 3 2 + it) M^{\sfrac 3 2 + it}\,dt= &\int_{-T_j}^{T_j} L_k(\sfrac 1 2 + it )\Psi_U(\sfrac 1 2 + it) M^{\sfrac 1 2 + it}\,dt\\&+\quad2\pi i\sum_{\zeta\in\scrE'}\mathrm{Res}_{s=\zeta}\big( L_k(s)\Psi_U(s)M^s\big)\\&\quad+\int_{\sfrac 1 2}^{\sfrac 3 2} L_k(\sigma+iT_j)\Psi_U(\sigma+i T_j)M^{\sigma+iT_j}\,d\sigma\\&\quad-\int_{\sfrac 1 2}^{\sfrac 3 2} L_k(\sigma-iT_j)\Psi_U(\sigma-i T_j)M^{\sigma-iT_j}\,d\sigma.
\end{align*}
Now, by \eqref{Tjintbdd},
\begin{align*}
&\lim_{j\rightarrow\infty} \left|\int_{\sfrac 1 2}^{\sfrac 3 2} L_k(\sigma\pm iT_j)\Psi_U(\sigma\pm i T_j)M^{\sigma\pm iT_j}\,d\sigma \right|\\&\qquad=\lim_{j\rightarrow\infty} \left|\int_{\sfrac 1 2}^{\sfrac 3 2} f(\sigma+iT_J)M^{\sigma\pm iT_j}\left(\prod_{\zeta\in\scrE'}(s-\zeta)\right)^{-1}\,d\sigma \right|\\&\qquad\leq\lim_{j\rightarrow\infty} \sqrt{C}M^{3/2}\sqrt{\int_{\sfrac 1 2 }^{\sfrac 3 2} \prod_{\zeta\in\scrE'}| \sigma \pm i T_j -\zeta|^{-2}\,d\sigma}=0,
\end{align*}
since $\scrE'$ is non-empty. Theorem \ref{thm1} and \eqref{PsiUbdd} give
\begin{align*}
&\int_{-\infty}^{\infty} |L_k(\sfrac{1}{2}+it) \Psi_U(\sfrac{1}{2}+it)M^{\sfrac 1 2 + it}|\,dt\\&\quad\leq 2\sqrt{M}\sum_{n=0}^{\infty}\sqrt{\int_n^{n+1}|L_k(\sfrac{1}{2}+it)|^2\,dt}\sqrt{\int_n^{n+1}|\Psi_U(\sfrac{1}{2}+it)|^2}\,dt<\infty,
\end{align*}
hence 
\begin{equation*}
\lim_{j\rightarrow\infty} \int_{-T_j}^{T_j} L_k(\sfrac 1 2 + it )\Psi_U(\sfrac 1 2 + it) M^{\sfrac 1 2 + it}\,dt=\int_{-\infty}^{\infty} L_k(\sfrac 1 2 + it )\Psi_U(\sfrac 1 2 + it) M^{\sfrac 1 2 + it}\,dt.
\end{equation*}
It follows that
\begin{equation*}
\int_{\frac{3}{2}+i\RR} L_k(s)\Psi_U(s) M^s\,ds=\int_{\frac{1}{2}+i\RR} L_k(s)\Psi_U(s) M^s\,ds+2\pi i\sum_{\zeta\in\scrE'}\mathrm{Res}_{s=\zeta}\big( L_k(s)\Psi_U(s)M^s\big).
\end{equation*}
Entering this into \eqref{Mellin} yields
\begin{equation}\label{shifted}
\sum_{\underset{m\neq 0}{m\in\ZZ}} |\psi_{m,k}^{(k_0)}(\sfrac 1 2 + i t_0)|^2\psi_U\left(\sfrac{|m|}{M} \right)=\frac{1}{2\pi i}\int_{\frac{1}{2}+i\RR} L_k(s)\Psi_U(s) M^s\,ds+\sum_{\zeta\in\scrE'}\mathrm{Res}_{s=\zeta}\big( L_k(s)\Psi_U(s)M^s\big).
\end{equation}
We now estimate the terms in the right-hand side of \eqref{shifted}; we start with the integral:
\begin{align*}
\left|\frac{1}{2\pi i}\int_{\frac{1}{2}+i\RR} L_k(s)\Psi_U(s) M^s\,ds\right|\ll \sqrt{M}+\sqrt{M}\int_{1}^{\infty}|L_k(\sfrac{1}{2}+it)\Psi_U(\sfrac{1}{2}+it)|\,dt
\end{align*}
(both here and henceforth, all implied constants are now independent of $U$). Defining $F(T):=\int_1^T|L_k(\frac{1}{2}+it)|^2\,dt$, by Theorem \ref{thm1}, $F(T)\ll_{\epsilon} T^{6+\epsilon}$. For $\eta_0>0$ and $c>\frac{7+\epsilon}{2}+\eta_0$, we use this bound together with \eqref{PsiUbdd}, the Cauchy-Schwarz inequality, and integration by parts in order to bound $\int_{1}^{\infty}|L_k(\sfrac{1}{2}+it)\Psi_U(\sfrac{1}{2}+it)|\,dt$ as follows:
\begin{align*}
&\int_{1}^{\infty}|L_k(\sfrac{1}{2}+it)\Psi_U(\sfrac{1}{2}+it)|\,dt\ll\lim_{T\rightarrow\infty}U^{c-1}\int_1^T |L_k(\sfrac{1}{2}+it)|t^{-c}\,dt\\&\leq \lim_{T\rightarrow\infty} U^{c-1}\sqrt{\int_1^{T}|L_k(\sfrac{1}{2}+it)|^2 t^{1+2\eta_0-2 c}\,dt}\sqrt{\int_1^{T}t^{-(1+2\eta_0)}\,dt}\\&= \lim_{T\rightarrow\infty}  U^{c-1}\sqrt{ \frac{1-T^{-2\eta_0}}{2\eta_0}} \sqrt{ \big[F(t)t^{1+2\eta_0-2c} \big]_{t=1}^{t=T}+(2c-1-2\eta_0)\int_1^T F(t)t^{2\eta_0-2c}\,dt}
\\&\quad\ll \lim_{T\rightarrow\infty}  U^{c-1}\sqrt{ \frac{1-T^{-2\eta_0}}{2\eta_0}}\sqrt{ T^{6+\epsilon+2\eta_0-2c} +(2c-1-2\eta_0)\int_1^T t^{6+\epsilon+2\eta_0-2c}\,dt}\\&\quad= \lim_{T\rightarrow\infty} U^{c-1}\sqrt{ \frac{1-T^{-2\eta_0}}{2\eta_0}}\sqrt{ T^{6+\epsilon+2\eta_0-2c} +\left(\frac{2c-1-2\eta_0}{2c-6-\epsilon-2\eta_0}\right)(1-T^{6+\epsilon+2\eta_0-2c})}\\&\quad= U^{c-1}\sqrt{ \frac{1}{2\eta_0}}\sqrt{\frac{2c-1-2\eta_0}{2c-6-\epsilon-2\eta_0}}\ll_{\eta_0} U^{c-1}.
\end{align*}
Thus, writing $c=\frac{7}{2}+\eta$ for some $\eta>0$, we obtain
\begin{equation}\label{Lintbdd}
\left|\int_{\frac{1}{2}+i\RR} L_k(s)\Psi_U(s) M^s\,ds\right|\ll_{\eta}\sqrt{M}U^{\frac{5}{2}+\eta}.
\end{equation}
Turning our attention to the residues in the left-hand side of \eqref{shifted}, we start by observing that $\Psi_U(s)$ and $M^s$ are holomorphic in $s$. We recall now the following facts regarding $L_k(s)$ and $\scrE'$: by Proposition \ref{GL=RS}, $L_k(s)=\frac{R_k(|E|^2,s)}{G(s)}$, where $\frac{1}{G(s)}$ is holomorphic for $s\in[\frac{1}{2},\frac{3}{2}]+i\RR$, and (using the assumption $t_0\neq 0$) $R_k(|E|^2,s)$ has a pole of order two at $s=1$, and simple poles at $\lbrace 1\pm 2it_0\rbrace\cup(\scrE_k\cap\big(\sfrac 1 2 ,1)\big))$. This gives
\begin{equation*}
\Res_{s=\zeta}\big( L_k(s)\Psi_U(s)M^s \big)=\frac{M^{\zeta}\Psi_U(\zeta)}{G(\zeta)}\Res_{s=\zeta}\big( R_k(|E|^2,s) \big),\qquad \zeta\in (\sfrac 1 2 , 1)\cap \scrE'.
\end{equation*}
By \cite[(28)]{Petridis95} $\Psi_U(s)=\frac{1}{s}+O_s(\frac{1}{U})$ (with the implied constant being uniform over $s$ in compact subsets of $\lbrace z\in\CC\,:\, \Re(z)>0\rbrace$), hence
\begin{equation*}
\sum_{\zeta\in (\frac{1}{2},1)\cap\scrE_k} \Res_{s=\zeta}\big( L_k(s)\Psi_U(s)M^s \big)=\sum_{\zeta\in (\frac{1}{2},1)\cap\scrE_k} c_{\zeta}M^{\zeta}+ O\big( \sfrac{M}{U}\big),
\end{equation*}
where
\begin{equation}\label{czeta}
c_{\zeta} =\frac{\Res_{s=\zeta}\big( R_k(|E|^2,s) \big)}{\zeta G(\zeta)} ,
\end{equation} 
and $|M^{\zeta}|\leq M$ was used to obtain $O\big( \sfrac{M}{U}\big)$. Turning to those $\zeta\in\scrE'$ with $\Re(\zeta)=1$, and again using $\Psi_U(s)=\frac{1}{s}+O(\frac{1}{U})$,
\begin{align*}
\mathrm{Res}_{s=1\pm 2i t_0}\big( L_k(s)\Psi_U(s)M^s\big)=\frac{M^{1\pm 2i t_0}\Res_{s=1\pm 2 i t_0}\big( R_k(|E|^2,s) \big)}{(1\pm 2i t_0)G(1\pm 2 i t_0)}+ O\big( \sfrac{M}{U}\big).
\end{align*}
By \eqref{RSRenormexp} and \eqref{EisXi},
\begin{align*}
\Res_{s=1\pm 2 i t_0}\big( R_k(|E|^2,s) \big)=&\lim_{s\rightarrow 1\pm 2 i t_0} \big(s-(1\pm 2i t_0)\big) R_k(|E|^2,s)\\&=\varphi_{k,k_0}(1\pm 2 i t_0)\varphi_{k_0,k_0}(\sfrac 1 2 \mp i t_0).
\end{align*}
Now, $\overline{\varphi_{j,k}(s)}=\varphi_{j,k}(\overline{s})$ and $\overline{G(s)}=G(\overline{s})$, giving
\begin{align*}
&\mathrm{Res}_{s=1+ 2i t_0}\big( L_k(s)\Psi_U(s)M^s\big)+\mathrm{Res}_{s=1- 2i t_0}\big( L_k(s)\Psi_U(s)M^s\big)
\\&=2\Re\left( \frac{M^{1+ 2i t_0}\varphi_{k,k_0}(1+ 2 i t_0)\varphi_{k_0,k_0}(\sfrac 1 2 - i t_0)}{(1+ 2i t_0)G(1+ 2 i t_0)}\right)+ O\big( \sfrac{M}{U}\big)
\\&=\Re\left( M^{1+2i t_0}\frac{8(2\pi)^{2i t_0}\varphi_{k,k_0}(1+ 2 i t_0)\varphi_{k_0,k_0}(\sfrac 1 2 - i t_0)\Gamma(1+ i t_0)}{\Gamma(\frac{1}{2}+2 i t_0)\Gamma(\frac{3 }{2}+i t_0)}\right)+ O\big( \sfrac{M}{U}\big).
\end{align*}
Define
\begin{equation}\label{c12it0}
c_{1+2i t_0}=\frac{8(2\pi)^{2i t_0}\varphi_{k,k_0}(1+ 2 i t_0)\varphi_{k_0,k_0}(\sfrac 1 2 - i t_0)\Gamma(1+ i t_0)}{\Gamma(\frac{1}{2}+2 i t_0)\Gamma(\frac{3 }{2}+i t_0)}.
\end{equation}

Finally, the pole at $s=1$ gives the following contribution:
\begin{align*}
\mathrm{Res}_{s=1}\big( L_k(s)\Psi_U(s)M^s\big)=\Res_{s=1} \left(  \frac{M^s \Psi_U(s)R_k(|E|^2,s)}{G(s)}\right).
\end{align*}
By Corollary \ref{RScont} (and \eqref{RSRenormexp} and \eqref{EisXi}), 
\begin{align*}
R_k(|E|^2,s)=&R.N.\left( \int_{\GaH} |E_{k_0}(\cdot,\sfrac{1}{2}+it_0)|^2E_k(\cdot,s)\,d\mu\right)\\&=\int_{\scrF_B} |E_{k_0}(z,\sfrac{1}{2}+it_0)|^2 E_k(z,s)\,d\mu(z)\\\notag&+\sum_{j=1}^{\kappa} \int_{\scrC_{j,B}} \left(|E_{k_0}(z,\sfrac{1}{2}+it_0)|^2 E_k(z,s)-(\delta_{k,j} y_j^s+\varphi_{k,j}(s)y_j^{1-s})\Xi_j(y_j)\right)\,d\mu(z)\\\notag&\qquad\qquad-\sum_{j=1}^{\kappa}\widehat{\Xi}^1_j(B),
\end{align*}
where
\begin{align*}
\Xi_j(y)=( |\varphi_{k_0,j}(\sfrac 1 2 + i t_0)|^2+\delta_{k_0,j})y+\delta_{k_0,j}\varphi_{k_0,j}(\sfrac 1 2 - i t_0)y^{1+2i t_0}+\delta_{k_0,j}\varphi_{k_0,j}(\sfrac 1 2 +it_0)y^{1-2i t_0}
\end{align*}
and for $j\neq k$:
\begin{align*}
\widehat{\Xi}^1_j(B)=&\varphi_{k,j}(s)\Bigg(\frac{( |\varphi_{k_0,j}(\sfrac 1 2 + i t_0)|^2+\delta_{k_0,j})B^{1-s}}{1-s}+\frac{\delta_{k_0,j}\varphi_{k_0,j}(\frac{1}{2} - i t_0)B^{1+2i t_0-s}}{1+2i t_0-s}\\&\qquad\qquad\qquad\qquad\qquad\qquad+\frac{\delta_{k_0,j}\varphi_{k_0,j}(\frac{1}{2} + i t_0)B^{1-2i t_0-s}}{1-2i t_0-s}\Bigg),
\end{align*}
and
\begin{align*}
\widehat{\Xi}^1_k(B)=&\frac{( |\varphi_{k_0,k}(\sfrac 1 2 + i t_0)|^2+\delta_{k_0,j})B^{s}}{s}+\frac{\delta_{k_0,j}\varphi_{k_0,j}(\frac{1}{2} - i t_0)B^{s+2i t_0}}{s+2i t_0}+\frac{\delta_{k_0,j}\varphi_{k_0,j}(\frac{1}{2} + i t_0)B^{s-2i t_0}}{s-2i t_0}\\&\quad+\varphi_{k,k}(s)\Bigg(\frac{( |\varphi_{k_0,k}(\sfrac 1 2 + i t_0)|^2+\delta_{k_0,k})B^{1-s}}{1-s}+\frac{\delta_{k_0,k}\varphi_{k_0,k}(\frac{1}{2} - i t_0)B^{1+2i t_0-s}}{1+2i t_0-s}\\&\qquad\qquad\qquad\qquad\qquad\qquad\qquad\qquad\qquad+\frac{\delta_{k_0,k}\varphi_{k_0,k}(\frac{1}{2} + i t_0)B^{1-2i t_0-s}}{1-2i t_0-s}\Bigg).
\end{align*}
By Lemma \ref{Eis1pole} and using $E_k(z,s)=\delta_{k,j} y_j^s+\left(\frac{\mu(\GaH)^{-1}}{s-1}+\widetilde{\varphi}_{k,j}(s)\right)y_j^{1-s}+O(y_j^{-A})$ (with the implied constant uniformly bounded for $s$ in a neighbourhood of $1$) and $|E_k(z,\frac{1}{2}+i t_0)|^2=\Xi_j(y_j)+O(y_j^{-A})$ for all $A>0$, $y_j\gg 1$, we obtain, as $s\rightarrow 1$,
\begin{align*}
R_k(|E|^2,s)=&\frac{\mu(\GaH)^{-1}}{s-1}\int_{\scrF_B} |E_{k_0}(z,\sfrac{1}{2}+i t_0)|^2\,d\mu(z)\\&+\sum_{j=1}^{\kappa} \varphi_{k,j}(s)\int_{\scrC_{j,B}} y_j^{1-s}\left(|E_{k_0}(z,\sfrac{1}{2}+it_0)|^2-\Xi_j(y_j)\right)\,d\mu(z)\\-\sum_{j=1}^{\kappa} &\varphi_{k,j}(s)\Bigg(\frac{( |\varphi_{k_0,j}(\sfrac 1 2 + i t_0)|^2+\delta_{k_0,j})B^{1-s}}{1-s}+\frac{\delta_{k_0,j}\varphi_{k_0,j}(\frac{1}{2} - i t_0)B^{1+2i t_0-s}}{1+2i t_0-s}\\&\qquad\qquad\qquad\qquad\qquad\qquad+\frac{\delta_{k_0,j}\varphi_{k_0,j}(\frac{1}{2} + i t_0)B^{1-2i t_0-s}}{1-2i t_0-s}\Bigg)+O(1)
\end{align*}
\begin{align*}
=&\frac{\mu(\GaH)^{-1}}{s-1}\int_{\scrF_B} |E_{k_0}(z,\sfrac{1}{2}+i t_0)|^2\,d\mu(z)\\&+\sum_{j=1}^{\kappa} \frac{\mu(\GaH)^{-1}}{s-1}\int_{\scrC_{j,B}}\left(|E_{k_0}(z,\sfrac{1}{2}+it_0)|^2-\Xi_j(y_j)\right)\,d\mu(z)\\-\sum_{j=1}^{\kappa} &\left(\frac{\mu(\GaH)^{-1}}{s-1}+\widetilde{\varphi}_{k,j}(s)\right)\Bigg(( |\varphi_{k_0,j}(\sfrac 1 2 + i t_0)|^2+\delta_{k_0,j})\left(\frac{1}{1-s}+\log B+O(|1-s|) \right)\\&+\frac{\delta_{k_0,j}\varphi_{k_0,j}(\frac{1}{2} - i t_0)B^{2i t_0}}{2i t_0}+\frac{\delta_{k_0,j}\varphi_{k_0,j}(\frac{1}{2} + i t_0)B^{-2i t_0}}{-2i t_0}+O(|1-s|)\Bigg)+O(1)
\end{align*}
\begin{align*}
=&\frac{2\mu(\GaH)^{-1}}{(s-1)^2}+\sum_{j=1}^{\kappa}\frac{( |\varphi_{k_0,j}(\sfrac 1 2 + i t_0)|^2+\delta_{k_0,j})\widetilde{\varphi}_{k,j}(1)}{s-1}\\&+\frac{\mu(\GaH)^{-1}}{s-1}\Bigg(\int_{\scrF_B} |E_{k_0}(z,\sfrac{1}{2}+i t_0)|^2\,d\mu(z)\\&\quad+\sum_{j=1}^{\kappa} \int_{\scrC_{j,B}}\left(|E_{k_0}(z,\sfrac{1}{2}+it_0)|^2-\Xi_j(y_j)\right)\,d\mu(z)\\&\qquad-\sum_{j=1}^{\kappa} \bigg((|\varphi_{k_0,j}(\sfrac 1 2 + i t_0)|^2+\delta_{k_0,j})\log B +\frac{\delta_{k_0,j}\varphi_{k_0,j}(\frac{1}{2} - i t_0)B^{2i t_0}}{2i t_0}\\&\qquad\quad+\frac{\delta_{k_0,j}\varphi_{k_0,j}(\frac{1}{2} + i t_0)B^{-2i t_0}}{-2i t_0}\Bigg)+O(1)
\end{align*}
\begin{align*}
=&\frac{2\mu(\GaH)^{-1}}{(s-1)^2}+\sum_{j=1}^{\kappa}\frac{( |\varphi_{k_0,j}(\sfrac 1 2 + i t_0)|^2+\delta_{k_0,j})\widetilde{\varphi}_{k,j}(1)}{s-1}\\\quad&+\frac{\mu(\GaH)^{-1}}{s-1}R.N.\left(\int_{\GaH}|E_{k_0}(\cdot,\sfrac{1}{2}+it_0)|^2\,d\mu\right)+O(1).
\end{align*}
Using the Maass-Selberg relations \eqref{MaasSelexp} (cf.\ the proof of Lemma \ref{maassel}), one can show that
\begin{equation*} 
R.N.\left(\int_{\GaH}|E_{k_0}(\cdot,\sfrac{1}{2}+it_0)|^2\,d\mu\right)=-\sum_{j=1}^{\kappa}\varphi_{k_0,j}'(\sfrac{1}{2}+i t_0)\varphi_{k_0,j}(\sfrac{1}{2}-i t_0),
\end{equation*}
hence
\begin{equation*}
R_k(|E|^2,s)=\frac{2\mu(\GaH)^{-1}}{(s-1)^2} + \frac{c_1}{s-1} +O(1),
\end{equation*}
where
\begin{align}\label{c1def}
c_1=& -\sum_{j=1}^{\kappa}\frac{\varphi_{k_0,j}'(\sfrac{1}{2}+i t_0)\varphi_{k_0,j}(\sfrac{1}{2}-i t_0)}{\mu(\GaH)}+\sum_{j=1}^{\kappa}( |\varphi_{k_0,j}(\sfrac 1 2 + i t_0)|^2+\delta_{k_0,j})\widetilde{\varphi}_{k,j}(1),
\end{align}
as $s\rightarrow 1$.
This gives
\begin{align*}
&\Res_{s=1}\big( L_k(s)\Psi_U(s)M^s\big)=\frac{2}{\mu(\GaH)}\left( \frac{\Psi_U(1)M\log M}{G(1)}+\frac{M\Psi_U'(1)}{G(1)}-\frac{M\Psi_U(1)G'(1)}{G(1)^2}\right)\\&\qquad\qquad\qquad\qquad\qquad\qquad\qquad\qquad\qquad\qquad+\frac{c_1 M \Psi_U(1)}{G(1)}\\&=\frac{2}{\mu(\GaH)G(1)}M\log M +\frac{1}{\mu(\GaH)G(1)}\left(-2+c_1\mu(\GaH)-2\frac{G'(1)}{G(1)} \right)M+O\big(\sfrac{M\log M}{U}\big)
\end{align*}
(here we used $\Psi_U'(1)=\int_0^{1-\frac{1}{U}} \log x \,dx+\int_{1-\frac{1}{U}}^{1+\frac{1}{U}} \psi_U(x)\log x\,dx =-1+O(\frac{1}{U}) $). Now,
\begin{align*}
&G(1)=\frac{\Gamma(\frac{1}{2}+i t_0)\Gamma(\frac{1}{2}-i t_0)\Gamma(\frac{1}{2})^2}{8\pi}=\frac{\pi}{8\cosh(\pi t_0)}\\
&\frac{G'(1)}{G(1)}=\frac{1}{2}\cdot\frac{\Gamma'(\frac{1}{2}+i t_0)}{\Gamma(\frac{1}{2}+i t_0)}+\frac{1}{2}\cdot\frac{\Gamma'(\frac{1}{2}-i t_0)}{\Gamma(\frac{1}{2}-i t_0)}+2\cdot\frac{\frac{1}{2}\cdot\Gamma'(\frac{1}{2})}{\Gamma(\frac{1}{2})}-\frac{8\pi\big(\log\pi+\Gamma'(1)\big)}{8\pi \Gamma(1)}\\&\qquad\qquad=\Re\left(\frac{\Gamma'(\frac{1}{2}+i t_0)}{\Gamma(\frac{1}{2}+i t_0)} \right)-\log (4\pi),
\end{align*}
hence
\begin{align*}
\Res_{s=1}\big( L_k(s)\Psi_U(s)M^s\big)=&\frac{16\cosh(\pi t_0)}{\mu(\GaH)\pi}M\log M \\&+\frac{8\cosh(\pi t_0)}{\mu(\GaH)\pi}\left(c_1\mu(\GaH)+2\log (4\pi)-2-2\Re\left(\sfrac{\Gamma'(\frac{1}{2}+i t_0)}{\Gamma(\frac{1}{2}+i t_0)} \right) \right)M\\&\quad+O\left(\frac{M\log M}{U}\right).
\end{align*}
Entering the formulas for the residues and \eqref{Lintbdd} into \eqref{shifted} gives
\begin{align*}
\sum_{\underset{m\neq 0}{m\in\ZZ}} |\psi_{m,k}^{(k_0)}(\sfrac 1 2 + i t_0)|^2\psi_U&\left(\sfrac{|m|}{M} \right)=\frac{16\cosh(\pi t_0)}{\mu(\GaH)\pi}M\log M \\&+\frac{8\cosh(\pi t_0)}{\mu(\GaH)\pi}\left(c_1\mu(\GaH)+2\log (4\pi)-2-2\Re\left(\sfrac{\Gamma'(\frac{1}{2}+i t_0)}{\Gamma(\frac{1}{2}+i t_0)} \right) \right)M\\&+\Re(M^{1+2 i t_0} c_{1+2 i t_0})+\!\!\sum_{\zeta\in (\frac{1}{2},1)\cap\scrE_k}\!\! c_{\zeta}M^{\zeta}+O_{\Gamma, t_0,\eta}\big( \sfrac{M\log M}{U}+\sqrt{M}U^{\frac{7}{2}+\eta}\big).
\end{align*}
Note now that our choice of $\psi_U$ gives
\begin{equation*}
\sum_{\underset{0<|m|\leq M(1-U^{-1})}{m\in\ZZ}}\!\!\!\! \!\!\!\!|\psi_{m,k}^{(k_0)}(\sfrac 1 2 + i t_0)|^2\leq\sum_{\underset{m\neq 0}{m\in\ZZ}} |\psi_{m,k}^{(k_0)}(\sfrac 1 2 + i t_0)|^2\psi_U\left(\sfrac{|m|}{M} \right)\leq\!\!\!\!\sum_{\underset{0<|m|\leq M(1+U^{-1})}{m\in\ZZ}}\!\!\!\!\!\!\!\! |\psi_{m,k}^{(k_0)}(\sfrac 1 2 + i t_0)|^2.
\end{equation*}
Letting $M'=M(1-U^{-1})$, we then have $M=M'+O(\frac{M'}{U})$ and $M\log  M=M'\log M'+O(\frac{M'\log M'}{U})$ (note that $U\geq 2$), giving
\begin{align*}
\sum_{\underset{0<|m|\leq M'}{m\in\ZZ}}& |\psi_{m,k}^{(k_0)}(\sfrac 1 2 +t_0)|^2\leq \frac{16\cosh(\pi t_0)}{\mu(\GaH)\pi}(M'\log M'+O(\sfrac{M'\log M'}{U})) \\&\qquad+\frac{8\cosh(\pi t_0)}{\mu(\GaH)\pi}\left(c_1\mu(\GaH)+\log (4\pi)-2-2\Re\left(\sfrac{\Gamma'(\frac{1}{2}+i t_0)}{\Gamma(\frac{1}{2}+i t_0)} \right) \right)(M'+O(\sfrac{M'}{U}))\\&\qquad+\Re\big((M'+O(\sfrac{M'}{U}))^{1+2 i t_0} c_{1+2 i t_0}\big)+\!\!\sum_{\zeta\in (\frac{1}{2},1)\cap\scrE}\!\! c_{\zeta}(M'+O(\sfrac{M'}{U}))^{\zeta}\\&\qquad+O_{\Gamma, t_0,\eta}\big( \sfrac{M'\log M'}{U}+\sqrt{M'}U^{\frac{5}{2}+\eta}\big),
\end{align*}
hence
\begin{align*}
\sum_{\underset{0<|m|\leq M'}{m\in\ZZ}}& |\psi_{m,k}^{(k_0)}(\sfrac 1 2 +t_0)|^2\leq \frac{16\cosh(\pi t_0)}{\mu(\GaH)\pi}M'\log M'\\&\qquad+\frac{8\cosh(\pi t_0)}{\mu(\GaH)\pi}\left(c_1\mu(\GaH)+2\log (4\pi)-2-2\Re\left(\sfrac{\Gamma'(\frac{1}{2}+i t_0)}{\Gamma(\frac{1}{2}+i t_0)} \right) \right)M'\\&\qquad+\Re\big(M'^{1+2 i t_0} c_{1+2 i t_0}\big)+\!\!\sum_{\zeta\in (\frac{1}{2},1)\cap\scrE_k}\!\! c_{\zeta}M'^{\zeta}\\&\qquad+O_{\Gamma, t_0,\eta}\big( \sfrac{M'\log M'}{U}+\sqrt{M'}U^{\frac{5}{2}+\eta}\big).
\end{align*}
Similarly, letting $M'=M(1+U^{-1})$ again gives $M=M'+O(\frac{M'}{U})$ and $M\log  M=M'\log M'+O(\frac{M'\log M'}{U})$, hence
\begin{align*}
\sum_{\underset{0<|m|\leq M'}{m\in\ZZ}}& |\psi_{m,k}^{(k_0)}(\sfrac 1 2 +t_0)|^2\geq \frac{16\cosh(\pi t_0)}{\mu(\GaH)\pi}M'\log M'\\&\qquad+\frac{8\cosh(\pi t_0)}{\mu(\GaH)\pi}\left(c_1\mu(\GaH)+2\log (4\pi)-2-2\Re\left(\sfrac{\Gamma'(\frac{1}{2}+i t_0)}{\Gamma(\frac{1}{2}+i t_0)} \right) \right)M'\\&\qquad+\Re\big(M'^{1+2 i t_0} c_{1+2 i t_0}\big)+\!\!\sum_{\zeta\in (\frac{1}{2},1)\cap\scrE_k}\!\! c_{\zeta}M'^{\zeta}\\&\qquad+O_{\Gamma, t_0,\eta}\big( \sfrac{M'\log M'}{U}+\sqrt{M'}U^{\frac{5}{2}+\eta}\big).
\end{align*}
Replacing $M'$ with $M$ now gives
\begin{align*}
\sum_{\underset{0<|m|\leq M}{m\in\ZZ}}& |\psi_{m,k}^{(k_0)}(\sfrac 1 2 +t_0)|^2= \frac{16\cosh(\pi t_0)}{\mu(\GaH)\pi}M\log M\\&\qquad+\frac{8\cosh(\pi t_0)}{\mu(\GaH)\pi}\left(c_1\mu(\GaH)+2\log (4\pi)-2-2\Re\left(\sfrac{\Gamma'(\frac{1}{2}+i t_0)}{\Gamma(\frac{1}{2}+i t_0)} \right) \right)M\\&\qquad+\Re\big(M^{1+2 i t_0} c_{1+2 i t_0}\big)+\!\!\sum_{\zeta\in (\frac{1}{2},1)\cap\scrE_k}\!\! c_{\zeta}M^{\zeta}\\&\qquad+O_{\Gamma, t_0,\eta}\big( \sfrac{M\log M}{U}+\sqrt{M}U^{\frac{5}{2}+\eta}\big).
\end{align*}
We now let $U=M^{\frac{1}{7}}$, giving $\frac{M}{U}=\sqrt{M}U^{\frac{5}{2}}=M^{\frac{6}{7}}$, completing the proof.
 
\hspace{425.5pt}\qedsymbol


\begin{thebibliography}{99}
\bibitem{Avelin}
H. Avelin, Computations of Eisenstein Series on Fuchsian Groups, Mathematics of Computation, Vol.\ 77, No.\ 263 (2008), pp.\  1779-1800.

\bibitem{Ben}
J. Beineke and D. Bump, Hidden symmetries for a renormalized integral of
Eisenstein series, Forum Math. Vol. 14 (2002), No. 3, pp.\ 457-475.

\bibitem{Bern}
J. Bernstein and A. Reznikov, Analytic Continuation of Representations and Estimates of Automorphic
Forms, Ann.\ of Math., 150 (1999), pp.\ 329-352.

\bibitem{Borel}
A. Borel, Automorphic forms on $\SL_2(\RR)$, Cambridge Tracts in Mathematics, \textbf{130}, Cambridge University Press, Cambridge, 1997.

\bibitem{Bump}
D.\ Bump, Automorphic forms and representations, Cambridge Studies in Mathematics, \textbf{55}, Cambridge University Press, Cambridge, 1998.

\bibitem{BDHI}
D.\ Bump, W.\ Duke, J.\ Hoffstein, and H.\ Iwaniec, An estimate for Hecke eigenvalues of Maass forms, IMRN, 1992, pp. 75-81.

\bibitem{Casselman}
W.\ Casselman,  Extended automorphic forms on the upper half plane, Math.\ Annalen 296.4 (1993), pp.\ 755-762.

\bibitem{DuttaGupta}
S. Dutta Gupta, Mean Values of $L$-Functions over Function Fields, J. of Number Theory Vol. 63 (1997), pp.\ 101-131.

\bibitem{DuttaGupta2}
S. Dutta Gupta, The Rankin-Selberg method on congruence subgroups, Illinois J. Math., Vol, 44, no. 1 (2000), pp. 95-103.

\bibitem{GR}
I.\ S.\ Gradshteyn and I.\ M.\ Ryzhik, Table of Integrals, Series, and Products, Seventh Edition, Academic Press, 2007.

\bibitem{Good}
A.\ Good, Cusp Forms and Eigenfunctions of the Laplacian, Math.\ Ann.\ 255 (1981), pp.\ 523-548.

\bibitem{Hejhal}
D. Hejhal, The Selberg Trace Formula for $\PSL(2,\RR)$ Volume 2, Lecture Notes in Mathematics, Springer, 1983.

\bibitem{Hej}
D. Hejhal, On value distribution properties of automorphic functions along closed horocycles,  Proceedings of the XVIth Rolf Nevanlinna Colloquium, de Greyter, 1996, pp.\ 39-52.  

\bibitem{Iwaniec}
H. Iwaniec, Introduction to the Spectral Theory of Automorphic Forms, Biblioteca de la Revista Matem\'atica Iberoamericana, Madrid, 1995.

\bibitem{Knapp}
A. Knapp, Representation Theory of Semisimple Lie Groups, Princeton University Press, 1986.

\bibitem{KrotzStanton}
B. Kr\"otz and R. Stanton, Holomorphic extensions of representations: (I) automorphic functions, Ann. of Math., 159 (2004), pp. 641-724.

\bibitem{Kubota}
T. Kubota, Elementary Theory of Eisenstein Series, Wiley \& Sons, (1978).

\bibitem{Lang}
S. Lang, $\SL_2(\RR)$, Springer, 1985.

\bibitem{MOS}
W.\ Magnus, F.\ Oberhettinger, and R.\ P.\ Soni, Formulas and Theorems for the Special Functions of Mathematical Physics, Third Edition, Springer-Verlag, 1966.

\bibitem{AndJensKron}
J. Marklof and A. Str\"ombergsson, Equidistribution of Kronecker sequences along closed horocycles, GAFA, Vol.\ 13 (2003), pp.\ 1239-1280.

\bibitem{Michel}
P. Michel and A. Venkatesh, The subconvexity problem for $\GL_2$, Publ. Math. Inst. Hautes Etudes Sci. (2010), no. 111, pp. 171-271.

\bibitem{Petridis95}
Y.\ N.\ Petridis, On squares of eigenfunctions for the hyperbolic plane and a new bound on certain L-series, IMRN, 1995, pp.\ 111-127.

\bibitem{Petridis99}
Y.\ N.\ Petridis, Fourier coefficients of cusp forms, Number theory (Ottawa, ON), 1999, CRM Proc. Lecture Notes, 19, Amer. Math. Soc., Provicence, RI, pp.\ 297-307.

\bibitem{PS85a}
R.\ S. Phillips and P.\ Sarnak, On cusp forms for cofinite subgroups of the group $\PSL(2, \RR)$, Inv.\ Math.\ 80 (1985), pp. 339-364.

\bibitem{PS85b}
R.\ S. Phillips and P.\ Sarnak,  The Weyl Theorem and deformation of discrete groups, Commun. Pure Appl. Math. 38 (1985), pp.\ 853-866.

\bibitem{PS92}
R.\ S. Phillips and P.\ Sarnak, Automorphic spectrum and Fermi's golden rule, J. Anal. Math. 59  (1992), pp.\ 179-187.

\bibitem{Rudin}
W. Rudin, Functional Analysis Second edition, McGraw-Hill, 1991.

\bibitem{Sarnak}
P. Sarnak, Asymptotic Behavior of Periodic Orbits of the Horocycle Flow
and Eisenstein Series, Comm. Pure Appl. Math. 34, No. 6 (1981), pp. 719-739.

\bibitem{Sarnak1}
P. Sarnak, Integrals of products of eigenfunctions, IMRN, 1994, pp.\ 251-260.

\bibitem{Strom}
A. Str\"ombergsson, On the Deviation of Ergodic Averages for Horocycle Flows, J.\ Mod.\ Dyn., Vol.\ 7 (2013), pp.\ 291-328.

\bibitem{Anotes}
A. Str\"ombergsson, On the Fourier coefficients of Eisenstein series for the hyperbolic plane, handwritten notes (2003).

\bibitem{Strom1}
A. Str\"ombergsson, On the uniform equidistribution of long closed horocycles, Duke Math. J., Vol. 123 (2004), pp. 507-547.

\bibitem{Titch}
E.\ C.\ Titchmarsh, The Theory of Functions, Second Edition, Oxford University Press, 1939.
\bibitem{Wallach}
N. Wallach, Real Reductive Groups II, Academic Press, 1992.

\bibitem{Zagier}
D. Zagier, The Rankin-Selberg method for automorphic functions which are
not of rapid decay, J. Fac. Sci. Univ. of Tokyo, Sect. 1A 28 (1981), pp. 415-437.

\end{thebibliography}
\end{document}